\newtheorem{prop}{Proposition}[section]
\theoremstyle{definition}
\newtheorem{Ack}[prop]{Acknowledgments}
\newtheorem{theorem}[prop]{Theorem}
\newtheorem{proposition}[prop]{Proposition}
\newtheorem{lemma}[prop]{Lemma}
\newtheorem{corollary}[prop]{Corollary}
\newtheorem{remark}[prop]{Remark}
\begin{document}
\author{Emmanuel Breuillard}
\address{Emmanuel Breuillard, Universit\'{e} Paris Sud, Orsay, France}
\email{emmanuel.breuillard@math.u-psud.fr}



\title[A height gap theorem for finite subsets of $GL_{d}(\overline{\Bbb{Q}})$]
{A height gap theorem for finite subsets of $GL_{d}(\overline{\Bbb{Q}})$
and non amenable subgroups}

\begin{abstract}
We introduce a conjugation invariant normalized height $\widehat{h}(F)$
on finite subsets of matrices $F$ in $GL_{d}(\overline{\Bbb{Q}})$  and describe its properties. In particular,
we prove an analogue of the Lehmer problem for this height by showing that
$\widehat{h}(F)>\varepsilon $ whenever $F$ generates a non-virtually solvable
subgroup of $GL_{d}(\overline{\Bbb{Q}}),$ where $\varepsilon =\varepsilon (d)>0$
is an absolute constant. This can be seen as a global adelic analog of the classical
Margulis Lemma from hyperbolic geometry. As an application we prove a uniform version of the
classical Burnside-Schur theorem on torsion linear groups. In a companion paper we will apply these results
to prove a strong uniform version of the Tits alternative.
\end{abstract}
\maketitle%

\setcounter{tocdepth}{1}
\tableofcontents%

\section{Introduction}

According to the Lehmer conjecture, the absolute Weil height times the
degree of an algebraic number $x\in \overline{\mathbb{Q}}$ which is not a
root of unity ought to be bounded below by an absolute constant. Various
generalizations and extensions of this problem have been recently studied by
a variety of authors in particular in the setting of abelian varieties (e.g.
\cite{SUZ},\cite{Zhang0}) and also in connection with the dynamics of
iterated polynomial maps (e.g. \cite{FL}, \cite{Cham}, \cite{BakRum}, \cite%
{Pi}). In the present paper, we will introduce yet another height function $%
\widehat{h}(F)$ which is well suited to the study of the geometric and
arithmetic behavior of power sets $F^{n}=F\cdot ...\cdot F$ for $n\in
\mathbb{N}$, where $F$ is a finite subset of $GL_{d}(\overline{\mathbb{Q}}).$
We will investigate its properties, in particular describe when it might
become small and then prove a statement analogous to the Lehmer conjecture
in this setting. In fact, we will prove that if $\mathbb{G}$ is the Zariski
closure of the subgroup generated by $F$, then $\widehat{h}(F)$ is always
bounded away from zero by a positive constant $\varepsilon =\varepsilon
(d)>0 $ unless the connected component of the identity $\mathbb{G}^{0}$ is
solvable. While if $\mathbb{G}^{0}$ is solvable, proving a lower bound on $%
\widehat{h}(F)$ boils down to the original Lehmer conjecture. Before we
explain our motivations for studying this object, and present the main
results of the paper, let us first define it.

\bigskip

\textit{Definitions.}

Let $d\geq 1$ be an integer, $\overline{\mathbb{Q}}$ be the field of
algebraic numbers, and $K\leq \overline{\mathbb{Q}}$ a number field. We let $%
V_{K}$ be the set of equivalence classes of absolute values on $K$ and $%
n_{v}=[K_{v}:\mathbb{Q}_{p}]$ the degree of the completion $K_{v}$ of $K$
over the closure $\mathbb{Q}_{p}$ of $\mathbb{Q}$ in $K_{v}.$ We normalise
the absolute value $|\cdot |_{v}$ on $K_{v}$ so that its restriction to $%
\mathbb{Q}_{p}$ is the standard absolute value, i.e. $|p|_{v}=\frac{1}{p}.$
To any finite subset $F$ of square matrices in $M_{d}(K)$ we associate the
following \textbf{height}
\begin{equation}
h(F)=\frac{1}{[K:\mathbb{Q}]}\sum_{v\in V_{K}}n_{v}\log ^{+}||F||_{v}
\label{heightdefintro}
\end{equation}%
where $\log ^{+}=\max \{0,\log \}$ and $||F||_{v}=\max \{||f||_{v},f\in F\}.$
Here $||f||_{v}$ is the operator norm on $M_{d}(K_{v})$ associated to the
\textit{standard norm} on $K_{v}^{d}.$ We define the standard norm for $x\in
K_{v}^{d}$ to be the sup norm $||x||_{v}=\max_{1\leq i\leq d}|x_{i}|_{v}$ if
$v$ is ultrametric and the Euclidean norm $||x||_{v}=\sqrt{%
\sum_{i=1}^{d}|x_{i}|_{v}^{2}}$ otherwise. If $d=1,$ this notion coincides
with the (absolute, logarithmic) Weil height of an algebraic number (see
e.g. \cite{Bom}).

We can now define the \textbf{normalized height} $\widehat{h}(F)$ as
\begin{equation*}
\widehat{h}(F)=\lim_{n\rightarrow +\infty }\frac{1}{n}h(F^{n})
\end{equation*}%
This limit exists by subadditivity. Unlike $h(F)$, $\widehat{h}(F)$ is
independent of the choice of basis of $K_{v}^{d}$ used to define the norms $%
||x||_{v}.$

Another way to describe $\widehat{h}(F)$ is in terms of spectral radius (see
Section \ref{heightdef} below) ; for instance if $F=\{A\}$ is a singleton,
then $\widehat{h}(F)=h([1,\lambda _{1},...,\lambda _{d}])$, where $(\lambda
_{1},...,\lambda _{d})$ are the eigenvalues of $A$ and $h([1,\lambda
_{1},...,\lambda _{d}])$ the standard Weil height of the point $[1,\lambda
_{1},...,\lambda _{d}]$ in the projective space $\mathbb{P}^{d}(\overline{%
\mathbb{Q}})$ as defined in \cite[\S 1.5.]{Bom}. This connection was first
described by V. Talamanca in \cite{Tala}, where a closely related definition
of the height and normalized height of a single matrix is given (see Remark %
\ref{tala} below).

The normalized height is an invariant of the diagonal action by conjugation
of $GL_{d}$ on $GL_{d}^{k},$ where $k=Card(F)$, and it is a measure of the
combined \textit{spectral radius} of $F$ (i.e. the rate of exponential
growth of $||F^{n}||_{v}$) at all places $v,$ where $v$ varies among all
possible equivalence classes of non trivial absolute values on the number
field generated by the matrix coefficients of $F$.

\bigskip

\textit{Basic properties and height gap.}

Here are a few sample properties which are satisfied by the normalized
height. We have : $\widehat{h}(F^{n})=n\cdot \widehat{h}(F)$ for $n\in
\mathbb{N}$. A finite set $F$ satisfies $\widehat{h}(F)=0$ if and only if $F$
generates a quasi-unipotent subgroup, i.e. a group all of whose elements
have only roots of unity as eigenvalues (Proposition \ref{zeroheight}).
Moreover, the following holds:

\begin{proposition}
\label{minh}There is a constant $C=C(d)>0$ such that if $F$ is a finite
subset of $GL_{d}(\overline{\mathbb{Q}})$ generating a subgroup whose
Zariski closure is semisimple, then%
\begin{equation*}
\widehat{h}(F)\leq \inf_{g\in GL_{d}(\overline{\mathbb{Q}})}h(gFg^{-1})\leq
C\cdot \widehat{h}(F)
\end{equation*}
\end{proposition}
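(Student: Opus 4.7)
The left-hand inequality is a direct consequence of the two defining properties of $\widehat{h}$. Submultiplicativity of operator norms gives $\|F^n\|_v\le\|F\|_v^n$, so $h$ is subadditive: $h(F^n)\le n\,h(F)$, whence $\widehat{h}(F)=\lim_n h(F^n)/n\le h(F)$. Combined with the conjugation-invariance of $\widehat{h}$ (noted right after its definition), this yields $\widehat{h}(F)=\widehat{h}(gFg^{-1})\le h(gFg^{-1})$ for every $g\in GL_d(\overline{\mathbb{Q}})$, and taking the infimum over $g$ gives the first inequality.

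For the right-hand inequality the plan is to construct a single $g\in\mathbb{G}(\overline{\mathbb{Q}})$ that is, at every place $v$ of $K$ simultaneously, close to a local norm-minimizer of $\|gFg^{-1}\|_v$. Via the spectral-radius description of $\widehat h$ alluded to in the introduction, one has
\[
\widehat{h}(F)=\frac{1}{[K:\mathbb{Q}]}\sum_{v\in V_K}n_v\log^+\rho_v(F),\qquad \rho_v(F)=\lim_{n\to\infty}\|F^n\|_v^{1/n},
\]
and since $\rho_v$ is conjugation-invariant and a lower bound for $\|gFg^{-1}\|_v$, it suffices to find $g$ realizing $\|gFg^{-1}\|_v\le C(d)\max(1,\rho_v(F))$ at every $v$. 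The key local input comes from semisimplicity of $\mathbb{G}$: the conjugation orbit of the $k$-tuple $F\in\mathbb{G}^k$ under $\mathbb{G}$ is closed (a Kempf--Luna/Richardson style statement for reductive group actions on tuples of matrices). A Kempf--Ness type argument at archimedean places, and its Bruhat--Tits building analogue at non-archimedean places, then produces $g_v\in\mathbb{G}(\overline{K_v})$ achieving the target local bound with a constant $C(d)$ that depends only on the dimension.

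To globalise, I would use that $F$ has matrix entries in some number field $K$: outside a finite set $S$ of places (the archimedean ones and those dividing the denominators of entries of $F$), already $\|F\|_v\le 1$, so the identity choice for $g$ suffices. Only the finitely many local conjugations $g_v$ with $v\in S$ need to be approximated simultaneously, which is arranged by weak approximation in $\mathbb{G}$ together with the continuity of the local operator norms in the entries of $g$. Summing the resulting local estimates yields $h(gFg^{-1})\le C\,\widehat{h}(F)$ with a constant depending only on $d$.

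The main obstacle is the local step: establishing the Kempf--Ness type minimizer estimate with a constant $C(d)$ that is uniform in $F$, in the specific semisimple subgroup $\mathbb{G}$, and valid at both archimedean and non-archimedean places. This uniformity is precisely where the hypothesis on $\mathbb{G}$ enters essentially—without semisimplicity the conjugation orbit can fail to be closed and $\inf_g\|gFg^{-1}\|_v$ can exceed $\rho_v(F)$ by an unbounded factor, as one sees already with a unipotent upper-triangular block. By contrast, the global patching is comparatively routine once the local bounds are in hand.
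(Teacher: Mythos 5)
Your left-hand inequality is correct (subadditivity of $h$ plus conjugation invariance of $\widehat h$, as in Proposition \ref{easyprop}), and your local step is essentially available in the paper: the comparison $E_v(F)\leq c^{-1}\max\{1,R_v(F)\}$ at each place is Lemma \ref{CompLem} together with Proposition \ref{properties}, no Kempf--Ness input is really needed. The two genuine gaps are in what you call the routine part. First, the globalization is not routine; it is the entire content of the statement. Weak (or strong) approximation produces \emph{some} $g\in GL_d(\overline{\mathbb{Q}})$ close to the finitely many local minimizers $g_v$, $v\in S$, but gives no control whatsoever on $\Vert g\Vert_w$, $\Vert g^{-1}\Vert_w$ at the remaining places (nor, equivalently, on the height of $g$): the approximant can, and in general will, fail to be integral at finite places outside $S$ or be huge at archimedean places, so $\Vert gFg^{-1}\Vert_w$ exceeds $1$ at places where $\Vert F\Vert_w\leq 1$, and this extra contribution to $h(gFg^{-1})$ is not bounded by $\widehat h(F)$. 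As the paper points out after Proposition \ref{minheight}, $e(F)$ is exactly the infimum of $h(gFg^{-1})$ over the \emph{ad\`eles}, and the proposition asserts that this adelic infimum is achieved up to constants on principal ad\`eles; bridging that gap is why the paper constructs the global conjugator explicitly (escape via Proposition \ref{twoelements} to a generic pair $\{a,b\}$ with $a$ regular in a torus $T$, conjugation by the torus element $t$ defined by $\alpha_j(t)^2=B_{i_jj}/B_{ji_j}$, the Iwasawa-type local estimates of Section 5 to bound the heights of the entries, and Burnside's theorem to propagate the bound from $\{a,b\}$ to all of $F$), so that the conjugator's height is itself controlled by $e(F)$.

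Second, even granting both your local estimate and a perfect global patching, summing over places only yields a bound of the form $h(gFg^{-1})\leq \widehat h(F)+\log C(d)$: the archimedean comparison between minimal norm and spectral radius is multiplicative, hence additive after taking $\log^+$, and the normalized sum over the archimedean places turns it into an additive constant, not a multiplicative factor. When $\widehat h(F)$ is small this does not give the stated inequality $\inf_g h(gFg^{-1})\leq C\cdot\widehat h(F)$. The paper removes exactly such additive constants by invoking the height gap theorem (Theorem \ref{main}): under the semisimplicity hypothesis $F$ generates a non virtually solvable group, so $\widehat h(F)\geq\varepsilon(d)$ and the additive term can be absorbed multiplicatively. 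Your proposal never invokes the gap theorem and therefore cannot conclude in the regime where $\widehat h(F)$ is small, which is precisely the delicate case.
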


In other words, $F$ can always be conjugated back in a good position where
its height is comparable to its normalized height. Also $\widehat{h}$ has the Northcott property (cf. [7]) in the sense that a subset of $GL_d(\overline{\mathbb{Q}})$ whose cardinality and normalized height are bounded, which generates a subgroup with semisimple Zariski closure, and which has all its matrix coefficients of bounded degree over $\mathbb{Q}$, must belong to a bounded finite family of conjugacy classes of such sets.

The main result of this paper establishes the existence of a uniform gap for
the normalized height of subsets $F$ generating a non amenable subgroup of $%
GL_{d}(\overline{\mathbb{Q}}).$ We have:

\begin{theorem}
\label{main0}There is a constant $\varepsilon =\varepsilon (d)>0$ such that
if $F$ is a finite subset of $GL_{d}(\overline{\mathbb{Q}})$ generating a
non amenable subgroup that acts strongly irreducibly, then $\widehat{h}%
(F)>\varepsilon .$
\end{theorem}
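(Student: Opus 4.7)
My plan is to argue by contradiction via an adelic analogue of the Margulis--Zassenhaus lemma: if $\widehat h(F) < \varepsilon$ with $\varepsilon = \varepsilon(d) > 0$ chosen sufficiently small, I will show that $\langle F\rangle$ must in fact be virtually solvable, contradicting non-amenability. The first step is to reduce to the case of a semisimple Zariski closure. Strong irreducibility forces the identity component $\mathbb{G}^{0}$ of the Zariski closure of $\langle F\rangle$ to act irreducibly, so $\mathbb{G}^{0}$ is reductive; non-amenability then forces its derived subgroup $[\mathbb{G}^{0},\mathbb{G}^{0}]$ to be a nontrivial semisimple group. Composing with the adjoint representation of $\mathbb{G}^{0}$ (which kills the central torus) produces a finite set $F_{1}$ of bounded cardinality in $GL_{d_{1}}(\overline{\mathbb{Q}})$ for some $d_{1} = d_{1}(d)$, whose Zariski closure is semisimple, whose generated group is still non-amenable, and with $\widehat h(F_{1}) \leq C_{1}(d)\,\widehat h(F)$ by functoriality of $\widehat h$ under morphisms of bounded degree.

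With the Zariski closure $\mathbb{G}_{1}$ of $\langle F_{1}\rangle$ semisimple, Proposition~\ref{minh} provides a global conjugate $F_{2} = g F_{1} g^{-1}$ with $h(F_{2}) \leq C_{2}(d)\,\widehat h(F_{1}) \leq C(d)\varepsilon$. Hence $F_{2}$ is adelically small: at all but finitely many non-archimedean places $v$ we have $\|F_{2}\|_{v} \leq 1$, and the total weighted contribution $\sum_{v} n_{v}\log^{+}\|F_{2}\|_{v}$ is at most $C(d)\varepsilon[K:\mathbb{Q}]$.

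The core step is an iteration argument. Set $F^{(0)} := F_{2}$ and $F^{(k+1)} := \{[x,y] : x,y \in F^{(k)}\cup (F^{(k)})^{-1}\}$. At each archimedean place $v$, once $x,y$ are close to the identity, a standard estimate gives $\|[x,y] - I\|_{v} \leq C\|x - I\|_{v}\,\|y - I\|_{v}$, so commutation contracts quadratically; at non-archimedean places, the ultrametric inequality yields a similar or stronger contraction. Summed over all places, this propagates to $h(F^{(k)}) \to 0$, provided $\varepsilon$ was chosen small enough that one stays in the basin of archimedean contraction from the outset. But then the eigenvalues of matrices in $F^{(k)}$ are algebraic numbers of bounded degree and vanishing Weil height, so by Kronecker's theorem they are roots of unity, and $F^{(k)}$ is quasi-unipotent. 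Since $\mathbb{G}_{1}$ is semisimple and the iterated commutator subgroup is Zariski-dense in a normal subgroup of $\mathbb{G}_{1}$, this forces some iterate of the commutator sequence to lie in the (finite) center of $\mathbb{G}_{1}$; hence $\langle F_{1}\rangle$ is virtually solvable, contradicting the non-amenability preserved in Step~1.

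The main obstacle is the iteration in the third paragraph: one must make the commutator contraction quantitatively valid uniformly at all places and across all number fields $K$, simultaneously controlling (i) the archimedean basin of contraction, (ii) the finitely many non-archimedean places where $\|F_{2}\|_{v} > 1$, and (iii) the combinatorial growth of the iterated commutator sets $F^{(k)}$, so that cardinality growth does not offset the height decay in $\sum_{v} n_{v}\log^{+}\|F^{(k)}\|_{v}$. Executing this carefully is where the bulk of the proof lives, and it is exactly here that working with $\widehat h$ rather than $h$ becomes essential, since it is only $\widehat h$ that can be brought under control globally via Proposition~\ref{minh}.
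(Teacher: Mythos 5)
There is a genuine gap, and it sits at the heart of your strategy: smallness of $\widehat{h}(F)$ (or even of $h(F_{2})$ after conjugation) does \emph{not} place the matrices in any basin of contraction for the commutator map. The height is a weighted average of $\log^{+}\|\cdot\|_{v}$ over all places of a number field $K$ of unbounded degree, so $h(F_{2})\leq C(d)\varepsilon$ is perfectly compatible with $\|f\|_{v}=1$ at every place while $f$ is nowhere near the identity: at a nonarchimedean place, $\|f\|_{v}\leq 1$ only says $f$ lies in a maximal compact subgroup, and at an archimedean place it only says $f$ is (close to) unitary. The paper's own example of $SL_{3}(\mathbb{Q})\cap SO(3,\mathbb{R})$, which contains finitely generated dense subgroups of $SO(3,\mathbb{R})$, shows that sets with all norms bounded by $1$ can generate wildly non-solvable groups; your estimate $\|[x,y]-I\|_{v}\leq C\|x-I\|_{v}\|y-I\|_{v}$ never becomes applicable because $\|x-I\|_{v}$ is of order $1$, not small. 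This is exactly why the classical Margulis--Zassenhaus argument does not transfer: it needs proximity to the identity at a fixed place (or discreteness), neither of which follows from a small adelic average. Relatedly, your appeal to Proposition \ref{minh} is circular in this context: the multiplicative form $h(gFg^{-1})\leq C\widehat{h}(F)$ is deduced in the paper \emph{from} the height gap theorem; the unconditional statement only gives $h(gFg^{-1})\leq C(1+e(F))$, with an additive constant that ruins the smallness you need to start the iteration.

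Even granting the contraction, the endgame fails: you invoke Kronecker to say that eigenvalues of bounded degree and ``vanishing'' height are roots of unity, but the degrees here are unbounded ($K$ is an arbitrary number field) and the heights along your iteration would only tend to $0$, never vanish at a finite stage. Deciding that algebraic numbers of small but nonzero height and huge degree behave like roots of unity is precisely the Lehmer-type difficulty; the paper gets around it not by a limiting commutator argument but by reducing to a pair $\{a,b\}$ in a simple adjoint group (escape lemma of Eskin--Mozes--Oh), proving local Iwasawa-type estimates comparing $E_{v}(F)$ with the quantities $|1-\alpha(a)|_{v}$, summing them via the product formula, and then using Bilu's equidistribution theorem together with Zhang's theorem on small points of tori to produce a contradiction with the choice of $\{a,b\}$ outside a fixed subvariety. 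Your proposal would need a substitute for this arithmetic input; as written, the third paragraph cannot be repaired by a more careful bookkeeping of constants.
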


The constant $\varepsilon (d)$ can be made explicit in principle, although
we make no attempt here to give a lower bound (see Remark \ref{effec}).

Recall that, as follows for instance from the Tits alternative (\cite{Tits}%
), amenable subgroups of $GL_{d}(\overline{\mathbb{Q}})$ are precisely the
virtually solvable subgroups, i.e. those subgroups which contain a solvable
subgroup of finite index.

Note that if $d=1,$ then $\widehat{h}$ coincides with the classical Weil
height of a non-zero algebraic number. Of course $GL_{1}(\overline{\mathbb{Q}%
})$ is solvable (it is a torus) and no uniform lower bound on the height can
exist there. However the Lehmer conjecture states that one ought to have $%
h(x)\geq \frac{c}{\deg (x)}$ for some absolute constant $c>0$ whenever $x$
is not a root of unity. We refer the reader to \cite{Sm} for a recent survey
on this conjecture (see also \cite{Bom}) and to \cite{AmDv}, \cite{AmoD} and
for recent progress. Theorem \ref{main0} can thus be seen as a positive
solution to a Lehmer type problem in semisimple algebraic groups as opposed to tori.

As it turns out, for each integer $k\geq 2,$ the set of $k$-tuples $F$ in $%
GL_{d}(\overline{\mathbb{Q}})$ which generate a virtually solvable subgroup
forms a closed algebraic subvariety of $GL_{d}(\overline{\mathbb{Q}})^{k}.$
Therefore Theorem \ref{main0} implies that the set of points with small
normalized height in $GL_{d}(\overline{\mathbb{Q}})^{k}$ is not
Zariski-dense. This is reminiscent of the Bogomolov conjecture proved by
Ullmo and Zhang (see \cite{Ull}, \cite{Zhang0}, \cite{SUZ}), which asserts
that, given an abelian variety, the set of points with small N\'{e}ron-Tate
height on an algebraic subvariety which is not a finite union of torsion
cosets of abelian subvarieties is not Zariski-dense. In fact the toric
version of the Bogomolov conjecture, proved by Zhang in \cite{Zhang}, will
be a key ingredient of the proof of Theorem \ref{main0}.

\begin{remark}
A competing definition of the normalized height $\widehat{h}(F)$ consists in
replacing $\log ^{+}$ by $\log $ in $(\ref{heightdefintro})$. The two
definitions coincide if $F\subset SL_{d},$ but may differ otherwise. However
the difference is minor and we found it more convenient to work with $\log
^{+},$ because all terms are then non negative, although many results, such
as Theorem \ref{main0}, also hold for this other definition of the height
(see the discussion in Remark \ref{loglog}).
\end{remark}

\bigskip

\textit{Motivation and consequences.}

In \cite{BreSol} we established a connection between the Lehmer conjecture
and the uniform exponential growth problem for linear solvable groups. More
precisely, we showed that proving uniform exponential growth over all
solvable subgroups of $GL_{2}(\mathbb{C})$, that is showing the existence of
an absolute constant $c>0$ such that $\lim_{n\rightarrow +\infty }|F^{n}|^{%
\frac{1}{n}}>c$ whenever $F$ generates a solvable non virtually nilpotent
subgroup of $GL_{2}(\mathbb{C})$ would imply the Lehmer conjecture.

We have not settled the issue of whether or whether not the Lehmer conjecture is in fact equivalent to the uniform exponential growth of solvable subgroups of $GL_2(\mathbb{C})$. However in our companion paper \cite{B}, we make use of Theorem \ref{main0} (height gap theorem) and Proposition \ref{minh} above to establish the following strengthening of the classical Tits alternative, which among other things implies the existence of a constant $c = c(d) > 0$ such that $lim_{n \rightarrow +\infty} |F^n|^{\frac{1}{n}} >c$ whenever $F$ generates a nonvirtually solvable subgroup of $GL_d(\mathbb{C}
)$.

\begin{theorem}[Uniform Tits alternative, \cite{B}] There is $N=N(d) \in \mathbb{N}$, such that if $K$ is a field and $F$ a finite symmetric subset of $GL_d(K)$ containing $1$ which generates a nonvirtually solvable subgroup, then $F^N$ contains two elements $a,b$ which generate a non-abelian free subgroup.
\end{theorem}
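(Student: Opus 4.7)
The plan is to combine the height gap theorem with escape-from-subvarieties and quantitative ping-pong at a single place. First, reduce to the case $K=\overline{\mathbb{Q}}$: a finitely generated non virtually solvable subgroup of $GL_{d}(K)$ may be specialized to one defined over a number field without destroying non solvability, using that the set of solvable $k$-tuples is a Zariski closed subvariety of $GL_{d}^{k}$. Replacing $F$ by a bounded power $F^{m_{0}}$ with $m_{0}=m_{0}(d)$, one may assume the Zariski closure $\mathbb{G}$ of $\langle F\rangle$ is connected and, after passing to the quotient by the solvable radical and to an irreducible summand of some bounded symmetric power of the standard representation, that $\mathbb{G}$ is semisimple and acts strongly irreducibly on some $\overline{\mathbb{Q}}^{d'}$ with $d'$ bounded in terms of $d$.

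Second, apply Theorem \ref{main0} to obtain $\widehat{h}(F)>\varepsilon (d)$. By Proposition \ref{minh}, conjugate $F$ so that $h(F)\leq C\widehat{h}(F)$; then the height is concentrated, and one locates a place $v$ of the field of definition where $\log ||F||_{v}$ contributes a definite fraction of the global height. This step is essential: without first conjugating, the height could be spread out over too many places to produce any usable local expansion.

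Third, produce a very proximal element $a\in F^{N_{1}}$ at the place $v$, and then a second very proximal $b\in F^{N_{2}}$ whose attracting and repelling fixed points in $\mathbb{P}^{d'-1}(K_{v})$ are disjoint from those of $a^{\pm 1}$. Proximality and the required general position are Zariski-open conditions on $\mathbb{G}$, so by an escape-from-subvarieties argument in the style of Eskin-Mozes-Oh and Abels-Margulis-Soifer, bounded words in $F$ suffice to realize both conditions. That the dominant eigenvalue of $a$ is uniformly separated in $|\cdot|_{v}$ from the other eigenvalues follows from the local spectral radius estimate at $v$ together with the height gap applied to $a$ itself, which by the normalization $h(F)\leq C\widehat{h}(F)$ is not offset by contributions from other places. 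Replacing $a,b$ by bounded powers $a^{k},b^{k}$ to amplify contraction at $v$, the classical ping-pong lemma on $\mathbb{P}^{d'-1}(K_{v})$ yields two elements of some $F^{N}$ generating a non-abelian free group.

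The main obstacle is the quantitative passage from the abstract bound $\widehat{h}(F)>\varepsilon$ to local contraction strong enough to close a ping-pong argument with separation uniform in $F$. This requires not only Proposition \ref{minh} to localize the height at few places, but also an escape-from-subvarieties lemma that is quantitatively uniform, relying on the fact that a non virtually solvable subgroup escapes any proper subvariety of $\mathbb{G}$ in a bounded number of steps depending only on the dimension and degree of the subvariety. Converting a global, adelic statement about height into a single-place dynamical statement with the quantitative spacing needed for freeness is the key technical difficulty, and is precisely what the tools developed earlier in the paper are designed to overcome.
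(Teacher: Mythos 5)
You should first be aware that this paper does not prove the Uniform Tits alternative at all: the theorem is quoted from the companion paper \cite{B}, and the present paper only supplies two of its inputs, the height gap (Theorem \ref{main0}) and the good-position statement (Proposition \ref{minh}). Judged as an outline of the missing argument, your proposal has the right general architecture (height gap, escape from subvarieties, proximality and ping-pong), but two of its steps are genuinely gapped. First, the reduction to $K=\overline{\mathbb{Q}}$ by specialization only makes sense in characteristic zero; a non virtually solvable subgroup of $GL_d(K)$ with $K$ of characteristic $p$ cannot be specialized into a number field, and the height gap theorem itself fails over $\overline{\mathbb{F}_p(t)}$ (Remark \ref{poscar}), so the positive characteristic case of the statement, which is part of the theorem as stated, needs a separate treatment and is not covered by your plan.

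The more serious gap is the localization step. From $\widehat{h}(F)>\varepsilon$, even after conjugating so that $h(F)\leq C\widehat{h}(F)$, you cannot in general ``locate a place $v$ where $\log\Vert F\Vert_v$ contributes a definite fraction of the global height'': the weights $n_v/[K:\mathbb{Q}]$ sum to $1$ over the places above each fixed rational prime, so the total weight of the places where $\Vert F\Vert_v>1$ is unbounded, and the height can be spread over arbitrarily many places each contributing an arbitrarily small amount (already for a single eigenvalue, a root of $x^n-a/b$ with $a/b$ close to $1$ and $b$ a product of many primes has height $\geq\varepsilon$ while every local term $\log^+|\cdot|_v$ is as small as you wish). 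Consequently the contraction available at whatever place you pick may be arbitrarily weak, and it cannot be amplified to a fixed ping-pong threshold by raising $a,b$ to powers bounded in terms of $d$ alone; allowing unbounded powers destroys exactly the uniformity $N=N(d)$ that distinguishes this theorem from the classical Tits alternative. The same objection applies to your claim that the dominant eigenvalue of $a$ is ``uniformly separated'' at $v$. What the argument of \cite{B} actually requires is a place-by-place comparison: one bounds, simultaneously at all places, the general-position defects (distances between attracting and repelling points, norms of conjugators) by local quantities whose global sum is controlled by $e(F)$ through Proposition \ref{minh} and the product formula, and then concludes that at \emph{some} place the local contraction dominates the local defect. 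You correctly identify this global-to-local passage as the key difficulty in your last paragraph, but the mechanism you propose to resolve it does not work, so the proof as outlined does not close.
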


In the same vein, but much more straightforwardly, one obtains the following
corollary, which answers a question from \cite{BarCor} and is a
strengthening of a well known theorem of Burnside and Schur (see \cite{CurR}) asserting that finitely generated linear torsion groups are finite.

\begin{corollary}
(Effective Schur) There is an integer $N=N(d)\in \mathbb{N}$ such that if $K$
is a field and $F$ is a finite subset of $GL_{d}(K)$ which generates an
infinite subgroup, then $(F\cup F^{-1})^{N}$ contains an element of infinite
order.
\end{corollary}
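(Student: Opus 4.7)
The natural approach splits according to whether the group $\langle F\rangle$ generated by $F$ is virtually solvable. In the non virtually solvable case the Uniform Tits Alternative stated just above applies directly: there is $N_{0}=N_{0}(d)$ such that $(F\cup F^{-1})^{N_{0}}$ (after passing to the symmetric set $F\cup F^{-1}\cup\{1\}$, which only absorbs a constant factor into $N$) contains two elements $a,b$ generating a non-abelian free subgroup. Every non-identity element of a free group has infinite order, so $a$ itself is the element we seek. This is the step that uses Theorem \ref{main0}, through its corollary the Uniform Tits Alternative.

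It remains to handle the case where $\langle F\rangle$ is virtually solvable and infinite. Here I would argue elementarily, invoking Mal'cev's classical theorem (a consequence of Jordan's theorem on finite linear groups): there is a bound $J=J(d)$ such that $\langle F\rangle$ contains a subgroup $H$ of index at most $J$ that is triangularizable over $\overline{K}$. By Reidemeister--Schreier rewriting, $H$ is generated by the finite set $F_{H}=H\cap (F\cup F^{-1})^{2J-1}$, and $H$ is infinite because $\langle F\rangle$ is. After conjugating $H$ into an upper-triangular Borel $B\leq GL_{d}(\overline{K})$, let $\pi\colon B\to D$ be the projection onto the diagonal torus with kernel the unipotent radical $U$; then $\pi(H)$ is a finitely generated abelian group.

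If $\pi(H)$ is infinite, then any generating set of a finitely generated infinite abelian group must contain an element of infinite order (else the whole group would be torsion and hence finite). Applying this to $\pi(F_{H})$ produces $g\in F_{H}$ with $\pi(g)$ of infinite order, whence $g$ itself has infinite order and lies in $(F\cup F^{-1})^{2J-1}$. If instead $\pi(H)$ is finite, then $H\cap U$ is infinite; since $H/U$ is abelian, every commutator $[g,h]$ with $g,h\in F_{H}$ lies in $U$, and in characteristic zero any nontrivial unipotent element has infinite order, so some commutator---or, using Mal'cev's bound on the derived length of a solvable subgroup of $GL_{d}$, some bounded iterate of commutators---gives an infinite-order element at bounded word-length in $F\cup F^{-1}$.

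The main obstacle I anticipate is the positive-characteristic version of this last subcase: unipotent elements in characteristic $p$ have order dividing $p^{d}$, so the commutator argument fails outright. In that setting, however, the field generated by the matrix entries of $F$ must be transcendental over $\mathbb{F}_{p}$ (since every finitely generated subgroup of $GL_{d}(\overline{\mathbb{F}_{p}})$ is already finite), and $H$ being infinite and finitely generated cannot be torsion by the original (non-effective) Burnside--Schur theorem. Making this effective---that is, exhibiting explicitly an element of $(F\cup F^{-1})^{N}$ with an eigenvalue that is not a root of unity, where $N$ depends only on $d$---will be the delicate point, likely requiring a place-by-place analysis of eigenvalues reminiscent of the height-theoretic arguments developed earlier in the paper.
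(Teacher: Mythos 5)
The characteristic-zero part of your argument is essentially sound, and its virtually solvable half is in fact the paper's own treatment (Mal'cev/Jordan index bound, Lemma \ref{finiteindex}, then the dichotomy: a nontrivial commutator of two generators is unipotent, hence of infinite order, while if all such commutators vanish the group is abelian and, being infinite and finitely generated, one of the generators must be non-torsion). Note that this second horn is still needed inside your $\pi(H)$-finite subcase, since all commutators of elements of $F_H$ may well be trivial; your hedge about ``bounded iterates of commutators'' does not address that possibility, whereas the abelian-generator argument does. For the non virtually solvable case your appeal to the Uniform Tits Alternative is not circular, but it imports the much harder theorem of the companion paper \cite{B}; the paper instead gets the infinite-order element directly from Corollary \ref{eig1} (a word of bounded length $N_{1}(d)$ with an eigenvalue of height $>\varepsilon(d)$, hence not a root of unity), which is an immediate consequence of Theorem \ref{main} together with Corollary \ref{eig}.

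The genuine gap is the positive-characteristic virtually solvable case, which you explicitly leave open; moreover the route you suggest for it (a place-by-place, height-theoretic analysis modelled on the archimedean arguments of the paper) cannot work, because by Remark \ref{poscar} the height gap Theorem \ref{main} has no analog over $\overline{\mathbb{F}_{p}(t)}$. The case is in fact elementary, as the paper indicates. If every element of $F_{H}\subset (F\cup F^{-1})^{2J-1}$ were torsion, its eigenvalues would be roots of unity, so the diagonal image $\pi(H)$ would be a finitely generated abelian group generated by torsion elements, hence finite; the kernel $H\cap U$ would then be a finitely generated unipotent group in characteristic $p$, hence nilpotent of exponent dividing a power of $p$, hence finite; so $H$, and therefore $\Gamma$, would be finite, contradicting the hypothesis. (The paper's proof of Corollary \ref{torsion} avoids your case split altogether: Burnside's theorem realizes $\Gamma$ inside $GL_{D}(L)$ with $L$ the field generated by the eigenvalues of the elements of $F^{2d^{2}+1}$; if that ball were torsion, $L$ would be generated by roots of unity, which in characteristic $p$ forces $L$, hence $\Gamma$, to be finite, and in characteristic $0$ reduces everything to $\overline{\mathbb{Q}}$, where Corollary \ref{eig} and Theorem \ref{main} apply.)
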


The positive characteristic case of the above corollary is easy to prove,
while the characteristic zero case relies on our Theorem \ref{main0}.

The interpretation of $\widehat{h}(F)$ in terms of spectral radius allows us
to derive the following:

\begin{corollary}
\label{eig0}There are constants $N_{1}=N_{1}(d)\in \mathbb{N}$, $C=C(d)\in
\mathbb{N}$ such that if $F$ is any finite subset of $GL_{d}(\overline{%
\mathbb{Q}})$ containing $1$, there is some $a\in F^{N_{1}}$ and some
eigenvalue $\lambda $ of $a$ such that
\begin{equation*}
h(\lambda )\geq \frac{1}{|F|^{C}}\cdot \widehat{h}(F).
\end{equation*}
\end{corollary}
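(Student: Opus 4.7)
The plan is to locate a single element $a\in F^{N_1}$ whose normalized height $\widehat{h}(\{a\})$ is comparable to $\widehat{h}(F)/|F|^{C}$, and then extract the desired eigenvalue from $a$.

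The eigenvalue extraction is straightforward and I would do it first. For a single matrix $a$ with eigenvalues $\lambda_1,\ldots,\lambda_d$, the spectral description recalled in the introduction gives $\widehat{h}(\{a\})=h([1,\lambda_1,\ldots,\lambda_d])$. At each place $v$ the inequality $\log^+\max_i|\lambda_i|_v\leq\sum_i\log^+|\lambda_i|_v$ holds, so
\[
\widehat{h}(\{a\})\leq\sum_{i=1}^d h(\lambda_i)\leq d\cdot\max_i h(\lambda_i),
\]
and some eigenvalue $\lambda$ of $a$ satisfies $h(\lambda)\geq\widehat{h}(\{a\})/d$. Since $1\in F$ implies $|F|\geq 2$ in every non-trivial situation, the factor $d$ will be absorbed into $|F|^{C}$ by enlarging $C$.

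To construct $a$, I would combine the spectral decomposition
\[
\widehat{h}(F)=\frac{1}{[K:\mathbb{Q}]}\sum_{v\in V_K}n_v\log^+\rho_v(F),\qquad \rho_v(F)=\lim_n\|F^n\|_v^{1/n},
\]
with a quantitative Berger--Wang theorem of Bochi type, which provides constants $N_0=N_0(d)\in\mathbb{N}$ and $\kappa=\kappa(d)>0$ such that for every place $v$ there is a word $a_v\in\bigcup_{k\leq N_0}F^k$ with $\rho_v(a_v)^{1/|a_v|}\geq\kappa\,\rho_v(F)$. Since only finitely many places contribute to $\widehat{h}(F)$, a weighted pigeonhole over the at most $(N_0+1)|F|^{N_0}$ possible words, with place $v$ weighted by $n_v\log^+\rho_v(F)$, picks out a single $a$ whose preimage under $v\mapsto a_v$ carries at least a $|F|^{-N_0}$ share of the total weight. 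For the $v$ in this preimage, $\log\rho_v(a)\geq|a|\bigl(\log\rho_v(F)+\log\kappa\bigr)$, which exceeds $\tfrac{1}{2}\log^+\rho_v(F)$ as soon as $\rho_v(F)\geq\kappa^{-2}$. Summing and dividing by $[K:\mathbb{Q}]$ yields $\widehat{h}(\{a\})\gtrsim\widehat{h}(F)/|F|^{N_0}$, completing the construction up to constants.

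The main obstacle I anticipate is the handling of the places with $1<\rho_v(F)<\kappa^{-2}$, where the multiplicative loss in Bochi's estimate swamps $\log^+\rho_v(F)$ and obstructs a direct lower bound on $\log^+\rho_v(a)$. The remedy is to argue separately that the aggregate contribution of such ``near--unit'' places to $\widehat{h}(F)$ is dominated by a dimension-dependent multiple of the contribution from the complementary ``large'' places; one way to achieve this is to replace $F$ by $F^{M}$ for some $M=M(d)$ so that at every place either $\rho_v(F^M)^{1/M}$ is comfortably above $\kappa^{-2}$ or its contribution is negligible. Once this bookkeeping is done, the final constants $N_1=N_1(d)$ and $C=C(d)$ depend only on $d$, as claimed.
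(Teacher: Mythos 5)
Your first two steps are fine and in fact coincide with the paper's: the eigenvalue extraction from a single matrix is correct, and your ``quantitative Berger--Wang theorem of Bochi type'' is exactly Lemma \ref{CompLem} (with $N_0=d^2$, $\kappa=1$ at non-archimedean places and $\kappa=c(d)$ at archimedean ones), followed by the same pigeonhole over the at most $|F|^{N_0}$ candidate words. The genuine gap is in the last paragraph, and your proposed remedy does not work. The loss at the archimedean places with $1<R_v(F)<\kappa^{-2}$ is an \emph{additive} loss of at most $2|\log\kappa(d)|$ in $\widehat{h}(F)$ (since $\sum_{v\in V_\infty}n_v=[K:\mathbb{Q}]$), and an additive loss cannot be converted into the multiplicative bound $h(\lambda)\geq \widehat{h}(F)/|F|^{C}$ when $\widehat{h}(F)$ itself is small. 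And it can be small while being entirely carried by such ``near-unit'' places: take $F=\{1,\mathrm{diag}(\alpha,1,\dots,1)\}$ with $\alpha$ an algebraic number of tiny height, or with all conjugates of modulus in $(\kappa,\kappa^{-1})$; then every contributing place is near-unit, so the claim that their aggregate contribution is dominated by a $d$-dependent multiple of the contribution of the ``large'' places is false. The power trick fails for the same reason: $R_v(F^M)^{1/M}=R_v(F)$ is unchanged, and if you instead mean $R_v(F)^M\geq\kappa^{-2}$, the required $M$ is of order $|\log\kappa|/\log R_v(F)$, which is not bounded in terms of $d$ alone (bounding $\log R_v(F)$ away from $0$ at a single place is essentially a Lehmer-type problem, and globally $\widehat{h}(F)$ really can be arbitrarily small in the virtually solvable case).

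This is precisely why the paper's proof splits into two cases and uses two inputs absent from your proposal. If $\langle F\rangle$ is not virtually solvable, the height gap theorem (Theorem \ref{main}) gives $\widehat{h}(F)>\varepsilon(d)$, so applying the Bochi-type bound to $F^{n_0}$ with $n_0=n_0(d)$ makes the additive loss $|\log c|$ negligible compared with $n_0\widehat{h}(F)$, and the pigeonhole then succeeds. If $\langle F\rangle$ is virtually solvable, one passes to a subgroup of index at most $n_0(d)$ conjugated into upper-triangular form, generated (via Lemma \ref{finiteindex}) by a set $F_1\subset F^{2n_0-1}$; for triangular sets the joint spectral radius at every place is attained on diagonal entries, i.e.\ $R_v(F_1\cup F_1^{-1})=\Lambda_v(F_1\cup F_1^{-1})$ with no constant loss, which yields $\widehat{h}(F)\leq 2|F|^{2n_0}\max_\lambda\bigl(h(\lambda)+h(\lambda^{-1})\bigr)$ directly. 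Without the height gap theorem in the first case and the triangularization argument in the second, your bookkeeping cannot be completed, so as written the proof has a real hole rather than a fixable technicality.
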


\begin{corollary}
\label{eig1}There are constants $N_{1}=N_{1}(d)\in \mathbb{N}$, $\varepsilon
=\varepsilon (d)>0$ such that if $F$ is any finite subset of $GL_{d}(%
\overline{\mathbb{Q}})$ containing $1$ and generating a non virtually
solvable subgroup, then we may find $a\in F^{N_{1}}$ and an eigenvalue $%
\lambda $ of $a$ such that $h(\lambda )>\varepsilon ,$ for some fixed $%
\varepsilon =\varepsilon (d)>0.$
\end{corollary}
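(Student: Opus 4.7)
The plan is to combine Theorem \ref{main0} with Corollary \ref{eig0}, after reducing to a generating set of bounded cardinality. A direct combination does not suffice: Corollary \ref{eig0} yields $h(\lambda) \geq \widehat{h}(F)/|F|^{C}$, which depends on $|F|$, and Theorem \ref{main0} requires strong irreducibility. Both obstacles must be addressed by replacing $F$ with a carefully chosen bounded subset of some power $F^{M}$.

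First I would reduce to a generating set of bounded cardinality. Let $\mathbb{G}$ be the Zariski closure of $\langle F \rangle$, so that $\mathbb{G}^{0}$ is not solvable. The set of pairs $(x,y) \in \mathbb{G}^{2}$ such that $\langle x,y\rangle$ is virtually solvable is contained in a proper Zariski-closed subvariety of $\mathbb{G}^{2}$ of degree bounded in terms of $d$ (essentially the union of $B \times B$ over Borel subgroups $B \subset \mathbb{G}^{0}$, whose family is parametrized by the flag variety). An escape-from-subvarieties argument \`{a} la Eskin-Mozes-Oh, applied to the Zariski-dense subgroup $\langle F\rangle$, then yields $a, b \in F^{M}$ for some $M = M(d)$ such that $\langle a,b\rangle$ is non virtually solvable. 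Set $F_{0} := \{1, a, b\}$, of cardinality at most $3$.

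Next I would pass to a representation in which $F_{0}$ acts strongly irreducibly. Letting $\mathbb{H}$ denote the Zariski closure of $\langle F_{0}\rangle$, with $\mathbb{H}^{0}$ non-solvable, I would project onto a simple factor $\mathbb{L}$ of $\mathbb{H}^{0}/R(\mathbb{H})$ and compose with a faithful irreducible algebraic representation of $\mathbb{L}$ of dimension bounded in terms of $d$, to obtain $\rho\colon \mathbb{H} \to GL(V)$ in which $\rho(\langle F_{0}\rangle)$ acts strongly irreducibly and remains non virtually solvable (being Zariski-dense in $\rho(\mathbb{H})$, whose identity component is simple). Since $V$ is realized as a subquotient of a tensor construction of bounded total degree on the standard representation in $GL_{d}$, every eigenvalue of $\rho(c)$ is a monomial in eigenvalues of $c$ with exponents bounded by a function of $d$, so eigenvalue heights transfer between the two representations with only a $d$-dependent multiplicative loss.

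With these reductions in hand, Theorem \ref{main0} applied to $\rho(F_{0})$ yields $\widehat{h}(\rho(F_{0})) > \varepsilon_{0}(d)$, and Corollary \ref{eig0} applied to $\rho(F_{0})$ (of cardinality at most $3$) produces $c \in F_{0}^{N_{1}}$ and an eigenvalue $\mu$ of $\rho(c)$ with $h(\mu) \geq \varepsilon_{0}(d)/3^{C}$. Translating $\mu$ back via the monomial relation from the previous step, some eigenvalue $\lambda$ of $c$ satisfies $h(\lambda) > \varepsilon(d)$ for an absolute constant $\varepsilon(d)>0$. Since $c \in F_{0}^{N_{1}} \subset F^{M N_{1}}$, we take the final constant $N_{1}(d) := M \cdot N_{1}$. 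The main obstacle I anticipate is the escape-from-subvarieties step, which requires explicit degree bounds on the ``virtually solvable'' subvariety, together with the bookkeeping in the representation-theoretic reduction that ensures eigenvalue heights transfer uniformly in $d$; once both are controlled, the conclusion is a direct application of Theorems \ref{main0} and \ref{eig0}.
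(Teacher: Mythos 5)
Your first step is exactly the paper's: one shows that the pairs generating a virtually solvable subgroup form a closed subvariety of bounded complexity (via the bounded index $n_{0}$ of a triangularizable subgroup), one checks using Proposition \ref{2elem} that $\langle F\rangle\times\langle F\rangle$ is not contained in it, and escape (Lemma \ref{Bezout}) gives $a,b\in F^{c(d)}$ generating a non virtually solvable subgroup. At that point, however, the paper simply applies Corollary \ref{eig} to $\{Id,a,b\}$ together with Theorem \ref{main} as stated in Section 3, which has \emph{no} strong irreducibility hypothesis; so no further representation-theoretic reduction is needed, and the $|F|$-dependence disappears because the set now has three elements. Your second step attempts to re-derive, on the fly, the passage from the introduction's Theorem \ref{main0} (with strong irreducibility) to the general statement, and this is where the genuine gaps lie.

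Two concrete problems. First, the homomorphism $\rho$ you describe does not exist in general: the projection of $\mathbb{H}^{0}/R(\mathbb{H})$ onto a single simple factor $\mathbb{L}$ is only defined on $\mathbb{H}^{0}$ (or on the normalizer of that factor), and the component group of $\mathbb{H}$ may permute the simple factors, while $a$, $b$ and the element $c\in F_{0}^{N_{1}}$ produced by Corollary \ref{eig0} need not lie in $\mathbb{H}^{0}$. Summing over the orbit of factors destroys strong irreducibility (the finite-index stabilizer of one summand acts reducibly), and passing to a bounded-index subgroup forces you to regenerate it inside a bounded power via Jordan's theorem and Lemma \ref{finiteindex}; this is precisely the content of the paper's Proposition \ref{reducsemi} and is not a one-line fix. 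Second, the eigenvalue-transfer justification is false as stated: a representation of $\mathbb{H}$ killing the radical need not be a subquotient of tensor constructions of the standard representation of degree bounded in terms of $d$ alone, because the degree depends on how the radical is embedded — for $\mathbb{G}_m\times SL_2\to GL_3$, $(t,g)\mapsto \mathrm{diag}(t,\,t^{N}g)$, realizing the $PGL_2$-quotient requires degree of order $N$. The transfer itself can be rescued (a bounded multiple of each weight of $\rho$ lies in the root lattice of $\mathbb{H}^{0}$, and every root is a difference $\epsilon_i-\epsilon_j$ of weights of the standard representation, whence $\max_i h(\lambda_i)\geq h(\mu)/C(d)$), but you do not give this argument, and it requires additional care for semisimple elements outside $\mathbb{H}^{0}$. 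In short, completing your route amounts to reproving the reduction carried out in Section 4 of the paper, whereas the intended proof simply quotes Theorem \ref{main} and Corollary \ref{eig} after the escape step you already have.
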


This follows easily from Corollary \ref{eig0}, Theorem \ref{main0} and the
following fact that we prove along the way to the proof of Theorem \ref%
{main0} (see Proposition \ref{twoelements}):

\begin{proposition}
\label{2elem}Let $\mathbb{G}$ be a connected semisimple algebraic group over
an algebraically closed field of characteristic 0. There is a constant $%
c=c(d)\in \mathbb{N}$, where $d=\dim \mathbb{G}$, such that the following
holds. Let $F$ be a finite subset of $\mathbb{G}$ containing $1$ and
generating a Zariski-dense subgroup. Then $F^{c(d)}$ contains two elements $%
a $ and $b$ which generate a Zariski dense subgroup of $\mathbb{G}$.
\end{proposition}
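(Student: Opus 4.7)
The plan is to apply an Eskin--Mozes--Oh-style escape-from-subvarieties lemma in $\mathbb{G}\times\mathbb{G}$, with target the ``bad'' locus of pairs failing to generate a Zariski-dense subgroup. The key ingredient is an effective (quantitative) form of the classical Borel--Dixmier fact that a generic pair in a connected semisimple group generates Zariski-densely: for $\mathbb{G}$ a connected semisimple algebraic group of dimension $d$ over an algebraically closed field of characteristic $0$, the set $X = \{(a,b) \in \mathbb{G}^2 : \overline{\langle a,b\rangle} \subsetneq \mathbb{G}\}$ is contained in a proper Zariski-closed subvariety $\tilde X \subsetneq \mathbb{G}^2$ of degree at most some $D(d)$. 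To establish this, observe that for $(a,b)\in X$ the closure $\overline{\langle a,b\rangle}$ lies in some maximal proper closed subgroup $M$ of $\mathbb{G}$, and that the maximal proper closed subgroups of a connected semisimple group of bounded dimension fall into finitely many $\mathbb{G}$-conjugacy classes: the parabolic subgroups, the normalizers of maximal tori, the finitely many ``exotic'' reductive maximals in the Dynkin--Liebeck--Seitz classification, and the maximal finite subgroups (whose order is bounded by Jordan's theorem). Each such class contributes a subvariety of $\mathbb{G}^2$ of bounded degree, and their union provides the required $\tilde X$.

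Given such a $\tilde X$, the proposition follows from escape. Since $\Gamma = \langle F\rangle$ is Zariski-dense in $\mathbb{G}$, the product $\Gamma\times\Gamma$ is Zariski-dense in $\mathbb{G}^2$. Take the generating set $S = (F\times\{1\})\cup(\{1\}\times F)$: we have $1\in S$, $\langle S\rangle = \Gamma\times\Gamma$, and $S^n\subset F^n\times F^n$ since $1\in F$. Applying the escape lemma to $S$ and the proper subvariety $\tilde X\subsetneq \mathbb{G}^2$ yields an integer $c = c(D(d),2d) = c(d)$ together with a pair $(a,b)\in S^c\setminus \tilde X$, hence $a,b\in F^c$ with $\langle a,b\rangle$ Zariski-dense in $\mathbb{G}$.

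The main obstacle is the effective (bounded-degree) control on the bad locus $X$. The qualitative fact that $X$ is contained in a proper subvariety of $\mathbb{G}^2$ is classical, but the uniform degree bound $D(d)$---required so that the escape constant $c$ depends only on $d$---rests on the structure theory of maximal subgroups of semisimple algebraic groups. A more hands-on inductive alternative would first use escape to find a regular semisimple element $a\in F^{c_1}$ (the non-regular-semisimple locus being a bounded-degree subvariety of $\mathbb{G}$) and then escape in $b$ from the bounded-degree locus of $b$'s which, together with $a$, lie in one of the proper closed subgroups containing the unique maximal torus $Z_{\mathbb{G}}(a)$; this route reduces the classification input needed to a bound on the number and complexity of proper closed subgroups containing a fixed maximal torus.
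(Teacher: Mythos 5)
Your escape scaffolding coincides with the paper's: Proposition \ref{twoelements} is obtained there exactly by feeding a proper conjugation-invariant subvariety of $\mathbb{G}\times\mathbb{G}$ into the Eskin--Mozes--Oh lemma (Lemma \ref{Bezout}), using that $\Gamma\times\Gamma$ is Zariski dense, and the dependence on $d$ alone comes from the finitely many isomorphism classes of semisimple groups of dimension at most $d$. The genuine gap is in your key step, namely the containment of the bad locus $X=\{(a,b):\overline{\langle a,b\rangle}\subsetneq\mathbb{G}\}$ in a proper closed subvariety $\tilde X$ of degree bounded by $D(d)$. First, your reduction assumes that every proper Zariski-closed subgroup, in particular every finite one, is contained in a \emph{maximal} proper closed subgroup; this is not automatic (an ascending chain of finite subgroups need not be bounded above by any proper closed subgroup, so Zorn's lemma does not apply), and the finite stratum is precisely where the difficulty concentrates. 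Second, Jordan's theorem does not bound the order of maximal finite subgroups: it bounds the index of an abelian normal subgroup, whose order is a priori unbounded, and turning this into order or conjugacy-class control requires a further argument (e.g.\ that such an abelian subgroup is either central or has finite centralizer). Third, even granting the Dynkin/Liebeck--Seitz classification for the positive-dimensional maximal subgroups, the uniform degree bounds for the associated pair-loci are asserted rather than proved. So the inequality $\deg\tilde X\le D(d)$, which you yourself identify as the main obstacle, is not established, and the classification input you invoke is in any case much heavier than what is needed.

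For comparison, the paper's Proposition \ref{subvar} builds the subvariety with no classification at all. It takes $X=U^{c}\cup\bigcup_i X_i\cup\bigcup_i V_i$, where $U$ is the (open, bounded-degree) set of regular semisimple elements, $V_i$ is the locus $[\pi_i(x)^{C!},\pi_i(y)^{C!}]=1$ with $C$ the Jordan constant --- this single polynomial condition disposes of all pairs whose projection generates a finite group, making maximal finite subgroups irrelevant --- and $X_i$ is the determinantal (hence closed, bounded-degree) condition that $Ad(\pi_i(x))$ and $Ad(\pi_i(y))$ generate a proper subalgebra of the associative algebra generated by $Ad(\mathbb{G}_i)$; this is a proper subvariety because a simple Lie algebra is generated by two elements. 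For $(x,y)\notin X$, the Lie algebra of $\pi_i(\overline{\langle x,y\rangle})$ is invariant under the full $Ad(\mathbb{G}_i)$, hence is an ideal of $\mathfrak{g}_i$, hence $0$ or $\mathfrak{g}_i$, and the Jordan condition excludes $0$; the same argument applied to $\mathbb{G}$ itself shows the closure is all of $\mathbb{G}$. If you wish to keep your route, you must supply the missing analysis of the finite case (existence of maximal overgroups or a bounded-order/bounded-degree substitute) and the degree bounds for each family of maximal subgroups; the commutator trick above is the inexpensive replacement for all of that.
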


N.B. This proposition also holds in positive characteristic, but the proof,
given in our companion paper \cite{B}, is more involved. See Remark \ref%
{poscar} for more on positive characteristic.

Corollaries \ref{eig0} and \ref{eig1} allow us to construct a short
(positive) word $w$ with letters in $F$ which has an eigenvalue of large
height. The length of the word is bounded by an absolute constant $%
N_{1}=N_{1}(d).$ This type of result is crucial in order to build the
so-called proximal elements which are needed in various situations, in
particular in the applications to the Tits alternative given in \cite{B}.

In the same vein we have:

\begin{corollary}
\label{corobv}There is a constant $N_{2}=N_{2}(d)\in \mathbb{N},$ such that
if $F$ is a finite subset of $GL_{d}(\mathbb{C})$ containing $1$ which generates a non
virtually solvable subgroup, then there is a matrix $w\in F^{N_{2}}$ with an
eigenvalue $\lambda $ such that : either there exists an ultrametric
absolute value $|\cdot |_{v}$ on $\mathbb{Q}(\lambda )$ such that $|\lambda
|_{v}>1,$ or there is a field homomorphism $\sigma :\mathbb{Q}(\lambda
)\hookrightarrow \mathbb{C}$ such that $|\sigma (\lambda )|\geq 2.$
\end{corollary}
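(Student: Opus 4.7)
The plan is to combine Corollary \ref{eig1} with a power-raising trick in order to overcome the fact that the height produced by Corollary \ref{eig1} is only bounded below by a small constant $\varepsilon(d)$, whereas the second alternative in Corollary \ref{corobv} demands a genuine lower bound of $\log 2$ at some archimedean place. First I would apply Corollary \ref{eig1} to obtain an element $a \in F^{N_1(d)}$ together with an eigenvalue $\lambda_0$ of $a$ satisfying $h(\lambda_0) > \varepsilon(d)$. Next I would fix an integer $n = n(d)$ large enough that $n\,\varepsilon(d) > \log 2$, and set $w = a^n \in F^{n N_1(d)}$, using that $1 \in F$ so that power sets are nested. A natural eigenvalue of $w$ is then $\lambda = \lambda_0^n$ (clear from Jordan form), and the well-known identity $h(\lambda_0^n) = n\,h(\lambda_0)$ for the Weil height gives $h(\lambda) > \log 2$.

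With this in place, I would argue by contradiction, assuming both alternatives fail. Suppose $|\lambda|_v \leq 1$ at every ultrametric place $v$ of $K := \mathbb{Q}(\lambda)$, and simultaneously $|\sigma(\lambda)| < 2$ for every embedding $\sigma : K \hookrightarrow \mathbb{C}$. Unpacking the formula $h(\lambda) = \frac{1}{[K:\mathbb{Q}]} \sum_v n_v \log^+ |\lambda|_v$, all ultrametric contributions vanish under the first assumption, while the archimedean sum reassembles as $\sum_\sigma \log^+ |\sigma(\lambda)|$ over the $[K:\mathbb{Q}]$ embeddings of $K$ into $\mathbb{C}$ (real places contribute once with $n_v = 1$, complex places contribute a single term with $n_v = 2$ which matches the pair of complex-conjugate embeddings). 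The second assumption bounds each of these $[K:\mathbb{Q}]$ terms strictly by $\log 2$, so $h(\lambda) < \log 2$, contradicting the previous paragraph. Setting $N_2(d) := n(d) \cdot N_1(d)$ then concludes.

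I do not foresee a serious obstacle: the whole argument is a single height computation grafted onto Corollary \ref{eig1}. The only point that requires care is the standard bookkeeping of the archimedean normalizations $\sum_{v\,\text{archimedean}} n_v = [K:\mathbb{Q}]$, which is exactly what makes the two contradicting assumptions combine into the clean upper bound $h(\lambda) < \log 2$; any other normalization convention would require adjusting the numerical threshold in the choice of $n(d)$ but not the structure of the proof.
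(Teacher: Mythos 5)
Your height bookkeeping and the power-raising trick are fine: if $\lambda$ is \emph{algebraic} and both alternatives fail, then indeed $h(\lambda)=\frac{1}{[K:\mathbb{Q}]}\sum_v n_v\log^+|\lambda|_v<\log 2$ since the finite places contribute nothing and the archimedean places, counted with the weights $n_v$, match the $[K:\mathbb{Q}]$ embeddings; and $h(\lambda_0^n)=n\,h(\lambda_0)$ with $1\in F$ lets you boost the $\varepsilon(d)$ from Corollary \ref{eig1} past $\log 2$ inside a bounded power. In fact this is a useful supplement, since the paper leaves the passage from the gap $\varepsilon(d)$ to the threshold $2$ implicit.

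However, there is a genuine gap: the corollary is about finite subsets of $GL_d(\mathbb{C})$, not $GL_d(\overline{\mathbb{Q}})$, and you apply Corollary \ref{eig1} (and the Weil height of $\lambda_0$) directly to $F$. If the matrices have transcendental entries, Corollary \ref{eig1} does not apply, and if $\lambda_0$ is transcendental then $h(\lambda_0)$ is not even defined — note that the second alternative is deliberately phrased via field homomorphisms $\sigma:\mathbb{Q}(\lambda)\hookrightarrow\mathbb{C}$ precisely so that it makes sense for transcendental $\lambda$. The paper's proof consists almost entirely of handling this: if some $\gamma\in F^{2d^2+1}$ has a transcendental eigenvalue $\lambda$, then $\mathbb{Q}(\lambda)\cong\mathbb{Q}(t)$ admits an embedding into $\mathbb{C}$ sending $\lambda$ to any transcendental number of modulus $\geq 2$, so the second alternative holds trivially; otherwise, the Burnside/trace-form argument from the proof of Corollary \ref{torsion} produces a faithful representation of $\Gamma=\langle F\rangle$ into $GL_{d^2}(L)$ with $L$ generated by the (algebraic) eigenvalues of $F^{2d^2+1}$, hence into $GL_{d^2}(\overline{\mathbb{Q}})$, and the relevant eigenvalues there are among the original ones; only after this reduction can one run your argument (in dimension $d^2$, so with $\varepsilon(d^2)$ and $N_1(d^2)$). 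Without this transcendence dichotomy and reduction, your proof establishes the statement only for $F\subset GL_d(\overline{\mathbb{Q}})$, which is strictly weaker than what is claimed.
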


In particular, if $\mathcal{O}$ is the ring of all algebraic integers, there
is an integer $N_{1}=N_{1}(d)\in \mathbb{N}$ such that if $F$ is a finite
set of $SL_{d}(\mathcal{O})$ containing $1,$ either $F$ generates a
virtually solvable subgroup, or there is an archimedean absolute value $v$
on $\overline{\mathbb{Q}}$ extending the canonical absolute value on $%
\mathbb{Q}$ and a matrix $f\in F^{N_{1}}$ with at least one eigenvalue of $v$%
-absolute value $\geq 2.$ Observe that this fails for arbitrary finite
subsets of $SL_{d}(\overline{\mathbb{Q}}).$ For instance $SL_{3}(\mathbb{Q}%
)\cap SO(3,\mathbb{R})$ is dense in $SO(3,\mathbb{R})$ and contains a
finitely generated dense subgroup.\newline

\bigskip

\textit{Geometric Interpretation and the Margulis Lemma.}

Theorem \ref{main0} has also the following geometric interpretation. Recall
that the classical Margulis Lemma (see \cite{Thu}) asserts that if $S=%
\mathbb{H}^{n}$ is the hyperbolic $n$-space, or more generally any real
symmetric space of non compact type endowed with its Riemannian metric $d$,
then there is a positive constant $\varepsilon =\varepsilon (S)>0$ such that
the following holds: suppose $F$ is a finite set of isometries of $S$ such
that $\max_{f\in F}d(f\cdot x,x)<\varepsilon $ for some point $x\in S$ and
suppose $F$ lies in a discrete subgroup of isometries of $S$, then $F$
generates a virtually nilpotent subgroup. This lemma has several important
consequences for the geometry and topology of hyperbolic manifolds and
locally symmetric spaces, such as the structure of cusps and the thick-thin
decomposition (\cite{Thu}), or lower bounds for the covolume of lattices in
semisimple Lie groups (see \cite{Wang}, \cite{KM}, \cite{Gel}).

What happens if one removes the discreteness assumption on the group
generated by $F$ and assumes instead that $F$ consists of elements which are
rational over some number field $K$ ? Of course the Margulis Lemma no longer
holds as such, in particular because $\varepsilon (S)$ tends to $0$ as $\dim
S$ tends to infinity. However Theorem \ref{main0} gives a kind of
substitute. As will be shown below (see Section \ref{heightdef}) the
normalized height $\widehat{h}(F)$ is always bounded above by the quantity $%
e(F),$ which we call minimal height, and which encodes, as a weighted sum
over all places $v\in V_{K},$ the minimal displacement of $F$ on each
symmetric space or Bruhat-Tits building $X_{v}$ associated to $%
SL_{d}(K_{v}). $ In particular the height gap $\widehat{h}(F)>\varepsilon $
obtained in Theorem \ref{main0} implies that there always is a natural space
$X_{v}$ (symmetric space or Bruhat-Tits building of $SL_{d}$) where $F$ acts
with a large displacement. More precisely:

\begin{corollary}
\label{geomint}Let $d\in \mathbb{N}$ and for a local field $k$ let us denote
by $X_{k}$ the symmetric space or Bruhat-Tits building of $PGL_{d}(k).$ We
let $d(\cdot ,\cdot )$ be a left invariant Riemannian metric on $X_{\mathbb{C}}$. There is a constant $\varepsilon =\varepsilon (d)>0$ with the following
property. Let $K$ be a number field and $F$ a finite subset of $SL_{d}(K)$
which generates a non virtually solvable subgroup $\Gamma $, then either for
some finite place $v$ of $K$, the subgroup $\Gamma $ acts (simplicially)
without global fixed point on the Bruhat-Tits building $X_{K_{v}},$ or for
some embedding $\sigma :K\hookrightarrow \mathbb{C}$
\begin{equation*}
\inf_{x\in X_{\mathbb{C}}}\max_{f\in F}d(\sigma (f)\cdot x,x)>\varepsilon .
\end{equation*}
\end{corollary}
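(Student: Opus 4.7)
\medskip
\noindent\textbf{Proof plan.} The corollary is a geometric reformulation of Theorem \ref{main0} combined with the inequality
\begin{equation*}
\widehat{h}(F)\leq e(F):=\frac{1}{[K:\mathbb{Q}]}\sum_{v\in V_K}n_v\,e_v(F),\qquad e_v(F):=\inf_{x\in X_{K_v}}\max_{f\in F}d_v(f\cdot x,x),
\end{equation*}
established in Section \ref{heightdef} as an interpretation of the normalized height in terms of local displacements. The strategy is: bound $\widehat{h}(F)$ from below via Theorem \ref{main0}, transfer the bound to $e(F)$, then show that this minimal displacement must concentrate at a single place.

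\emph{Step 1: obtaining a height gap.} Theorem \ref{main0} as stated requires strong irreducibility, which we do not assume directly. To reduce, take a Jordan--H\"older filtration $0=V_0\subsetneq V_1\subsetneq\cdots\subsetneq V_s=K^d$ of $K^d$ as a $\Gamma$-module. Since the kernel of the combined action on the composition factors is unipotent (hence solvable) and since virtually solvable groups are closed under extensions, at least one composition factor $W=V_{i+1}/V_i$ must carry a non-virtually-solvable image $\Gamma_W$ of $\Gamma$. On $W$ the action is irreducible; a standard reduction (passing to the identity component of its Zariski closure, at the cost of replacing $F$ by a bounded power $F^N$) upgrades this to strong irreducibility, and Theorem \ref{main0} applied to the induced set $F_W\subset GL(W)$ gives $\widehat{h}(F_W)>\varepsilon(d)$. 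Choosing a basis of $K^d$ compatible with the filtration, $F$ becomes block upper triangular with $F_W$ among its diagonal blocks, so the spectral radius of $F^n$ at every place $v$ dominates that of $F_W^n$. Hence $\widehat{h}(F)\geq\widehat{h}(F_W)>\varepsilon(d)$ (after absorbing the $1/N$ into the constant).

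\emph{Step 2: distributing the displacement over places.} By $\widehat{h}(F)\leq e(F)$ we obtain $e(F)>\varepsilon(d)$. Split $e(F)=e_{\mathrm{arch}}(F)+e_{\mathrm{nonarch}}(F)$. If $e_{\mathrm{nonarch}}(F)>0$, some finite place $v$ satisfies $e_v(F)>0$, which means $F$---and hence the group $\Gamma$ it generates---has no common fixed point on the Bruhat--Tits building $X_{K_v}$: this is the first alternative. Otherwise $e_{\mathrm{arch}}(F)>\varepsilon(d)$, and since the archimedean weights $n_\sigma/[K:\mathbb{Q}]$ sum to $1$, the weighted sum is at most the maximum, so some archimedean embedding $\sigma:K\hookrightarrow\mathbb{C}$ must satisfy $e_\sigma(F)>\varepsilon(d)$. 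When $\sigma$ is real, complex conjugation is an isometry of $X_{\mathbb{C}}$ commuting with $\sigma(F)$ and fixing exactly $X_{\mathbb{R}}$; by convexity of the (conjugation-invariant) displacement function on the CAT(0) space $X_{\mathbb{C}}$, its infimum on $X_{\mathbb{C}}$ equals its infimum on $X_{\mathbb{R}}$. Either way we obtain the second alternative.

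\emph{Main obstacle.} The essential nontrivial input is the inequality $\widehat{h}(F)\leq e(F)$, proved separately in Section \ref{heightdef} via the spectral-radius interpretation of $\widehat{h}$ at each place. The reduction of Step 1 is bookkeeping, but must be done carefully because Theorem \ref{main0} is only stated under strong irreducibility; the remaining arguments (the pigeonholing on places and the real-versus-complex comparison via convexity) are essentially elementary.
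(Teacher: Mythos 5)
Your Step 2 is essentially the paper's own argument: the paper also deduces the corollary from the height gap combined with $\widehat{h}(F)\leq e(F)$ and Lemma \ref{displacement}, in contrapositive form — if $\Gamma$ fixed a point on $X_{K_v}$ for every finite $v$ it would fix a vertex (the action of $SL_d(K_v)$ is type-preserving and vertices are permuted transitively by $GL_d(K_v)$), so $E_v(F)=1$ for all finite $v$, hence $e_f(F)=0$, hence $e_\infty(F)>\varepsilon$ by Theorem \ref{main}, and a pigeonhole over archimedean places gives an embedding $\sigma$ with $\log E_{\mathbb{C}}(\sigma(F))>\varepsilon$, which Lemma \ref{displacement} converts into displacement on $X_{\mathbb{C}}$. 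One real difference: at a real embedding you pass from displacement on $X_{\mathbb{R}}$ to displacement on the larger space $X_{\mathbb{C}}$ via the convexity/midpoint argument for the conjugation-invariant displacement function; this is correct and needed with your definition of $e_v$, whereas the paper sidesteps the issue entirely because $E_v(F)$ is by definition an infimum over $GL_d(\overline{K_v})$, so at a real place it already controls the complex symmetric space.

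The genuine problem is Step 1. Theorem \ref{main}, as stated in Section \ref{state} and proved in the paper, carries no irreducibility hypothesis, so you could simply invoke it. Instead you re-derive it from Theorem \ref{main0}, and the mechanism you propose fails: if $1\in F$ then $\langle F^N\rangle=\langle F\rangle$, so replacing $F$ by a bounded power cannot upgrade an irreducible action to a strongly irreducible one — strong irreducibility is a property of the generated group. The correct reduction passes to a subgroup of bounded index in $\Gamma$ (whose generators are then located in a bounded power of $F$ via Lemma \ref{finiteindex}); moreover the Zariski closure of the image on a composition factor is in general only reductive, and one must kill its center (to reach the non-amenable, strongly irreducible setting of Theorem \ref{main0}) by passing to the conjugation action on $End(W_1)$, with the height comparisons of Proposition \ref{Ad comp Cor}. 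This is exactly the content of Propositions \ref{reducsemi} and \ref{semi}, and it is far from bookkeeping. As written, Step 1 is a gap; replacing it by a citation of Theorem \ref{main} repairs the proof, after which your Step 2 goes through.
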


The crucial point here of course is that $\varepsilon $ is independent of
the number field $K$. Thus Theorem \ref{main0} can be seen as a uniform
Margulis Lemma for all $S$-arithmetic lattices of a given Lie type. For
example, it is uniform over all $SL_{2}(\mathcal{O}_{K})$ where $K$ can vary
among all number fields, even though those groups can be lattices of
arbitrarily large rank.\newline

\bigskip

\textit{Outline of the proof of Theorem \ref{main0}.}

The first part of the proof consists in reducing to the situation when $F$
is a $2$-element set $F=\{A,B\},$ where $A$ and $B$ are two regular
semisimple elements in an absolutely almost simple algebraic group $\mathbb{G%
}$ of adjoint type and $F$ generates a Zariski-dense subgroup of $\mathbb{G}$%
. It is not hard to see that the existence of a gap for $\widehat{h}(F)$
when computed in the adjoint representation of $\mathbb{G}$ implies the
existence of a gap for $\widehat{h}(F)$ when computed in any finite
dimensional linear representation of $\mathbb{G}$. We thus reduce to the
adjoint representation of $\mathbb{G}$. The reduction from an arbitrary
finite set $F$ to a $2$-element set makes use of a lemma due to
Eskin-Mozes-Oh \cite{EMO} (\textquotedblleft escaping
subvarieties\textquotedblright\ Lemma \ref{Bezout}), which, given any non
trivial algebraic relation between pairs $\{x,y\}$ of elements in $\mathbb{G}
$, produces two short words in $\{x,y\}$ which no longer satisfy this
relation. This lemma is also used later on and is an essential tool here.

As we mentioned above, one may interpret $\widehat{h}(F)$ in terms of the
combined minimal displacement $e(F)$ of $F$ on all symmetric spaces and
Bruhat-Tits buildings that arise through the various completions of the
number field. The quantity $e(F)$ is defined as the weighted sum of the
logarithm of the minimal norms $E_{v}(F)=\inf \{||gFg^{-1}||_{v},g\in GL_{d}(%
\overline{K_{v}})\}.$ Crucial to this correspondence is a spectral radius
formula for sets of matrices (Lemma \ref{CompLem} below) which compares the
minimal displacement of $F$ (or equivalently $E_{v}(F)$) with the minimal
displacement of each individual matrix in the power set $F^{d^{2}}$ (or
equivalently its maximal eigenvalue)$.$ As a consequence, $\widehat{h}(F)$
is small if and only if $e(F)$ is small.

In the second part of the proof, we fix a place $v$ and work in $\mathbb{G}%
(K_{v}).$ Given $A,B$ in $\mathbb{G}(K_{v})$, with $A$ in a maximal torus $T$
of $\mathbb{G}(K_{v}),$ we obtain local estimates for the minimal
displacement of the action of $B$ restricted to the maximal flat associated
to $T.$ These estimates are obtained via the Iwasawa decomposition working
our way through all positive roots of $A$ starting from the maximal one. At
the end we get an upper bound for $\inf_{t_{v}\in T}||t_{v}Bt_{v}^{-1}||_{v}$
which involves $E_{v}(F)$ on the one hand and the gap $\left| 1-\alpha
(A)\right| _{v}$ between the roots of $\alpha (A)$ and $1$ on the other hand.

In the last part of the proof, we put all our local estimates together and
make crucial use of the product formula, so as to obtain an upper bound for
the weighted sum of all $\inf_{t_{v}\in T}\log ||t_{v}Bt_{v}^{-1}||_{v}$ in
terms of $e(F)$ and the average of the $\log |1-\alpha (A)|_{v}$ over all
archimedean places $v,$ for each root $\alpha $. When $e(F)$ is small this
upper bound becomes also small. Indeed, since the height of each $\alpha (A)$
is small, we can invoke Bilu's equidistribution theorem : the Galois
conjugates of $\alpha (A)$ equidistribute on the unit circle (\cite{Bilu}).
Hence the average of the $\log |1-\alpha (A)|_{v}$'s gives a negligible
contribution.

Finally, considering a suitably chosen regular map $f$ on $\mathbb{G}$ which
is invariant under conjugation by the elements of $T$ (a suitable matrix
coefficient of $B$ will do), we use the above upper bound to show that the
height of $f(B)$ as well as $f(B^{i})$ for larger and larger $i\in \mathbb{N}
$, becomes small when $e(F)$ is small. However, by a theorem of Zhang \cite%
{Zhang} on small points of algebraic tori, this must force a non trivial
algebraic relation between the $f(B^{i})$'s. Finally the Eskin-Mozes-Oh
lemma quoted above provides the desired contradiction, as we may have chosen
$F=\{A,B\}$ to avoid this relation to begin with.

The reader can also consult \cite{Bre}, where we gave the full details of
the proof in the special case of $GL_{2}.$

\bigskip

\textit{Outline of the paper.}

Section 2 is devoted to the definition of the normalized and minimal heights
and the derivation of their most basic properties. The main results of this
section are the spectral radius formula for several matrices (Lemma \ref%
{CompLem} below) and Proposition \ref{smallRE2}, which gives a lower bound
on the displacement of the power set $F^{n}.$ These facts will enable us to
compare the normalized height with the minimal height and to reinterpret the
normalized height in terms of adelic displacement.

In Section 3, we state our main results in full detail. Their proof occupies
the remainder of the paper. Section 4 gives the main reduction step from an
arbitrary subset of $GL_{d}(\overline{\mathbb{Q}})$ to a subset consisting
of two elements $F=\{A,B\}$ which generates a Zariski-dense subgroup of a
simple algebraic group $\mathbb{G}$. We also prove there the comparison
statement between different linear representations (Proposition \ref%
{minheight}). The geometric interpretation in terms of displacement is also
made precise at the end of Section 4.

In Section 5, we pick a Chevalley basis for the adjoint representation of $%
\mathbb{G}$ and we prove local estimates whose aim is to obtain good upper
bounds for the size of the matrix coefficients of a conjugate of $F=\{A,B\}$
which almost realizes the infimum $E_{v}(F)=\inf_{g\in GL_{d}(\overline{K_{v}%
})}||gFg^{-1}||_{v}$ in terms of $E_{v}(F)$ and the simple roots $\alpha
(A). $ These local bounds are then used and put together in Section 6 in
order to get a global bound on the height of matrix coefficients of $A$ and $%
B$ (Proposition \ref{heightbound}).

Section 7 is devoted to completing the proofs of the results stated in
Section 3. In particular, we make use of the global bound proved in Section
6 to prove Theorem \ref{main0} (height gap) and the local estimates of
Section 5 are used again to give a proof of Proposition \ref{minh} (good
position). Finally we also derive the corollaries stated in this
introduction.

\section{Minimal height and displacement\label{Sec2}}

\subsection{Local notions of minimal norm, spectral radius and minimal
displacement\label{locnot}}

Let $k$ be a local field of characteristic $0$. Let $\left\Vert \cdot
\right\Vert _{k}$ be the standard norm on $k^{d},$ that is the canonical
Euclidean (resp. Hermitian) norm if $k=\mathbb{R}$ (resp. $\mathbb{C}$) and
the sup norm ($\left\Vert x\right\Vert _{k}=\max_{i}|x_{i}|_{k}$) if $k$ is
non Archimedean. We will also denote by $\left\Vert \cdot \right\Vert _{k}$
the operator norm induced on the space of $d$ by $d$ matrices $M_{d}(k)$ by
the standard norm $\left\Vert \cdot \right\Vert _{k}$ on $k^{d}.$ Let $Q$ be
a bounded subset of matrices in $M_{d}(k)$. We set
\begin{equation*}
\left\Vert Q\right\Vert _{k}=\sup_{g\in Q}\left\Vert g\right\Vert _{k}
\end{equation*}%
and call it the \textit{norm of }$Q$. Let $\overline{k}$ be an algebraic
closure of $k.$ It is well known (see Lang's Algebra \cite[XII. 2.
Proposition 2.5. ]{Lang0}) that the absolute value on $k$ extends to a
unique absolute value on $\overline{k},$ hence the norm $\left\Vert \cdot
\right\Vert _{k}$ also extends in a natural way to $\overline{k}^{d}$ and to
$M_{d}(\overline{k}).$ This allows us to define the \textit{minimal norm} of
a bounded subset $Q$ of $M_{d}(k)$ as
\begin{equation*}
E_{k}(Q)=\inf_{x\in GL_{d}(\overline{k})}\left\Vert xQx^{-1}\right\Vert _{k}
\end{equation*}%
We will also need to consider the \textit{maximal eigenvalue of }$Q,$ namely
\begin{equation*}
\Lambda _{k}(Q)=\max \{|\lambda |_{k},\text{ }\lambda \in spec(q),q\in Q\}
\end{equation*}%
where $spec(q)$ denotes the set of eigenvalues (the spectrum) of $q$ in $%
\overline{k}.$ We also set $Q^{n}=Q\cdot ...\cdot Q$ to be the set of all
products of $n$ elements from $Q$. Finally, we introduce the \textit{%
spectral radius} of $Q,$ that is
\begin{equation*}
R_{k}(Q)=\lim_{n\rightarrow +\infty }\left\Vert Q^{n}\right\Vert _{k}^{\frac{%
1}{n}}
\end{equation*}%
in which the limit exists (and coincides with $\inf_{n\in \mathbb{N}%
}\left\Vert Q^{n}\right\Vert _{k}^{\frac{1}{n}}$) because the sequence $%
\{\left\Vert Q^{n}\right\Vert _{k}\}_{n}$ is sub-multiplicative.

These quantities are related to one another. The key property concerning
them is given in the following result, which, together with its corollary
below (Propositon \ref{properties}), we call \emph{"spectral radius formula
for several matrices"} because of its parallel with the classical spectral
radius formula relating the asymptotics of the powers of a matrix with its
maximal eigenvalue:

\begin{lemma}
\label{CompLem}\textbf{(Spectral Radius Formula for }$Q$\textbf{)} Let $Q$
be a bounded subset of $M_{d}(k)$.

$(a)$ if $k$ is non Archimedean, there is an integer $q\in \lbrack 1,d^{2}]$
such that $\Lambda _{k}(Q^{q})=E_{k}(Q)^{q}.$

$(b)$ if $k$ is Archimedean, there is a constant $c=c(d)\in (0,1)$
independent of $Q$ and an integer $q\in \lbrack 1,d^{2}]$ such that $\Lambda
_{k}(Q^{q})\geq c^{q}\cdot E_{k}(Q)^{q}.$\newline
\end{lemma}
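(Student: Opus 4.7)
The easy inequality $\Lambda_k(Q^q) \leq E_k(Q)^q$ for any $q \geq 1$ is immediate: eigenvalues are conjugation-invariant and bounded in absolute value by any operator norm, so for every $x \in GL_d(\overline{k})$ one has $\Lambda_k(Q^q) \leq \|xQx^{-1}\|_k^q$, and taking the infimum over $x$ yields the claim. Hence the content of the lemma is the reverse inequality, with the sharp restriction $q \leq d^2$.

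I would prove the reverse direction by induction on $d$. The case $d=1$ is trivial, since matrices are scalars and $\Lambda_k(Q) = E_k(Q) = \|Q\|_k$. For $d>1$, I split into a reducible and an irreducible case. Suppose first that $Q$ preserves a proper non-zero subspace $V \subset \overline{k}^d$. After a preliminary conjugation, every element of $Q$ is block upper-triangular with diagonal blocks $Q_V$ and $Q_{V'}$ acting on $V$ and $\overline{k}^d/V$. Since eigenvalues of a block-triangular matrix are the union of those of the blocks,
\[
\Lambda_k(Q^q) = \max\bigl(\Lambda_k(Q_V^q),\, \Lambda_k(Q_{V'}^q)\bigr).
\]
Conjugating further by $\mathrm{diag}(I_{\dim V},\, \epsilon\, I_{d-\dim V})$ with $\epsilon$ large shrinks the off-diagonal block: in the non-Archimedean case the ultrametric norm becomes equal to the maximum of the norms of the diagonal blocks as $\epsilon \to \infty$, while in the Archimedean case the same holds up to a constant $C=C(d)$. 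Thus $E_k(Q) \leq C \max(E_k(Q_V), E_k(Q_{V'}))$, and the induction hypothesis applied to each block, together with the obvious fact that products of length $q$ in $Q$ reduce to products of length $q$ in each block, completes the reducible case (the constant $C$ being absorbed into the final $c(d)$).

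In the irreducible case, Burnside's theorem gives that the $\overline{k}$-algebra generated by $Q$ together with the identity is all of $M_d(\overline{k})$, and a standard consequence is that products of elements of $Q$ of length at most $d^2$ already span $M_d(\overline{k})$ as a vector space. In the non-Archimedean case, the action of $GL_d(\overline{k})$ on the Bruhat-Tits building of $GL_d(\overline{k})$ via conjugation, combined with the convexity and properness of the displacement function $x \mapsto \|xQx^{-1}\|_k$ (here irreducibility prevents the displacement from being everywhere small), produces an optimal position $x_0$ with $\|x_0 Q x_0^{-1}\|_k = E_k(Q)=:R$. Replacing $Q$ by this conjugate, no further conjugation can lower the norm. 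I would then argue that if $\Lambda_k(Q^q) < R^q$ for every $q \leq d^2$, then all products of length at most $d^2$ could be simultaneously conjugated to operators of norm strictly less than their respective $R^q$ (by a perturbative argument along a Chevalley-basis direction where the weights of $Q$ concentrate), and using that such products span $M_d(\overline{k})$ by Burnside, this perturbation lifts to one lowering $\|Q\|_k$ itself, contradicting optimality. In the Archimedean case one works with a minimizing sequence $(x_n)$ instead of an exact minimum, and absorbs a multiplicative loss $c(d)\in(0,1)$ arising from the fact that the shrinking-by-diagonal-conjugation trick only works up to constants.

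The hardest and most delicate step is the last one: converting rigidity at the optimal conjugacy position into the existence of a product of length at most $d^2$ whose eigenvalue saturates the bound. The key obstacle is quantitative — one must show that the Burnside spanning property can be used with only $d^2$ products (not some larger function of $d$), and in the Archimedean case one must identify and control the precise constant $c(d)$ coming from the geometry of the symmetric space. Once this step is in place, the conclusion follows by contradiction from the optimality of $x_0$ (or of the minimizing sequence), exactly as in the classical spectral radius formula for a single matrix, but with Burnside replacing the Jordan form.
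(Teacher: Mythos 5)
Your easy inequality $\Lambda_k(Q^q)\leq E_k(Q)^q$ and the reducible step of your induction are fine (in the non-Archimedean case the shrinking conjugation even gives $E_k(Q)\leq\max(E_k(Q_V),E_k(Q_{V'}))$ with no constant, so exact equality survives the induction), and the fact that words of length at most $d^2$ span the algebra generated by $Q$ is the same dimension-count the paper uses. The gap is the irreducible case, which is the entire content of the lemma, and there your argument is a hope rather than a proof. First, an exact minimizer $x_0$ with $\|x_0Qx_0^{-1}\|_k=E_k(Q)$ need not exist over $\overline{k}$: the value group is dense and the infimum can fail to be attained (the paper sidesteps this by passing to a carefully chosen ramified extension $k(\pi^{1/m})$, where the minimum over that field is attained because the norm takes discrete values, and by rescaling $Q$). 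More seriously, the crucial implication --- ``if $\Lambda_k(Q^q)<E_k(Q)^q$ for every $q\leq d^2$, then some conjugation strictly lowers the norm, contradicting (near-)optimality'' --- is not a formal consequence of optimality plus Burnside spanning, and your ``perturbative argument along a Chevalley-basis direction where the weights of $Q$ concentrate'' is not an argument. What is actually needed is a \emph{uniform quantitative} statement: a threshold depending only on $d$ (not on $Q$ or on the local field) such that if all eigenvalues of all $Q^q$, $q\leq d^2$, lie below that threshold relative to $\|Q\|$, then $Q$ can be conjugated to have strictly smaller norm. A tiny eigenvalue deficit at a (near-)minimizing position does not by itself produce a norm-decreasing perturbation.

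This uniform statement is exactly what the paper supplies, by a completely different mechanism: Wedderburn's theorem (an algebra with a linear basis of nilpotents is nilpotent) combined with Engel's theorem shows that if every $Q^q$, $q\leq d^2$, consists of nilpotents then $Q$ conjugates into the strictly upper triangular matrices, where the minimal norm is $0$; the passage from ``eigenvalues small'' to ``genuinely nilpotent'' is then achieved by compactness --- a Hausdorff limit of extremal counterexamples in the Archimedean case (b), and an ultraproduct of valuation rings in the non-Archimedean case (Lemma \ref{immeuble}), which yields the uniform threshold $N(d)$ needed for the exact equality in (a). Without either this compactness step or an effective $\varepsilon$-version of Wedderburn--Engel (which the author notes is possible but much lengthier), your contradiction step does not close, so the proposal as written does not prove the lemma.
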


\textbf{N.B. :} In the work of Eskin-Mozes-Oh \cite{EMO} a result of a
similar nature appears between the lines inside their argument (when they
consider almost algebras). A weaker version of this lemma (essentially part
(b)) was already used in \cite{uti}. The equality in part (a) is new and
will be crucial in our arguments.

\proof%
Let $K$ be a field. We make use of two well-known theorems. The first is a
theorem of Wedderburn (see Curtis-Reiner \cite{CurR} 27.27) that if an
algebra $A$ over $K$ has a linear basis over $K$ consisting of nilpotent
elements, then $A^{m}=0$ for some integer $m$. The second is a theorem of
Engel (see Jacobson \cite{Jab}) that if $A$ is a subset of $M_{d}(K) $ such
that $A^{m}=0$ for some integer $m,$ then $A$ can be simultaneously
conjugated in $GL_{d}(K)$ inside $N_{d}(K),$ the subalgebra of upper
triangular matrices with zeroes on and below the diagonal. Combined
together, these facts yield:

\begin{lemma}
\label{Wed+Eng}Let $K$ be a field. If $Q$ is any subset of $M_{d}(K)$ such
that $Q^{q}$ contains only nilpotent matrices for every $q,$ $1\leq q\leq
d^{2},$ then there is $g\in GL_{d}(K)$ such that $gQg^{-1}\subset N_{d}(K).$
\end{lemma}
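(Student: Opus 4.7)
The plan is to apply, in sequence, the two theorems cited just before the statement of the lemma: first Wedderburn's theorem to conclude that the associative (non-unital) $K$-subalgebra $A$ of $M_{d}(K)$ generated by $Q$ satisfies $A^{m}=0$ for some integer $m$, and then Engel's theorem to produce $g\in GL_{d}(K)$ with $gAg^{-1}\subseteq N_{d}(K)$, whence $gQg^{-1}\subseteq N_{d}(K)$. The only point that requires a genuine argument is the verification of the hypothesis of Wedderburn's theorem: namely, that $A$ admits a $K$-linear basis consisting of nilpotent matrices. Crucially, the hypothesis is only that $Q^{q}$ consists of nilpotents for $1\le q\le d^{2}$, so I must first reduce to products of bounded length.

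To produce such a basis I use the dimension bound $\dim_{K}A\le d^{2}$ together with a stabilization argument on the chain of subspaces $V_{m}:=\mathrm{span}_{K}(Q\cup Q^{2}\cup\cdots\cup Q^{m})\subseteq M_{d}(K)$. Clearly $V_{1}\subseteq V_{2}\subseteq\cdots$ and $A=\bigcup_{m}V_{m}$. I claim that once $V_{m}=V_{m+1}$ for some $m$, the chain stabilizes: indeed, $V_{m}=V_{m+1}$ forces $Q^{m+1}\subseteq V_{m}$, hence $Q^{m+2}\subseteq Q\cdot V_{m}\subseteq\mathrm{span}_{K}(Q^{2}\cup\cdots\cup Q^{m+1})\subseteq V_{m}$, and a straightforward induction yields $Q^{n}\subseteq V_{m}$ for all $n\ge 1$, so $A=V_{m}$. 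Since $\dim_{K}M_{d}(K)=d^{2}$, the strictly increasing portion of the chain has length at most $d^{2}$, so $A=V_{d^{2}}$. A basis of $A$ can therefore be extracted from $Q\cup Q^{2}\cup\cdots\cup Q^{d^{2}}$, and every such matrix is nilpotent by the hypothesis of the lemma.

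With the nilpotent basis in hand, Wedderburn's theorem supplies an integer $m\ge 1$ with $A^{m}=0$, and Engel's theorem then furnishes $g\in GL_{d}(K)$ conjugating $A$ (and \emph{a fortiori} $Q$) into $N_{d}(K)$. The only non-routine step in this outline is the stabilization argument above, which is pure linear algebra, so I foresee no substantive obstacle. The restriction to $q\le d^{2}$ in the hypothesis is exactly what the dimension bound on $V_{m}$ allows one to exploit; a priori weaker hypothesis (say, only $q\le d$) would not suffice.
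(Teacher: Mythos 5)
Your proof is correct and follows essentially the same route as the paper: use the bound $\dim_K M_d(K)\le d^2$ to extract a basis of the algebra generated by $Q$ from $\bigcup_{1\le q\le d^2}Q^q$, then apply Wedderburn and Engel. The stabilization argument you spell out is exactly the linear-algebra detail the paper leaves implicit.
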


\proof%
Since $\dim _{K}M_{d}(K)\leq d^{2},$ the $K$-algebra generated by $Q$ has a
linear basis made of elements in $\cup _{1\leq q\leq d^{2}}Q^{q}$. By
Wedderburn and Engel, the result follows.%
\endproof%

We first quickly prove $(b).$ We argue by contradiction. There is a sequence
$Q_{n}$ with $E_{k}(Q_{n})=1$ while $\max_{1\leq q\leq d^{2}}\Lambda
_{k}(Q_{n}^{q})^{\frac{1}{q}}$ tends to $0.$ Up to conjugating by some $%
g_{n}\in GL_{d}(\mathbb{C})$, we may assume that $||Q_{n}||_{\mathbb{C}}\leq
1+\frac{1}{n},$ and passing to a Hausdorff limit, we obtain a compact set $Q$
with $E_{\mathbb{C}}(Q)=||Q||_{\mathbb{C}}=1$, while $\max_{1\leq q\leq
d^{2}}\Lambda _{k}(Q^{q})^{\frac{1}{q}}=0.$ But this is a contradiction with
Lemma \ref{Wed+Eng} as $E_{\mathbb{C}}(C)=0$ for any bounded subset $C$ of $%
N_{d}(\mathbb{C}).$ This proves $(b).$

In order to prove $(a)$ we first show:

\begin{lemma}
\label{immeuble}(\textbf{small eigenvalues implies large fixed point set})
Let $d\in \mathbb{N}$. There exists an integer $N=N(d)\in \mathbb{N}$ with
the following property. Let $k$ be a non archimedean local field with
absolute value $|\cdot |_{k}$ and $\mathcal{O}_{k}$ its ring of integers.
Let $Q$ be a subset of $M_{d}(\mathcal{O}_{k})$ such that for each integer $%
q\in \lbrack 1,d^{2}]$ every element of $Q^{q}$ has all its eigenvalues of
absolute value at most $|\pi |_{k}^{N},$ where $\pi $ is a uniformizer for $%
\mathcal{O}_{k}.$ Then there is $g\in GL_{d}(k)$ such that $gQg^{-1}$
belongs to $\pi M_{d}(\mathcal{O}_{k}).$
\end{lemma}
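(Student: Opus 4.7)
The plan is to find a lattice $L \subset k^d$ such that $QL \subset \pi L$; then any $g \in GL_d(k)$ with $g^{-1}\mathcal{O}_k^d = L$ will satisfy $gQg^{-1} \subset \pi M_d(\mathcal{O}_k)$, as required. I will take the natural candidate
\[
L = \sum_{n \ge 0} \pi^{-n} Q^n \mathcal{O}_k^d,
\]
which is automatically $\pi^{-1}Q$-invariant, and reduce everything to showing that $L$ is bounded in $k^d$ --- equivalently, that $\Vert Q^n\Vert_k \le C|\pi|^n$ for all $n$ and some $C = C(d)$.

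First I would apply Lemma \ref{Wed+Eng} to the reduction $\overline{Q}$ modulo $\pi$ in $M_d(\kappa)$, where $\kappa = \mathcal{O}_k/\pi\mathcal{O}_k$ is the residue field. For $N \ge 1$ the hypothesis forces every element of $\overline{Q}^q$ ($q \le d^2$) to be nilpotent: its eigenvalues in $\overline{\kappa}$ come from eigenvalues in $\overline{k}$ of absolute value $\le |\pi|^N < 1$, so they vanish. Lemma \ref{Wed+Eng} then yields $\overline{g} \in GL_d(\kappa)$ with $\overline{g}\,\overline{Q}\,\overline{g}^{-1} \subset N_d(\kappa)$, and lifting a basis of $\mathcal{O}_k^d$ produces $g_1 \in GL_d(\mathcal{O}_k)$ such that $g_1 Q g_1^{-1} \subset N_d(\mathcal{O}_k) + \pi M_d(\mathcal{O}_k)$. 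Writing each $A \in g_1Qg_1^{-1}$ as $A = N_A + \pi R_A$ with $N_A \in N_d(\mathcal{O}_k)$ and using $N_d(\mathcal{O}_k)^d = 0$, an elementary counting argument on the expansion of $A_1\cdots A_n$ shows that every nonzero term carries at least $\lceil (n-d+1)/d\rceil$ factors of type $\pi R$; hence $\Vert Q^n\Vert_k \le |\pi|^{\lceil (n-d+1)/d\rceil}$ after conjugation by $g_1$.

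The decay $\Vert Q^n\Vert_k \lesssim |\pi|^{n/d}$ just obtained is not strong enough to make $L$ bounded when $d \ge 2$. To upgrade it I would apply the same scheme iteratively to the rescaled subsets $Q_j := \pi^{-j}\, g_j\, Q^{jd}\, g_j^{-1}$ for $jd \le d^2$, whose elements inherit the eigenvalue bound $|\pi|^{N-j}$ from the hypothesis on $Q^{jd}$, and combine it with the Cayley--Hamilton estimate $\Vert B^d\Vert_k \le |\pi|^N$ valid for any $B \in Q^q$, $q \le d^2$ (whose characteristic polynomial has coefficients of size $\le |\pi|^{jN}$ in degree $d-j$ by the elementary symmetric function formula). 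For $N = N(d)$ chosen large enough (polynomially in $d$), after $O(d)$ such iterations the composed conjugation yields the required linear decay $\Vert Q^n\Vert_k \le C|\pi|^n$; this makes $L$ a lattice and the lemma follows.

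The hard part will be this iterated improvement. Lemma \ref{Wed+Eng} requires nilpotency for all products of length $\le d^2$, whereas the rescaled set $Q_j$ only directly inherits eigenvalue bounds from the hypothesis for products of length $\le d^2/(jd)$, which shrinks rapidly with $j$. Propagating the Wedderburn--Engel conclusion through the successive rescalings therefore requires either a refined form of the lemma adapted to restricted nilpotency hypotheses, or ad hoc compensation via direct Cayley--Hamilton estimates for the missing products, together with a sufficiently generous choice of $N(d)$ to absorb the cumulative loss across the $O(d)$ rescalings.
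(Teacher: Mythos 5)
Your reduction of the lemma to the norm estimate $\Vert Q^n\Vert_k \le C\,|\pi|_k^{\,n}$ via the lattice $L=\sum_{n\ge 0}\pi^{-n}Q^n\mathcal{O}_k^d$ is sound, and so is your first step: modulo $\pi$ the characteristic polynomial of every element of $Q^q$ ($q\le d^2$) becomes $X^d$, so Lemma \ref{Wed+Eng} applies over the residue field, and after conjugating by a lift $g_1\in GL_d(\mathcal{O}_k)$ the block-counting argument does give $\Vert Q^n\Vert_k\le |\pi|_k^{\lceil (n-d+1)/d\rceil}$. But from there the proof is missing its core. The decay $|\pi|^{n/d}$ has to be upgraded to $|\pi|^{n}$, and the iteration you sketch does not close: as you note yourself, the rescaled sets $Q_j=\pi^{-j}g_jQ^{jd}g_j^{-1}$ inherit eigenvalue control only for products of length at most $d/j$, far short of the length $d^2$ that Lemma \ref{Wed+Eng} demands, while the Cayley--Hamilton bound $\Vert B^d\Vert_k\le|\pi|_k^N$ controls powers of a single element and says nothing about mixed products --- which is exactly the ``several matrices'' difficulty the lemma is about. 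Deferring this to ``a refined form of the lemma adapted to restricted nilpotency hypotheses, or ad hoc compensation'' is deferring the entire content: what is needed is a quantitative ($\varepsilon$-nilpotent) version of the Wedderburn and Engel theorems, and the paper explicitly flags (Remark \ref{effec}) that such an effective argument is much lengthier and is carried out elsewhere (\cite{BreEffec}). So the proposal, as written, has a genuine gap at its decisive step.

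For comparison, the paper sidesteps the bootstrapping problem altogether by compactness. It argues by contradiction: assuming the statement fails for $N=n$ with fields $k_n$ and sets $Q_n$, it forms an ultraproduct of the rings $\mathcal{O}_{k_n}$, quotients out the elements of absolute value zero to obtain a discrete valuation ring with fraction field $K$, observes that in the limit the class of $(Q_n^q)_n$ consists of genuinely nilpotent matrices for every $q\le d^2$, applies Lemma \ref{Wed+Eng} over $K$ to strictly triangularize, and transfers that triangularization back to the $k_n$ (clearing denominators via the adjugate) to contradict the assumed failure. This costs effectivity of $N(d)$ but requires no iterative improvement of norms. If you wish to pursue your route, be aware that you would in effect be re-proving the approximate Wedderburn--Engel theorem, a substantial piece of work in its own right.
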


\proof%
We argue by contradiction. This means that we have a sequence of local
fields $k_{n}$ and subsets $Q_{n}$ in $M_{d}(\mathcal{O}_{k_{n}})$ such that
$||gQ_{n}g^{-1}||_{k_{n}}\geq 1$ for all $g\in GL_{d}(k_{n})$ and all
eigenvalues of $Q_{n}^{q}$ have absolute value at most$\ |\pi
_{n}|_{k_{n}}^{n}$. Let us consider a non-principal ultrafilter $\mathcal{U}$
on $\mathbb{N}$ and form the ultraproduct ring $A=\prod_{\mathcal{U}}%
\mathcal{O}_{k_{n}}$. First let us decide that we have chosen the absolute
value $|\cdot |_{n}$ on $k_{n}$ in such a way that $|\pi _{n}|_{n}=\frac{1}{2%
}$ for every $n$ where $\pi _{n}$ is a fixed uniformizer in $\mathcal{O}%
_{k_{n}}.$ For every $x_{n}\in \mathcal{O}_{k_{n}}$ the quantity $%
|x_{n}|_{n} $ may only take values among $2^{-(\mathbb{N\cup \{\infty \})}}.$
It follows that for every $x\in A$ represented by $(x_{n})_{n\in \mathbb{N}%
}, $ the quantity $|x|:=\lim_{\mathcal{U}}|x_{n}|_{n}$, which is well
defined, may only take values in $2^{-(\mathbb{N\cup \{\infty \})}}.$
Moreover the defining properties of the absolute values $|\cdot |_{n}$ are
inherited by $|\cdot |$, that is $|xy|=|x|\cdot |y|$ and $|x+y|\leq \max
\{|x|,|y|\}$, except that there may be non zero elements $x\in A$ with $%
|x|=0.$ We will quotient these elements out. Let $I=\{x\in A,|x|=0\}.$ Then $%
I$ is clearly a prime ideal of $A$. We can now set $\mathcal{O}=A/I$, which
is a domain on which our absolute value $|\cdot |$ descends to a
well-defined absolute value, which we still denote by $|\cdot |.$ On $%
\mathcal{O}$ the absolute value $|\cdot |$ takes values in $2^{-(\mathbb{%
N\cup \{\infty \})}}$ and satisfies the standard axioms ($|xy|=|x|\cdot |y|$
; $|x+y|\leq \max \{|x|,|y|\}$; $|x|=0$ iff $x=0$) which make $\mathcal{O}$
a discrete valuation ring (see \cite[Chapter 9]{AM}) with uniformizer $\pi $
equal to the class of $(\pi _{n})_{n\in \mathbb{N}}$ in $A/I.$ Let $K$ be
the field of fractions of $\mathcal{O}$. It is a field with a non
archimedean absolute value and $\mathcal{O}=\{x\in K,|x|\leq 1\}.$ Let $Q$
be the class of $(Q_{n})_{n\in \mathbb{N}}$ in $M_{d}(\mathcal{O}).$ Then $%
Q^{q}$ is the class of $(Q_{n}^{q})_{n\in \mathbb{N}}$ for each $q$. But by
assumption $|a|_{n}\leq \frac{1}{2^{n}}$ for every non dominant coefficient $%
a$ of the characteristic polynomial of any matrix in $Q_{n}^{q}.$ It follows
that $Q^{q}$ is made of nilpotent matrices for each $q,$ $1\leq q\leq d^{2}.$
We may thus apply Lemma \ref{Wed+Eng} to $Q$ in $M_{d}(K).$ There is a
matrix $g\in GL_{d}(K)$ such that $gQg^{-1}\subset N_{d}(K).$ Write $g=\pi
^{-L}\overline{g}$ where $\overline{g}\in M_{d}(\mathcal{O}).$ There is $%
\widehat{g}\in M_{d}(\mathcal{O})$ such that $\overline{g}\widehat{g}=\det
\overline{g}$ which is the transpose of the matrix of minors. We thus have $%
\overline{g}Q\widehat{g}\subset N_{d}(\mathcal{O}).$ This means that there
is a function $f(n)$ going to $+\infty $ with $n$ such that $\overline{g}%
_{n}Q_{n}\widehat{g}_{n}\subset N_{d}(\mathcal{O}_{k_{n}})$ $\mod \pi%
_{n}^{f(n)}$ for most $n$'s (i.e. for a set of $n$'s belonging to $\mathcal{U%
}$). In particular for every $M\in \mathbb{N}$, for most $n$'s one may find
a matrix $h_{n}\in GL_{d}(k_{n})$ such that $h_{n}\overline{g}_{n}Q_{n}%
\widehat{g}_{n}h_{n}^{-1}\subset \pi _{n}^{M+1}M_{d}(\mathcal{O}_{k_{n}})$
(e.g. take $h_{n}$ diagonal with coefficients $\pi _{n}^{-i(M+1)},$ $%
i=1,...,d$). Finally note that $\det \overline{g}\in \mathcal{O}\backslash
\{0\}$ so that if $(\overline{g}_{n})_{n}$ is a representative of $\overline{%
g}$ in $M_{d}(A),$ there is $M\in \mathbb{N}$ such that $|\det \overline{g}%
_{n}|_{n}\geq 2^{-M}$ for most $n\in \mathbb{N}$. Hence $h_{n}\overline{g}%
_{n}Q_{n}\overline{g}_{n}^{-1}h_{n}^{-1}\subset \pi _{n}M_{d}(\mathcal{O}%
_{k_{n}})$ for most $n$'s, which is the desired contradiction.
\endproof%

We can now prove $(a).$ Let $\pi $ be a uniformizer for $k$ and let $\delta
\geq 0$ be such that $\max_{1\leq q\leq d^{2}}\Lambda _{k}(Q^{q})^{\frac{1}{q%
}}=|\pi |_{k}^{\delta }E_{k}(Q).$ Assume by contradiction that $\delta >0.$
Let $m\geq N(d)/\delta $. Let $k_{1}=k(\pi _{1})$ where $\pi _{1}^{m}=\pi $
and $F_{k_{1}}(Q)=\min_{x\in GL_{d}(k_{1})}\left\Vert xQx^{-1}\right\Vert
_{k_{1}}.$ Up to conjugating by $x\in GL_{d}(k_{1})$, we may assume that $%
F_{k_{1}}(Q)=||Q||_{k_{1}}\geq E_{k}(Q).$ Let $Q_{0}=\frac{Q}{q_{0}}$ for
some $q_{0}\in k_{1}$ such that $|q_{0}|_{k_{1}}=||Q||_{k_{1}}.$ Then
\begin{equation*}
\max_{1\leq q\leq d^{2}}\Lambda _{k}(Q_{0}^{q})^{\frac{1}{q}}\leq |\pi
_{1}|_{k_{1}}^{\delta m}\leq |\pi _{1}|_{k_{1}}^{N(d)}
\end{equation*}%
while $F_{k_{1}}(Q_{0})=1.$ But this obviously contradicts Lemma \ref%
{immeuble}. This ends the proof of $(a).$

\endproof%

\begin{remark}
In the proof we just gave of item $(a)$ in Lemma \ref{CompLem}, we used an
ultralimit argument in order to establish Lemma \ref{immeuble}. Passing to
ultralimits allowed us to obtain a set $Q$ made of genuinely nilpotent
(instead of almost nilpotent) matrices in the ultraproduct field $K$ and to
thereby be able to apply the theorems of Wedderburn and Engel in the field $%
K $ (i.e. Lemma \ref{Wed+Eng}). Without such a limiting object at our
disposal, we would have had to work much harder and prove an epsilon version
of the theorems of Wedderburn and Engel, where nilpotency is replaced by $%
\varepsilon $-nilpotency (see Remark \ref{effec} below). Of course the use
of ultralimits has the drawback that the constant $N(d)$ we get in Lemma \ref%
{immeuble} is non effective. However this non-effectiveness has no effect
for our purposes (and no effect on the effectivity of the height gap $%
\varepsilon (d)$ from Theorem \ref{main0}) because only the equality
obtained in Lemma \ref{CompLem} $(a)$ (and not the constant $N(d)$ of Lemma %
\ref{immeuble}) will be used later. See \cite{Bre} for an alternative
argument for $2$ by $2$ matrices.
\end{remark}

\begin{remark}
\label{effec}The proof of item $(b)$ in Lemma \ref{CompLem} was by
contradiction and gave no indication about how large $c$ is. This is in fact
the only place in this paper (and hence in the determination of the height
gap $\varepsilon (d)$ from Theorem \ref{main0}) where we have a constant
which is not explicitable in principle. However we can give another proof of
$(b)$ which is constructive and gives a lower bound of order $\exp
(-d^{d^{2}})$ for $c(d)$. We do not include this proof here because it is
much lengthier and requires to prove an approximate version of the theorems
of Wedderburn and Engel valid for a set of matrices $Q$ such that each $%
Q^{q} $ is made of $\varepsilon $-nilpotent matrices (i.e. matrices all of
whose eigenvalues have modulus $\leq \varepsilon $). Details can be found in
\cite{BreEffec}.
\end{remark}

\begin{remark}
Although we will not need this in the sequel, we observe in passing and also
to justify the title of Lemma \ref{immeuble} that it has the following
geometric interpretation in terms of the Bruhat-Tits building $\mathcal{BT}%
(GL_{d},k)$ of $GL_{d}(k).$ Let $S$ be a bounded subset of $GL_{d}(k).$ If
every element of $S^{q},$ $q\in \lbrack 1,d^{2}],$ fixes pointwise a ball of
radius $n$ in $\mathcal{BT}(GL_{d},k)$ for the combinatorial distance, then
there is a common ball of radius $\Omega _{d}(n)$ which is fixed pointwise
by all elements in $S.$ This statement does not follow directly from Lemmas %
\ref{Wed+Eng} and \ref{immeuble}, but from a simple modification of these
lemmas, where one considers the $k$-algebra generated by the $S^{q}-I_{d},$ $%
q\in \lbrack 1,d^{2}]$ in $M_{d}(k)$ in place of the one generated by the $%
Q^{q}$ as in the proof of Lemma \ref{immeuble}$.$
\end{remark}

With the spectral radius formula at our disposal, that is Lemma \ref{CompLem}%
, we can now understand the relationships between the various quantities at
hand, i.e. the minimal norm, spectral radius and maximal eigenvalue.

\begin{proposition}
\label{properties}Let $Q$ be a bounded subset of $M_{d}(k)$. We have

$(i)$ $\Lambda _{k}(Q)\leq R_{k}(Q)\leq E_{k}(Q)\leq \left\Vert Q\right\Vert
_{k},$ and $R_{k}(gQg^{-1})=R_{k}(Q)$ for any $g\in GL_{d}(\overline{k}),$

$(ii)$ $\Lambda _{k}(Q^{n})\geq \Lambda _{k}(Q)^{n}$, $E_{k}(Q^{n})\leq
E_{k}(Q)^{n}$ and $R_{k}(Q^{n})=R_{k}(Q)^{n}$ $\forall n\in \mathbb{N}$,

$(iii)$ $R_{k}(Q)=\lim_{n\rightarrow +\infty }E_{k}(Q^{n})^{\frac{1}{n}%
}=\inf_{n\in \mathbb{N}}E_{k}(Q^{n})^{\frac{1}{n}},$

$(iv)$ $R_{k}(Q)=\sup_{n\in \mathbb{N}}\Lambda _{k}(Q^{n})^{\frac{1}{n}},$

$(v)$ if $k$ is non Archimedean, $R_{k}(Q)=E_{k}(Q)$,

$(vi)$ if $k$ is Archimedean, $c\cdot E_{k}(Q)\leq R_{k}(Q)\leq E_{k}(Q)$,
where $c$ is the constant from Lemma \ref{CompLem} $(b)$.
\end{proposition}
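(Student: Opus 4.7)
The plan is to dispatch the routine inequalities in (i)--(iii) first using only the definitions and submultiplicativity, and then exploit the spectral radius formula (Lemma \ref{CompLem}) to deduce (iv), (v), (vi).

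For (i), the bound $E_k(Q)\leq \|Q\|_k$ is trivial (take $x=I$), and for any $q\in Q$ and eigenvalue $\lambda$ of $q$ we have $\lambda^n\in\mathrm{spec}(q^n)$ and $|\lambda|^n\leq\|q^n\|_k\leq\|Q^n\|_k$, so taking $n$-th roots and passing to the limit gives $\Lambda_k(Q)\leq R_k(Q)$. The remaining ingredient is $R_k(Q)\leq E_k(Q)$, for which the natural route is to show that $R_k(\cdot)$ is conjugation invariant: for $g\in GL_d(\overline{k})$ we have $\|(gQg^{-1})^n\|_k=\|gQ^ng^{-1}\|_k\leq \|g\|_k\|g^{-1}\|_k\|Q^n\|_k$, and the constants disappear after taking $n$-th roots, so $R_k(gQg^{-1})=R_k(Q)$; combining with $R_k(Q)=\inf_n\|Q^n\|_k^{1/n}\leq\|gQg^{-1}\|_k$ and taking the infimum over $g$ yields the desired inequality. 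Assertion (ii) follows from $\lambda^n\in\mathrm{spec}(q^n)$, submultiplicativity $\|xQ^nx^{-1}\|_k\leq\|xQx^{-1}\|_k^n$, and the classical fact that the limit $\lim\|Q^n\|_k^{1/n}$ behaves multiplicatively under taking powers of the set.

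For (iii), one side is immediate from (i): $E_k(Q^n)^{1/n}\leq \|Q^n\|_k^{1/n}\to R_k(Q)$. For the reverse inequality, apply (i) and (ii) to $Q^n$: $R_k(Q)^n=R_k(Q^n)\leq E_k(Q^n)$, so $R_k(Q)\leq\inf_n E_k(Q^n)^{1/n}$, squeezing both sides together.

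The new content is in (iv), (v), (vi), where Lemma \ref{CompLem} is the main tool. For (v) in the non-archimedean case, part (a) of that lemma gives an integer $q\in[1,d^2]$ with $\Lambda_k(Q^q)=E_k(Q)^q$; using (i) and (ii), $E_k(Q)^q=\Lambda_k(Q^q)\leq R_k(Q^q)=R_k(Q)^q$, i.e.\ $E_k(Q)\leq R_k(Q)$, which combined with (i) gives equality. The archimedean case (vi) runs identically from part (b) of Lemma \ref{CompLem}, producing $R_k(Q)^q\geq c^q E_k(Q)^q$, hence $R_k(Q)\geq c\cdot E_k(Q)$. Finally for (iv), the upper bound $\Lambda_k(Q^n)^{1/n}\leq R_k(Q)$ follows from (i) and (ii); for the lower bound, apply Lemma \ref{CompLem} to $Q^m$ to obtain some $q_m\in[1,d^2]$ with $\Lambda_k(Q^{mq_m})^{1/(mq_m)}\geq c^{1/m}E_k(Q^m)^{1/m}$ (with $c=1$ non-archimedean). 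Letting $m\to\infty$ and invoking (iii) gives $\sup_n\Lambda_k(Q^n)^{1/n}\geq R_k(Q)$.

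No step is really an obstacle once Lemma \ref{CompLem} is in hand; the only subtlety I anticipate is bookkeeping the exponents carefully in (iv) so that the constant $c<1$ from the archimedean case does not prevent the supremum from reaching $R_k(Q)$, which is why one must apply the lemma to $Q^m$ rather than to $Q$ directly and let $m\to\infty$ so that $c^{1/m}\to 1$.
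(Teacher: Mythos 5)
Your proof is correct and follows essentially the same route as the paper: the elementary items $(i)$--$(iii)$ via conjugation invariance of $R_k$ and submultiplicativity, and items $(iv)$--$(vi)$ by applying Lemma \ref{CompLem} to the powers $Q^m$ so that the archimedean constant enters only as $c^{1/m}\to 1$. The only cosmetic difference is that your squeeze in $(iii)$ yields the identity $\lim=\inf$ in one stroke, where the paper adds a separate monotonicity remark; this changes nothing of substance.
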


\proof%
Items $(i)$ and $(ii)$ are clear from the definitions. Let us first show $%
(iii).$ We have $E_{k}(Q^{n})\leq ||Q^{n}||_{k}$ for every $n\in \mathbb{N},$
hence $\lim \sup E_{k}(Q^{n})^{\frac{1}{n}}\leq R_{k}(Q).$ On the other
hand, $R_{k}(Q)=R_{k}(gQg^{-1})\leq ||gQg^{-1}||_{k}$ for every $g\in GL_{d}(%
\overline{k}).$ Hence $R_{k}(Q)\leq E_{k}(Q)$ and for every $n\in \mathbb{N}$%
, $R_{k}(Q)^{n}=R_{k}(Q^{n})\leq E_{k}(Q^{n}),$ hence $R_{k}(Q)\leq \lim
\inf E_{k}(Q^{n})^{\frac{1}{n}}.$ So we have shown that $\lim E_{k}(Q^{n})^{%
\frac{1}{n}}$ exists and equals $R_{k}(Q).$ Furthermore, for every $n,p\in
\mathbb{N}$, $E_{k}(Q^{np})^{\frac{1}{np}}\leq E_{k}(Q^{p})^{\frac{1}{p}}$.
Letting $n$ tend to $+\infty $, we obtain $R_{k}(Q)\leq E_{k}(Q^{p})^{\frac{1%
}{p}}$. Hence $R_{k}(Q)=\inf_{n\in \mathbb{N}}E_{k}(Q^{n})^{\frac{1}{n}}$.

Now consider $(iv).$ It is clear that as $\Lambda _{k}(Q^{n})\leq
R_{k}(Q^{n})=R_{k}(Q)^{n},$ we have $\sup $ $\Lambda _{k}(Q^{n})^{\frac{1}{n}%
}\leq R_{k}(Q).$ On the other hand, given $n\in \mathbb{N}$, there is $0\leq
q\leq d^{2}$ from Lemma \ref{CompLem}, such that $\Lambda _{k}(Q^{qn})^{%
\frac{1}{qn}}\geq c^{\frac{1}{n}}\cdot E_{k}(Q^{n})^{\frac{1}{n}}$ (where $%
c=1$ if $k$ is non Archimedean) which forces $\sup \Lambda _{k}(Q^{n})^{%
\frac{1}{n}}\geq \lim \sup E_{k}(Q^{n})^{\frac{1}{n}}=R_{k}(Q).$

Now $(v).$ From $(iii)$ and $(iv)$ we clearly have for any $q\in \mathbb{N}$
$\Lambda _{k}(Q^{q})^{\frac{1}{q}}\leq R_{k}(Q)\leq E_{k}(Q).$ If $k$ is non
Archimedean, then this combined with Lemma \ref{CompLem} $(a)$ shows the
desired identity. If $k$ is Archimedean, then it gives $\Lambda
_{k}(Q^{q})\leq R_{k}(Q)^{q}$, which when combined with Lemma \ref{CompLem} $%
(b)$ gives $c\cdot E_{k}(Q)\leq R_{k}(Q).$
\endproof%

\begin{remark}
\label{RvsE}It can be shown that $R_{k}(Q)$ coincides with the infimum of $%
||Q||$ over all possible operator norms $||\cdot ||$ not necessarily assumed
to be operators norms of Euclidean or $\ell ^{\infty }$ norms (see \cite%
{BreEffec}). Observe however that when $k=\mathbb{R}$ or $\mathbb{C}$, then
we may have $R_{k}(Q)<E_{k}(Q).$ For instance, consider $Q=\{1,T,S\}\subset
SL_{2}(\mathbb{Z}),$ where $T$ and $S$ are the matrices corresponding to the
standard generators of $PGL_{2}(\mathbb{Z}),$ i.e. $T=\left(
\begin{array}{cc}
1 & 1 \\
0 & 1%
\end{array}%
\right) $ acts by translation by $1$ and $S=\left(
\begin{array}{cc}
0 & 1 \\
-1 & 0%
\end{array}%
\right) $ by inversion around the circle of radius $1$ in the upper-half
plane. Then it is easy to compute $E_{k}(Q)=\sqrt{2}=||tQt^{-1}||_{k}$ where
$t$ is the diagonal matrix $t=diag(\frac{1}{^{4}\sqrt{2}},^{4}\sqrt{2})$. On
the other hand, one can check that $||tQ^{2}t^{-1}||_{k}<2,$ and thus $%
R_{k}(Q)\leq E_{k}(Q^{2})^{\frac{1}{2}}<E_{k}(Q)$.
\end{remark}

Note that if $Q$ belongs to $SL_{d}(k),$ then $E_{k}(Q)\geq R_{k}(Q)\geq
\Lambda _{k}(Q)\geq 1$. The following Proposition explains what happens if
these quantities are close or equal to $1.$

\begin{prop}
\label{smallRE2}(\textbf{growth of displacement}) Suppose $k$ is Archimedean
(i.e. $k=\mathbb{R}$ or $\mathbb{C}$). Then for every $n\in \mathbb{N}$ and
every bounded subset $Q$ of $SL_{d}(k)$ containing $1,$ we have
\begin{equation}
E_{k}(Q^{n})\geq E_{k}(Q)^{\sqrt{\frac{n}{4d}}}  \label{gr}
\end{equation}%
And
\begin{equation}
\log R_{k}(Q)\geq c_{1}\cdot \log E_{k}(Q)\cdot \min \{1,\log E_{k}(Q)\}
\label{gr2}
\end{equation}%
where $c_{1}=c_{1}(d)>0$ is a positive constant.
\end{prop}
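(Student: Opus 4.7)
I would treat (\ref{gr}) and (\ref{gr2}) separately: the first is the substantive geometric input, and the second follows by optimising in a parameter and appealing to Proposition \ref{properties}(vi). The natural framework for (\ref{gr}) is the Riemannian symmetric space $X_k = SL_d(k)/K_k$ (with $K_k$ a maximal compact), whose Cartan decomposition gives $\log\|g\|_k \leq d_{X_k}(o,g\!\cdot\!o) \leq (d-1)\sqrt{d}\,\log\|g\|_k$. Consequently $\log E_k(Q)$ and the minimum joint displacement $L(Q) := \inf_{x \in X_k} \max_{q \in Q} d_{X_k}(x,qx)$ differ by at most a $d$-dependent constant, and (\ref{gr}) becomes equivalent (up to such a constant) to $L(Q^n) \gtrsim \sqrt{n/(4d)}\,L(Q)$. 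To establish this I would fix an almost-optimiser $x$ for $L(Q^n)$, so the orbit $Q^n\cdot x$ sits in a ball of radius $L(Q^n)$ around $x$, and analyse the Cartan projections of $n$-step products on the $(d-1)$-dimensional flat through $x$: each $q\in Q$ moves $x$ by at least $L(Q)$, hence projects onto some coordinate of the flat with magnitude $\gtrsim L(Q)/\sqrt{d-1}$, and a Cauchy--Schwarz / second-moment estimate on $n$-step products then forces a typical word in $Q^n$ to have Cartan projection of magnitude $\gtrsim \sqrt{n/(4d)}\,L(Q)$ --- the familiar $\sqrt{n}$ random-walk rate.

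For (\ref{gr2}), I would apply Proposition \ref{properties}(vi) to $Q^n$ to get $\log R_k(Q^n) \geq \log c + \log E_k(Q^n)$, and combine with (\ref{gr}) together with $R_k(Q^n) = R_k(Q)^n$ to obtain, for every $n \geq 1$,
\[
  \log R_k(Q) \;\geq\; \frac{\log c}{n} \;+\; \frac{1}{\sqrt{4dn}}\,\log E_k(Q).
\]
Setting $\ell = \log E_k(Q)$ and optimising, the critical point $n^\ast \sim 16\, d\,(\log c)^2/\ell^2$ is admissible whenever $\ell \lesssim 4\sqrt{d}\,|\log c|$ and yields $\log R_k(Q) \geq \ell^2/(16\,d\,|\log c|)$; for larger $\ell$, $n=1$ together with Proposition \ref{properties}(vi) suffices directly. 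In either regime one deduces $\log R_k(Q) \geq c_1\,\ell\,\min\{\ell,1\}$ with $c_1 = 1/(16\,d\,|\log c|)$, which is (\ref{gr2}).

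The main obstacle is clearly the displacement-growth inequality (\ref{gr}). The monotonicity $Q \subset Q^n$ yields only the trivial $L(Q^n) \geq L(Q)$, and soft non-positive-curvature considerations miss the $\sqrt{n}$ rate entirely; the subtlety is that conjugating $Q^n$ into near-minimum-norm position may shrink the individual factors $q\in Q$ well below their ``intrinsic'' $L(Q)$ displacement, so the $\sqrt{n/(4d)}$ spreading has to be extracted from a quantitative Cartan/Iwasawa analysis on the rank-$(d-1)$ flat rather than from coarse geometry --- and it is this $\sqrt{n}$ rate, rather than the mere positivity of a lower bound, that does all the work in the deduction of (\ref{gr2}).
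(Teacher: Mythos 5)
Your deduction of (\ref{gr2}) from (\ref{gr}) is correct and is essentially the paper's own: apply Lemma \ref{CompLem}(b) (equivalently Proposition \ref{properties}(vi)) to $Q^{n}$, use $R_{k}(Q^{n})=R_{k}(Q)^{n}$, and optimize in $n$. The genuine gap is in (\ref{gr}) itself, which you leave as a sketch resting on two claims that do not hold up. First, ``each $q\in Q$ moves $x$ by at least $L(Q)$'' is false: $L(Q)$ is an infimum over basepoints of a \emph{maximum} over $Q$, so at the (near-)optimal point for $Q^{n}$ only \emph{some} $q\in Q$ is guaranteed displacement $\geq L(Q)$; individual elements may well be elliptic with tiny displacement there. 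Second, and more seriously, the assertion that a ``Cauchy--Schwarz / second-moment estimate on $n$-step products forces a typical word in $Q^{n}$ to have Cartan projection $\gtrsim\sqrt{n}\,L(Q)$'' is not an argument: Cartan projections of products are subadditive, which only gives upper bounds, there is no independence or martingale structure that makes a $\sqrt{n}$ \emph{lower} bound automatic, and the elements of $Q$ need not stabilize any common flat, so ``projecting the $n$-step products onto coordinates of the flat through $x$'' is not even defined. The $\sqrt{n}$ rate is exactly the content of the proposition and has to be produced by a concrete mechanism.

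The mechanism the paper uses, and which is absent from your proposal, is a convexity (CAT(0)) argument that exploits crucially the hypothesis $1\in Q$. Let $r_{n}$ be the optimal circumradius of an orbit $Q^{n}x$ and $\ell_{n}=L_{k}(Q^{n})$, so that $r_{n}\leq \ell_{n}\leq 2r_{n}$. If $Q^{n+1}x$ lies in the ball of radius $r_{n+1}+\varepsilon$ about $y$, then for each $q\in Q$ the set $qQ^{n}x$ lies both in that ball (because $qQ^{n}x\subset Q^{n+1}x$) and in its translate about $qy$; by the CAT(0) median (semi-parallelogram) inequality the intersection of these two balls lies in a ball of radius $\bigl((r_{n+1}+\varepsilon)^{2}-d(qy,y)^{2}/4\bigr)^{1/2}$, whence $r_{n}^{2}\leq (r_{n+1}+\varepsilon)^{2}-d(qy,y)^{2}/4$ and, maximizing over $q\in Q$ and letting $\varepsilon\to 0$, $\ell_{1}^{2}\leq 4(r_{n+1}^{2}-r_{n}^{2})$. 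Telescoping gives $n\ell_{1}^{2}\leq 4\ell_{n}^{2}$, and the purely multiplicative comparison $\log E_{k}\leq L_{k}\leq \sqrt{d}\,\log E_{k}$ converts this into (\ref{gr}) with the stated constant; note that the comparison must carry no additive error, since the relevant regime is $E_{k}(Q)$ close to $1$, where any additive loss (which your phrase ``differ by at most a $d$-dependent constant'' leaves open) would destroy the inequality. Without this convexity step, or an equivalent substitute, your proof of (\ref{gr}), and hence of (\ref{gr2}), is incomplete.
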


\proof%
We will use non-positive curvature of the symmetric space $\mathbb{X}_{k}%
\mathcal{\ }$associated to $SL_{d}(k).$ Let $d(\cdot ,\cdot )$ be the left
invariant Riemmanian metric on $\mathbb{X}_{k}$ normalized in such a way
that $d(ax_{0},x_{0})^{2}=\sum_{i}(\log |a_{i}|)^{2}$ if $x_{0}\in \mathbb{X}%
_{k}$ is the base point corresponding to $SO_{d}(\mathbb{R)}$ (resp. $SU_{d}(%
\mathbb{C})$) and $a$ is a diagonal matrix in $SL_{d}(k).$ We set $%
L_{k}(Q)=\inf_{x\in \mathbb{X}_{k}}\max_{q\in Q}d(q\cdot x,x).$ Observe that
$L_{k}\left( Q\right) \in \lbrack 1,\sqrt{d}]\log E_{k}(Q)$ (see also Lemma %
\ref{displacement} below).

Let $\ell _{n}:=L_{k}(Q^{n})$ and let $r_{n}$ be the infimum over $x\in
\mathbb{X}_{k}$ of the smallest radius of a closed ball containing $Q^{n}x.$
Note first that $r_{n}\leq \ell _{n}\leq 2r_{n}.$ Indeed if $\ell _{n}<t,$
then there is $x\in \mathbb{X}_{k}$ such that $d(qx,x)<t$ for all $q\in
Q^{n},$ i.e. $Q^{n}x$ lies in the ball of radius $t$ centered at $x,$ so $%
r_{n}\leq t$ and thus $r_{n}\leq \ell _{n}.$ Similarly if $r_{n}<t,$ then
there is $x\in \mathbb{X}_{k}$ such that $Q^{n}x$ is contained in a ball of
radius $t.$ In particular $d\left( y,z\right) \leq 2t$ for all $y,z\in
Q^{n}x $ and thus $d(qx,x)\leq 2t$ for all $q\in Q^{n},$ i.e. $\ell _{n}\leq
2t$, so $\ell _{n}\leq 2r_{n}.$

We now prove (\ref{gr}). Fix $\varepsilon >0$ and let $x,y\in \mathbb{X}_{k}$
be such that $Q^{n+1}x$ is contained in a ball of radius $%
r_{n+1}+\varepsilon $ around $y.$ Let $q\in Q$ be arbitrary. Since $Q$
contains $1,$ we have $Q^{n}x\subset Q^{n+1}x$, and $qQ^{n}x$ lies in the
two balls of radius $r_{n+1}+\varepsilon $ centered around $qy$ and around $%
y.$ By the CAT(0) inequality for the median, the intersection of the two
balls is contained in the ball $B$ of radius $t:=\sqrt{(r_{n+1}+\varepsilon
)^{2}-d(qy,y)^{2}/4}$ centered around the midpoint $m$ between $y$ and $qy.$
Translating by $q^{-1},$ we get that $Q^{n}x$ lies in the ball of radius $t$
centered at $q^{-1}m.$ In particular $r_{n}\leq t.$ This means $%
d(qy,y)^{2}\leq 4((r_{n+1}+\varepsilon )^{2}-r_{n}^{2}).$ Since $q\in Q$ and
$\varepsilon >0$ were arbitrary, we obtain $\ell _{1}^{2}\leq
4(r_{n+1}^{2}-r_{n}^{2})$. Summing over $n$, we get $n\ell _{1}^{2}\leq
4r_{n}^{2}\leq 4\ell _{n}^{2}$, hence (\ref{gr}).

For $(\ref{gr2}),$ note that for every $n,$ by Lemma \ref{CompLem} there is $%
q\leq d^{2}$ such that $\Lambda _{k}(Q^{qn})^{\frac{1}{q}}\geq
cE_{k}(Q^{n})\geq cE_{k}(Q)^{\sqrt{\frac{n}{4d}}}$ and hence $R_{k}(Q)\geq
c^{\frac{1}{n}}E_{k}(Q)^{\sqrt{\frac{1}{4dn}}}.$ Optimizing in $n$ we obtain
a constant $c_{1}=c_{1}(d)$ for which $(\ref{gr2})$ holds.
\endproof%

\begin{remark}
The above inequality (\ref{gr}) is interesting only when $E_{k}(Q)$ is
small. Indeed, a better estimate holds if $E_{k}(Q)>\frac{1}{c},$ where $c$
is the constant $c\in (0,1)$ obtained in Lemma \ref{CompLem} (b)
\begin{equation*}
E_{k}(Q^{n})\geq \max_{q\in \lbrack 1,d^{2}]}\Lambda _{k}(Q^{nq})^{\frac{1}{q%
}}\geq \max_{q\in \lbrack 1,d^{2}]}\Lambda _{k}(Q^{q})^{\frac{n}{q}}\geq
(cE_{k}(Q))^{n}.
\end{equation*}
\end{remark}

\begin{remark}
Observe that if $Q\subset SL_{d}(k)$, then adding the identity to $Q$ does
not modify our quantities. Namely if $Q_{1}=Q\cup \{Id\},$ then $%
E_{k}(Q_{1})=E_{k}(Q)$, $\Lambda _{k}(Q_{1})=\Lambda _{k}(Q)$ and also $%
R_{k}(Q_{1})=R_{k}(Q).$ For the last identity, note that for all $n\in
\mathbb{N}$, there is $m\leq n$ such that $\Lambda _{k}(Q_{1}^{n})=\Lambda
_{k}(Q^{m})\leq R_{k}(Q)^{m}\leq R_{k}(Q)^{n}$, since $R_{k}(Q)\geq 1$,
hence taking the supremum over $n$, $R_{k}(Q_{1})\leq R_{k}(Q),$ while the
converse inequality is clear.
\end{remark}

\subsection{Height, normalized height and minimal height\label{heightdef}}

Let $p$ be a prime number (abusing notation, we allow $p=\infty $). Fix an
algebraic closure $\overline{\mathbb{Q}_{p}}$ of the field of $p$-adic
numbers $\mathbb{Q}_{p}$ (if $p=\infty $, set $\mathbb{Q}_{p}=\mathbb{R}$).
We take the standard normalization of the absolute value on $\mathbb{Q}_{p}$
(i.e. $|p|_{p}=\frac{1}{p}$), while $|\cdot |_{\infty }$ is the standard
absolute value on $\mathbb{R}$. It admits a unique extension to $\overline{%
\mathbb{Q}_{p}}$, which we again denote by $|\cdot |_{p}$. Let $\overline{%
\mathbb{Q}}$ be the field of all algebraic numbers over $\mathbb{Q}$ and $K$
a number field. Let $V_{K}$ be the set of equivalence classes of valuations
on $K.$ For $v\in V_{K}$ let $K_{v}$ be the corresponding completion. For
each $v\in V_{K},$ $K_{v}$ is a finite extension of $\mathbb{Q}_{p}$ for
some prime $p$. We normalize the absolute value on $K_{v}$ to be the unique
one which extends the standard abolute value on $\mathbb{Q}_{p}$. Namely $%
|x|_{v}=|N_{K_{v}|\mathbb{Q}_{p}}(x)|_{p}^{\frac{1}{n_{v}}}$ where $%
n_{v}=[K_{v}:\mathbb{Q}_{p}]$. Equivalently $K_{v}$ has $n_{v}$ different
embeddings in $\overline{\mathbb{Q}_{p}}$ and each of them gives rise to the
same absolute value on $K_{v}.$ We identify $\overline{K_{v}}$, the
algebraic closure of $K_{v}$ with $\overline{\mathbb{Q}_{p}}$. Let $V_{f}$
be the set of finite places and $V_{\infty }$ the set of infinite places.

Let $d\in \mathbb{N}$ be an integer $d\geq 2$. For $v\in V_{K},$ in order
not to surcharge notation, we will use the subscript $v$ instead of $K_{v}$
in the quantities $E_{v}(F)=E_{K_{v}}(F)$, $\Lambda _{v}(F)=\Lambda
_{K_{v}}(F)$, etc.

Recall that if $x\in K$ then its height is by definition (see e.g. \cite{Bom}%
) the following quantity
\begin{equation*}
h(x)=\frac{1}{[K:\mathbb{Q}]}\sum_{v\in V_{K}}n_{v}\log ^{+}|x|_{v}
\end{equation*}%
It is well defined (i.e. independent of the choice of $K\ni x$). We will
make constant use of the following basic inequalities valid for every
algebraic numbers $x$ and $y$:\ $h(xy)\leq h(x)+h(y)$ and $h(x+y)\leq
h(x)+h(y)+\log 2.$

Let us similarly define the height of a matrix $f\in M_{d}(K)$ by
\begin{equation*}
h(f)=\frac{1}{[K:\mathbb{Q}]}\sum_{v\in V_{K}}n_{v}\log ^{+}||f||_{v},
\end{equation*}%
where $||f||_{v}$ is the operator norm of $f.$ We set the height of a finite
set $F$ of matrices in $M_{d}(K)$ to be
\begin{equation}
h(F)=\frac{1}{[K:\mathbb{Q}]}\sum_{v\in V_{K}}n_{v}\log ^{+}||F||_{v},
\label{heightdef2}
\end{equation}%
where $n_{v}=[K_{v}:\mathbb{Q}_{v}]$ and where $||F||_{v}=\max_{f\in
F}||f||_{v}.$ We also define the \textit{minimal height} of $F$ as:

\begin{equation}
e(F)=\frac{1}{[K:\mathbb{Q}]}\sum_{v\in V_{K}}n_{v}\log ^{+}E_{v}(F)
\label{abs}
\end{equation}%
and the \textit{normalized height} of $F$ as:
\begin{equation}
\widehat{h}(F)=\frac{1}{[K:\mathbb{Q}]}\sum_{v\in V_{K}}n_{v}\log
^{+}R_{v}(F)  \label{normhei}
\end{equation}%
For any height $\mathbf{h}$ (i.e. $h,e$ or $\widehat{h}$), we also set $%
\mathbf{h}=\mathbf{h}_{\infty }+\mathbf{h}_{f}$, where $\mathbf{h}_{\infty }$
is the infinite part of $\mathbf{h}$ (i.e. the part of the sum over the
infinite places of $K$) and $\mathbf{h}_{f}$ is the finite part of $\mathbf{h%
}$ (i.e. the part of the sum over the finite places of $K$). Note that these
heights are well defined independently of the number field $K$ such that $%
F\subseteq M_{d}(K).$ We also set $h_{v}(F)=\log ^{+}||F||_{v}$ (resp. $%
e_{v}(F)=\log ^{+}E_{v}(F),$ etc) so that $h=\frac{1}{[K:\mathbb{Q}]}%
\sum_{v\in V_{K}}n_{v}h_{v},$ etc.

\begin{remark}
\label{basechange}If we choose another basis of $\overline{\mathbb{Q}}^{d}$
the new height $h_{new}(F)$ differs only from the original height by a
bounded additive error. Indeed there are only finitely many places where the
new standard norm may differ from the original one. On the other hand $%
\widehat{h}(F)$ is independent of the choice of basis.
\end{remark}

The above terminology is justified by the following facts:

\begin{proposition}
\label{easyprop}For any finite set $F$ in $M_{d}(\overline{\mathbb{Q}})$, we
have:

(a) $\widehat{h}(F)=\lim_{n\rightarrow +\infty }\frac{1}{n}%
h(F^{n})=\inf_{n\in \mathbb{N}}\frac{1}{n}h(F^{n}),$

(b) $e_{f}(F)=\widehat{h}_{f}(F)$ and $e(F)+\log c\leq \widehat{h}(F)\leq
e(F)$ where $c$ is the constant in Lemma \ref{CompLem} $(b),$

(c) $\widehat{h}(F^{n})=n\cdot \widehat{h}(F)$ and $\widehat{h}(F\cup
\{Id\})=\widehat{h}(F),$

(d) $\widehat{h}(xFx^{-1})=\widehat{h}(F)$ if $x\in GL_{d}(\overline{\mathbb{%
Q}}).$
\end{proposition}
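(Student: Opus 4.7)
The plan is to dispatch the four items by unpacking the three definitions of $h,e,\widehat{h}$ place-by-place and invoking Proposition \ref{properties}, so that each claim reduces to either Fekete's lemma, a submultiplicativity argument, or a direct invariance observation.

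For part (a), I would start from submultiplicativity of the operator norm, $\|F^{n+m}\|_v\le \|F^n\|_v\cdot \|F^m\|_v$, which together with $\log^+(xy)\le \log^+ x+\log^+ y$ makes $n\mapsto h(F^n)$ subadditive. Fekete's lemma then gives $\lim_n \tfrac1n h(F^n)=\inf_n \tfrac1n h(F^n)$. The key point is to identify this limit with $\widehat h(F)=\tfrac{1}{[K:\mathbb{Q}]}\sum_v n_v\log^+R_v(F)$. This requires interchanging the sum over places and the limit in $n$. The interchange is legitimate because only finitely many places contribute: at every non-archimedean $v$ with $\|F\|_v\le 1$ we have $\|F^n\|_v\le 1$ for all $n$, hence $\log^+\|F^n\|_v=0$; the surviving set of places is the archimedean ones together with the finite set of $v$ where some matrix coefficient has absolute value $>1$. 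On each such $v$, the convergence $\|F^n\|_v^{1/n}\to R_v(F)$ (from the definition of spectral radius, see also Proposition \ref{properties}) upgrades, after a brief case analysis depending on whether $R_v(F)$ is $\ge 1$, $<1$ or $=1$, to $\tfrac1n\log^+\|F^n\|_v\to \log^+R_v(F)$.

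For part (b), I would apply Proposition \ref{properties}(v) at every finite place $v$ to get $R_v(F)=E_v(F)$, and hence after $\log^+$ and summation, $e_f(F)=\widehat h_f(F)$. At archimedean places, Proposition \ref{properties}(vi) gives $cE_v(F)\le R_v(F)\le E_v(F)$ with the constant $c=c(d)\in(0,1)$ of Lemma \ref{CompLem}(b). A short check (splitting into the cases $R_v(F)\ge 1$ and $R_v(F)<1$) shows that $\log^+E_v(F)+\log c\le \log^+R_v(F)\le \log^+E_v(F)$. Summing with weights $n_v/[K:\mathbb{Q}]$ over the archimedean places, and using $\sum_{v\mid\infty}n_v=[K:\mathbb{Q}]$, delivers $e_\infty(F)+\log c\le \widehat h_\infty(F)\le e_\infty(F)$; combined with the finite-place identity, this is the claim.

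For part (c), the identity $\widehat h(F^n)=n\widehat h(F)$ is immediate from part (a) applied to $F^n$: $\widehat h(F^n)=\lim_m \tfrac1m h(F^{nm})=n\lim_m \tfrac{1}{nm}h(F^{nm})=n\widehat h(F)$. For the invariance under adjoining the identity, the observation is that $(F\cup\{\Id\})^n=\bigcup_{m=0}^{n}F^m$, so $\Lambda_v\bigl((F\cup\{\Id\})^n\bigr)=\max\bigl(1,\max_{1\le m\le n}\Lambda_v(F^m)\bigr)$. Taking the $n$-th root and the supremum in $n$, and using $R_v(F)=\sup_m \Lambda_v(F^m)^{1/m}$ from Proposition \ref{properties}(iv), yields $R_v(F\cup\{\Id\})=\max(1,R_v(F))$. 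Applying $\log^+$, which satisfies $\log^+\max(1,x)=\log^+x$, gives the desired place-by-place equality, whence the claim after summing.

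For part (d), this is nothing but the conjugation invariance $R_v(xFx^{-1})=R_v(F)$ from Proposition \ref{properties}(i), applied place by place (noting $\overline{\mathbb{Q}}\hookrightarrow \overline{K_v}$ so that $x\in GL_d(\overline{\mathbb{Q}})$ lies in $GL_d(\overline{K_v})$). The main obstacle is really in part (a), namely justifying the exchange of sum and limit, which as noted above is handled by the finiteness of the set of contributing places; the rest is bookkeeping with $\log^+$ and the already-established spectral properties.
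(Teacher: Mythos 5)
Your proof is correct and follows essentially the same route as the paper: only finitely many places contribute (uniformly in $n$) so the limit passes inside the sum, and each item then reduces to the corresponding part of Proposition \ref{properties} (with Lemma \ref{CompLem}(b) supplying the constant $c$ at archimedean places). You in fact supply a bit more detail than the paper's own proof, which leaves the claims $\widehat{h}(F\cup\{\Id\})=\widehat{h}(F)$ and $\widehat{h}(xFx^{-1})=\widehat{h}(F)$ implicit; your verifications of these (via $(F\cup\{\Id\})^{n}=\bigcup_{m\le n}F^{m}$ and conjugation invariance of $R_{v}$) are fine.
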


\proof%
Since $F$ is finite, there are only finitely many places $v$ such that $%
||F||_{v}>1.$ For each such place, $\frac{1}{n}\log
^{+}||F^{n}||_{v}\rightarrow \log ^{+}R_{v}(F),$ hence $\frac{1}{n}%
h(F^{n})\rightarrow \widehat{h}(F).$ By Prop. \ref{properties} $(vii)$ we
have $E_{v}(F)=R_{v}(F)$ if $v\in V_{f},$ hence $e_{f}(F)=\widehat{h}%
_{f}(F), $ while $c\cdot E_{v}(F)\leq R_{v}(F)\leq E_{v}(F)$ if $v\in
V_{\infty },$ hence $e_{\infty }(F)+\log c\leq \widehat{h}_{\infty }(F)\leq
e_{\infty }(F). $ Finally by Prop. \ref{properties} $(ii)$ $%
R_{v}(F^{n})=R_{v}(F)^{n}$ for every $n\in \mathbb{N}$ and every place $v$.
Hence $\widehat{h}(F^{n})=n\cdot \widehat{h}(F).$
\endproof%

We also record the following simple observation:

\begin{proposition}
\label{propbis}

(a) $e(xFx^{-1})=e(F)$ for all $F$ finite in $M_{d}(\overline{\mathbb{Q}})$
and $x\in GL_{d}(\overline{\mathbb{Q}}).$

(b) $e(F^{n})\leq n\cdot e(F),$

(c) If $\lambda $ is an eigenvalue of an element of $F,$ then $h(\lambda
)\leq \widehat{h}(F)\leq e(F),$

(d) If $F\subset GL_{d}(\overline{\mathbb{Q}})$ then $e(F\cup F^{-1})\leq
(d|F|+d-1)\cdot e(F)$ and $e(F\cup \{1\})=e(F).$ If $F$ is a subset of $%
SL_{d}(\overline{\mathbb{Q}}),$ then $e(F\cup F^{-1})\leq (d-1)\cdot e(F)$.
\end{proposition}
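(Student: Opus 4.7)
My plan is to handle items (a), (b), (c) and the $e(F\cup\{1\})=e(F)$ half of (d) as direct corollaries of Proposition~\ref{properties} and the definitions, concentrating all the real work on the inverse bound in (d).

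First, (a) follows because the substitution $g\mapsto gx$ in the infimum over $GL_d(\overline{K_v})$ defining $E_v$ yields $E_v(xFx^{-1})=E_v(F)$ at every place, so the weighted sum defining $e$ is unchanged. Item (b) follows place-by-place from the sub-multiplicativity $E_v(F^n)\le E_v(F)^n$ in Proposition~\ref{properties}(ii). For (c), Proposition~\ref{properties}(i) gives $|\lambda|_v\le\Lambda_v(F)\le R_v(F)\le E_v(F)$ whenever $\lambda$ is an eigenvalue of an element of $F$, and summing the three chains of weighted $\log^+$'s yields $h(\lambda)\le\widehat h(F)\le e(F)$. The equality $e(F\cup\{1\})=e(F)$ in (d) is similar: since the identity is fixed by every conjugation, $E_v(F\cup\{1\})=\max(E_v(F),1)$, and $\log^+\max(x,1)=\log^+x$.

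The crux is the bound on $e(F\cup F^{-1})$. The key local inequality I plan to establish and exploit is
\[
\|A^{-1}\|_v \;\le\; \|A\|_v^{d-1}/|\det A|_v
\]
for every $A\in GL_d(K_v)$ and every place $v$. At ultrametric places this comes from the cofactor formula $A^{-1}=\mathrm{adj}(A)/\det A$, since each entry of $\mathrm{adj}(A)$ is a $(d-1)\times(d-1)$ minor of $A$, hence has $v$-absolute value at most $\|A\|_v^{d-1}$ in the sup operator norm. At archimedean places it follows from the singular value decomposition via $1/\sigma_d=\prod_{i<d}\sigma_i/|\det A|_v\le\sigma_1^{d-1}/|\det A|_v$. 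Applied to $A=gfg^{-1}$ for each $f\in F$ with $g$ a near-minimizer of $\|gFg^{-1}\|_v$, together with the conjugation-invariance of $\det$ and the elementary $\log^+$ triangle inequality on the max over $F\cup F^{-1}$, I obtain
\[
\log^+E_v(F\cup F^{-1})\;\le\;(d-1)\log^+E_v(F)+\sum_{f\in F}\log^+|\det f|_v^{-1}.
\]

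Summing with weights $n_v/[K:\mathbb{Q}]$ converts the last term into $\sum_{f\in F}h((\det f)^{-1})=\sum_{f\in F}h(\det f)$. To finish, I control each $h(\det f)$ by $d\cdot e(F)$ using the Hadamard-type bound $|\det A|_v\le\|A\|_v^d$ (valid at every place by expanding the determinant) combined with the conjugation-invariance of $\det$ applied to a near-minimizer $g$ for $E_v(F)$. Substituting yields the announced $e(F\cup F^{-1})\le(d|F|+d-1)\,e(F)$; the $SL_d$ refinement is the same argument with $\det f\equiv1$ forcing the extra sum to vanish, leaving only $(d-1)\,e(F)$. The main obstacle I anticipate is obtaining the inverse inequality $\|A^{-1}\|_v\le\|A\|_v^{d-1}/|\det A|_v$ uniformly across all places \emph{without stray multiplicative constants}: this requires the two parallel but distinct local arguments above (adjugate at ultrametric places, singular value decomposition at archimedean places), each tailored to the specific sup/Euclidean choice of standard norms prescribed in Section~\ref{locnot}.
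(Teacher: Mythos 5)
Your proposal is correct and follows essentially the same route as the paper: the whole content is the local inequality $\|x^{-1}\|_v\le \|x\|_v^{d-1}/|\det x|_v$ (which the paper asserts via the $KAK$ decomposition and you verify via the adjugate and the singular value decomposition), applied at a near-minimizing conjugate and summed over places, with $\sum_{f\in F}h(\det f)\le d|F|\,e(F)$ supplying the extra term. The only cosmetic differences are that you bound $h(\det f)$ by Hadamard's inequality rather than via the eigenvalue bound of item (c), and absorb the $\log^+E_v(F)$ term using $d\ge 2$ where the paper keeps $x=1$ in its maximum; both are fine.
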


\proof%
The first three items are clear. For the last, observe that $||x^{-1}||_{v}=%
\frac{1}{|\det (x)|_{v}}||x||_{v}^{d-1}$ for any $x\in GL_{d}(K_{v})$ as can
be seen by expressing those norms in terms of the $KAK$ decomposition of $x$%
. Hence $||(F\cup F^{-1})||_{v}\leq ||F||_{v}^{d-1}\cdot \max \{\frac{1}{%
|\det (x)|_{v}},x\in F\cup \{1\}\}$ and $E_{v}(F\cup F^{-1})\leq
E_{v}(F)^{d-1}\cdot \max \{\frac{1}{|\det (x)|_{v}},x\in F\cup \{1\}\}.$ So $%
e(F\cup F^{-1})\leq (d-1)e(F)+\sum_{x\in F}h(\det (x)^{-1})$ i.e. $e(F\cup
F^{-1})\leq (d|F|+d-1)\cdot e(F).$%
\endproof%

We can also compare $e(F)$ and $\widehat{h}(F)$ when $\widehat{h}(F)$ is
small:

\begin{proposition}
\label{hecomp}For every $\varepsilon >0$ there is $\delta =\delta
(d,\varepsilon )>0$ such that if $F$ is a finite subset of $SL_{d}(\overline{%
\mathbb{Q}})$ containing $1$ with $\widehat{h}(F)<\delta $, then $%
e(F)<\varepsilon $. Moreover $\widehat{h}(F)=0$ iff $e(F)=0.$
\end{proposition}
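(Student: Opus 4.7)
The plan is to bound $e(F)$ place by place using the comparisons between $R_v(F)$ and $E_v(F)$ already established in Section 2. First I would split $e(F) = e_\infty(F) + e_f(F)$ along archimedean and non-archimedean places. At a finite place, Proposition \ref{properties}$(v)$ gives the equality $R_v(F) = E_v(F)$, so $e_f(F) = \widehat{h}_f(F) \le \widehat{h}(F)$ without loss. All the work is therefore at the archimedean places, where the comparison in Proposition \ref{easyprop}$(b)$ is only additive and so not strong enough to conclude.

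To handle the archimedean places, I would invoke the growth estimate (\ref{gr2}) from Proposition \ref{smallRE2}. Its hypotheses are met at each archimedean $v$: the image of $F$ in $SL_d(K_v)$ is a bounded subset containing $1$, so $\log E_v(F) \ge 0$ and $\log R_v(F) \ge 0$. Writing $x_v = \log E_v(F)$ and $y_v = \log R_v(F)$, the inequality $y_v \ge c_1 x_v \cdot \min\{1, x_v\}$ can be inverted (splitting into the cases $x_v \le 1$ and $x_v > 1$) to give the uniform estimate
\[
x_v \;\le\; \frac{y_v}{c_1} \;+\; \sqrt{\frac{y_v}{c_1}}.
\]

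Next I would average over archimedean $v$ with the weights $n_v/[K:\mathbb{Q}]$. These weights sum to $1$, so they form a probability measure on the set of archimedean places. The linear term contributes $\widehat{h}_\infty(F)/c_1$, while Jensen's inequality applied to the concave square root gives
\[
\frac{1}{[K:\mathbb{Q}]}\sum_{v\mid\infty} n_v \sqrt{\frac{y_v}{c_1}} \;\le\; \sqrt{\frac{\widehat{h}_\infty(F)}{c_1}}.
\]
Combining with the finite-place contribution produces
\[
e(F) \;\le\; \widehat{h}(F) \;+\; \frac{\widehat{h}(F)}{c_1} \;+\; \sqrt{\frac{\widehat{h}(F)}{c_1}}.
\]
Both assertions of the proposition follow immediately. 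Given $\varepsilon>0$, pick $\delta=\delta(d,\varepsilon)$ small enough that the right-hand side is less than $\varepsilon$ whenever $\widehat{h}(F)<\delta$. For the final equivalence, the direction $e(F)=0\Rightarrow \widehat{h}(F)=0$ is contained in Proposition \ref{easyprop}$(b)$, and the converse follows by setting $\widehat{h}(F)=0$ in the displayed inequality.

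The only real obstacle is the inversion of (\ref{gr2}), which is nonlinear and quadratic near $x_v=0$; this is what forces a $\sqrt{\widehat{h}(F)}$ term on the right and is precisely why the comparison $e \leftrightarrow \widehat{h}$ requires some care when both heights are small. Once that step is handled, the rest is simply a clean application of Jensen's inequality to the probability measure supplied by the archimedean weights, exploiting that $SL_d$ forces $E_v(F)\ge 1$ so that $\log^+ = \log$ throughout.
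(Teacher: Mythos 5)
Your proof is correct and follows essentially the same route as the paper: both rest on the growth estimate of Proposition \ref{smallRE2} at each archimedean place, the identity $e_f(F)=\widehat{h}_f(F)$ from Proposition \ref{easyprop}(b) at the finite places, and a convexity step to pass from local to global. The only (cosmetic) difference is the order of operations: you invert the local inequality first and then average using Jensen's inequality for the square root, whereas the paper first averages via Cauchy--Schwarz to obtain the global bound $\widehat{h}_{\infty }(F)\geq \frac{c_{1}}{4}e_{\infty }(F)\min \{1,e_{\infty }(F)\}$ (Proposition \ref{hecomp2}) and inverts at the end.
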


This follows immediately from Proposition \ref{easyprop} b) and the
following proposition.

\begin{proposition}
\label{hecomp2}Let $c_{1}$ be the constant from Proposition \ref{smallRE2},
then
\begin{equation*}
\widehat{h}_{\infty }(F)\geq \frac{c_{1}}{4}\cdot e_{\infty }(F)\cdot \min
\{1,e_{\infty }(F)\}
\end{equation*}%
for any finite subset $F$ of $SL_{d}(\overline{\mathbb{Q}})$ containing $1.$
\end{proposition}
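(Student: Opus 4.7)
The plan is to deduce the bound place-by-place from Proposition \ref{smallRE2} (equation (\ref{gr2})) and then aggregate the local estimates via an elementary Jensen-type inequality. First I would observe that since $F \subset SL_d(\overline{\mathbb{Q}})$ contains $\mathrm{Id}$, Proposition \ref{properties}(i) gives $E_v(F) \geq R_v(F) \geq \Lambda_v(F) \geq 1$ at every place $v$, so $\log^+ = \log$ on these quantities throughout. Fix an archimedean $v$ and set $a_v := \log E_v(F) \geq 0$. Applying the local bound (\ref{gr2}) to $F \subset SL_d(K_v)$ yields
$$\log R_v(F) \;\geq\; c_1\, a_v \min\{1, a_v\}.$$
Writing $\mu_v := n_v/[K:\mathbb{Q}]$ (so $\sum_{v\in V_\infty}\mu_v = 1$) and $A := \sum_{v\in V_\infty}\mu_v a_v = e_\infty(F)$, summing the above weighted by $\mu_v$ gives
$$\widehat{h}_\infty(F) \;\geq\; c_1 \sum_{v \in V_\infty} \mu_v\, a_v \min\{1, a_v\}.$$

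It thus suffices to establish the purely elementary inequality $\sum_v \mu_v a_v \min\{1, a_v\} \geq \tfrac{1}{4}\, A \min\{1, A\}$. I would split $V_\infty = V_1 \sqcup V_2$ with $V_1 = \{v : a_v \leq 1\}$, and set $A_i := \sum_{V_i}\mu_v a_v$, so $A_1 + A_2 = A$ and $A_1 \leq \sum_{V_1}\mu_v \leq 1$. The left-hand side equals $\sum_{V_1}\mu_v a_v^2 + A_2$, and Cauchy--Schwarz on $V_1$ (using $\sum_{V_1}\mu_v \leq 1$) yields $\sum_{V_1}\mu_v a_v^2 \geq A_1^2$. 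Hence the left-hand side is bounded below by the convex function $A_1 \mapsto A_1^2 + (A - A_1)$ on the interval $A_1 \in [0, \min\{A,1\}]$, whose minimum equals $A^2$ when $A \leq \tfrac{1}{2}$ and $A - \tfrac{1}{4}$ otherwise. A direct case check in the two regimes $A \leq 1$ and $A \geq 1$ then shows that this minimum is $\geq \tfrac{1}{4}\, A \min\{1, A\}$, completing the proof.

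The only mild subtlety is that $x \mapsto x\min\{1, x\}$ is not globally convex (its derivative drops from $2$ to $1$ across $x=1$), so one cannot apply Jensen directly; this is precisely what forces the cosmetic factor $1/4$ to appear in the statement. Nothing in this argument affects the effectivity of $c_1$, which remains the only non-explicit ingredient (inherited from the compactness argument in the proof of Lemma \ref{CompLem}(b)).
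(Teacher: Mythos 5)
Your proof is correct and follows essentially the same route as the paper's: apply the local inequality $(\ref{gr2})$ of Proposition \ref{smallRE2} at each archimedean place, split the places according to whether $e_{v}(F)\leq 1$, and use Cauchy--Schwarz on the weighted sum over the small places. The only difference is cosmetic — you finish by explicitly minimizing the convex quadratic $A_{1}\mapsto A_{1}^{2}+(A-A_{1})$ over $[0,\min\{A,1\}]$, while the paper concludes with the case distinction on whether the contribution of the places with $e_{v}(F)>1$ is at least $\tfrac12 e_{\infty}(F)$ — and both give the same constant $\tfrac{c_{1}}{4}$.
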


\proof%
From Proposition \ref{smallRE2}, $\widehat{h}_{v}(F)\geq c_{1}\cdot
e_{v}(F)\cdot \min \{1,e_{v}(F)\}$ for every $v\in V_{\infty }.$ We may
write $e_{\infty }(F)=\alpha e^{+}(F)+(1-\alpha )e^{-}(F)$ where $e^{+}$ is
the average of the $e_{v}$ greater than $1$ and $e^{-}$ the average of the $%
e_{v}$ smaller than $1$ (i.e. $e^{+}\sum_{v\in V_{\infty
},e_{v}>1}n_{v}=\sum_{v\in V_{\infty },e_{v}>1}n_{v}e_{v}$ and similarly for
$e^{-}$). Applying Cauchy-Schwarz, we have $\widehat{h}_{\infty }(F)\geq
c_{1}\cdot (\alpha e^{+}+(1-\alpha )(e^{-})^{2}).$ If $\alpha e^{+}(F)\geq
\frac{1}{2}e_{\infty }(F),$ then $\widehat{h}_{\infty }(F)\geq \frac{c_{1}}{2%
}e_{\infty }(F)$, and otherwise $(1-\alpha )e^{-}\geq \frac{e_{\infty }}{2}$%
, hence $\widehat{h}_{\infty }(F)\geq c_{1}(1-\alpha )(e^{-})^{2}\geq \frac{%
c_{1}}{4}e_{\infty }^{2}.$ At any case $\widehat{h}_{\infty }(F)\geq \frac{%
c_{1}}{4}\cdot e_{\infty }(F)\cdot \min \{1,e_{\infty }(F)\}.$%
\endproof%

In order to use the previous proposition inside $GL_{d},$ we shall need the
following:

\begin{proposition}
\label{Ad comp Cor}For every finite set $F$ in $GL_{d}(\overline{\mathbb{Q}}%
) $, then

$(i)$ $\widehat{h}(Ad(F))\leq d(|F|+1)\cdot \widehat{h}(F),$

$(ii)$ $e(Ad(F))\leq d(|F|+1)\cdot e(F)$ and

$(iii)$ $e(F)\leq e(Ad(F))+|F|\cdot \widehat{h}(F).$
\end{proposition}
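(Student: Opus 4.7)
The plan is to handle $(i)$ and $(ii)$ via a single operator-norm estimate and $(iii)$ by factoring out central scalars and then passing to the spectral-radius interpretation.

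For $(i)$ and $(ii)$, the key inequality is the operator-norm bound
$$ \|Ad(f)\|_v \;\leq\; \|f\|_v\cdot\|f^{-1}\|_v \;\leq\; \frac{\|f\|_v^d}{|\det f|_v}, $$
valid at every place $v$, whose second step is the $KAK$-decomposition estimate $\|f^{-1}\|_v \leq \|f\|_v^{d-1}/|\det f|_v$ already used in Proposition~\ref{propbis}(d). Applied to a near-minimizing conjugate $gFg^{-1}$ of $E_v(F)$, and using that $\det$ is invariant under conjugation, this yields $E_v(Ad(F)) \leq E_v(F)^d \cdot \max_{f\in F}|\det f|_v^{-1}$; applied instead to $F^n$, and using multiplicativity of the determinant on products, it yields $\|Ad(F^n)\|_v \leq \|F^n\|_v^d \cdot (\max_{f}|\det f|_v^{-1})^n$. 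Taking $\log^+$ termwise, using $\log^+(xy) \leq \log^+x + \log^+y$, and summing over places produces
$$ e(Ad(F)) \leq d\, e(F) + \sum_{f \in F} h(\det f), \qquad h(Ad(F^n)) \leq d\, h(F^n) + n\sum_{f \in F} h(\det f). $$
Dividing the second by $n$ and letting $n\to\infty$ gives $\widehat h(Ad(F)) \leq d\, \widehat h(F) + \sum_f h(\det f)$. Since $\det f = \prod_i \lambda_i(f)$ and each eigenvalue satisfies $h(\lambda_i) \leq \widehat h(F)$ by Proposition~\ref{propbis}(c), we obtain $h(\det f) \leq d\, \widehat h(F) \leq d\, e(F)$; summing over $f \in F$ finishes both $(i)$ and $(ii)$.

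For $(iii)$, I would factor out the center. Write each $f\in F$ as $f = \delta_f f_0$ with $\delta_f = (\det f)^{1/d}$ a scalar (in a suitable extension) and $f_0 \in SL_d$; then $Ad(F) = Ad(F_0)$ for $F_0 = \{f_0 : f \in F\}$. Pulling scalars out of an arbitrary conjugation gives $\|gFg^{-1}\|_v \leq (\max_f|\delta_f|_v)\cdot\|gF_0g^{-1}\|_v$, so taking $\log^+$ and summing over places yields
$$ e(F) \;\leq\; e(F_0) + \tfrac1d \sum_{f \in F} h(\det f) \;\leq\; e(F_0) + |F|\cdot \widehat h(F), $$
exactly as in $(i)$. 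It remains to show $e(F_0) \leq e(Ad(F_0)) = e(Ad(F))$ (up to an additive $O_d(1)$, which gets absorbed). The sub-claim I would establish is $R_v(F_0) \leq R_v(Ad(F_0))$ at every place: for $g \in F_0^n \subset SL_d$ with eigenvalues $\lambda_1,\dots,\lambda_d$, the relation $\prod_i\lambda_i=1$ forces $\max_i|\lambda_i|_v \geq 1 \geq \min_j|\lambda_j|_v$, hence
$$ \Lambda_v(\{g\}) = \max_i|\lambda_i|_v \;\leq\; \max_{i,j}|\lambda_i/\lambda_j|_v = \Lambda_v(\{Ad(g)\}). $$
Taking the maximum over $g \in F_0^n$ and the supremum over $n$, Proposition~\ref{properties}$(iv)$ gives $R_v(F_0) \leq R_v(Ad(F_0))$, so $\widehat h(F_0) \leq \widehat h(Ad(F_0)) \leq e(Ad(F))$. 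Proposition~\ref{easyprop}(b) then costs only an absolute additive constant in passing from $\widehat h(F_0)$ to $e(F_0)$.

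The main obstacle is this last sub-claim. One cannot simply argue $E_v(F_0) \leq E_v(Ad(F_0))$, because $E_v(Ad(F_0))$ is an infimum over $GL_{d^2}$-conjugations while $E_v(F_0)$ is an infimum only over $GL_d$-conjugations, and a minimizer for $Ad(F_0)$ need not come from the subgroup $Ad(GL_d)\subset GL_{d^2}$. It is therefore essential to descend to eigenvalues, where the inequality is an elementary consequence of $\det=1$, and only then to lift back to the spectral radius via the spectral-radius formula.
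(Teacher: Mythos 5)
Your treatment of $(i)$ and $(ii)$ is correct and is essentially the paper's own argument: your operator-norm bound $\Vert Ad(f)\Vert_v\le\Vert f\Vert_v\Vert f^{-1}\Vert_v\le\Vert f\Vert_v^{d}/|\det f|_v$ is the upper half of Lemma \ref{normcomp}, the direction of the conjugation infimum is harmless there because $E_v(Ad(F))$ is an infimum over the larger group, and the final step $h(\det f)\le d\,\widehat h(F)$ via Proposition \ref{propbis}(c) is exactly how the paper concludes.

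The gap is in $(iii)$. Your chain gives, exactly, $e(F)\le e(F_0)+|F|\,\widehat h(F)$ and $\widehat h(F_0)\le\widehat h(Ad(F_0))\le e(Ad(F))$, but the return trip from $\widehat h(F_0)$ to $e(F_0)$ via Proposition \ref{easyprop}(b) costs the additive constant $|\log c|$, with $c=c(d)\in(0,1)$ from Lemma \ref{CompLem}(b): at archimedean places one only has $c\,E_v\le R_v\le E_v$, and your eigenvalue comparison $\Lambda_v(g)\le\Lambda_v(Ad(g))$ only controls $R_v(F_0)$, not $E_v(F_0)$. So what you actually prove is $e(F)\le e(Ad(F))+|F|\,\widehat h(F)+|\log c(d)|$, which is strictly weaker than the statement, and there is nothing for this constant to be ``absorbed'' into: the inequality in $(iii)$ carries no constant, and the one place it is used, Corollary \ref{Cor e h}, is exactly the degenerate regime $\widehat h(F)=0$, $e(Ad(F))=0$, where one must conclude $e(F)=0$; an additive $O_d(1)$ error destroys precisely that conclusion. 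Your concern about the restricted conjugation (a minimizer for $Ad(F)$ in $GL_{d^2}$ need not come from $Ad(GL_d)$) is legitimate, but the eigenvalue detour only resolves it at the finite places, where $E_v=R_v$ by Proposition \ref{properties}(v); at the infinite places it loses the very constant you then need to drop. The paper's own (terse) derivation of $(iii)$ runs the same termwise computation as in $(i)$--$(ii)$ but with the left-hand inequality of Lemma \ref{normcomp}, $\Vert x\Vert_v\le|\det x|_v^{1/d}\Vert Ad(x)\Vert_v$, place by place, with no archimedean loss. As a side remark, your argument combined with Proposition \ref{hecomp} applied to $F_0$ would still recover Corollary \ref{Cor e h}, so the route is salvageable for the application; but as a proof of $(iii)$ as stated it falls short.
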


\proof%
By Lemma \ref{Ad comp} below, $\log ||Ad(x)||_{v}\leq d\log
^{+}||x||_{v}+\log ^{+}|\det x^{-1}|_{v}$ for every place $v$ and $x\in
F^{n}.$ Thus $\log ||Ad(F^{n})||_{v}\leq d\log ^{+}||F^{n}||_{v}+n\max_{f\in
F}\log ^{+}|\det f^{-1}|_{v}.$ Letting $n$ go to infinity, we get $\log
R_{v}(Ad(F))\leq d\log ^{+}R_{v}(F)+\max_{f\in F}\log ^{+}|\det f^{-1}|_{v}.$
Summing over the places we obtain $\widehat{h}(Ad(F))\leq d\widehat{h}%
(F)+\sum_{f\in F}h(\det f^{-1})\leq d(1+|F|)\cdot \widehat{h}(F),$ where the
last inequality follows from Proposition \ref{propbis} (c). The other two
inequalities are proven in a similar way.
\endproof%

We used:

\begin{lemma}
\label{normcomp}\label{Ad comp}For every local field $k$ and every $x\in
GL_{d}(k)$, $\frac{1}{|\det (x)|_{k}^{1/d}}||x||_{k}\leq \left\Vert
Ad(x)\right\Vert _{k}\leq \frac{1}{|\det (x)|_{k}}\left\Vert x\right\Vert
_{k}^{d},$ where $Ad(x)\in GL(M_{d,d}(k)).$
\end{lemma}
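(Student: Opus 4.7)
The plan is to write $Ad(x)$ as the conjugation operator $y\mapsto xyx^{-1}$ on $M_d(k)$, equipped with the operator norm it inherits from the standard norm on $k^d$, and then to establish the two inequalities separately, each by a short direct calculation.

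For the upper bound, I would simply use the submultiplicativity of the operator norm on $M_d(k)$: for every $y\in M_d(k)$,
$$\|xyx^{-1}\|_k \leq \|x\|_k\,\|y\|_k\,\|x^{-1}\|_k,$$
so $\|Ad(x)\|_k \leq \|x\|_k\,\|x^{-1}\|_k$. It then suffices to invoke the standard estimate $\|x^{-1}\|_k \leq \|x\|_k^{d-1}/|\det x|_k$ (already used in the proof of Proposition \ref{propbis}(d), and immediate either from the cofactor formula $x^{-1}=(\det x)^{-1}\mathrm{adj}(x)$, whose entries are $(d-1)\times(d-1)$ minors bounded in $|\cdot|_k$ by $\|x\|_k^{d-1}$, or from a Cartan $KAK$ decomposition), which yields the desired right-hand inequality.

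For the lower bound, I would exploit the Cartan decomposition $x=k_1 a k_2$ in $GL_d(k)$, with $a=\mathrm{diag}(a_1,\ldots,a_d)$ arranged so that $|a_1|_k\geq\ldots\geq |a_d|_k$ and $k_1,k_2$ in the maximal compact subgroup of $GL_d(k)$ (that is, $O_d(\mathbb{R})$ or $U_d(\mathbb{C})$ in the Archimedean case, and $GL_d(\mathcal{O}_k)$ in the non-Archimedean case). The maximal compact acts by isometries on $k^d$, so the operator norm on $M_d(k)$ is bi-invariant under it, which gives $\|Ad(x)\|_k=\|Ad(a)\|_k$ and $\|x\|_k=|a_1|_k$. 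Testing $Ad(a)$ on the elementary matrix $E_{1d}$, which has unit operator norm, one computes $aE_{1d}a^{-1}=(a_1/a_d)E_{1d}$, whence $\|Ad(a)\|_k \geq |a_1|_k/|a_d|_k$. The arithmetic--geometric mean inequality applied to the nonnegative real numbers $|a_1|_k\geq\ldots\geq |a_d|_k$ gives $|a_d|_k \leq (|a_1|_k\cdots|a_d|_k)^{1/d} = |\det x|_k^{1/d}$, and combining yields $\|Ad(x)\|_k \geq \|x\|_k/|\det x|_k^{1/d}$.

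The lemma is essentially elementary; the only point requiring a little care is ensuring that the Cartan decomposition and the bi-invariance of the operator norm under the maximal compact subgroup are available uniformly in all local fields $k$ of characteristic $0$. This is entirely classical (singular value decomposition in the Archimedean case, Smith normal form / Bruhat--Tits in the non-Archimedean case), so I expect no real obstacle.
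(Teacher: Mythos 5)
Your proof is correct and follows essentially the same route as the paper's: both arguments rest on the Cartan decomposition $x=k_1ak_2$, the invariance of the norms under the maximal compact subgroup, and the fact that for diagonal $a$ the conjugation operator has norm $|a_1|_k/|a_d|_k$, which is then compared with $|\det x|_k$ exactly as you do. The only difference is cosmetic: the paper records the identity $\left\Vert Ad(a)\right\Vert_k=|a_1|_k/|a_d|_k$ outright, whereas you obtain the lower bound by testing on $E_{1d}$ and the upper bound via submultiplicativity together with $\left\Vert x^{-1}\right\Vert_k\leq \left\Vert x\right\Vert_k^{d-1}/|\det x|_k$ (and your appeal to the arithmetic--geometric mean inequality is really just the observation that the smallest $|a_i|_k$ is at most their geometric mean), which is the same numerical content.
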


\proof%
By the Cartan decomposition, we may assume that $x$ is diagonal $%
x=diag(a_{1},...,a_{d})$ with $|a_{1}|\geq ...\geq |a_{d}|.$ Then $%
\left\Vert x\right\Vert _{k}=|a_{1}|_{k}$ and $\left\Vert Ad(x)\right\Vert
_{k}=\frac{|a_{1}|_{k}}{|a_{d}|_{k}}.$ On the other hand $|\det
(x)|=|a_{1}\cdot ...\cdot a_{d}|$ hence $\frac{|a_{1}|_{k}}{|\det
(x)|_{k}^{1/d}}\leq |a_{1}|_{k}/|a_{d}|_{k}\leq \frac{|a_{1}|_{k}^{d}}{|\det
(x)|_{k}}.$ We are done.
\endproof%

\begin{corollary}
\label{Cor e h}Let $F$ be a finite subset of $GL_{d}(\overline{\mathbb{Q}}).$
Then $\widehat{h}(F)=0$ if and only if $e(F)=0.$
\end{corollary}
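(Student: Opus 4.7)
The plan hinges on combining Proposition \ref{hecomp} (which handles only $SL_d$) with the adjoint representation trick encoded in Proposition \ref{Ad comp Cor}.

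The implication $e(F)=0 \Rightarrow \widehat{h}(F)=0$ is immediate from Proposition \ref{easyprop}(b), which gives $\widehat{h}(F)\leq e(F)$: both heights are sums of non-negative terms, so the vanishing of the upper bound forces the vanishing of $\widehat{h}(F)$.

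For the converse $\widehat{h}(F)=0 \Rightarrow e(F)=0$, the obstacle is that Proposition \ref{hecomp} is stated only for finite subsets of $SL_d$ containing $\Id$, whereas $F$ is merely in $GL_d$. The trick is to pass through the adjoint representation, which embeds $GL_d$ into $SL_{d^2}$: writing $Ad(g) = L_g \circ R_{g^{-1}}$, where $L_g$, $R_{g^{-1}}$ denote left and right multiplication on $M_d(\overline{\mathbb{Q}})$, one computes $\det Ad(g) = (\det g)^d \cdot (\det g)^{-d} = 1$, so $Ad(F) \subset SL(\mathfrak{gl}_d) \cong SL_{d^2}$.

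Concretely, I would set $F' := F \cup \{\Id\}$ (which changes neither $\widehat{h}$ nor $e$, by Proposition \ref{easyprop}(c) and Proposition \ref{propbis}(d)) and chain the estimates: Proposition \ref{Ad comp Cor}(i) gives $\widehat{h}(Ad(F')) \leq d(|F'|+1)\,\widehat{h}(F') = 0$; then Proposition \ref{hecomp} applied to $Ad(F') \subset SL_{d^2}$, which contains $\Id$, yields $e(Ad(F')) = 0$; finally Proposition \ref{Ad comp Cor}(iii) gives $e(F) = e(F') \leq e(Ad(F')) + |F'|\,\widehat{h}(F') = 0$. Once the adjoint trick is identified, no genuine obstacle remains, since all the ingredients have already been assembled in Section 2.
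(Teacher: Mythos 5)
Your proof is correct and follows essentially the same route as the paper's own argument: pass to the adjoint representation (whose image lies in $SL_{d^2}$ since $\det Ad(g)=1$), apply Proposition \ref{Ad comp Cor}(i) to get $\widehat{h}(Ad(F))=0$, invoke Proposition \ref{hecomp} to conclude $e(Ad(F))=0$, and return via Proposition \ref{Ad comp Cor}(iii), with the converse coming from $\widehat{h}(F)\leq e(F)$. Your only addition is the (welcome) care of adjoining the identity so that the hypothesis of Proposition \ref{hecomp} is literally satisfied, a point the paper passes over silently.
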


\proof%
By Proposition \ref{Ad comp Cor}, if $\widehat{h}(F)=0,$ then $\widehat{h}%
(Ad(F))=0.$ Since the elements of $Ad(F)$ have determinant $1,$ we may apply
Proposition \ref{hecomp} and obtain $e(Ad(F))=0.$ By the last inequality in
Proposition \ref{Ad comp Cor}, we get $e(F)=0.$ The converse is clear from
Proposition \ref{easyprop} (b).
\endproof%

\begin{remark}
\label{tala}In \cite{Tala} a variant of our height function $\widehat{h}$ is
studied in the case when $F$ is a single matrix. Namely setting $h_{0}(g):=%
\frac{1}{[K:\mathbb{Q}]}\sum_{v\in V_{K}}n_{v}\log ||g||_{v}$ for $g\in
M_{d}(\overline{\mathbb{Q}}),$ then it is shown in \cite{Tala} among other
things that if $g\in GL_{d}(\overline{\mathbb{Q}}),$ then $%
h_{0}(g)=\sup_{x\in \overline{\mathbb{Q}}^{d}\backslash
\{0\}}(h_{0}(gx)-h_{0}(x))$ and that $\lim_{n\rightarrow +\infty }\frac{1}{n}%
h_{0}(g^{n})=\frac{1}{[K:\mathbb{Q}]}\sum_{v\in V_{K}}n_{v}\log \Lambda
_{v}(g)$. The results of this section can be seen as a generalization of
\cite{Tala} to sets $F$ with more than one matrix.
\end{remark}

\section{Statement of the results\label{state}}

We state here our results. The main theorem is the following:

\begin{theorem}
\label{main}(Height gap) There exists a positive constant $\varepsilon
=\varepsilon (d)>0$ with the following property. Let $F$ be a finite subset
of $GL_{d}(\overline{\mathbb{Q}})$ generating a non virtually solvable
subgroup. Then $\widehat{h}(F)\geq \varepsilon .$
\end{theorem}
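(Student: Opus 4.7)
The plan is to argue by contradiction: suppose there is a sequence of finite subsets $F_n \subset GL_d(\overline{\mathbb{Q}})$, each generating a non-virtually-solvable subgroup, with $\widehat{h}(F_n) \to 0$, and derive an impossible algebraic relation. First I would reduce to the case where $F = \{A,B\}$ consists of two regular semisimple elements generating a Zariski-dense subgroup of an absolutely almost simple algebraic group $\mathbb{G}$ of adjoint type, still with $\widehat{h}(F) = O(\widehat{h}(F_n))$. Three tools are needed here: Proposition \ref{Ad comp Cor} to pass to the (faithful) adjoint representation, Proposition \ref{2elem} to extract two Zariski-dense generators from a bounded-length word, and the Eskin--Mozes--Oh escape lemma to arrange that the pair $(A,B)$ avoids a prescribed proper subvariety $V$ whose definition will be pinned down only at the last step. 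Proposition \ref{minh} (good position) then allows conjugation so that $h(F)$ is comparable to $\widehat{h}(F)$, and Proposition \ref{easyprop}(b) yields smallness of the minimal height $e(F)$.

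Next, I would carry out a local analysis at each place $v$ of a number field $K$ containing the entries of $F$. After conjugating so that $A$ lies in a suitable maximal torus $T_v$ of $\mathbb{G}(K_v)$ and decomposing $B$ along the root spaces, the Iwasawa decomposition combined with a downward induction on positive roots (starting from the highest root and working through the height filtration) should yield, for each root $\alpha$, a bound of the form
\[
\inf_{t \in T_v} \log^+ \|t B t^{-1}\|_v \;\leq\; C \cdot e_v(F) \;+\; C \cdot \log^+ \frac{1}{|1 - \alpha(A)|_v}.
\]
Summing over $v$ with weights $n_v$ and applying the product formula to $1 - \alpha(A) \in \overline{\mathbb{Q}}^\times$ converts the dangerous denominators into an essentially archimedean Mahler-measure-type sum $\sum_{v \mid \infty} n_v \log^+ |1 - \alpha(A)|_v$. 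Since $\alpha(A)$ is a monomial in eigenvalues of $A$, Proposition \ref{propbis}(c) gives $h(\alpha(A)) = O(\widehat{h}(F)) \to 0$, and Bilu's equidistribution theorem then forces the Galois conjugates of $\alpha(A)$ to equidistribute on the unit circle, so this sum also tends to $0$ (genericity $\alpha(A) \neq 1$ is again arranged by Eskin--Mozes--Oh). The outcome is that, for an appropriate conjugate, every matrix coefficient of $B$ has height tending to $0$.

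Finally, I would fix a nonconstant regular function $f$ on $\mathbb{G}$ invariant under $T$-conjugation --- for example a matrix coefficient relative to weight vectors of $T$. The previous step delivers $h(f(B^i)) \to 0$ for $i = 1, \dots, N$ with $N = N(d)$ large. Embedding the tuple $(f(B), f(B^2), \dots, f(B^N))$ into a product of algebraic tori and invoking Zhang's theorem on small points of tori \cite{Zhang}, the Zariski closure of these small-height tuples must be a finite union of torsion cosets, which translates into a nontrivial polynomial relation $P$ on $(A,B) \in \mathbb{G}^2$ that is independent of $n$. Taking $V = \{P = 0\}$ in the initial reduction and invoking the escape lemma to avoid $V$ yields the desired contradiction.

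The hard part will be the local estimates through the Iwasawa decomposition and their assembly via the product formula: the root gaps $|1-\alpha(A)|_v$ live in denominators and can be arbitrarily small at individual places, and only the global averaging supplied by Bilu's equidistribution tames them. This is precisely the mechanism by which the Lehmer obstruction, genuine for the torus $GL_1$, evaporates in the semisimple setting --- and is the technical heart of the whole argument.
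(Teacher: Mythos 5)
Your outline reproduces the paper's overall strategy (reduction to a pair generating a Zariski-dense subgroup of a simple adjoint group, Iwasawa-type local estimates along the positive roots, product formula plus Bilu, then Zhang plus the Eskin--Mozes--Oh escape), but the step where you claim to tame the denominators fails as written. Bilu's theorem makes the \emph{signed} archimedean average $\frac{1}{[K:\mathbb{Q}]}\sum_{v\mid\infty}n_v\log|1-\alpha(A)|_v$ tend to $0$; it does not make $\sum_{v\mid\infty}n_v\log^+|1-\alpha(A)|_v$, nor the corresponding sum for $|1-\alpha(A)|_v^{-1}$, small. Under equidistribution on the unit circle these positive-part averages converge to $\int_0^1\log^+|1-e^{2\pi i\theta}|\,d\theta>0$, a fixed positive constant. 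Hence with your proposed local bound $\inf_{t}\log^+\|tBt^{-1}\|_v\le C\,e_v(F)+C\log^+|1-\alpha(A)|_v^{-1}$, the global sum is only $O(1)$, not $o(1)$, and Zhang's theorem (whose threshold $\mu$ you cannot choose) yields nothing. This is precisely where the paper needs more: a second, refined archimedean estimate (Proposition \ref{upper2}) in which the gap factor enters multiplicatively against a vanishing power $s_v^{D_1}$ of the local displacement, plus a four-way splitting of the places governed by two parameters ($\varepsilon$ and a truncation level $A$), so that only the truncated tail $h_\infty^{A}(\delta_i^{-1})$ and the finite part $h_f(\delta_i^{-1})$ (recovered via the product formula and the signed Bilu average, Lemmas \ref{lillem0}--\ref{lillem2}) must be small, while the remaining archimedean gap terms are absorbed into $\frac{\log A}{\varepsilon}\,e(F)$. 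You would also need the degree hypothesis in Bilu's theorem, which the paper secures by demanding $A_1$-regular elements in the escape step, not merely $\alpha(A)\neq 1$.

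Two further problems occur in your reduction. Proposition \ref{minh} cannot be invoked: in the paper its proof uses Theorem \ref{main}, so your appeal to it is circular; it is also unnecessary, since the local analysis only requires, at each place separately, a conjugate nearly realizing $E_v(F)$ (Lemma \ref{Mostow}). Moreover Proposition \ref{easyprop}(b) does not give smallness of $e(F)$ from smallness of $\widehat{h}(F)$: it only yields $e(F)\le\widehat{h}(F)+|\log c|$ with a fixed additive constant. The passage from $\widehat{h}$ small to $e$ small is a genuine step, supplied in the paper by Proposition \ref{hecomp}, which rests on the CAT(0) displacement-growth inequality of Proposition \ref{smallRE2} together with the adjoint trick of Proposition \ref{Ad comp Cor}. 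Finally, when you invoke Zhang you must also check that the closure of the image of $g\mapsto(f(g),\dots,f(g^{d+1}))$ is not itself a finite union of torsion cosets; this is the multiplicative independence statement of Lemma \ref{multindep}, without which no nontrivial relation on $(A,B)$ is produced and the contradiction with the escape step evaporates.
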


It is easy to characterize sets of zero normalized height :

\begin{proposition}
\label{zeroheight}(Height zero points) If $F$ is a finite subset of $GL_{d}(%
\overline{\mathbb{Q}})$, then $\widehat{h}(F)=0$ if and only if the group
generated by $F$ is virtually unipotent.
\end{proposition}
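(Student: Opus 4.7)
The plan is to prove the equivalence with the quasi-unipotent condition (every element of $\langle F\rangle$ has roots of unity as eigenvalues), as defined in the introduction; I will then briefly sketch the standard argument that this coincides with being virtually unipotent in our setting.

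For the easy direction, I will assume $\langle F\rangle$ is quasi-unipotent. Then for every $n\geq 1$ and every place $v$ of the number field $K$ generated by the entries of $F$, every eigenvalue $\lambda$ of every element of $F^{n}$ is a root of unity, so $|\lambda|_{v}=1$, and therefore $\Lambda_{v}(F^{n})=1$. Proposition \ref{properties}(iv) then yields $R_{v}(F)=\sup_{n\geq 1}\Lambda_{v}(F^{n})^{1/n}=1$, so $\log^{+}R_{v}(F)=0$ at every $v$ and $\widehat{h}(F)=0$ by formula (\ref{normhei}).

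For the converse, I will start from $\widehat{h}(F)=0$. Corollary \ref{Cor e h} gives $e(F)=0$, and Proposition \ref{propbis}(d) upgrades this to $e(F\cup F^{-1})\leq(d|F|+d-1)\cdot e(F)=0$; combined with $\widehat{h}\leq e$ from Proposition \ref{easyprop}(b), this forces $\widehat{h}(F\cup F^{-1})=0$, and then $\widehat{h}((F\cup F^{-1})^{n})=0$ for all $n$ by Proposition \ref{easyprop}(c). Proposition \ref{propbis}(c) now says that any eigenvalue $\lambda$ of any element of $(F\cup F^{-1})^{n}$ satisfies $h(\lambda)=0$. Such a $\lambda$ is nonzero (the matrices being invertible), so Kronecker's theorem on algebraic numbers of zero Weil height forces $\lambda$ to be a root of unity. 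Since every element of $\langle F\rangle$ belongs to some $(F\cup F^{-1})^{n}$, the group $\langle F\rangle$ is quasi-unipotent.

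To finish, I will verify that quasi-unipotent and virtually unipotent coincide here. Since the entries of $F$ lie in a number field $K$, eigenvalues of elements of $\langle F\rangle$ lie in extensions of $K$ of degree at most $d$, hence inside a fixed number field containing only finitely many roots of unity; so some $N$ satisfies $\lambda^{N}=1$ for every such eigenvalue, i.e.\ $g^{N}$ is unipotent for every $g\in\langle F\rangle$. The condition ``$g^{N}$ is unipotent'' is Zariski closed, so it extends to the Zariski closure $\mathbb{G}$ of $\langle F\rangle$; this forces every semisimple element of the connected component $\mathbb{G}^{0}$ to have order dividing $N$, so $\mathbb{G}^{0}$ has no positive-dimensional torus and is unipotent by the Levi decomposition. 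Thus $\langle F\rangle\cap\mathbb{G}^{0}$ is a unipotent finite-index subgroup of $\langle F\rangle$. No single step is a serious obstacle; the proposition is essentially a repackaging of the already-proved properties of the three heights together with Kronecker's theorem.
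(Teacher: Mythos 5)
Your argument is correct, and the height-theoretic core is exactly the paper's: for $\widehat{h}(F)=0\Rightarrow$ quasi-unipotent you run the same chain (Corollary \ref{Cor e h}, Proposition \ref{propbis}(d), $\widehat{h}\leq e$ and homogeneity from Proposition \ref{easyprop}, then Proposition \ref{propbis}(c) plus Kronecker), and for the converse you use Proposition \ref{properties}(iv) just as the paper does. The genuine divergence is the last step, passing from ``all eigenvalues are roots of unity'' to ``virtually unipotent'': the paper simply invokes Theorem 6.11 of Raghunathan's book, which produces a finite-index subgroup of any finitely generated linear group in characteristic $0$ none of whose elements has a nontrivial root of unity as eigenvalue, whereas you give a self-contained argument (uniform bound $N$ on the order of the root-of-unity eigenvalues, Zariski closure, no positive-dimensional torus in $\mathbb{G}^{0}$, Levi decomposition, $\langle F\rangle\cap\mathbb{G}^{0}$ unipotent of finite index). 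Your route avoids the external citation and exploits the $\overline{\mathbb{Q}}$-setting; the citation is shorter and needs no degree bookkeeping. One phrase in your argument should be repaired: the eigenvalues of the various elements of $\langle F\rangle$ do \emph{not} all lie in a single fixed number field (the compositum of all degree $\leq d$ extensions of $K$ is infinite); what is true, and all you need, is that each such eigenvalue has degree at most $d\,[K:\mathbb{Q}]$ over $\mathbb{Q}$, and since $\varphi(n)\to\infty$ only finitely many orders of roots of unity occur in bounded degree, which gives your uniform $N$. Finally, the implication ``virtually unipotent $\Rightarrow$ quasi-unipotent'', which your easy direction needs, is left tacit; it is trivial (a power of any element lands in the unipotent finite-index subgroup), and the paper asserts it without proof as well, but a half-sentence would close the loop.
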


\proof%
If $\widehat{h}(F)=0,$ then $e(F)=0$ by Corollary \ref{Cor e h}. Now by
Proposition \ref{propbis}, $e(F\cup F^{-1})=0,$ hence $\widehat{h}((F\cup
F^{-1})^{n})=n\widehat{h}(F\cup F^{-1})=0$ for each $n\in \mathbb{N}$. Thus
every element from the group $\left\langle F\right\rangle $ generated by $F$
has only roots of unity as eigenvalues. However, according to Theorem 6.11
in \cite{Rag}, $\left\langle F\right\rangle $ has a finite index subgroup $%
\Gamma _{0}$ for which no element has a non-trivial root of unity as
eigenvalue. Therefore every element in $\Gamma _{0}$ must be unipotent, i.e.
$\Gamma _{0}$ is unipotent. Conversely, if $\left\langle F\right\rangle $ is
virtually unipotent, then every element in $\left\langle F\right\rangle $
has its eigenvalues among the roots of unity. In particular, as follows from
Proposition \ref{properties} $(iv)$, $R_{v}(F)=1$ for every place $v.$ Hence
$\widehat{h}(F)=0.$
\endproof%

The above results dealt with small values of the normalized height. The
following proposition says in substance that, provided $\left\langle
F\right\rangle $ has semisimple Zariski closure, the normalized height is
attained up to a constant by the height of some suitable conjugate of $F.$
We have

\begin{proposition}
\label{minheight}(Comparison between $h$ and $\widehat{h}$) If $\mathbb{G}$
is a semisimple algebraic group over $\overline{\mathbb{Q}}$ and $(\rho ,V)$
a finite dimensional linear representation of $\mathbb{G}$, then there is $%
C\geq 1$ and there is\ a choice of a basis on $V$ with associated height
function $h$ on $End(V),$ such that if $F$ is any finite subset of $\mathbb{G%
}(\overline{\mathbb{Q}})$ generating a Zariski-dense subgroup of $\mathbb{G}$%
, we have
\begin{equation*}
\widehat{h}(\rho (F))\leq e(\rho (F))\leq h(\rho (gFg^{-1}))\leq C\cdot
\widehat{h}(\rho (F))
\end{equation*}%
for some $g\in \mathbb{G}(\overline{\mathbb{Q}}).$
\end{proposition}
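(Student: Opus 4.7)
The first inequality $\widehat{h}(\rho(F)) \leq e(\rho(F))$ is a restatement of Proposition \ref{easyprop}(b), itself coming from the pointwise bound $R_v(Q) \leq E_v(Q)$ of Proposition \ref{properties}(i). The second inequality is automatic: for any $g \in \mathbb{G}(\overline{\mathbb{Q}})$, the conjugation-invariance of $E_v$ gives $E_v(\rho(F)) = E_v(\rho(gFg^{-1})) \leq \|\rho(gFg^{-1})\|_v$ at every place, and weighted summation yields $e(\rho(F)) \leq h(\rho(gFg^{-1}))$.

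The substance lies in the third inequality, where we must produce a single $g \in \mathbb{G}(\overline{\mathbb{Q}})$ that simultaneously almost realizes the local infima $E_v(\rho(F))$ at every place $v$. My plan is, first, to fix an integral Chevalley structure on $(\mathbb{G}, \rho)$: this gives a lattice $V_{\mathbb{Z}} \subset V$ and a maximal torus $T$, and the basis of $V_{\mathbb{Z}}$ is the one with respect to which $h$ is computed. Second, I would invoke the local displacement estimates from Section 5 (using the Cartan/Iwasawa decomposition of $\mathbb{G}(K_v)$ and the spectral radius formula Lemma \ref{CompLem}) to find, at each place $v$, an element $g_v \in \mathbb{G}(\overline{K_v})$ with $\|\rho(g_v F g_v^{-1})\|_v \leq C_1 \cdot E_v(\rho(F))$ where $C_1 = C_1(\mathbb{G}, \rho)$; the key point is that the infimum defining $E_v$, a priori taken over all of $GL(V)$, is attainable up to a bounded factor by a conjugation coming from $\mathbb{G}$ itself, thanks to semisimplicity.

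Third, the integral Chevalley structure ensures that for all but a finite set $S$ of places one has $\rho(F) \subset \rho(\mathbb{G}(\mathcal{O}_v))$, hence $\|\rho(F)\|_v = E_v(\rho(F)) = 1$ and we may take $g_v = 1$. The local data $\{g_v\}_{v \in S}$ is then patched by a (weak/strong) approximation argument in $\mathbb{G}$: since $\mathbb{G}$ is connected semisimple, one can find a global $g \in \mathbb{G}(\overline{\mathbb{Q}})$ whose image at each $v \in S$ differs from $g_v$ by an element of a fixed compact piece (absorbed into $C_1$), while keeping $\rho(g)$ integral outside $S$. Summing the local bounds yields $h(\rho(gFg^{-1})) \leq C_2 \cdot e(\rho(F))$, and the comparison $e(\rho(F)) \leq C_3 \cdot \widehat{h}(\rho(F))$ follows from Proposition \ref{easyprop}(b) together with Proposition \ref{hecomp2} applied to $Ad \circ \rho$ (which has image in $SL$), the latter being related back to $\rho$ via Proposition \ref{Ad comp Cor}.

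The principal obstacle is the local statement in step two, namely that $E_v(\rho(F))$ is achievable up to a bounded factor by conjugation inside $\mathbb{G}(\overline{K_v})$ rather than merely inside $GL(V)(\overline{K_v})$; this is exactly the content of the local analysis of Section 5, where the simple-root computations with respect to the Chevalley basis are carried out. A secondary delicate point is the conversion of the place-wise multiplicative bound $\|\rho(gFg^{-1})\|_v \leq C_1 E_v(\rho(F))$, which adds a bounded constant per place, into a genuinely multiplicative global bound; this is handled by noting that only finitely many places contribute to $h$ and by using the comparison between $e_\infty$ and $\widehat{h}_\infty$ of Proposition \ref{hecomp2} to absorb the constants, possibly at the cost of enlarging $C$.
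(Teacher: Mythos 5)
Your first two inequalities, the local step (which is really Lemma \ref{Mostow}, proved by a CAT(0) projection argument, rather than the root estimates of Section 5), and the observation that almost all places contribute nothing are fine. The genuine gap is the patching step. You need a single $g\in\mathbb{G}(\overline{\mathbb{Q}})$ which simultaneously stays within bounded distance of the near-minimizer $g_v$ at every $v\in S$ \emph{and} lies in the norm stabilizer $\mathbf{K}_w$ at every $w\notin S$; otherwise conjugation by $g$ re-inflates the norms at the places outside $S$, and their contribution to $h(\rho(gFg^{-1}))$ is governed by the height of $g$ itself, which your argument never controls. This amounts to approximating the adelic point $(g_v)_v$ (with components in $\mathbf{K}_w$ off $S$) by a principal point, and no approximation theorem delivers that: $\mathbb{G}(K)$ is discrete, not dense, in $\mathbb{G}(\mathbb{A}_K)$; weak approximation only controls the finitely many places of $S$, while strong approximation requires simple connectedness (so is unavailable for the adjoint groups) and in any case only gives integrality away from $S$ at the price of surrendering all control at the archimedean places, exactly where the height lives. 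The paper's remark after the statement makes this explicit: $e(F)$ \emph{is} the infimum of $h(gFg^{-1})$ over adelic $g$, and the proposition is precisely the claim that this infimum is attained up to a constant on principal ad\`{e}les, so an approximation-based patching assumes the conclusion. A telling symptom is that your argument nowhere uses the hypothesis that $\langle F\rangle$ is Zariski dense, yet without it the statement is false: for $F=\{u\}$ with $u\neq 1$ unipotent one has $e(\rho(F))=\widehat{h}(\rho(F))=0$, but $h(\rho(gug^{-1}))>0$ for every $g$, since a unipotent of archimedean operator norm at most $1$ must be the identity (and the paper itself notes the failure when $F$ normalizes a unipotent subgroup).

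For comparison, the paper's route is global-algebraic rather than local-to-global: after reducing to the adjoint representation (Proposition \ref{welldefheight}, with additive constants absorbed via the height gap, Theorem \ref{main}), it uses escape (Proposition \ref{twoelements}) to find in a bounded power of $F$ a pair $(a,b)$ with $a$ regular semisimple in a maximal torus $T$ and all matrix coefficients of $Ad(b)$ nonzero, and takes as global conjugator an explicit $t\in T(\overline{\mathbb{Q}})$ chosen to symmetrize finitely many coefficients of $Ad(b)$. The local estimates of Section 5, in the form $(\ref{Bbound})$, then pin down the torus parts $\alpha_j(t_v)$ of the local minimizers in terms of these coefficients, and the product formula converts this into height bounds on all coefficients of $Ad(a)$, $Ad(b)$ and, via a Burnside/irreducibility argument (this is where Zariski density really enters), on all of $Ad(F)$, of the form $O_d(1)(1+e(F))$; Theorem \ref{main} absorbs the constant. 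Note that even your final conversion of $e(\rho(F))$ into $C\cdot\widehat{h}(\rho(F))$ needs this height gap to absorb additive constants, which you did not invoke.
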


Recall from Remark \ref{basechange} that if we change the basis of $V$ the
associated height differs from the original one only by an additive
constant. This proposition subsumes Proposition \ref{minh} from the
introduction. It is important for the applications as it allows us to
conjugate $F$ back in the \textquotedblleft right position\textquotedblright
. Observe that by definition $e(F)$ is equal to the infimum of $h(gFg^{-1})$
when $g=(g_{v})_{v\in V_{K}}$ is allowed to vary among the full group of ad%
\`{e}les $GL_{d}(\mathbb{A}).$ This proposition shows that this infimum is
attained up to a multiplicative constant on principal ad\`{e}les, i.e. on $%
GL_{d}(\overline{\mathbb{Q}}).$ The condition that the Zariski closure of
the group generated by $F$ should be semisimple is important as easy
examples show that the result of the proposition can fail if for instance $F$
normalizes a unipotent subgroup.

The normalized height $\widehat{h}$ was defined for an arbitrary finite
subset of $GL_{d}(\overline{\mathbb{Q}})$. If $\mathbb{G}$ is an arbitrary
semisimple group, one can define the normalized height for $\mathbb{G}$ as
the one you obtain after taking some absolutely irreducible representation
of $\mathbb{G}$ which is non trivial on each factor of $\mathbb{G}$. The
following proposition shows that up to constants, this height is independent
of the choice of the representation.

\begin{proposition}
\label{welldefheight}(Invariance under change of representation) Let $%
\mathbb{G}$ be a semisimple algebraic group over $\overline{\mathbb{Q}}$ and
$(\rho _{i},V_{i})$ for $i=1,2$ be two finite dimensional linear
representations of $\mathbb{G}$ which are non trivial on each simple factor
of $\mathbb{G}$. Let $h_{i}$ be a height function on $End(V_{i})$ defined as
above by the choice of a basis in each $V_{i}.$ Then there are constants $%
C_{12},C_{12}^{\prime }\geq 1$ such that for any finite subset $F$ of $%
\mathbb{G}(\overline{\mathbb{Q}}),$ we have%
\begin{equation*}
\frac{1}{C_{12}}\cdot h_{2}(\rho _{2}(F))-C_{12}^{\prime }\leq h_{1}(\rho
_{1}(F))\leq C_{12}\cdot h_{2}(\rho _{2}(F))+C_{12}^{\prime }
\end{equation*}%
In particular%
\begin{equation*}
\frac{1}{C_{12}}\cdot \widehat{h_{2}}(\rho _{2}(F))\leq \widehat{h_{1}}(\rho
_{1}(F))\leq C_{12}\cdot \widehat{h_{2}}(\rho _{2}(F))
\end{equation*}%
Moreover the constant $C_{12}$ depends only on $\rho _{1}$ and $\rho _{2}$
and is independent of the choice of basis used to define $h_{1}$ and $h_{2}.$
\end{proposition}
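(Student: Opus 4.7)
Proof plan. By symmetry it suffices to establish only one direction, namely $h_1(\rho_1(F)) \le C_{12}\,h_2(\rho_2(F)) + C_{12}'$; the inequality for the normalized heights $\widehat{h_i}$ then follows by applying this bound to $F^n$, dividing by $n$, and letting $n\to\infty$, so that the additive constant disappears in the limit (cf.\ Proposition \ref{easyprop} (a)). Using Weyl's complete reducibility of representations of $\mathbb{G}$ (valid since $\mathbb{G}$ is semisimple in characteristic zero), I may further reduce to the case where $\rho_1$ is irreducible, since the operator norm of a block-diagonal matrix is at most the sum of the block norms.

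The central step is a tensor embedding at the level of the adjoint group. If $\rho_1$ is irreducible then $Z(\mathbb{G})$ acts by scalars on $V_1$, so $\rho_1 \otimes \rho_1^*$ is trivial on $Z(\mathbb{G})$ and descends to a representation of $\mathrm{Ad}(\mathbb{G}) = \mathbb{G}/Z(\mathbb{G})$. Consider the ``diagonal'' subrepresentation
\[
\sigma \;:=\; \bigoplus_k \rho_2^{(k)} \otimes (\rho_2^{(k)})^*
\]
of $\rho_2 \otimes \rho_2^*$, indexed by the irreducible summands of $\rho_2$. It likewise descends to $\mathrm{Ad}(\mathbb{G})$, and the hypothesis that $\rho_2$ is non-trivial on each simple factor $\mathbb{G}_j$ guarantees that for each $j$ some summand $\rho_2^{(k)}$ restricts non-trivially to $\mathbb{G}_j$; then $\mathrm{End}_0(\rho_2^{(k)}) \subset \sigma$ is a non-trivial algebraic representation of the adjoint simple factor $\mathbb{G}_j/Z(\mathbb{G}_j)$, hence faithful on it. Therefore $\sigma$ is a faithful representation of $\mathrm{Ad}(\mathbb{G})$. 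By the standard Chevalley/Tannakian fact that every finite-dimensional representation of a reductive group is a subrepresentation of a tensor power of any faithful one, one obtains an embedding of $\mathbb{G}$-representations
\[
\rho_1 \otimes \rho_1^* \;\hookrightarrow\; \sigma^{\otimes N} \;\subset\; (\rho_2 \otimes \rho_2^*)^{\otimes N}
\]
for some $N = N(\rho_1,\rho_2)$ depending only on the representations.

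Fix bases on the $V_i$ and the induced bases on the tensor products. The embedding then translates pointwise at each place $v$ into
\[
\|\rho_1(g)\|_v \, \|\rho_1(g)^{-1}\|_v \;\le\; C_v \bigl(\|\rho_2(g)\|_v \, \|\rho_2(g)^{-1}\|_v\bigr)^N,
\]
where $C_v$ encodes the embedding coefficients and the dimension factors from comparing operator and tensor-basis norms, and satisfies $C_v = 1$ for all but finitely many $v$. Since $\mathbb{G}$ is semisimple, $\det \rho_i \equiv 1$, so $\rho_i(\mathbb{G}) \subset SL(V_i)$, which yields $\|\rho_i(g)^{-1}\|_v \le \|\rho_i(g)\|_v^{\dim V_i - 1}$ together with $\|\rho_i(g)\|_v \ge 1$ (cf.\ Proposition \ref{propbis}). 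Substituting gives $\|\rho_1(g)\|_v \le C_v \|\rho_2(g)\|_v^{N \dim V_2}$. Taking $\log^+$, maximizing over $g \in F$, and summing over places with the weights $n_v/[K:\mathbb{Q}]$ produces the desired inequality with $C_{12} := N\,\dim V_2$, which is manifestly independent of the chosen bases, and with $C_{12}' := \frac{1}{[K:\mathbb{Q}]}\sum_v n_v \log^+ C_v$ a finite additive constant.

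The main obstacle is the tensor-embedding step: carefully verifying the faithfulness of the diagonal representation $\sigma$ on $\mathrm{Ad}(\mathbb{G})$ from the hypothesis that $\rho_2$ is non-trivial on each simple factor (in particular, controlling what happens when the irreducible summands of $\rho_2$ have distinct central characters, which is why one uses $\sigma$ rather than all of $\rho_2 \otimes \rho_2^*$), and then invoking the correct Chevalley/Tannakian generation statement to produce the subrepresentation embedding with $N$ depending only on $(\rho_1,\rho_2)$. The remaining local norm comparisons and the assembly via the product formula are routine.
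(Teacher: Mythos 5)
Your proof is essentially correct, but it follows a genuinely different route from the paper's. The paper reduces both representations to irreducible ones with highest weights $\chi_1,\chi_2$, works with the weight-adapted norms of \S\ref{reducadj} (the $K_0$-invariant Hermitian norms at archimedean places and the norms coming from an admissible weight-basis lattice at finite places), so that on the maximal torus $\|\rho_i(g)\|_{\rho_i,k}=\max_{w\in W}|\chi_i(w(g))|_k$; it then uses that $n_0\chi_1$ is a nonnegative integer combination of simple roots with \emph{all} coefficients nonzero (positivity of the entries of the inverse Cartan matrix, which is exactly where the hypothesis of nontriviality on each simple factor enters), compares with $\chi_2$ the same way, and extends from $T$ to all of $\mathbb{G}(k)$ by the Cartan decomposition, finishing with the base-change Remark \ref{basechange} for the additive constant. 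You instead pass to $\mathrm{Ad}(\mathbb{G})$ to kill the center, observe that the "diagonal" piece $\sigma=\bigoplus_k\rho_2^{(k)}\otimes(\rho_2^{(k)})^*$ is faithful on $\mathrm{Ad}(\mathbb{G})$ precisely because $\rho_2$ is nontrivial on each simple factor, and invoke the Chevalley--Tannaka generation theorem to embed $\rho_1\otimes\rho_1^*$ into (a direct sum of) tensor powers of $\sigma$, after which the estimate follows from routine local norm comparisons, $\det\rho_i\equiv 1$, and $\|x^{-1}\|_v\le\|x\|_v^{\dim V_i-1}$; your care about the central characters of the summands of $\rho_2$ (using $\sigma$ rather than all of $\rho_2\otimes\rho_2^*$) is exactly the right point to worry about, and the almost-everywhere triviality of the comparison constants $C_v$ holds because the embedding and an equivariant projection are defined over a number field. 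The trade-off: the paper's argument gives an explicit and fairly small multiplicative constant ($M$, the largest coefficient of the highest weights in the simple roots) and sharp control on the torus, while yours is softer and avoids root-system combinatorics at the cost of a less explicit exponent $N$ from the Tannakian theorem and a few more bookkeeping constants; both arguments use characteristic zero (complete reducibility, subrepresentation rather than subquotient) in the same essential way.
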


Finally we record the following consequences:

\begin{corollary}
\label{eig} There are constants $\varepsilon =\varepsilon (d)$, $\kappa
=\kappa (d)\in \mathbb{N}$ and $C=C(d)\in \mathbb{N}$ such that if $F$ is
any finite subset of $GL_{d}(\overline{\mathbb{Q}})$ containing $1$, there
is some $a\in F^{\kappa }$ and some eigenvalue $\lambda $ of $a$ such that
\begin{equation*}
h(\lambda )\geq \frac{1}{|F|^{C}}\cdot \widehat{h}(F).
\end{equation*}
\end{corollary}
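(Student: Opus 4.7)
The plan is to combine the spectral radius formula for sets of matrices (Lemma \ref{CompLem}) with a two-stage pigeonhole argument: first over the bounded exponent $q\in[1,d^{2}]$ supplied by that lemma, and then over the finitely many eigenvalues of a single power set $F^{q^{*}}$.

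Fix a number field $K$ containing the matrix entries of $F$. For every $v\in V_{K}$, Lemma \ref{CompLem} furnishes $q_{v}\in[1,d^{2}]$ with $\Lambda_{v}(F^{q_{v}})^{1/q_{v}}\geq c_{v}\,E_{v}(F)$, where $c_{v}=1$ at non-archimedean places (by part (a)) and $c_{v}=c(d)\in(0,1)$ at archimedean places (by part (b)). Since $E_{v}(F)\geq R_{v}(F)$ and $\log^{+}(c_{v}x)\geq\log^{+}x-|\log c|$, this reads
\[
\tfrac{1}{q_{v}}\log^{+}\Lambda_{v}(F^{q_{v}})\;\geq\;\log^{+}R_{v}(F)-|\log c|\cdot\mathbf{1}_{v\in V_{\infty}}.
\]
Partitioning $V_{K}$ into the $d^{2}$ classes $\{v:q_{v}=q\}$ and applying pigeonhole, I fix a single exponent $q^{*}\in[1,d^{2}]$ whose class carries at least a $1/d^{2}$-share of the total weighted sum. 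Extending the resulting inequality to all $v$ (harmless since $\log^{+}\geq 0$) and multiplying by $q^{*}$ gives
\[
\frac{1}{[K:\mathbb{Q}]}\sum_{v}n_{v}\log^{+}\Lambda_{v}(F^{q^{*}})\;\geq\;\frac{q^{*}}{d^{2}}\bigl(\widehat{h}(F)-|\log c|\bigr).
\]

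Next, let $\Lambda^{(q^{*})}$ be the set of all eigenvalues of all elements of $F^{q^{*}}$, of cardinality at most $d|F|^{q^{*}}\leq d|F|^{d^{2}}$; since $\Lambda_{v}(F^{q^{*}})=\max_{\lambda\in\Lambda^{(q^{*})}}|\lambda|_{v}$, the estimate $\log^{+}\max\leq\sum\log^{+}$ followed by swapping the sums in $v$ and $\lambda$ yields $\sum_{\lambda}h(\lambda)\geq(q^{*}/d^{2})(\widehat{h}(F)-|\log c|)$. A final pigeonhole in $\lambda$ produces an eigenvalue $\lambda$ of some $a\in F^{q^{*}}\subseteq F^{d^{2}}$ (the inclusion uses $1\in F$) with
\[
h(\lambda)\;\geq\;\frac{\widehat{h}(F)-|\log c|}{d^{3}\,|F|^{d^{2}}}.
\]
Choosing $\kappa=d^{2}$ and $C=C(d)$ large enough so that $|F|^{C}\geq 2d^{3}|F|^{d^{2}}$ (valid for $|F|\geq 2$; the case $|F|=1$ forces $\widehat{h}(F)=0$ and is trivial) finishes the corollary in the principal range $\widehat{h}(F)\geq 2|\log c|$.

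The main obstacle is the additive loss $|\log c|$ coming from the archimedean constant in Lemma \ref{CompLem}(b), which is the sole reason the final $C=C(d)$ is not explicitly computable (cf.\ Remark \ref{effec}). In the complementary range $\widehat{h}(F)<2|\log c|$ the target $\widehat{h}(F)/|F|^{C}$ is itself dominated by a $d$-dependent constant divided by $|F|^{C}$, and one treats this case separately: when $\widehat{h}(F)=0$ take $\lambda=1$ (for which $h(\lambda)=0$); otherwise pick a place $v$ with $R_{v}(F)>1$ and apply Lemma \ref{CompLem} there to extract a non-quasi-unipotent eigenvalue of an element of $F^{d^{2}}$, then enlarge $C=C(d)$ to absorb the fixed quantity $|\log c|$ into $|F|^{C}\geq 2^{C}$. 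With these choices of $\kappa=d^{2}$ and $C=C(d)$ the stated inequality holds uniformly.
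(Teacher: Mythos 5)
Your argument in the principal range $\widehat h(F)\ge 2|\log c|$ is correct and is essentially the paper's own first step (the paper phrases it as $\sum_{a\in F^{d^2}}e(\{a\})\ge e(F)-|\log c|$, coming from Lemma \ref{CompLem}, and then bounds $e(\{a\})$ by the sum of the eigenvalue heights of $a$). The genuine gap is the complementary range $0<\widehat h(F)<2|\log c|$, where the additive archimedean loss from Lemma \ref{CompLem}(b) swallows the whole estimate. Your proposed fix --- pick a place with $R_v(F)>1$, extract a ``non-quasi-unipotent'' eigenvalue of some element of $F^{d^2}$, and enlarge $C$ --- does not work, for two reasons. First, Lemma \ref{CompLem}(b) only gives $\Lambda_v(F^q)^{1/q}\ge c\,E_v(F)$ at an archimedean place; if the only places with $R_v(F)>1$ are archimedean and satisfy $R_v(F)<1/c$ (for instance $F$ with algebraic integer entries), nothing guarantees an eigenvalue of absolute value $>1$ inside $F^{d^2}$. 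Second, and more fundamentally, an eigenvalue with $h(\lambda)>0$ carries no quantitative comparison with $\widehat h(F)/|F|^{C}$: the deficit is in $h(\lambda)$, which could a priori be far smaller than $\widehat h(F)$, and enlarging $C=C(d)$ only shrinks the target by a factor depending on $d$ and $|F|$, not by an amount depending on $F$ itself. Powering $F$ does not rescue the estimate either, since killing the $|\log c|$ loss would require roughly $n\gtrsim |\log c|/\widehat h(F)$ powers, which is unbounded as $\widehat h(F)\to 0$, whereas $\kappa$ must depend on $d$ only.

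This small-$\widehat h$ regime is exactly where the paper invokes its two main inputs, both absent from your proposal. If $\langle F\rangle$ is not virtually solvable, Theorem \ref{main} gives $\widehat h(F)\ge\varepsilon(d)$, so a bounded power $F^{n_0d^2}$ with $n_0=n_0(d)$ absorbs the constant $|\log c|$ multiplicatively. If $\langle F\rangle$ is virtually solvable, the paper passes to a subgroup of index at most $n_0(d)$ conjugate into the upper triangular matrices, finds generators of it in a bounded power of $F$ via Lemma \ref{finiteindex}, and uses that for a triangular set one has $R_v=\Lambda_v$ exactly at every place, so the comparison $\widehat h(F)\le 2|F|^{2n_0}\max_\lambda\big(h(\lambda)+h(\lambda^{-1})\big)$ holds with no additive error at all. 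Without one of these mechanisms (or a substitute for them), the stated inequality is not established when $\widehat h(F)$ is small, so your proof as written is incomplete.
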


As a corollary of this and the height gap theorem we obtain an effective
version of Schur's classical result on torsion linear groups (see \cite%
{Schur}).

\begin{corollary}
\label{torsion}(Effective Schur: no large torsion balls) There is an integer
$N_{2}=N_{2}(d)\in \mathbb{N}$ such that if $K$ is a field and if $F$ is a
finite subset of $GL_{d}(K)$ containing $1$, then either it generates a
finite \ subgroup, or $(F\cup F^{-1})^{N_{2}(d)}$ contains an element of
infinite order. Furthermore if $F$ generates a non virtually nilpotent
subgroup, then we can find the element of infinite order already in $%
F^{N_{2}(d)}$.
\end{corollary}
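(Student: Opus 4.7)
We split the proof according to whether the normalized height $\widehat{h}(F)$ vanishes; by Proposition \ref{zeroheight}, $\widehat{h}(F) = 0$ exactly when $\langle F \rangle$ is virtually unipotent. In characteristic zero we first reduce to $F \subset GL_d(\overline{\mathbb{Q}})$ by a standard specialization argument: the entries of $F$ lie in a finitely generated extension $K_0$ of $\mathbb{Q}$, and a generic specialization of the transcendentals to $\overline{\mathbb{Q}}$ can be chosen injective on the (finite) ball $(F \cup F^{-1})^{N_2(d)}$. Since homomorphisms preserve finite order, producing an infinite-order element in a short word of the specialized set lifts to one in $F$, and injectivity on large balls guarantees that the specialized subgroup is infinite whenever $\langle F \rangle$ is. The positive characteristic case is handled separately by an elementary reduction to finite fields.

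\textbf{Case 1: $\widehat{h}(F) > 0$.} Corollary \ref{eig} produces $a \in F^{\kappa(d)}$ and an eigenvalue $\lambda$ of $a$ with
\[
h(\lambda) \geq \frac{\widehat{h}(F)}{|F|^{C(d)}} > 0.
\]
By Kronecker's theorem, an algebraic number of positive Weil height cannot be a root of unity, so $\lambda$ has infinite multiplicative order and thus $a$ itself has infinite order. As $a \in F^{\kappa(d)} \subseteq (F \cup F^{-1})^{\kappa(d)}$, this immediately settles both the main statement and the \emph{furthermore} clause, since the hypothesis that $\langle F \rangle$ is non-virtually-nilpotent is strictly stronger than non-virtually-unipotent and so belongs to this case.

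\textbf{Case 2: $\widehat{h}(F) = 0$ and $\langle F \rangle$ is infinite.} Now $\langle F \rangle$ is virtually unipotent, so every $g \in \langle F \rangle$ is quasi-unipotent and its eigenvalues are roots of unity. These eigenvalues are algebraic of degree at most $d$ over $\mathbb{Q}$, hence of order at most $M_0(d) := \max\{m : \varphi(m) \leq d\}$, and consequently $g^M$ is unipotent for every $g \in \langle F \rangle$, where $M := M_0(d)!$. Consider the Zariski-closed subvariety $V = \{g \in GL_d : g^M = I\}$, whose degree is a function of $d$ alone. If $\langle F \rangle$ were contained in $V$ it would have exponent dividing $M$ and would be finite by Burnside's theorem on finitely generated linear groups of bounded exponent, contradicting infiniteness. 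Hence the Zariski closure of $\langle F \rangle$ is not contained in $V$, and the Eskin--Mozes--Oh escape-from-subvarieties lemma (Lemma \ref{Bezout}) produces $w \in (F \cup F^{-1})^{L(d)}$ with $w^M \neq I$. Since $w^M$ is automatically unipotent and non-identity (in characteristic zero), it has infinite order, and lies in $(F \cup F^{-1})^{L(d) M}$.

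Choosing $N_2(d) = \max\{\kappa(d),\ L(d) \cdot M_0(d)!\}$ completes the proof. The technically delicate step is Case 2, where the height gap theorem is vacuous and we must instead exploit the structure of quasi-unipotent matrices via Burnside's bounded-exponent theorem combined with the escape lemma; the reduction to $\overline{\mathbb{Q}}$ and the positive characteristic treatment are routine.
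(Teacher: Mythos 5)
Two steps of your argument do not hold up. The more serious one is Case 2, which rests on a false claim: the eigenvalues of an element of a virtually unipotent subgroup of $GL_d$ are indeed roots of unity, but their order is \emph{not} bounded in terms of $d$ alone. The degree bound you invoke only says $[\,L'(\lambda):L'\,]\leq d$ over the field $L'$ generated by the matrix entries, not over $\mathbb{Q}$. For instance, with $d=2$ take $F=\{1,\ \zeta_m I_2,\ u\}$ where $u\neq 1$ is unipotent and $\zeta_m$ is a root of unity of huge order $m$: this generates an infinite, virtually unipotent subgroup of $GL_2(\overline{\mathbb{Q}})$ containing elements whose eigenvalues have order $m$. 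Consequently ``$g^{M}$ is unipotent for every $g\in\langle F\rangle$'' fails, and the element $w$ with $w^{M}\neq I$ produced by Lemma \ref{Bezout} may simply be a torsion element of large order (as $\zeta_m I_2$ is in the example), so no element of infinite order is obtained. The paper treats this regime quite differently: using the bound $n_0(d)$ of Wehrfritz \cite{Werf} and Lemma \ref{finiteindex} it replaces $F$ by a generating set $F_1$, lying in a ball of bounded radius, of a triangularizable finite-index subgroup; then either some commutator $[a,b]$ with $a,b\in F_1$ is a nontrivial unipotent, hence of infinite order, or $F_1$ generates an abelian group, which is finite once its generators are torsion.

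The reduction to $\overline{\mathbb{Q}}$ is also not established. A specialization that is injective on the finite ball $(F\cup F^{-1})^{N_2}$ need not send an infinite group to an infinite group, nor preserve non-virtual-nilpotency: already for $F=\{t\}\subset GL_1(\mathbb{Q}(t))$, specializing $t$ to a root of unity of arbitrarily large order is injective on any fixed ball yet has finite image, and such specializations are Zariski dense, so a ``generic'' choice does not help. If the specialized group is finite, the $\overline{\mathbb{Q}}$-statement gives nothing to lift back. Making a specialization argument work would require first producing an infinite-order element of $\langle F\rangle$ (Schur) and a specialization preserving its infinite order, which is precisely the work the paper avoids: assuming all of $F^{2d^2+1}$ is torsion, Burnside's theorem and the left regular representation over the field $L$ generated by the eigenvalues (a field generated by roots of unity, hence contained in $\overline{\mathbb{Q}}$ in characteristic $0$, and finite in positive characteristic) yield a faithful representation of $\Gamma$ in $GL_{d^2}(L)$, so no specialization is needed; your positive-characteristic case is likewise left unproved. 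By contrast, your Case 1 — Corollary \ref{eig} plus Kronecker's theorem, together with the observation via Proposition \ref{zeroheight} that a non-virtually-nilpotent group has $\widehat{h}(F)>0$, giving a positive word in $F^{\kappa}$ of infinite order — is correct over $\overline{\mathbb{Q}}$ and essentially coincides with the paper's use of Corollary \ref{eig}.
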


The following example gives a situation showing that without the assumption
on $F$ in the last sentence of this corollary, the conclusion may fail.
Consider the subgroup of $GL_{2}(\mathbb{C})$ consisting of affine
transformations of the complex line. Then, for arbitrary $N\in \mathbb{N}$
one may find a finite (non-symmetric!) set $F$ containing the identity such
that the group generated by $F$ is infinite and virtually abelian, while $%
F^{N}$ consists solely of elements of finite order. For instance, take $%
F=\{id,a_{\omega },ta_{\omega }t^{-1}\}$ where $a_{\omega }=\left(
\begin{array}{cc}
\omega & 0 \\
0 & 1%
\end{array}%
\right) $ is multiplication by $\omega $ (a root of $1$ of order $N+1$) and $%
t=\left(
\begin{array}{cc}
1 & 1 \\
0 & 1%
\end{array}%
\right) $ is translation by $1,$ then the commutator $[a_{\omega
},ta_{\omega }t^{-1}]$ is $\neq 1$ if $N\geq 0$ and unipotent so of infinite
order, while $F^{N}$ is made of homotheties of ratio $\omega ^{k}$ with $%
1\leq k\leq N$ (i.e. elements of the form $\left(
\begin{array}{cc}
\omega ^{k} & \ast \\
0 & 1%
\end{array}%
\right) $), which are all torsion elements.

\begin{remark}
\label{poscar}In the entire paper we work over $\overline{\mathbb{Q}}.$
However a fair amount of what we do remains valid over global fields of
positive characteristic, i.e. over the algebraic closure of $\mathbb{F}%
_{p}(t).$ In particular the definition of the heights makes sense, except
that all places are non archimedean. Also all properties of Section \ref%
{Sec2} hold in positive characteristic as well, and they even become simpler
since all places are non archimedean and can thus be treated on an equal
footing, and $e(F)=\widehat{h}(F)$ always. Proposition \ref{welldefheight}
remains true for irreducible representations of $\mathbb{G}$. Moreover the
additive constant disappears. Also \ref{minheight} remains true for
irreducible representations. Same for Corollary \ref{eig}. This is key for
the applications to the Tits alternative in positive characteristic proved
in \cite{B}. The proof of these propositions is word by word the same as in
the $\overline{\mathbb{Q}}$ case, except for the proof of Proposition \ref%
{minheight} which needs some mild modification if the characteristic is $2$
or $3$ or if $\mathbb{G}$ is of type $A$ (see Remark \ref{poscharadjoint}).
Theorem \ref{main} however has no direct analog in positive characteristic
(nor does Zhang's theorem \ref{Zh}) : for a counter-example take $F_{n}$ to
be the two-element set in $SL_{2}$ consisting of an upper triangular and a
lower triangular unipotent matrix with coefficient $t^{\frac{1}{n}},$ then $%
F_{n}$ generates a Zariski-dense subgroup, but $\widehat{h}%
(F_{n})\rightarrow 0$. Nevertheless this is not a problem for the
applications to the Tits alternative, since all places being non archimedean
in positive characteristic, only the positivity of $\widehat{h}$ matters
there. See \cite{B} for more on positive characteristic.
\end{remark}

\begin{remark}
\label{loglog}Another possible definition of our height functions $h,%
\widehat{h}$ and $e$ consists in replacing the $\log ^{+}$ by $\log $ in $($%
\ref{heightdef2}), (\ref{abs}) and (\ref{normhei}). This new definition (let
us denote it by $h_{0}$ and $\widehat{h}_{0}$) is more adapted to $PGL_{d}$
while ours is more adapted to $SL_{d},$ but the differences are minor. First
of all, it is clear that the two notions coincide if $F\subset SL_{d},$
because each norm $||F||_{v}$ is then greater or equal to $1.$ Moreover, $%
h_{0}(F)\geq 0$ for all $F$ (from the product formula applied to any
eigenvalue of an element of $F,$ say). Also $h_{0}(\lambda F)=h_{0}(F)$ for
all $\lambda \in \overline{\mathbb{Q}}^{\times }$, and $h(F)=h_{0}(\rho (F))$
where $\rho $ is the obvious embedding of $GL_{d}$ inside $GL_{d+1}$ in the
upper left corner. Of course $\widehat{h}_{0}(F)\leq \widehat{h}(F).$

Moreover, Theorem \ref{main} also holds for $\widehat{h}_{0}.$ This follows
easily from Corollary \ref{eig1}. Indeed, let $F^{\prime }=\{f/(\det
f)^{1/d},f\in F\},$ then $\left\langle F^{\prime }\right\rangle $ is
virtually solvable if and only if $\left\langle F\right\rangle $ is. By
Corollary \ref{eig1} there is $g\in F^{\prime N_{1}(d)}$ and an eigenvalue $%
\lambda $ of $g$ such that $h(\lambda )>\varepsilon =\varepsilon (d)>0.$ But
$h(\lambda )\leq \widehat{h}_{0}(\{g\}),$ because $g\in SL_{d},$ and there
is $\mu \in \overline{\mathbb{Q}}^{\times }$ such that $\mu g\in
F^{N_{1}(d)}.$ So $h(\lambda )\leq \widehat{h}_{0}(\{g\})=\widehat{h}%
_{0}(\{\mu g\})\leq N_{1}(d)\widehat{h}_{0}(F).$ Hence the result.
\end{remark}

\section{Preliminary reductions\label{reduc}}

The main goal of this section is to establish Proposition \ref{adjoint}
below, which reduces the proof of Theorem \ref{main} to the case when $%
F=\{a,b\}$ is a finite set of two regular semisimple elements generating a
Zariski dense subgroup inside $\mathbb{G}(\overline{\mathbb{Q}}),$ where $%
\mathbb{G}$ is a Zariski-connected absolutely simple algebraic group of
adjoint type defined over $\overline{\mathbb{Q}}$, and where the underlying
vector space is the Lie algebra $\mathfrak{g}$ of $\mathbb{G}$ on which $%
\mathbb{G}$ acts via the adjoint representation, so that $\mathbb{G}\subset
SL(\mathfrak{g}).$

\subsection{Escape and reduction to a $2$-element set\label{reduc2}}

In this paragraph, we prove Proposition \ref{2elem} from the introduction in
the slightly stronger form given below in Proposition \ref{twoelements}. The
key ingredient there is a Lemma due to Eskin-Mozes-Oh about escaping from
algebraic subvarieties in bounded time.

First we recall some terminology. Let $\mathbb{G}$ be a connected semisimple
algebraic group over $\overline{\mathbb{Q}}.$ A semisimple group element $%
a\in \mathbb{G}(\overline{\mathbb{Q}})$ is said to be regular if $\ker
(Ad(a)-1)$ has the minimal possible dimension (namely equal to the absolute
rank of $\mathbb{G}$). For $A_{1}\in \mathbb{N},$ we will say that $a\in
\mathbb{G}(\overline{\mathbb{Q}})$ is $A_{1}$-regular if $\ker (Ad(a)-\omega
)$ has minimal possible dimension for every root of unity $\omega $ of order
at most $A_{1}$ (namely dimension $0$ if $\omega \neq 1$ and the absolute
rank if $\omega =1$). It is clear that the subset of $A_{1}$-regular
elements of $\mathbb{G}$ is a non-empty Zariski open subset of $\mathbb{G}$
consisting of semisimple elements.

If $Z$ is a proper Zariski closed subset of $\mathbb{G}$ invariant under
conjugation by a maximal torus $T,$ then we let $\widehat{Z}$ be the
Zariski-closure of $\{(gag^{-1},gbg^{-1})\in \mathbb{G}^{2}$ with $g\in
\mathbb{G}$, $a\in T$ and $b\in Z,$ or $a\in Z$ and $b\in T\}.$ It is a
proper algebraic subset of $\mathbb{G\times G}$ of dimension at most $2\dim
\mathbb{G}-1.$

\begin{proposition}
\label{twoelements}Let $\mathbb{G}$ be a connected semisimple algebraic
subgroup of $GL_{d}(\overline{\mathbb{Q}})$ with maximal torus $T$. Let $Z$
be a proper Zariski closed subset of $\mathbb{G}$ invariant under
conjugation by $T $. Then there is an integer $c=c(\mathbb{G},Z)>0$ such
that if $F$ is a finite subset of $\mathbb{G}(\overline{\mathbb{Q}})$
generating a Zariski-dense subgroup in $\mathbb{G}$, then $(F\cup
\{1\})^{c(d)}$ contains two elements $a$ and $b$ which are regular
semisimple, generate a Zariski dense subgroup of $\mathbb{G}$, and satisfy $%
(a,b)\notin \widehat{Z}.$ For any given integer $A_{1}\in \mathbb{N}$, by
allowing $c$ to depend also on $A_{1},$ i.e. $c=c(\mathbb{G},Z,A_{1})>0,$ we
may further assume that $a$ and $b$ are $A_{1}$-regular.
\end{proposition}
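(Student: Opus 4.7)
The plan is to apply Lemma \ref{Bezout} (the Eskin--Mozes--Oh escape-from-subvariety lemma) in two successive stages: first to produce a single $A_1$-regular semisimple element $a$ in a short power of $F\cup\{1\}$, and then, with $a$ fixed, to produce a partner $b$ in another short power. Taking $c$ to be the maximum of the two bounds yields the proposition.

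For the first stage, let $V_1 \subset \mathbb{G}$ denote the complement of the Zariski open set of $A_1$-regular semisimple elements; this is a proper Zariski closed subvariety whose degree is bounded in terms of $\mathbb{G}$ and $A_1$ only. Since $\langle F\rangle$ is Zariski-dense in $\mathbb{G}$, we have $\langle F\rangle \not\subset V_1$, and Lemma \ref{Bezout} yields $a \in (F\cup\{1\})^{c_1}$ with $a \notin V_1$, where $c_1 = c_1(\mathbb{G},A_1)$.

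For the second stage, I would fix such an $a$ and set
\begin{equation*}
W_a \;=\; V_1 \;\cup\; \bigl\{b\in\mathbb{G} : (a,b)\in \widehat{Z}\bigr\} \;\cup\; U_a,
\end{equation*}
where $U_a = \{b\in\mathbb{G} : \overline{\langle a,b\rangle}^{\mathrm{Zar}} \neq \mathbb{G}\}$. The crucial point is that $W_a$ is a proper Zariski closed subvariety of $\mathbb{G}$ of degree bounded in terms of $\mathbb{G}$, $Z$, and $A_1$, independently of $F$ and of the specific $a$. For the middle term, since $\dim \widehat{Z} \le 2\dim\mathbb{G}-1$ and $a$ is regular semisimple, the fiber of $\widehat{Z}$ over $a$ is a finite union of $N_{\mathbb{G}}(T)$-translates of $Z$ (and possibly of $T$, depending on whether $a$ is $\mathbb{G}$-conjugate to an element of $Z$), a proper closed subvariety of degree controlled by $\deg \widehat{Z}$. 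For $U_a$, one invokes the standard structural fact that a connected semisimple group has only finitely many conjugacy classes of maximal proper connected algebraic subgroups (Dynkin--Borel--Tits); then $U_a$ is exactly the finite union of those maximal proper algebraic subgroups which contain $a$, hence a Zariski closed subvariety of $\mathbb{G}$ of degree bounded in terms of $\mathbb{G}$. That $U_a$ is proper, i.e.\ $\mathbb{G}\setminus U_a\neq\emptyset$, follows from the Zariski-density of $\langle F\rangle$, which must contain a $b$ lying outside the finitely many maximal proper algebraic subgroups through $a$.

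Once $W_a \subsetneq \mathbb{G}$ is established, the Zariski-density of $\langle F\rangle$ gives $\langle F\rangle \not\subset W_a$, and a second application of Lemma \ref{Bezout} produces $b \in (F\cup\{1\})^{c_2}$ with $b \notin W_a$, where $c_2 = c_2(\mathbb{G},Z,A_1)$. By construction $a$ and $b$ are $A_1$-regular semisimple, satisfy $(a,b)\notin\widehat{Z}$, and $\langle a,b\rangle$ is Zariski-dense in $\mathbb{G}$. I expect the main obstacle to be obtaining the uniform bound on the degree of $U_a$: unlike $V_1$ and the $\widehat{Z}$-fiber, which are concrete subvarieties whose degrees are bounded by routine dimension counts, controlling $U_a$ forces one to invoke the classification of maximal proper connected algebraic subgroups of $\mathbb{G}$. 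Everything else in the argument is then a direct application of Lemma \ref{Bezout} together with elementary algebraic geometry.
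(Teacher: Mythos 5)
Your two-stage escape strategy is plausible in outline, but the step you yourself flag as the main obstacle --- the uniform control of $U_a=\{b\in\mathbb{G}:\overline{\langle a,b\rangle}\neq\mathbb{G}\}$ --- is where the argument actually breaks. The claim that ``$U_a$ is exactly the finite union of those maximal proper algebraic subgroups which contain $a$'' is false in general, for two reasons. First, the Zariski closure of $\langle a,b\rangle$ need not be connected (it can even be finite), so the Dynkin--Borel--Tits finiteness of conjugacy classes of maximal \emph{connected} subgroups does not control it: for instance, if $a\in PGL_2$ is a regular semisimple element of order $5$, then $a$ lies in a positive-dimensional family of conjugates of the icosahedral subgroup (conjugating a fixed copy by the centralizing torus of $a$ produces infinitely many distinct copies through $a$), and each of them supplies elements $b$ with $\langle a,b\rangle$ finite. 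Second, even among connected maximal subgroups, infinitely many conjugates can pass through a fixed regular semisimple $a$: in $\mathbb{G}=SL_2\times SL_2$ with $a=(a_1,a_2)$ and $a_1$ conjugate to $a_2$, the graphs $\{(g,hgh^{-1}):g\in SL_2\}$, as $h$ runs over a coset of the centralizer of $a_1$, form a one-parameter family of maximal connected subgroups all containing $a$. So $U_a$ is neither a finite union of subgroups nor obviously Zariski closed; what the escape lemma needs is that $U_a$ is contained in a \emph{proper} closed subvariety whose degree is bounded independently of $a$, and that is exactly the nontrivial content your argument does not supply. A secondary gap of the same flavour: after fixing $a$ in stage one, the fiber of $\widehat{Z}$ over $a$ is the fiber of a Zariski \emph{closure}, so it can jump and need not be the finite union of $N_{\mathbb{G}}(T)$-translates of $Z$ you describe; for special $a$ it could even be all of $\mathbb{G}$, so you must also escape $a$ from the (proper closed) locus where this fiber has full dimension.

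The paper avoids both problems by never fixing $a$: it proves (Proposition \ref{subvar}) that there is a single proper closed subvariety $X\subset\mathbb{G}\times\mathbb{G}$ containing every pair $(x,y)$ that fails to be regular semisimple or fails to generate a Zariski-dense subgroup. The closed conditions used are (i) that the associative subalgebra of $End(\mathfrak{g}_i)$ generated by $Ad(\pi_i(x)),Ad(\pi_i(y))$ has less than maximal dimension (a proper condition because each simple factor $\mathfrak{g}_i$ is generated by two elements), which forces the Lie algebra of $\overline{\langle x,y\rangle}$ to be an ideal, and (ii) the commutator condition $[x^{C!},y^{C!}]=1$ coming from Jordan's theorem, which rules out finite image. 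One then applies the Eskin--Mozes--Oh lemma once, in the product group $\mathbb{G}\times\mathbb{G}$, to $X\cup\widehat{Z}$ together with the non-$A_1$-regular locus in either coordinate; this yields the pair $(a,b)$ directly, with a degree bound depending only on $\mathbb{G}$, $Z$, $A_1$, and no fiberwise analysis is needed. If you want to keep your two-stage scheme, you would essentially have to prove Proposition \ref{subvar} anyway and then choose $a$ outside the locus where the fiber $X_a\cup\widehat{Z}_a$ is all of $\mathbb{G}$, at which point the one-step argument in the product is simpler.
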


The key ingredient in this proposition is the following lemma. For an
algebraic variety $X$ we will denote by $m(X)$ the sum of the degree and the
dimension of each of its irreducible components.

\begin{lemma}
(Eskin-Mozes-Oh escape lemma \cite{EMO} Lemma 3.2)\label{Bezout} Given an
integer $m\geq 1$ there is $N=N(m)$ such that for any field $K$, any integer
$d\geq 1$, any $K$--algebraic subvariety $X$ in $GL_{d}(K)$ with $m(X)\leq m$
and any subset $F\subset GL_{d}(K)$ which contains the identity and
generates a subgroup which is not contained in $X(K)$, we have $%
F^{N}\nsubseteq X(K)$.
\end{lemma}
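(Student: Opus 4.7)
The plan is to prove the lemma by induction on the complexity $m(X)$, using an iterative \emph{shrinking} operation $X \mapsto X \cap f^{-1}X$ for $f \in F$. The key initial observation is that, since $1 \in F$, if $F^N \subseteq X(K)$ then for every $f \in F$ and every $h \in F^{N-1}$ one has $fh \in F^N \subseteq X$, so $h \in f^{-1}X$; hence $F^{N-1} \subseteq X \cap f^{-1}X$ for every $f \in F$.

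This leads to the following dichotomy: either (i) $X \cap f^{-1}X = X$ for every $f \in F$, equivalently $fX \subseteq X$ for every $f \in F$, or (ii) there exists some $f \in F$ with $X \cap f^{-1}X$ a proper subvariety of $X$. In case (i), the semigroup generated by $F$ stabilizes $X$ under left translation, and since $1 \in F \subseteq X$, this semigroup is itself contained in $X$. A standard Noetherian argument shows that a Zariski-closed subsemigroup of an algebraic group is in fact a subgroup: for $h$ in a closed subsemigroup $H$, the descending chain $H \supseteq hH \supseteq h^2 H \supseteq \cdots$ of closed subsets stabilizes, which produces $h^{-1} \in H$. Consequently the Zariski closure of the semigroup generated by $F$ is a subgroup containing $F$ and contained in $X$, hence contains $\langle F \rangle$, contradicting the hypothesis that $\langle F \rangle \not\subseteq X$.

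In case (ii), set $X' := X \cap f^{-1}X$. Then $X'$ is a proper closed subvariety of $X$ containing $F^{N-1}$, and by Bezout's inequality $m(X')$ is bounded by an explicit polynomial in $m(X)$ whose exponent depends only on $d$. The induction is then carried out with the dimension of $X$ as the primary invariant and its degree as the secondary invariant: for irreducible $X$ one has $\dim X' < \dim X$ strictly, so the shrinking process terminates in at most $\dim X \leq m$ steps, yielding a valid bound $N = N(m)$ via an explicit recursion.

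The main obstacle is the careful bookkeeping of the induction and the treatment of reducible $X$, where shrinking need not drop the overall dimension in a single step. One handles this either by passing to a suitable irreducible component that gets hit by successive shrinkings, or by a well-ordered induction combining dimension with the number of top-dimensional components of fixed degree. The resulting bound $N(m)$ is a tower of polynomials of height of order $m$---far from optimal but entirely sufficient for the applications later in the paper.
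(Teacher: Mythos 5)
First, a point of comparison: the paper does not prove this lemma at all; it is quoted from Eskin--Mozes--Oh \cite{EMO} (their Lemma 3.2), with only the remark that it rests on the generalized Bezout theorem stated right after it. Your reconstruction follows the same strategy as that source: iterated shrinking $X\mapsto X\cap f^{-1}X$, Bezout to control degrees, and the dichotomy whose ``stable'' branch is handled by the fact that a Zariski-closed subsemigroup of an algebraic group is a group. That branch of your argument is correct and complete: if $fX\subseteq X$ for all $f\in F$, then since $1\in X$ the semigroup generated by $F$ lies in $X$, its closure is a group by the Noetherian chain $H\supseteq hH\supseteq h^{2}H\supseteq\cdots$, and this contradicts $\langle F\rangle\not\subseteq X(K)$ with no condition on $N$.

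The gap is in the termination of the shrinking branch, which is precisely where the quantitative content lies (that $N$ depends on $m(X)$ alone). Your assertion that for irreducible $X$ the process stops in at most $\dim X\leq m$ steps only covers the first step: after one shrinking $X'=X\cap f^{-1}X$ is in general reducible, and for a reducible variety a proper closed subvariety need not have smaller dimension, nor fewer top-dimensional components (it may merely lose a lower-dimensional component), so the invariant you gesture at does not strictly decrease as stated, and the ``main obstacle'' you name is exactly the missing proof. It can be repaired along the lines you hint at, but this has to be carried out: order varieties by the full vector $\bigl(c_{e}(Y)\bigr)_{e=\dim X,\dots,0}$, where $c_{e}(Y)$ is the number of $e$-dimensional irreducible components, taken lexicographically from the top. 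One checks that $Y'\subsetneq Y$ forces a strict lexicographic decrease (top-dimensional components of $Y'$ are components of $Y$; if the counts agree at every dimension, descending through the dimensions shows the component sets coincide and $Y'=Y$), while generalized Bezout, applied componentwise to $X_{i}\cap f_{i}^{-1}X_{i}$, bounds every entry of the vector at step $i+1$ by $\deg(X_{i})^{2}$, so the length of the chain, hence $N$, is bounded by a function of $m(X)$ only (a crude, tower-type bound, as you predict). Alternatively one can use the sharper step of \cite{EMO}: choose $f\in F$ moving some top-dimensional component of $X$ out of $X$, so that the pair (dimension, number of top-dimensional components) genuinely drops. So: right skeleton, same route as the cited source, but the induction that makes $N=N(m)$ is not actually executed, and the one quantitative claim you do make along the way (the $\dim X$-step bound) is unjustified beyond the first step.
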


This result is a consequence of a generalized version of Bezout's theorem
about the intersection of finitely many algebraic subvarieties (see
Zannier's appendix in \cite{Sch}):

\begin{theorem}[Generalized Bezout Theorem]
\label{thm:Bezout} Let $K$ be a field, and let $Y_{1},\ldots ,Y_{p}$ be pure
dimensional algebraic subvarieties of $K^{n}$. Denote by $W_{1},\ldots
,W_{q} $ the irreducible components of $Y_{1}\cap \ldots \cap Y_{p}$. Then $%
\sum_{i=1}^{q}$deg$(W_{i})\leq \prod_{j=1}^{p}$deg$(Y_{j}).$
\end{theorem}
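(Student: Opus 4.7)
The plan is to reduce to the projective setting and induct on the number $p$ of varieties, the whole geometric content being concentrated in a single bilinear inequality for the intersection of an irreducible variety with a pure-dimensional one. First I would replace each $Y_j \subset K^n$ by its projective closure $\overline{Y}_j \subset \mathbb{P}^n$: this preserves the degrees and the affine irreducible components, while any additional components of $\bigcap_j \overline{Y}_j$ at infinity can only enlarge the left-hand side of the claimed inequality. It therefore suffices to prove the projective analogue, and from here on I may assume $K$ is algebraically closed.

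Next I would induct on $p$. The base case $p=1$ is just the definition of the degree of a pure-dimensional projective variety as the sum of the degrees of its irreducible components. For the inductive step, let $V_1,\ldots,V_r$ be the irreducible components of $Y_1 \cap \cdots \cap Y_{p-1}$, so the induction hypothesis gives $\sum_{k=1}^r \deg(V_k) \le \prod_{j=1}^{p-1} \deg(Y_j)$. Every irreducible component of $Y_1 \cap \cdots \cap Y_p$ appears as an irreducible component of $V_k \cap Y_p$ for some $k$, so the proof reduces to establishing the following key bilinear inequality: for any irreducible projective variety $V$ and pure-dimensional projective variety $Y$, the irreducible components $W_1,\ldots,W_s$ of $V \cap Y$ satisfy
\[
\sum_{\ell=1}^s \deg(W_\ell) \;\le\; \deg(V)\cdot \deg(Y).
\]
Summing this over $k$ and combining with the induction hypothesis gives the desired bound on the total degree of the irreducible components of $Y_1 \cap \cdots \cap Y_p$, closing the induction.

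The main obstacle is this bilinear inequality: $V$ and $Y$ may intersect improperly, producing excess contributions to the naive degree count. I would handle it via Fulton's refined Bezout theorem (Theorem 12.3 in his \emph{Intersection Theory}), which writes $\deg(V)\deg(Y)$ as $\sum_\ell i(W_\ell;V\cdot Y)\,\deg(W_\ell)$ plus a non-negative excess intersection class supported on $V\cap Y$; since each intersection multiplicity $i(W_\ell;V\cdot Y)$ is a positive integer and the excess class has non-negative degree, the inequality falls out immediately. A more elementary alternative is to flatly degenerate $V$ to a cycle $\sum_i a_i L_i$ supported on linear subspaces of dimension $\dim V$ with $\sum_i a_i = \deg(V)$ (for example via the universal family over the Chow variety, or by deformation to the normal cone of a generic linear section), and then combine the additivity and flat-family invariance of intersection degrees with the classical fact that a generic linear subspace of codimension $\dim V$ meets $Y$ in a zero-dimensional scheme of degree $\deg(Y)$. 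Either route yields the bilinear inequality and completes the proof.
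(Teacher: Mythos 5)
Your proposal cannot be compared line-by-line with a proof in the paper, because the paper does not prove this theorem: it is quoted as an external result, with the proof attributed to Zannier's appendix in \cite{Sch}, where it is obtained by elementary projective-geometric means rather than by modern intersection theory; in the paper it serves only as input to the Eskin--Mozes--Oh escape lemma. Judged on its own, your main route is correct and standard: pass to projective closures (each affine component's closure is a component of the projective intersection of the same degree, and extra components at infinity only increase the bound), induct on $p$ using the fact that every component of $Y_1\cap\cdots\cap Y_p$ is a component of $V_k\cap Y_p$ for some component $V_k$ of the partial intersection, and settle the bilinear inequality $\sum_\ell \deg W_\ell \le \deg V\cdot\deg Y$ by Fulton's refined Bezout theorem, since the irreducible components occur among the distinguished varieties with positive multiplicities; in fact the refined Bezout theorem treats all $p$ varieties simultaneously, so the induction is optional. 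Two caveats: the reduction to $\overline{K}$ deserves a sentence (one needs the geometric convention for degrees and the observation that each $K$-component of the intersection base-changes to a union of irreducible components of the base-changed intersection with the same total degree, so the inequality over $\overline{K}$ implies the one over $K$); and your ``more elementary alternative'' via a flat degeneration of $V$ to a cycle of linear subspaces is not convincing as sketched, because irreducible components and their degrees do not pass well to flat limits (components can disappear into excess or embedded pieces and the naive count is only semicontinuous in the unhelpful direction), so the Fulton argument should be regarded as the actual proof. In short: what you do differently from the cited source is to invoke the machinery of refined intersection theory where Zannier's appendix argues elementarily; your version is shorter but heavier, while the cited argument is self-contained at the level of classical projective geometry, which is presumably why the paper quotes it.
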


In order to apply the escape lemma to the proof of Proposition \ref%
{twoelements}, we need:

\begin{proposition}
\label{subvar}Let $\mathbb{G}$ be a connected semisimple algebraic group
over $\mathbb{C}$. There is a proper algebraic subvariety $X$ of $\mathbb{%
G\times G}$ such that any pair $(x,y)\notin X$ is made of regular semisimple
elements which generate a Zariski-dense subgroup of $\mathbb{G}$.
\end{proposition}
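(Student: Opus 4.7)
The plan is to construct $X$ as a union $X = X_{\mathrm{rs}} \cup X_{\mathrm{ZD}}$, where $X_{\mathrm{rs}}$ captures the pairs $(x,y)$ with at least one of $x,y$ not regular semisimple, and $X_{\mathrm{ZD}}$ captures the pairs with $\langle x, y\rangle$ not Zariski-dense in $\mathbb{G}$. Each piece will be shown to be contained in a proper Zariski-closed subvariety of $\mathbb{G}\times\mathbb{G}$.

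I first handle $X_{\mathrm{rs}}$. The regular semisimple locus is Zariski-open in $\mathbb{G}$, being the non-vanishing locus of a nontrivial polynomial on $\mathbb{G}$ (for instance the discriminant of the characteristic polynomial of $\mathrm{Ad}(\cdot)$ divided by $(T-1)^{\mathrm{rk}\,\mathbb{G}}$, or equivalently the product over positive roots $\alpha$ of $(1-\alpha(\cdot))$ pulled back along a Steinberg section). Hence $X_{\mathrm{rs}}$ is a proper Zariski-closed subvariety of $\mathbb{G}\times\mathbb{G}$, and this step is essentially routine.

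The harder piece is $X_{\mathrm{ZD}}$, where I use two inputs. First, by a classical theorem of Kuranishi type (adapted to the algebraic setting), every connected semisimple algebraic group over $\mathbb{C}$ admits a Zariski-dense two-generated subgroup, so there exists $(x_0,y_0)$ with $\overline{\langle x_0,y_0\rangle}=\mathbb{G}$ and consequently $X_{\mathrm{ZD}}\ne \mathbb{G}\times\mathbb{G}$. Second, the classification of maximal proper closed subgroups of $\mathbb{G}$ (Dynkin's classification of maximal semisimple subalgebras, Borel--de Siebenthal for subgroups of maximal rank, plus the finitely many conjugacy classes of parabolics) asserts that such maximal subgroups form only finitely many $\mathbb{G}$-conjugacy classes, with representatives $M_1,\ldots,M_N$. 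Every $(x,y)\in X_{\mathrm{ZD}}$ has $\overline{\langle x,y\rangle}$ contained in some conjugate of some $M_i$, so
\[
X_{\mathrm{ZD}} \;\subseteq\; \bigcup_{i=1}^N \mathrm{Im}\bigl(\mathbb{G}\times M_i\times M_i \xrightarrow{\,(g,a,b)\mapsto (gag^{-1},gbg^{-1})\,} \mathbb{G}\times\mathbb{G}\bigr).
\]
Each summand is constructible of dimension $2\dim M_i + \dim\mathbb{G} - \dim N_\mathbb{G}(M_i) < 2\dim \mathbb{G}$ (using $\dim M_i < \dim\mathbb{G}$ by properness and connectedness of $\mathbb{G}$, together with $N_\mathbb{G}(M_i)\supseteq M_i$). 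Taking Zariski closures, $X_{\mathrm{ZD}}$ lies inside a finite union of proper closed subvarieties of $\mathbb{G}\times\mathbb{G}$, hence inside a single proper closed subvariety.

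The main obstacle is the classification of maximal proper closed subgroups up to conjugacy, which genuinely rests on the structure theory of reductive groups (Dynkin, Borel--de Siebenthal). One must also handle possibly disconnected maximal subgroups: each is contained in the normalizer of its identity component, of which there are again only finitely many conjugacy classes, so the same dimension count goes through. Everything else reduces to standard ingredients: the open-condition formulation of regular semisimplicity, a single Kuranishi-type existence statement to guarantee $X_{\mathrm{ZD}}$ is proper, and a dimension count for conjugation orbits on pairs.
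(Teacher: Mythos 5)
Your reduction of the ``not regular semisimple'' locus is fine, and the overall shape of your argument for $X_{\mathrm{ZD}}$ (cover the non-generating pairs by finitely many conjugation families $\{(gag^{-1},gbg^{-1}): g\in\mathbb{G},\ a,b\in M_i\}$ and do a dimension count) would work \emph{if} you had the finiteness statement you invoke. But that statement is not delivered by the sources you cite, and this is a genuine gap. Dynkin, Borel--de Siebenthal and the theory of parabolics classify maximal \emph{connected} (equivalently, via normalizers, maximal positive-dimensional) subgroups. A maximal proper closed subgroup of $\mathbb{G}$ can be \emph{finite}: already in $PGL_2(\mathbb{C})$ the subgroups $S_4$ and $A_5$ are maximal closed subgroups, and in larger (notably exceptional) groups there are many ``Lie primitive'' finite subgroups not contained in any proper positive-dimensional closed subgroup. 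For such $M_i$ your patch for disconnected subgroups collapses: the normalizer of the identity component is $N_{\mathbb{G}}(\{1\})=\mathbb{G}$, which is not proper. Finiteness of the number of conjugacy classes of maximal finite subgroups (equivalently, boundedness of their orders plus rigidity of homomorphisms from a fixed finite group) is true but is a substantially deeper theorem than anything you cite, and for exceptional groups its proof uses heavy classification input. A secondary, smaller issue: you also use implicitly that every proper closed subgroup is contained in a \emph{maximal} proper closed one, which again is not automatic because ascending chains of finite subgroups need not terminate; this too requires an argument.

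For comparison, the paper sidesteps all of this. Pairs $(x,y)$ generating a \emph{finite} subgroup are trapped by Jordan's theorem: with $C=C(d)$ the Jordan constant, any such pair satisfies the closed condition $[x^{C!},y^{C!}]=1$. Pairs generating an infinite but non-dense subgroup are trapped at the Lie algebra level: the condition that the associative subalgebra of $End(\mathfrak{g}_i)$ generated by $Ad(\pi_i(x)),Ad(\pi_i(y))$ has smaller dimension than the one generated by all of $Ad(\mathbb{G}_i)$ is Zariski closed, and it is a \emph{proper} condition because each $\mathfrak{g}_i$ is generated by two elements (Bourbaki); outside these loci the Lie algebra of $\overline{\langle x,y\rangle}$ is an ideal in each $\mathfrak{g}_i$, and one concludes directly. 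So no classification of maximal subgroups, and no Kuranishi-type existence statement, is needed. To repair your proof along your own lines you would either have to import the (deep) finiteness of conjugacy classes of maximal finite subgroups, or, more simply, add the Jordan commutator condition $[x^{C!},y^{C!}]=1$ as an extra closed piece of $X$ to dispose of the finite-closure case, and restrict your maximal-subgroup argument to the positive-dimensional case, where the normalizer trick and your dimension count are legitimate.
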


\proof%
Recall the well-known:

\begin{lemma}
\label{regular}The set $U$ of regular semisimple elements of $\mathbb{G}$ is
a non-empty Zariski-open subset of $\mathbb{G}$.
\end{lemma}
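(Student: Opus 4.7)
The plan is to exhibit $U$ as the non-vanishing locus of a single regular function on $\mathbb{G}$ built from the characteristic polynomial of the adjoint action. For $g \in \mathbb{G}$, set $P_g(t) := \det(tI - Ad(g)) \in \overline{\mathbb{Q}}[t]$. Writing the Jordan decomposition $g = g_s g_u$, the operators $Ad(g_s)$ and $Ad(g_u)$ commute, the first being semisimple and the second unipotent, so they can be simultaneously put in upper-triangular form; the eigenvalues of $Ad(g)$ (with multiplicity) therefore coincide with those of $Ad(g_s)$, and the algebraic multiplicity of $1$ as a root of $P_g$ equals $\dim Z_{\mathbb{G}}(g_s)$. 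Since $g_s$ lies in some maximal torus $T$ and $T \subseteq Z_{\mathbb{G}}(g_s)$, this multiplicity is always at least $r := \rk(\mathbb{G})$. Hence $(t-1)^r \mid P_g(t)$ for every $g$, and the coefficient $D(g)$ of $(t-1)^r$ in $P_g(t)$ is a well-defined regular function on $\mathbb{G}$. Set $U' := \{g \in \mathbb{G} : D(g) \neq 0\}$, which is Zariski open.

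Next I would verify that $U' = U$. If $g \in U'$, then $1$ has algebraic multiplicity exactly $r$ in $P_g$, so $\dim Z_{\mathbb{G}}(g_s) = r$, meaning $g_s$ is regular semisimple and $T := Z_{\mathbb{G}}(g_s)^0$ is a maximal torus. The unipotent element $g_u$ commutes with $g_s$, so it lies in $Z_{\mathbb{G}}(g_s)$. Since we are in characteristic $0$, write $g_u = \exp(X)$ for some nilpotent $X \in \mathfrak{g}$; the relation $g_s g_u g_s^{-1} = g_u$ gives $Ad(g_s) X = X$, so the one-parameter subgroup $\{\exp(tX) : t \in \overline{\mathbb{Q}}\}$ sits entirely in $Z_{\mathbb{G}}(g_s)$ and, being connected, inside $T$. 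A torus contains no non-trivial unipotent elements, so $g_u = 1$ and $g = g_s$ is regular semisimple. The reverse inclusion $U \subseteq U'$ is immediate from the multiplicity formula above.

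Finally, non-emptiness reduces to producing a regular semisimple element in a fixed maximal torus $T \subset \mathbb{G}$. Because $\mathbb{G}$ is semisimple, each root $\alpha \in \Phi$ is a non-trivial character of $T$, so $\ker \alpha$ is a proper closed subtorus of $T$. As $T$ is irreducible and $\Phi$ is finite, the complement $T \setminus \bigcup_{\alpha \in \Phi} \ker \alpha$ is non-empty, and any element in it is regular semisimple in $\mathbb{G}$. The step most worth spelling out is the one identifying $U'$ with $U$: the delicate point is that every element of $U'$ is \emph{automatically} semisimple, which in characteristic zero reduces to the fact that the unipotent part $g_u$ lies in the connected centralizer of $g_s$ and this centralizer, by regularity of $g_s$, has toral identity component.
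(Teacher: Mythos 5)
Your proof is correct, but it follows a genuinely different (and in fact more careful) route than the paper's. The paper's proof is a one-liner: it asserts that $U$ coincides with the locus where $\ker(Ad(g)-1)$ has minimal dimension, i.e.\ it works with the \emph{geometric} multiplicity of the eigenvalue $1$, and invokes the (upper semicontinuity) fact that minimal kernel dimension is an open condition. Strictly speaking that locus is the set of \emph{regular} elements, which also contains non-semisimple elements (e.g.\ a regular unipotent $u$ in $PGL_2$ has $\dim\ker(Ad(u)-1)=1=\mathrm{rank}$), so the paper's identification with the regular \emph{semisimple} locus is glossed over. You instead use the \emph{algebraic} multiplicity: the coefficient $D(g)$ of $(t-1)^r$ in $\det(tI-Ad(g))$ is a regular function whose non-vanishing locus you then identify with $U$, the key extra step being the Jordan-decomposition argument in characteristic $0$ showing that if $g_s$ is regular semisimple then $g_u=\exp(X)$ lies in the one-parameter subgroup $\{\exp(tX)\}\subset Z_{\mathbb{G}}(g_s)^0$, a maximal torus, forcing $g_u=1$; this is exactly the point the paper's argument does not address, and it is what makes $U'=U$ an honest equality rather than an inclusion into the larger regular locus. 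You also verify non-emptiness explicitly by choosing $t\in T$ off the kernels of the finitely many roots, which the paper leaves implicit. In short: the paper's approach buys brevity at the cost of precision (geometric multiplicity only cuts out the regular locus), while yours gives the standard discriminant-type function $D$ and a complete verification that its non-vanishing locus is precisely the regular semisimple set.
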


\proof%
The set $U$ coincides with the set of $g\in \mathbb{G}$ such that $\ker
(Ad(g)-1)$ is of minimal dimension. This is clearly a Zariski-open condition.%
\endproof%

We will make use of Jordan's theorem on finite subgroups of $GL_{d}(\mathbb{C%
})$ (see \cite{CurR}). Recall that according to this theorem, there is a
constant $C=C(d)\in \mathbb{N},$ such that if $\Gamma $ is a finite subgroup
of $GL_{d}(\mathbb{C}),$ then $\Gamma $ contains a abelian subgroup $A$ with
$[\Gamma :A]\leq C(d).$ As the kernel of the adjoint representation
coincides with the center of $\mathbb{G}$, it follows that the same bound
apply for all finite subgroups of $\mathbb{G}(\mathbb{C})$ as long as $\dim (%
\mathbb{G})\leq d.$ Let $V(\mathbb{G})$ be the proper Zariski-closed subset
of $\mathbb{G\times G}$ consisting of all couples $(x,y)$ such that $%
[x^{C!},y^{C!}]=1.$ By Jordan's theorem, if $(x,y)\notin V,$ then the
subgroup generated by $x$ and $y$ infinite.

Let $(\mathbb{G}_{i})_{1\leq i\leq k}$ be the $\mathbb{C}$-simple factors of
$\mathbb{G}$, together with their factor maps $\pi _{i}:\mathbb{G\rightarrow
G}_{i}.$ For convenience, let us denote $\mathbb{G}_{0}=\mathbb{G}$. Let $%
X_{i}$, for $0\leq i\leq k$, be the subset of $\mathbb{G}\times \mathbb{G}$
consisting of couples $(x,y)$ such that the $\mathbb{C}$-subalgebra of $End(%
\mathfrak{g}_{i})$ generated by $Ad(\pi _{i}(x))$ and $Ad(\pi _{i}(y))$ is
of strictly smaller dimension than the subalgebra generated by the full of $%
Ad(\mathbb{G}_{i}),$ where $\mathfrak{g}_{i}$ is the Lie algebra of $\mathbb{%
G}_{i}.$ This is a Zariski-closed subset of $\mathbb{G}\times \mathbb{G}.$
According to \cite{Bourb} VIII.2 ex.8, each $\mathfrak{g}_{i}$ is generated
by two elements. If follows that $X_{i}$ is a proper closed subvariety. Also
let $V_{i}$ be the set of couples $(x,y)\in \mathbb{G\times G}$ such that $%
(\pi _{i}(x),\pi _{i}(y))\in V(\mathbb{G}_{i}),$ where $V(\mathbb{G}_{i})$
is the proper closed subset defined above.

Finally, let $X$ be the proper closed subvariety $X=U^{c}\cup
\bigcup_{i}X_{i}\cup \bigcup_{i}V_{i}.$ Let us verify that $X$ satisfies the
conclusion of the proposition. Suppose $(x,y)\notin X.$ Then $(x,y)\in U$
and $x$,$y$ are regular semisimple. Let $\mathbb{H}$ be the Zariski closure
of the group generated by $x$ and $y.$ Let $\mathfrak{h}_{i}$ be the Lie
algebra of $\pi _{i}(\mathbb{H}),$ which is a Lie subalgebra of $\mathfrak{g}%
_{i}.$ As $\mathfrak{h}_{i}$ is invariant under $Ad(\pi _{i}(x))$ and $%
Ad(\pi _{i}(y)),$ it must be invariant $Ad(\mathbb{G}_{i})$, by the
assumption that $(x,y)\notin X_{i}.$ Therefore $\mathfrak{h}_{i}$ is an
ideal of $\mathfrak{g}_{i}.$ As $\mathfrak{g}_{i}$ is a simple Lie algebra,
either $\mathfrak{h}_{i}=\{0\}$ or $\mathfrak{h}_{i}=\mathfrak{g}_{i}.$ In
the former case, this means that $\pi _{i}(\mathbb{H})$ is finite. However,
by assumption $(\pi _{i}(x),\pi _{i}(y))\notin V(\mathbb{G}_{i}),$ this
means that the group generated by $\pi _{i}(x)$ and $\pi _{i}(y)$ is
infinite. So $\pi _{i}(\mathbb{H})$ is not finite, $\mathfrak{h}_{i}=%
\mathfrak{g}_{i}$ and $\pi _{i}(\mathbb{H})=\mathbb{G}_{i}.$

On the other hand, since $(x,y)\notin X_{0}$, the same argument shows that
the Lie algebra of $\mathbb{H}$ itself is an ideal in $\mathfrak{g.}$ Hence $%
\mathbb{H}^{\circ }$ is a normal subgroup of $\mathbb{G}$, hence is the
product of the simple factors of $\mathbb{G}$ contained in it. The fact that
$\pi _{i}(\mathbb{H})=\mathbb{G}_{i}$ for each $i$ forces $\mathbb{H=G}$.
\endproof%

\textit{Proof of Proposition \ref{twoelements}: }this is immediate by the
combination of Proposition \ref{subvar} and Lemma \ref{Bezout}.%
\endproof%

\subsection{Reduction to semisimple $\mathbb{G}\label{semireduc}$}

This paragraph is devoted to the proof of

\begin{proposition}
\label{semi}In order to prove Theorem \ref{main}, it is enough to prove the
following assertion. There is $\varepsilon =\varepsilon (d)>0$ such that :
if $\mathbb{G}\subseteq SL_{d}$ is a semisimple algebraic group over $%
\overline{\mathbb{Q}}$ acting irreductibly on $\overline{\mathbb{Q}}^{d}$,
and $F=\{Id,a,b\}$ is a subset of $\mathbb{G}$ generating a Zariski-dense
subgroup, then $e(F)>\varepsilon (d).$
\end{proposition}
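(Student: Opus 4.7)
The plan is to assume the proposition's conclusion and derive Theorem~\ref{main} by a chain of three reductions that transform an arbitrary finite $F \subseteq GL_d(\overline{\mathbb{Q}})$ generating a non-virtually-solvable subgroup $\Gamma$ into a set of the form $\{\mathrm{Id}, a, b\}$ inside a semisimple $\mathbb{G} \subseteq SL_{d'}$ acting irreducibly, with $d' \leq d$, losing only a multiplicative and additive constant depending on $d$.

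\textbf{Step 1 (irreducibility).} Fix a Jordan--H\"older composition series $0 = V_0 \subset \cdots \subset V_k = \overline{\mathbb{Q}}^d$ of $\overline{\mathbb{Q}}^d$ as a $\Gamma$-module. The unipotent radical of the Zariski closure of $\Gamma$ acts trivially on the associated graded, so the image of $\Gamma$ in $\prod_i GL(V_i/V_{i-1})$ remains non-virtually-solvable, and so does its projection $\rho$ to some irreducible factor $W = V_i/V_{i-1}$ of dimension $d' \leq d$. Conjugating $F$ by the change of basis matrix adapted to the filtration affects $\widehat{h}$ only by a bounded additive constant (Remark~\ref{basechange}), and at every place the operator norm of a block-upper-triangular matrix dominates that of each of its diagonal blocks, so $\widehat{h}(\rho(F)) \leq \widehat{h}(F) + O_d(1)$.

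\textbf{Step 2 (semisimple in $SL$).} Since $\rho(\Gamma)$ acts irreducibly on $W$, its Zariski closure $G$ in $GL(W)$ is reductive, and Schur's lemma forces the connected center $Z^0$ to act by scalars. Rescale $\rho(\gamma) \mapsto \tilde\gamma := \rho(\gamma)/(\det \rho(\gamma))^{1/d'}$, via a consistent choice of $d'$-th roots in $\overline{\mathbb{Q}}$, so that $\widetilde F \subseteq SL(W)$ and the group it generates is Zariski-dense in a semisimple subgroup $\mathbb{G} \subseteq SL(W)$ (the preimage of $G/Z^0$) still acting irreducibly. The rescaling multiplies each $\|\rho(\gamma)\|_v$ by $|\det \rho(\gamma)|_v^{-1/d'}$; summing $\log^+$ over all places, invoking the product formula on $\det \rho(\gamma)$, and using Proposition~\ref{propbis}(c) to bound $h(\det \rho(\gamma)) \leq d' \cdot \widehat{h}(\rho(F))$, gives $\widehat{h}(\widetilde F) \leq C(d) \cdot \widehat{h}(F) + O_d(1)$.

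\textbf{Step 3 (two elements and transition to $e$).} Apply Proposition~\ref{twoelements} with $Z = \emptyset$ inside the semisimple $\mathbb{G}$ to extract regular semisimple elements $a, b \in \widetilde F^{c(\mathbb{G})}$ generating a Zariski-dense subgroup of $\mathbb{G}$; put $F' = \{\mathrm{Id}, a, b\}$. Monotonicity of $\widehat{h}$ under inclusion combined with Proposition~\ref{easyprop}(a),(c) yields $\widehat{h}(F') \leq c(\mathbb{G}) \cdot \widehat{h}(\widetilde F)$. Since $F' \subseteq SL(W)$ contains $\mathrm{Id}$, Proposition~\ref{hecomp} turns sufficiently small $\widehat{h}(F')$ into $e(F') < \varepsilon(d')$, contradicting the hypothesized conclusion of the present proposition. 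Unwinding constants produces the desired absolute $\varepsilon(d) > 0$ in Theorem~\ref{main}.

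\textbf{Main obstacle.} The delicate step is Step 2: verifying that the determinant rescaling genuinely lands inside a semisimple subgroup of $SL(W)$ (not merely reductive), controlling the multi-valuedness of $d'$-th roots so that $\widetilde F$ generates an actual subgroup whose Zariski closure is of the expected form, and keeping the height inflation bounded by a constant depending only on $d$. A further subtlety is that the ``semisimple subgroup $\mathbb{G}$'' may differ from $G^\circ/Z^0$ by a finite isogeny, and one must check Zariski-density and irreducibility of the action are preserved.
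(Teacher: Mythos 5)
Your Steps 1 and 3 are essentially sound, but Step 2 contains a genuine gap, and it is exactly the point you flag as the ``main obstacle'': dividing by $(\det\rho(\gamma))^{1/d'}$ does \emph{not} in general produce a set whose Zariski closure is a (connected) semisimple subgroup of $SL(W)$ acting irreducibly. The rescaled elements differ from the originals by scalars, so the group $\langle\widetilde F\rangle$ still acts irreducibly and its closure $\widetilde{\mathbb{G}}\subseteq SL(W)$ has reductive identity component; but the connected centre of $\widetilde{\mathbb{G}}^{\circ}$ need not act by scalars and need not be finite, and $\widetilde{\mathbb{G}}$ need not be connected. Concretely, take $W=\overline{\mathbb{Q}}^{2}\otimes\overline{\mathbb{Q}}^{2}$ and let $\Gamma$ be a finitely generated Zariski-dense subgroup of $G=\langle\, \mathbb{G}_{m}\cdot(T\otimes SL_{2}),\, w\otimes 1\,\rangle\subset GL_{4}$, where $T$ is the diagonal torus of the first $SL_{2}$ and $w$ its Weyl element: $G$ acts irreducibly, is not virtually solvable, and its full centre is scalar, so your hypotheses are met; yet after rescaling (each generator only gets multiplied by a root of unity times a scalar) the closure has identity component inside $T\otimes SL_{2}$, whose radical $T\otimes 1$ is a nontrivial torus not acting by scalars. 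So the hypothesised assertion of the proposition (which concerns a \emph{semisimple} $\mathbb{G}$) cannot be invoked, and neither can Proposition \ref{twoelements}/Proposition \ref{2elem}, which require Zariski-density in a \emph{connected} semisimple group. The parenthetical ``preimage of $G/Z^{0}$'' does not rescue this: that preimage is all of $G$, and the rescaled elements do not lie in the derived group of $G^{\circ}$.

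The paper's route is structurally different precisely to avoid this. After projecting to an irreducible composition factor (your Step 1, done in the paper at the level of $e$ via an exact local identity $E_{v}(\rho(F))=E_{v}(F)$ for the block-diagonal projection), it does \emph{not} rescale determinants. Instead it restricts to an $\mathbb{H}_{0}^{\circ}$-irreducible subspace $W_{1}$, passes to the bounded-index subgroup $\mathbb{H}_{2}=\mathcal{Z}\mathbb{S}$ (boundedness coming from finiteness of $\Out(\mathbb{S})$ plus Jordan's theorem), re-generates the corresponding finite-index subgroup $\Gamma_{0}$ inside a bounded power of $F$ by Lemma \ref{finiteindex} --- this is what handles the disconnectedness/torus-normalizer phenomenon in the counterexample above --- and then kills the residual central homotheties algebraically, by composing with the conjugation action on $End(W_{1})$, whose effect on heights is controlled by Proposition \ref{Ad comp Cor}; a final passage to an irreducible factor and Proposition \ref{twoelements} yields $\{Id,a,b\}$ Zariski-dense in a genuinely semisimple group with $e$ inflated by at most $O_{d}(1)$. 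To repair your argument you would need to replace the determinant-rescaling by some such finite-index-plus-adjoint device; as written, Step 2 is where the proof breaks.
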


The proof of this will rest mainly on the following proposition:

\begin{proposition}
\label{reducsemi}There are constants $C=C(d)>0$ and $m=m(d)\in \mathbb{N}$
such that if $F$ is a finite subset of $GL_{d}(\overline{\mathbb{Q}})$
containing $1$ and generating a non virtually solvable subgroup, there
exists a subset $F_{1}\subset F^{m}$, a connected semisimple algebraic group
$\mathbb{H}$ together with a faithful irreducible representation $(\rho
_{0},V_{0})$ of $\mathbb{H}$ with $\dim V_{0}\leq d$ and a homomorphism $\pi
:\Gamma _{0}\rightarrow \mathbb{H}(\overline{\mathbb{Q}})$, where $\Gamma
_{0}$ contains $F_{1}$ and has index at most $m$ in $\Gamma =\left\langle
F\right\rangle $, such that $\pi (\Gamma _{0})$ is Zariski dense in $\mathbb{%
H}$ and
\begin{equation*}
e(\rho _{0}\circ \pi (F_{1}))\leq C(d)\cdot e(F).
\end{equation*}
\end{proposition}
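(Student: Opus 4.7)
The plan is to mimic the standard passage from a linear algebraic group to its semisimple quotient, using the escape Lemma~\ref{Bezout} at every step so that each construction is carried out within a bounded number of words in $F$ and therefore preserves heights up to a constant factor. First, let $\mathbb{G}$ be the Zariski closure of $\Gamma := \langle F \rangle$ in $GL_d$, and let $\mathbb{G}^0$ be its identity component. Since the degree of any algebraic subgroup of $GL_d$ is bounded in terms of $d$, so is the index $[\mathbb{G}:\mathbb{G}^0]$. Set $\Gamma_0 = \Gamma\cap\mathbb{G}^0(\overline{\mathbb{Q}})$. Applying Lemma~\ref{Bezout} first to escape $\mathbb{G}\setminus\mathbb{G}^0$, and then iteratively to escape a chain of proper algebraic subvarieties of $\mathbb{G}^0$ of bounded complexity, yields a subset $F_1\subset F^{m}$ (for some $m=m(d)$) with $F_1\subset \Gamma_0$ and $\langle F_1\rangle$ Zariski-dense in $\mathbb{G}^0$.

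Next, since $\Gamma$ is not virtually solvable, neither is $\mathbb{G}^0$. Pick a $\mathbb{G}^0$-invariant composition series $0=W_0\subsetneq\cdots\subsetneq W_k=\overline{\mathbb{Q}}^d$ with irreducible subquotients. If the Zariski closure of the image of $\mathbb{G}^0$ in every $GL(W_i/W_{i-1})$ were a torus, then $\mathbb{G}^0$ modulo the kernel of its total action on $\bigoplus_i W_i/W_{i-1}$ would be commutative; but this kernel is unipotent (its elements act as upper unitriangular matrices in a basis refining the filtration), so $[\mathbb{G}^0,\mathbb{G}^0]$ would be unipotent, forcing $\mathbb{G}^0$ to be solvable. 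Hence some subquotient $V_0:=W_i/W_{i-1}$, of dimension $\dim V_0\le d$, has the property that the Zariski closure $\mathbb{G}_1$ of the image of $\mathbb{G}^0$ in $GL(V_0)$ is reductive and non-abelian. By Schur's lemma the connected center $Z\subset\mathbb{G}_1$ acts by scalars, and $\mathbb{H}:=[\mathbb{G}_1,\mathbb{G}_1]$ is a connected semisimple subgroup of $SL(V_0)$ acting faithfully and irreducibly on $V_0$; take $\rho_0:\mathbb{H}\hookrightarrow GL(V_0)$ to be the inclusion.

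The construction of $\pi:\Gamma_0\to\mathbb{H}(\overline{\mathbb{Q}})$ is the main technical step. The obvious map $\Gamma_0\to\mathbb{G}_1=Z\cdot\mathbb{H}$ lands in the almost direct product and not in $\mathbb{H}$ itself. The resolution is to decompose each image as $\lambda(g)\cdot h(g)$ with $\lambda(g)\in Z$ scalar and $h(g)\in\mathbb{H}$, uniquely up to the finite central intersection $Z\cap\mathbb{H}$. After passing, if necessary, to a further finite-index subgroup of $\Gamma_0$ (of index still bounded by a function of $d$), this becomes a genuine homomorphism $\pi:=h:\Gamma_0\to\mathbb{H}(\overline{\mathbb{Q}})$; equivalently one factors through the adjoint quotient $\mathbb{G}_1/Z=\mathbb{H}^{ad}$ and lifts via the isogeny $\mathbb{H}\to\mathbb{H}^{ad}$. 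The image $\pi(\Gamma_0)$ is Zariski-dense in $\mathbb{H}$ because the image of $\Gamma_0$ in $\mathbb{G}_1$ is Zariski-dense in $\mathbb{G}_1$.

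Finally, the height bound follows from three routine observations: (i) passing from $V=\overline{\mathbb{Q}}^d$ to the invariant subspace $W_i$ and then to the quotient $V_0$ only decreases operator norms, with the basis-change corrections contributing a bounded additive error at finitely many places; (ii) the scalar adjustment used to define $\pi$ is a polynomial map of bounded degree in the matrix coefficients, so heights of $\rho_0\circ\pi(g)$ are at most a multiplicative constant (depending on $d$) times those of $g$; and (iii) $e$ is submultiplicative (Proposition~\ref{propbis}(b)), whence $e(F_1)\le m\cdot e(F)$. Combining these yields $e(\rho_0\circ\pi(F_1))\le C(d)\cdot e(F)$. The main obstacle is the explicit construction of $\pi$ as a homomorphism into a semisimple group, which requires careful handling of the central scalar ambiguity.
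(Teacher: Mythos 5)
Your reduction breaks at the very first step: the claim that $[\mathbb{G}:\mathbb{G}^0]$ is bounded in terms of $d$ (because ``the degree of any algebraic subgroup of $GL_d$ is bounded in terms of $d$'') is false. Already in $GL_1$ the group $\mu_N$ of $N$-th roots of unity is Zariski closed of degree $N$, and if $F$ contains a root of unity of large order the closure $\mathbb{G}$ of $\Gamma=\langle F\rangle$ has arbitrarily many components. Hence $\Gamma_0=\Gamma\cap\mathbb{G}^0$ can have arbitrarily large index in $\Gamma$, which violates the requirement that $[\Gamma:\Gamma_0]\le m(d)$, and the escape step is equally unavailable: the subvariety $\mathbb{G}\setminus\mathbb{G}^0$ has unbounded degree, so Lemma \ref{Bezout} gives no bound $N=N(d)$, and nothing guarantees that a Zariski-dense subgroup of $\mathbb{G}^0$ is generated inside $F^{m(d)}$. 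The paper never passes to the identity component of the full closure: it first projects to an irreducible composition factor (after checking that the block-diagonal projection does not increase, in fact preserves, $E_v$), takes the closure $\mathbb{H}_0$ of the image there, notes that $\mathbb{H}_0^\circ$ is reductive, and then replaces $\mathbb{H}_0^\circ$ by the possibly much larger subgroup $\mathbb{H}_2=\mathcal{Z}\mathbb{S}$ ($\mathbb{S}$ the semisimple part, $\mathcal{Z}$ its centralizer), whose index in $\mathbb{H}_0$ \emph{is} bounded in terms of $d$ because the component group permutes at most $d$ isotypic blocks and $\mathrm{Out}(\mathbb{S})$ has order bounded in terms of $d$. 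Only then does Lemma \ref{finiteindex} produce generators of the bounded-index subgroup $\Gamma_0$ inside a bounded power of $F$.

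Your construction of $\pi$ is also obstructed. Splitting $g=\lambda(g)h(g)$ only gives a homomorphism into $\mathbb{H}/(Z\cap\mathbb{H})$, i.e.\ into the adjoint quotient, and a homomorphism into the adjoint group need not lift through the central isogeny $\mathbb{H}\to\mathbb{H}^{\mathrm{ad}}$, not even on a subgroup of bounded index (central extensions by the finite center need not virtually split with controlled index), so ``after passing to a further finite-index subgroup this becomes a genuine homomorphism'' is not justified. The paper avoids all lifting: it lets $\Gamma_0$ act by conjugation on $End(W_1)$, so the central scalars $\mathcal{Z}$ are killed automatically, the image lies in a connected semisimple group, heights are controlled by Proposition \ref{Ad comp Cor}, and one then passes to an irreducible subrepresentation by running the block-projection argument a second time. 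Finally, both your scalar correction and the paper's use of Proposition \ref{Ad comp Cor} cost a factor proportional to $|F_1|$; this is exactly why the paper first replaces $F_1$ by a three-element set $\{1,a,b\}$ via Proposition \ref{twoelements} before applying $Ad$. With your $F_1$ of unbounded cardinality the resulting constant would depend on $|F|$ and not only on $d$.
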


The proof of this proposition will occupy the rest of this subsection. At
the end we derive Proposition \ref{semi} from it.

We first analyse the local behavior at each place. Let $K$ be a number field
and $(e_{i})_{1\leq i\leq d}$ be the canonical basis of $V=K^{d}.$ Let $%
V=\bigoplus_{1\leq i\leq m}V_{i}$ be a direct sum decomposition adapted to
this basis, i.e. there are indices $j_{1}<...<j_{m}$ such that $%
V_{i}=span\{e_{j_{i}},...,e_{j_{i+1}-1}\}.$ Let $P$ be the group of block
upper triangular matrices determined by the corresponding flag, i.e. the
parabolic subgroup of $GL_{d}$ fixing the flag. Let $\rho :P\rightarrow
GL_{d}$ be the natural homomorphism that sends a matrix $A=(a_{ij})_{ij}\in
P $ to the matrix $\rho (A)=(a_{ij}^{\prime })_{ij}$ with $a_{ij}^{\prime
}=a_{ij}$ if $e_{i}$ and $e_{j}$ belong to the same $V_{k}$ and $%
a_{ij}^{\prime }=0$ otherwise.

\begin{lemma}
Let $v\in V_{K}$ be a place of $K.$ Let $F$ be a finite set in $%
GL_{d}(K)\cap P.$ Then
\begin{equation*}
E_{v}(\rho (F))=E_{v}(F)
\end{equation*}
\end{lemma}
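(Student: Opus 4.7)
The plan is to sandwich both sides between the common quantity $\max_{1\le i\le m} E_v(\pi_i(F))$, where $\pi_i:P\to GL(V_i)$ denotes the projection onto the $i$-th diagonal block. Since $\rho(F)$ is block-diagonal and the standard norm on $K_v^d$ decomposes across $V=\bigoplus V_i$ (sup norm for ultrametric $v$, Euclidean for archimedean $v$), a block-diagonal conjugator $y=\mathrm{diag}(y_1,\ldots,y_m)$ satisfies $\|y\rho(F)y^{-1}\|_v=\max_i\|y_i\pi_i(F)y_i^{-1}\|_v$, which gives $E_v(\rho(F))\le \max_i E_v(\pi_i(F))$. For the reverse, restricting any conjugate $z\rho(F)z^{-1}$ to the invariant subspace $zV_i$ and using that the induced Euclidean (resp. ultrametric) norm on $V_i\simeq zV_i$ gives an operator norm at least $E_v(\pi_i(F))$ yields the identity $E_v(\rho(F))=\max_i E_v(\pi_i(F))$.

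Applying the same subquotient idea to $F$ itself (rather than only to its block-diagonal part) then gives the lower bound $E_v(F)\ge\max_i E_v(\pi_i(F))$. Indeed, for any $x\in GL_d(\overline{K_v})$ the conjugate $xFx^{-1}$ preserves the flag $xV_{\ge j_1}\supset\cdots\supset xV_{\ge j_m}$ and induces on each graded piece $xV_{\ge j_i}/xV_{\ge j_{i+1}}$ an action conjugate to $\pi_i(F)$. The operator norm of this induced action, taken with respect to the quotient of the standard norm, is at most $\|xFx^{-1}\|_v$ and at least $E_v(\pi_i(F))$, because the quotient of the standard norm is again a norm of the appropriate type after identifying the graded piece with $V_i$.

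The hard (and only nontrivial) direction is the reverse inequality $E_v(F)\le E_v(\rho(F))$, which I would handle by a scaling argument. Conjugate by $T_t=\mathrm{diag}(t^{n_1}I_{d_1},\ldots,t^{n_m}I_{d_m})$ for strictly decreasing integers $n_1>\cdots>n_m$ and a scalar $t\in\overline{K_v}^{\times}$ with $|t|_v<1$. Since $F\subset P$, the $(i,j)$-block of $T_tAT_t^{-1}$ is $t^{n_i-n_j}A_{ij}$: the diagonal blocks are preserved and the strictly upper off-diagonal ones are scaled down by $|t|_v^{n_i-n_j}$. Composing with a block-diagonal $y$ that approximates $E_v(\rho(F))=\max_iE_v(\pi_i(F))$ and letting either the integer gaps $n_i-n_{i+1}$ grow (ultrametric case, where one can in fact achieve exact equality thanks to the strong triangle inequality) or letting $|t|_v\to 0$ (archimedean case, using continuity of the operator norm in the matrix entries), the off-diagonal contributions become negligible and $\|yT_tFT_t^{-1}y^{-1}\|_v\to\|y\rho(F)y^{-1}\|_v$. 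Passing to the infimum gives $E_v(F)\le E_v(\rho(F))$, and combining the three steps closes the proof. The only subtlety is uniform control as $A$ ranges over the finitely many elements of $F$, which is harmless by finiteness.
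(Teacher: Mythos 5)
Your proposal is correct, and half of it is the paper's own argument: your Step 3 (conjugating by the block-scalar matrices $T_t$ to crush the off-diagonal blocks and make $yT_tFT_t^{-1}y^{-1}$ converge to $y\rho(F)y^{-1}$) is exactly how the paper proves $E_v(F)\le E_v(\rho(F))$, via the group $\Delta$ of block-scalar matrices. Where you diverge is on the other inequality. The paper shows directly that for \emph{every} standard norm $N$ on $K_v^d$ one has $\|\rho(x)\|_N\le\|x\|_N$ for $x\in P$, by producing a splitting $K_v^d=\bigoplus W_i$ compatible both with the flag and with $N$ (orthogonal in the archimedean case, a direct factor decomposition of the lattice in the ultrametric case); applied to the norms $\|g\cdot g^{-1}\|_v$ this gives $E_v(\rho(F))\le E_v(F)$ in one stroke. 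You instead route through the graded pieces of the flag: the identity $E_v(\rho(F))=\max_i E_v(\pi_i(F))$ (block-diagonal conjugators for $\le$, restriction to the invariant subspaces $zV_i$ for $\ge$) together with the subquotient bound $E_v(F)\ge\max_i E_v(\pi_i(F))$, using that quotient maps are norm-nonincreasing. The two arguments have essentially the same mathematical content -- your quotient-norm step and the paper's adapted splitting are two faces of the same fact -- but your intermediate formula $E_v(\rho(F))=\max_i E_v(\pi_i(F))$ is a slightly stronger and arguably more transparent statement, while the paper's version avoids discussing induced norms on subquotients altogether. One small point to flag in your write-up: in the ultrametric case the restriction or quotient of the sup norm on $\overline{K_v}^d$ need not be \emph{exactly} a standard (orthonormalizable) norm, since the value group of $\overline{\mathbb{Q}_p}$ is dense and exact orthogonal bases may fail to exist; this is harmless because $\alpha$-orthogonal bases exist for every $\alpha<1$ (or because one may work over a finite extension containing the entries of the conjugator), giving the inequality up to a factor tending to $1$ -- but you should say a word about it, just as the paper quietly relies on the analogous lattice-splitting fact.
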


\proof One needs first to observe that if $||\cdot ||$ is any standard norm
(i.e. a Euclidean norm associated to some basis of $k^{d}$ when $k$ is
archimedian, a sup-norm associated to some $\mathcal{O}_{k}$ lattice in $%
k^{d}$, say $R$, when $k$ is ultrametric) then $\left\Vert \rho
(x)\right\Vert _{v}\leq \left\Vert x\right\Vert _{v}$ for every $x\in P.$
This fact easily follows after we check that there is a direct sum
decomposition of $K_{v}^{d}$ as $\bigoplus_{1\leq i\leq m}W_{i}$ where the $%
W_{i}$'s are orthogonal (archimedean case) or give rise to a direct factor
decomposition $R=\bigoplus_{1\leq i\leq m}(W_{i}\cap R)$ (ultrametric case)
and for which $x$ remains block upper-triangular in any basis adapted to
this decomposition. From this we get the first half of the claimed relation,
i.e. $E_{v}(\rho (F))\leq \inf_{g\in GL_{d}(\overline{\mathbb{Q}}%
_{v})}\left\Vert g\rho (F)g^{-1}\right\Vert \leq \inf_{g\in GL_{d}(\overline{%
\mathbb{Q}}_{v})}\left\Vert gFg^{-1}\right\Vert _{v}=E_{v}(F).$

The second half follows from the remark that $\rho (F)$ can be approximated
uniformly by the $\delta F\delta ^{-1}$'s for some suitably chosen $\delta
\in \Delta (\overline{\mathbb{Q}}_{v}),$ where $\Delta $ is the group of
block scalar matrices associated with the $V_{i}$'s. Indeed we get
\begin{eqnarray*}
E_{v}(F) &=&\inf_{g\in GL_{d}(\overline{\mathbb{Q}}_{v})}\left\|
gFg^{-1}\right\| _{v}=\inf_{g\in GL_{d}(\overline{\mathbb{Q}}%
_{v})}\inf_{\delta \in \Delta (\overline{\mathbb{Q}}_{v})}\left\| g\delta
F\delta ^{-1}g^{-1}\right\| _{v} \\
&\leq &\inf_{g\in GL_{d}(\overline{\mathbb{Q}}_{v})}\left\| g\rho
(F)g^{-1}\right\| _{v}=E_{v}(\rho (F)).
\end{eqnarray*}

\endproof

This lemma gives that if $\overline{\mathbb{Q}}^{d}=\bigoplus_{1\leq i\leq
m}V_{i}$ is a direct sum decomposition associated to a composition series
for $\mathbb{G}$, then $e(\rho (F))=e(F)$. Moreover $\left\langle
F\right\rangle $ is virtually solvable if and only if $\rho (\left\langle
F\right\rangle )$ is virtually solvable and if and only if each $\rho
_{i}(\left\langle F\right\rangle )$ is virtually solvable, where $\rho _{i}$
is the induced action on $V_{i}.$ Hence there must be one $\rho _{i_{0}}$
for which $\rho _{i_{0}}(\left\langle F\right\rangle )$ is not virtually
solvable. Note that $e(\rho _{i_{0}}(F))\leq e(F).$

Let $\mathbb{H}_{0}$ be the Zariski closure of $\rho _{i_{0}}(F)$ in $%
GL(V_{i_{0}}).$ Note at this point that if we knew that $\mathbb{H}_{0}$ was
connected semisimple, we would be done.

Clearly, the connected component $\mathbb{H}_{0}^{\circ }$ is a reductive
group, since a non trivial unipotent radical would have a non trivial
pointwise fixed subspace :\ this subspace would then be globally invariant
under $\mathbb{H}_{0}$ and contradict the irreducibility of the action on $%
V_{i_{0}}.$

Let $W_{1}$ be a $\mathbb{H}_{0}^{\circ }$-irreducible subspace of minimal
dimension in $V_{i_{0}}.$ As $\mathbb{H}_{0}^{\circ }$ is normal in $\mathbb{%
H}_{0}$, and $\mathbb{H}_{0}$ acts irreducibly on $V_{i_{0}},$ we have a
direct sum decomposition $V_{i_{0}}=\bigoplus_{1\leq j\leq q}W_{j}$ into $%
\mathbb{H}_{0}^{\circ }$-irreducible subspaces where $\mathbb{H}_{0}/\mathbb{%
H}_{0}^{\circ }$ permutes transitively the $W_{j}$'s$.$ Since $\mathbb{H}%
_{0} $ is not virtually solvable, $\mathbb{H}_{0}^{\circ }$ is not solvable,
thus its image into $GL(W_{1})$ (say, all $W_{j}$ are isomorphic
representations of $\mathbb{H}_{0}^{\circ }$) is not solvable. Observe that,
since $\mathbb{H}_{0}^{\circ }$ is reductive and acts irreducibly on $W_{1}$%
, its center must act by homotheties (by Schur's lemma), hence the
semisimple part, say $\mathbb{S}$, of $\mathbb{H}_{0}^{\circ }$ also acts
irreducibly.

Let $\mathbb{H}_{1}$ be the stabilizer of $W_{1}$ in $\mathbb{H}_{0}$. Then $%
[\mathbb{H}_{0}:\mathbb{H}_{1}]\leq q\leq d.$ We now use:

\begin{lemma}
Suppose $\mathbb{L}$ is a linear algebraic group with $\mathbb{L}^{\circ }$
reductive. Let $\mathbb{S}$ be the semisimple part of $\mathbb{L}^{\circ }$ (%
$\mathbb{S}=[\mathbb{L}^{\circ },\mathbb{L}^{\circ }]$) and $\mathcal{Z}$ be
the centralizer of $\mathbb{S}$ in $\mathbb{L}$. Then $[\mathbb{L}:\mathcal{Z%
}\mathbb{S}]\leq c(d)$, where $c(d)$ is a constant depending only on $d=\dim
(\mathbb{L}).$
\end{lemma}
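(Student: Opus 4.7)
The plan is to exploit the conjugation action of $\mathbb{L}$ on the normal subgroup $\mathbb{S}$ and reduce the bound to a bound on the order of $\Out(\mathbb{S})$. Observe first that $\mathbb{S} = [\mathbb{L}^{\circ},\mathbb{L}^{\circ}]$ is characteristic in $\mathbb{L}^{\circ}$, hence normal in $\mathbb{L}$; and since $\mathbb{L}^{\circ}$ is reductive, $\mathbb{S}$ is semisimple. Conjugation thus yields an algebraic homomorphism
\[
\phi : \mathbb{L} \longrightarrow \Aut(\mathbb{S}),
\]
whose kernel is precisely $\mathcal{Z}$.

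Next I would show that the preimage of $\Inn(\mathbb{S})$ under $\phi$ equals $\mathcal{Z}\mathbb{S}$. Indeed $\mathcal{Z}\mathbb{S}$ visibly maps into $\Inn(\mathbb{S})$; conversely, if $\phi(g)$ agrees with inner conjugation by some $s \in \mathbb{S}$ on $\mathbb{S}$, then $s^{-1}g$ centralizes $\mathbb{S}$, so $g \in s\mathcal{Z} \subset \mathcal{Z}\mathbb{S}$. Passing to the quotient by $\Inn(\mathbb{S})$, the map $\phi$ therefore descends to an injection
\[
\overline{\phi}: \mathbb{L}/\mathcal{Z}\mathbb{S} \hookrightarrow \Aut(\mathbb{S})/\Inn(\mathbb{S}) = \Out(\mathbb{S}).
\]

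The main step, which I expect to be the only nontrivial one, is to bound $|\Out(\mathbb{S})|$ in terms of $d$ alone; this is where the full force of $\mathbb{S}$ being semisimple is used. For a semisimple algebraic group over the algebraically closed field $\overline{\mathbb{Q}}$, it is classical that $\Aut(\mathbb{S})$ is a linear algebraic group with identity component $\Inn(\mathbb{S}) = \mathbb{S}/Z(\mathbb{S})$, so that $\Out(\mathbb{S})$ is a finite group; concretely, it is generated by permutations of the isomorphic simple factors of $\mathbb{S}$ together with the Dynkin diagram automorphisms of each simple factor. Since the smallest simple algebraic group has dimension $3$, $\mathbb{S}$ has at most $k \leq d/3$ simple factors, and each factor contributes at most $6$ diagram automorphisms (triality in type $D_{4}$), so $|\Out(\mathbb{S})| \leq k! \cdot 6^{k}$. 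Setting $c(d) = \lfloor d/3 \rfloor ! \cdot 6^{\lfloor d/3 \rfloor}$ and combining with the injection $\overline{\phi}$ gives $[\mathbb{L}:\mathcal{Z}\mathbb{S}] \leq c(d)$, as required.
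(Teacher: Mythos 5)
Your proof is correct and follows essentially the same route as the paper: the conjugation map $\mathbb{L}\to\Aut(\mathbb{S})$ descends to $\Out(\mathbb{S})$, the kernel of the induced map is identified with $\mathcal{Z}\mathbb{S}$, and the index is bounded by the (finite) order of $\Out(\mathbb{S})$, which depends only on the Dynkin diagram and hence on $d$. The only difference is cosmetic: you spell out the identification of the preimage of $\Inn(\mathbb{S})$ and give an explicit bound $\lfloor d/3\rfloor!\cdot 6^{\lfloor d/3\rfloor}$ where the paper simply cites Borel 14.9.
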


\proof%
The group $\mathbb{S}$ is normal in $\mathbb{L}$; let $\sigma :\mathbb{L}%
\rightarrow Aut(\mathbb{S})$ be the map given by conjugation. It induces $%
\overline{\sigma }:\mathbb{L}\rightarrow Out(\mathbb{S}).$ But $Out(\mathbb{S%
})$ is a finite group whose order depends only on the Dynkin diagram of $%
\mathbb{S}$, hence is bounded in terms of $d$ only (see \cite{Bor} 14.9).
Let $\mathbb{K}$ be the kernel of $\overline{\sigma }.$ Then $[\mathbb{L}:%
\mathbb{K}]\leq c(d)$ by the latter remark. On the other hand, by definition
of $\mathbb{K}$, $\mathbb{K}=\mathcal{Z}\mathbb{S}$.
\endproof%

We apply this lemma to $\mathbb{L=H}_{1}$. Since $\mathbb{S}$ acts
irreducibly on $W_{1},$ $\mathcal{Z}$ must act by homotheties (Schur's
lemma). Set $\mathbb{H}_{2}=\mathcal{Z}\mathbb{S}$. We have $\mathbb{H}%
_{0}^{\circ }\subset \mathbb{H}_{2}$ and $[\mathbb{H}_{0}:\mathbb{H}%
_{2}]\leq dc(d).$ Also $[\Gamma :\Gamma _{0}]\leq d$ where $\Gamma
_{0}=\Gamma \cap \mathbb{H}_{2}$ is Zariski dense in $\mathbb{H}_{2}.$ By
the (well-known) Lemma \ref{finiteindex} below, we may find a finite set $%
F_{0}$ in $(F\cup \{1\})^{2dc(d)-1}$ containing $1$ such that $\left\langle
F_{0}\right\rangle =\Gamma _{0}$. Moreover $e(F_{0})\leq e(F^{2dc(d)-1})\leq
(2dc(d)-1)e(F).$

\begin{lemma}
\label{finiteindex}Let $F$ be a finite subset of a group $\Gamma $
containing $1$. Assume that the elements of $F$ (together with their
inverses) generate $\Gamma .$ Let $\Gamma _{0}$ be a subgroup of index $k$
in $\Gamma .$ Then $F^{2k-1}$ contains a generating set of $\Gamma _{0}.$
\end{lemma}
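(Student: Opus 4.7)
The plan is to adapt the classical Schreier rewriting procedure to produce generators of $\Gamma_0$ that are positive words in $F$ of bounded length, despite the hypothesis only asserting that $F \cup F^{-1}$ generates $\Gamma$.

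The first step is to check that $F$ itself already acts transitively on the $k$-element set $\Gamma/\Gamma_0$ of right cosets. Each $f \in F$ induces a permutation of this finite set, and any sub-semigroup of the finite symmetric group $S_k$ generated by these permutations is automatically a subgroup, since every element of a finite semigroup inside a group has finite order and hence an inverse lying in the positive semigroup it generates. Thus the sub-semigroup coincides with the image of the group generated by $F \cup F^{-1}$, which is transitive by hypothesis. Equivalently, the Schreier coset graph with directed edges labeled by $F$ is strongly connected on $k$ vertices.

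Using strong connectedness together with a BFS from the coset $\Gamma_0$, I would choose for each coset $C$ a ``forward'' word $t_C \in F^{k-1}$ with $t_C \in C$ and a ``return'' word $q_C \in F^{k-1}$ with $t_C q_C \in \Gamma_0$, setting $t_{\Gamma_0} = q_{\Gamma_0} = 1$. Letting $\tau(g) = t_{\Gamma_0 g}$ denote the chosen transversal representative, I form the set
\[
\Sigma \;=\; \{\, t_C \, f \, q_{\Gamma_0 t_C f} \,:\, C \in \Gamma_0\backslash\Gamma,\ f \in F \,\} \,\cup\, \{\, t_C \, q_C \,:\, C \in \Gamma_0\backslash\Gamma \,\}.
\]
Every element of $\Sigma$ lies in $F^{2k-1} \cap \Gamma_0$, using $1 \in F$ to pad lengths when necessary.

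The final step is to verify that $\Sigma$ generates $\Gamma_0$. By the standard Schreier lemma applied to the generating set $F \cup F^{-1}$ and the transversal $T = \{t_C\}_C$, the Schreier generators $\tilde\sigma(t_C, s) = t_C s \, \tau(t_C s)^{-1}$ with $s \in F \cup F^{-1}$ generate $\Gamma_0$. For $s = f \in F$, the identity $\tilde\sigma(t_C, f) = (t_C f q_{C'}) \cdot (t_{C'} q_{C'})^{-1}$ with $C' = \Gamma_0 t_C f$ exhibits $\tilde\sigma(t_C, f)$ as a product of two elements of $\Sigma^{\pm 1}$; for $s = f^{-1}$ with $f \in F$, the standard relation $\tilde\sigma(t_C, f^{-1}) = \tilde\sigma(t_{C''}, f)^{-1}$ (where $C'' = \Gamma_0 t_C f^{-1}$) reduces to the previous case. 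Hence $\langle \Sigma \rangle = \Gamma_0$. The only delicate point is that inverting the transversal element $t_C$ need not land inside the positive semigroup generated by $F$; the role of the ``round-trip'' correction elements $t_C q_C$ is precisely to compensate for this, which is why they must be included alongside the modified Schreier generators.
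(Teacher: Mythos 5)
Your proof is correct and takes essentially the same route as the paper: there too one picks coset representatives that are positive words of length at most $k-1$ in $F$ together with positive ``return'' words (realized as inverses of representatives chosen in $(F^{-1})^{k-1}$) and forms the products (representative)$\cdot f\cdot$(return word) lying in $\Gamma_0\cap F^{2k-1}$. Your finite-semigroup transitivity observation and the explicit Schreier-lemma verification merely fill in the steps the paper dismisses as ``clear'' and ``straightforward to verify.''
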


\proof%
It is clear that $F^{k-1}$ contains a set of representatives for each left
coset in $\Gamma /\Gamma _{0},$ say $\{s_{1},...,s_{k}\}.$ Similarly, $%
(F^{-1})^{k-1}$ contains a set of representatives of the left cosets, say $%
\{u_{1},...,u_{k}\}.$ Consider all elements of $\Gamma _{0}$ of the form $%
s_{i}fu_{j}^{-1}$ for $i,j\in \lbrack 1,k]$ and $f\in F.$ They all belong to
$F^{2k-1}.$ It is straightforward to verify that, together with their
inverses, they generate $\Gamma _{0}$.%
\endproof%

In order to get rid of $\mathcal{Z}$, we now consider the action of $\mathbb{%
H}_{2}$ by conjugation on $End(W_{1}).$ The action factors through $\mathbb{S%
}$, hence the image is a connected semisimple algebraic subgroup of $%
GL(End(W_{1})),$ say $\mathbb{H}_{3}$. Moreover, we can bound the new height
in terms of the old one by making use of Proposition \ref{Ad comp Cor}
above. In particular if $F_{1}$ is any subset of $\mathbb{H}_{2}(\overline{%
\mathbb{Q}})$, then $e(Ad(F_{1}))\leq d(|F_{1}|+1)\cdot e(F_{1}).$

By Proposition \ref{twoelements} above (or Proposition \ref{2elem} from the
Introduction), we may find a pair $a,b$ in $F_{0}^{c_{2}(d)}$ (for some
constant $c_{2}(d)$) which generates modulo $\mathcal{Z}$ a Zariski dense
subgroup of $\mathbb{H}_{3}$. Let $F_{1}=\{1,a,b\}.$ Then $e(Ad(F_{1}))\leq
4d\cdot e(F_{1})\leq 4d\cdot c_{2}(d)\cdot e(F_{0})$ and $e(Ad(F_{1}))\leq
O_{d}(1)\cdot e(F)$ where $O_{d}(1)=8d^{2}c(d)c_{2}(d).$

Now the group $\left\langle Ad(F_{1})\right\rangle $ is Zariski dense in $%
\mathbb{H}_{3}$ and we may apply verbatim the beginning of the proof to this
group, to conclude that for some irreducible subrepresentation of $\mathbb{H}%
_{3}$ on $End(W_{1})$, say $(\rho ,\overline{W})$ we have $e(\rho
(Ad(F_{1})))\leq e(Ad(F_{1}))\leq O_{d}(1)\cdot e(F).$ Set $\mathbb{H}$ to
be the image of $\mathbb{H}_{3}$ in $GL(\overline{W}).$ Clearly $\Gamma _{0}$
acts on $\overline{W}$ with Zariski closure $\mathbb{H}$. Thus the proof of
Proposition \ref{reducsemi} is complete.

\begin{proof}[Proof of Proposition \protect\ref{semi}.]
In the setting of Theorem \ref{main} we first reduce to proving a gap for $%
e(F)$ instead of $\widehat{h}(F)$. This can indeed be achieved since, with
the notation of the last paragraph, $\widehat{h}(F)=\frac{1}{C_{d}}\widehat{h%
}(F^{C_{d}})\geq \frac{1}{C_{d}}\widehat{h}(F_{1})$ with $%
C_{d}=2dc(d)c_{2}(d).$ Moreover Proposition \ref{Ad comp Cor} also yields $%
\widehat{h}(Ad(F_{1}))\leq d(|F_{1}|+1)\cdot \widehat{h}(F_{1})\leq O_{d}(1)%
\widehat{h}(F).$ But $Ad(F_{1})$ lies in matrices with determinant $1,$ and
generates a non virtually solvable subgroup ; hence Proposition \ref{hecomp}
shows that $\widehat{h}(Ad(F_{1}))$ is bounded away from $0$ iff $%
e(Ad(F_{1}))$ is. But $e(\rho (Ad(F_{1})))\leq e(Ad(F_{1}))$ and $\rho
(Ad(F_{1}))$ generates a Zariski dense subgroup of the semisimple algebraic
group $\mathbb{H}$. Applying Proposition \ref{2elem} we are done.%
\endproof%
\bigskip
\end{proof}

\subsection{Comparison of heights under different representations\label%
{reducadj}\label{minidisp}}

In this paragraph we prove Proposition \ref{welldefheight} and we conclude
the reduction step of Theorem \ref{main} by proving Proposition \ref{adjoint}
below.

First let us recall some facts about representations of Chevalley groups.
Let $\mathbb{G}$ be a semisimple algebraic group over $\overline{\mathbb{Q}}$%
. The group $\mathbb{G}$ is a Chevalley group and comes with an associated $%
\mathbb{Z}$ structure. For general background on Chevalley groups we refer
the reader to Steinberg \cite{Stein} and to Bourbaki, Chapter 8 \cite{Bourb}%
. We let $\mathfrak{g}_{\mathbb{Z}}$ be a Chevalley order corresponding to $%
\mathbb{G}$ on the Lie algebra $\mathfrak{g}$ of $\mathbb{G}$ and $\mathfrak{%
a}$ the associated Cartan subalgebra in $\mathfrak{g.}$ Also let $%
(Y_{1},...,Y_{d})$ be a Chevalley basis of $\mathfrak{g}_{\mathbb{Z}}$ so
that the $Y_{i}$'s for $i\in \lbrack |\Phi ^{+}|+1,|\Phi ^{+}|+r]$ span the
admissible lattice $\mathfrak{g}_{\mathbb{Z}}\cap \mathfrak{a}$ of $%
\mathfrak{a}$ (here $\Phi ^{+}$ is the set of positive roots and $r$ the
absolute rank of $\mathbb{G}$). We denote by $T$ the maximal split torus of $%
\mathbb{G}$ corresponding to $\mathfrak{a}$ and by $\tau $ the Cartan
involution.

Given a local field $k,$ we define the \textquotedblleft Killing
norm\textquotedblright\ $||\cdot ||_{Kill,k}$ on $\mathfrak{g}_{k}$ to be
the one given by the Killing form $B_{\mathfrak{g}}$ when $k$ is archimedean
(i.e. $||X||_{Kill,k}=-B_{\mathfrak{g}}(X^{\tau },X)$) and the one arising
from the lattice $\mathfrak{g}_{\mathbb{Z}}\otimes \mathcal{O}_{k}=\mathfrak{%
g}_{\mathcal{O}_{k}}$ when $k$ is ultrametric (i.e. $||X||_{Kill,k}=%
\max_{i}|x_{i}|_{k}$ if $X=\sum x_{i}Y_{i}$). This allows us to define what
we will call the \textquotedblleft Killing height\textquotedblright\ $%
h_{Kill}(F)$ for $F\subseteq \mathbb{G}(\overline{\mathbb{Q}})$ by the usual
formula $(\ref{heightdef2})$ where we use the Killing norm at each place.

We denote by $K_{0}$ the stabilizer of $||\cdot ||_{Kill,k}.$ It is a
maximal compact subgroup of $\mathbb{G}(k).$ It is also a \textit{good}
maximal compact subgroup \ in the sense of \cite[3.3]{BT}, that is $K_{0}$
contains a copy of the Weyl group, so that $N_{K_{0}}(T(k))T(k)=N_{\mathbb{G}%
(k)}(T(k)).$

Let $V,\rho _{V}$ be a finite dimensional linear representation of $\mathbb{G%
}$ which is non trivial on each factor of $\mathbb{G}$. By Steinberg \cite%
{Stein} Section 2 Corollary 1, there exists an integer lattice, say $V_{%
\mathbb{Z}}$, of $V$ which is invariant under $\mathbb{G}(\mathbb{Z})$ and
which is spanned by a basis $(Y_{1},...,Y_{D})$ made of weight vectors for
the action of $T$. When $k$ is ultrametric $V_{\mathcal{O}_{k}}=V_{\mathbb{Z}%
}\otimes \mathcal{O}_{k}$ defines the following norm on $V_{k}=V_{\mathbb{Z}%
}\otimes k.$ We denote it by $||X||_{\rho _{V},k}:=\max_{i}|x_{i}|_{k}$ if $%
X=\sum_{i=1}^{D}x_{i}Y_{i}\in V_{k}.$ When $k$ is Archimedean, then there
exists a hermitian scalar product on $V_{k}$ which is invariant under $K_{0}$
and for which $\mathbb{G}(k)$ is stable under taking the adjoint (see \cite%
{Mos})$.$ We denote again by $||\cdot ||_{\rho _{V},k}$ the corresponding
hermitian norm. Together these norms define a height function $h_{\rho _{V}}$
on finite subsets of $End(V)$ defined as in $(\ref{heightdef2}).$ When $%
V,\rho _{V}$ is the adjoint representation, the just defined norms and
height coincide with the Killing norms and height.

\textit{Proof of Proposition \ref{welldefheight}.} By complete reducibility
(true in characteristic zero, in positive characteristic one has to assume
irreducibility to begin with), we may assume that both representations are
irreducible, with highest weight $\chi _{1}$ and $\chi _{2}$ respectively.
Let $W$ be the Weyl group of $\mathbb{G}$. If $g\in T,$ then $||\rho
_{i}(g)||_{\rho _{i},k}=\max_{w\in W}\left\vert \chi _{i}(w(g))\right\vert
_{k}.$ Since the root lattice is of finite index in the weight lattice,
there exists $n_{0}=n_{0}(\mathbb{G})\in \mathbb{N}$ such that $n_{0}\chi
_{i}$ is a linear combination $\sum_{\alpha \in \Pi }n_{\alpha }^{(i)}\alpha
$ with non-negative integer coefficients of the simple roots $\alpha \in \Pi
$ of $\mathbb{G}$. Since the inverse of the Cartan matrix of an irreducible
root system has no zero entry (see \cite{Bourb}), and since each $\rho _{i}$
is non trivial on each non trivial factor of $\mathbb{G}$, the coefficients $%
n_{\alpha }$ are non-zero. It follows that
\begin{eqnarray*}
||\rho _{1}(g)||_{\rho _{1},k}^{n_{0}} &\leq &\max_{\alpha \in \Pi
}\max_{w\in W}\left\vert \alpha (w(g))\right\vert _{k}^{M}\leq \max_{w\in
W}\left\vert \chi _{2}(w(g))\right\vert _{k}^{Mn_{0}} \\
&\leq &||\rho _{2}(g)||_{\rho _{2},k}^{Mn_{0}}
\end{eqnarray*}%
where $M=\max_{i=1,2,\alpha \in \Pi }n_{\alpha }^{(i)}$. Now the Cartan
decomposition implies that the above inequality holds for every $g\in
\mathbb{G}(k).$ It follows that $h_{\rho _{1}}\leq Mh_{\rho _{2}}.$ Finally,
if we considered instead the norm built from the basis $(Y_{1},...,Y_{D})$
of $V_{i}$ over $\mathbb{Z}$ defined above, then it would differ from $%
||\cdot ||_{\rho _{1},k}$ only at infinite places by a fixed multiplicative
constant, say $C_{i}.$ Let $h_{i}$ be the associated height. Then $|h_{\rho
_{i}}-h_{i}|\leq C_{i}$. Therefore $h_{1}\leq Mh_{2}+C_{1}+MC_{2}.$ Together
with Remark \ref{basechange} this ends the proof of Proposition \ref%
{welldefheight}.
\endproof%

We can now conclude this section of preliminary reductions by proving:

\begin{proposition}
\label{adjoint}In Theorem \ref{main}, we may assume that $F=\{Id,a,b\}$ is a
subset of $\mathbb{G}(\overline{\mathbb{Q}})$, where $\mathbb{G}$ is a
Zariski-connected absolutely simple algebraic group of adjoint type defined
over $\overline{\mathbb{Q}}$, viewed via the adjoint representation as an
algebraic subgroup of $SL(\mathfrak{g})$, where $\mathfrak{g}$ is the Lie
algebra of $\mathbb{G}$.
\end{proposition}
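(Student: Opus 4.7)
The strategy is to chain together reductions from earlier in this section, picking up where Proposition \ref{semi} leaves off. Proposition \ref{semi} already reduces the proof of Theorem \ref{main} to establishing an $e$-gap for a three-element set $F_1 = \{Id, a_0, b_0\}$ that generates a Zariski-dense subgroup of some semisimple $\mathbb{G} \subseteq SL_d$ acting irreducibly on $\overline{\mathbb{Q}}^d$, with $e(F_1) \leq C_1(d) \cdot e(F)$. What remains is to replace $\mathbb{G}$ by a connected, adjoint, absolutely simple group, and to replace the given linear representation by the adjoint representation, at the cost of a further bounded multiplicative constant on the height.

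The plan unfolds in three reduction steps. First, pass to the connected component: since $\mathbb{G} \subseteq GL_d$, the index $[\mathbb{G} : \mathbb{G}^{\circ}]$ is bounded in terms of $d$, so $\Gamma_0 := \langle F_1\rangle \cap \mathbb{G}^{\circ}$ has bounded index in $\langle F_1\rangle$ and is Zariski-dense in $\mathbb{G}^{\circ}$; Lemma \ref{finiteindex} applied to $F_1 \cup F_1^{-1}$ then produces a generating set $F_2$ of $\Gamma_0$ inside a bounded power of $F_1 \cup F_1^{-1}$, and Proposition \ref{propbis} yields $e(F_2) \leq C_2(d) \cdot e(F_1)$. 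Second, reduce to a single simple factor: decompose $\mathbb{G}^{\circ}$ as an almost-direct product $\mathbb{G}_1 \cdots \mathbb{G}_r$ of simple factors, fix $j=1$, and consider the natural quotient morphism $\pi : \mathbb{G}^{\circ} \twoheadrightarrow \mathbb{G}_1^{\mathrm{ad}}$ (first mod out the center of $\mathbb{G}^{\circ}$, then project onto the first factor of $\mathbb{G}_1^{\mathrm{ad}} \times \cdots \times \mathbb{G}_r^{\mathrm{ad}}$). Since the image under a surjective algebraic morphism of a Zariski-dense set is Zariski-dense, $\pi(\Gamma_0)$ is Zariski-dense in $\mathbb{G}_1^{\mathrm{ad}}$; moreover $\mathbb{G}_1^{\mathrm{ad}}$ is connected, of adjoint type, and absolutely simple (the latter because over the algebraically closed field $\overline{\mathbb{Q}}$, simple and absolutely simple coincide). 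Third, control the height under $\pi$: realize $\pi$ as a subrepresentation of the ambient adjoint action of $\mathbb{G}^{\circ} \subseteq GL_d$ on $M_d(\overline{\mathbb{Q}})$, namely as the restriction of $Ad$ to the $\mathbb{G}^{\circ}$-invariant subspace $\mathfrak{g}_1 \subset \mathfrak{g}^{\circ} = \bigoplus_{i} \mathfrak{g}_i \subset M_d$. Proposition \ref{Ad comp Cor}~(ii) gives $e(Ad(F_2)) \leq d(|F_2|+1) \cdot e(F_2)$; after choosing coordinates on $M_d$ adapted to the decomposition $\mathfrak{g}^{\circ} = \bigoplus_i \mathfrak{g}_i$ (which by Remark \ref{basechange} alters $e$ only by a bounded additive constant), the operator norm of the $\mathfrak{g}_1$-block is bounded at every place by the full operator norm of $Ad(\cdot)$, and therefore $e(\pi(F_2)) \leq C_3(d) \cdot e(F)$.

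To finish, apply Proposition \ref{twoelements} inside $\mathbb{G}_1^{\mathrm{ad}}$ (with any choice of proper closed $T$-invariant subvariety $Z$, say $Z = \emptyset$): it extracts two regular semisimple elements $a, b \in \pi(F_2)^{c(d)}$ generating a Zariski-dense subgroup of $\mathbb{G}_1^{\mathrm{ad}}$, and the three-element set $F' = \{Id, a, b\} \subseteq \mathbb{G}_1^{\mathrm{ad}}$ then satisfies $e(F') \leq C(d) \cdot e(F)$, which is exactly the reduction demanded by Proposition \ref{adjoint}. The only delicate bookkeeping in the argument is in the third step above; the machinery already in place makes it short: Proposition \ref{Ad comp Cor} supplies the comparison between the ambient $SL_d$-height and the adjoint height, and Remark \ref{basechange} makes the further restriction to the $\mathfrak{g}_1$-summand costless. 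Equivalently, Proposition \ref{welldefheight} could be invoked to swap between the $SL_d$-embedding of $\mathbb{G}^{\circ}$ and its adjoint representation on $\mathfrak{g}^{\circ}$, both of which are non-trivial on every simple factor of $\mathbb{G}^{\circ}$.
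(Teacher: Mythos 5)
Your overall architecture is the same as the paper's: start from Proposition \ref{semi}, pass from the given faithful irreducible representation to the adjoint action, project to one simple ideal $\mathfrak{g}_1$ to land in an absolutely simple adjoint group, and finish with Proposition \ref{twoelements} (Proposition \ref{2elem}) to cut down to $\{Id,a,b\}$. The difference is in the representation-comparison step, where the paper invokes Proposition \ref{welldefheight} and works with $\widehat{h}$ and $h$, while you use Proposition \ref{Ad comp Cor} plus a block-restriction argument and claim a linear bound on $e$.

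That is where there is a genuine gap. Your justification for $e(\pi(F_2))\leq C_3(d)\cdot e(F)$ is that, in coordinates adapted to $\mathfrak{g}^{\circ}=\bigoplus_i\mathfrak{g}_i\subset M_d$, the operator norm of the $\mathfrak{g}_1$-block is at most the full operator norm of $Ad(\cdot)$. This only compares the norms $\|\cdot\|_v$ for one fixed basis, hence yields $h(\pi(F_2))\leq h(Ad(F_2))+O_d(1)$ and, after taking powers, $\widehat{h}(\pi(F_2))\leq \widehat{h}(Ad(F_2))$; it does \emph{not} by itself compare the minimal heights, because $E_v$ is an infimum over conjugators $x\in GL(M_d)(\overline{K_v})$ which need not preserve $\mathfrak{g}_1$: the nearly optimal conjugate $xAd(F_2)x^{-1}$ preserves $x(\mathfrak{g}_1)$, not $\mathfrak{g}_1$, so you cannot simply read off a block. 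One can repair this, either (i) by staying with $\widehat{h}$ (which is conjugation-invariant) and converting back to $e$ only at the end via Proposition \ref{hecomp}/\ref{hecomp2} — legitimate here since $\pi(F_2)\subset SL(\mathfrak{g}_1)$ and contains $Id$, and sufficient for the gap even though the resulting comparison is not linear — which is essentially how the paper's chain (Proposition \ref{semi} together with Proposition \ref{welldefheight}) proceeds; or (ii) by actually proving that $E_v$ does not increase under restriction to an invariant subspace, which requires the extra observation that the ambient standard norm restricted to $x(\mathfrak{g}_1)$ is again a Hermitian (resp.\ lattice) norm, hence of the form $\|y\cdot\|_v$ for some $y\in GL(\mathfrak{g}_1)(\overline{K_v})$; note that even then, at archimedean places one picks up the constant $c(d)$ of Lemma \ref{CompLem}, i.e.\ an additive $|\log c|$ in $e$, so the clean statement is again the $\widehat{h}$ one. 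A minor further remark: your first step is superfluous (the group produced by Proposition \ref{reducsemi}, hence in Proposition \ref{semi}, is already connected), and its justification "$[\mathbb{G}:\mathbb{G}^{\circ}]$ is bounded in terms of $d$" is not valid for arbitrary algebraic subgroups of $GL_d$, so it should simply be dropped.
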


\textit{Proof:} According to Proposition \ref{semi}, when proving Theorem %
\ref{main}, we may assume that $F$ generates a Zariski-dense subgroup of a
semisimple algebraic group $\mathbb{G}$ acting irreducibly on $\overline{%
\mathbb{Q}}^{d}$. By Proposition \ref{welldefheight}, the normalized heights
of this representation of $\mathbb{G}$ and of the adjoint representation of $%
\mathbb{G}$ are comparable. Hence proving the gap for the first amounts to
proving the gap for the second. We may thus assume that $\mathbb{G}$ $=Ad(%
\mathbb{G})$ is acting via the adjoint representation on its Lie algebra $%
\mathfrak{g}.$ It remains to verify that we can reduce to a simple factor of
$\mathbb{G}$. Recall that $\mathbb{G}$ is the direct product of its simple
factors. As the representation space $\mathfrak{g}$ splits into the $\mathbb{%
G}$-invariant subspaces corresponding to the simple ideals $(\mathfrak{g}%
_{i})_{i}$ of $\mathfrak{g}$, and as $h(Ad(F))\geq h(Ad(F)_{|\mathfrak{g}%
_{i}})$ for each $i,$ it is enough to prove the theorem for one of the
simple factors. Finally by Proposition \ref{2elem}, we may assume that $F$
has three elements $\{Id,a,b\}$.
\endproof%

\subsection{Geometric interpretation and displacement on symmetric spaces
and Bruhat-Tits buildings\label{geominter}}

In this final paragraph of preliminary reductions, we give a geometric
interpretation of the minimal norm $E_{v}(F)$ and prove Lemma \ref{Mostow},
which will be key in the proof of the main theorem. We keep the notation of
the previous paragraph. Here again $\mathbb{G}$ is a Chevalley group and $k$
is a local field. We set $\mathcal{BT}(\mathbb{G},k)$ to be the Bruhat-Tits
building (resp. the symmetric space if $k$ is Archimedean) associated to $%
\mathbb{G}(k)$ as defined in \cite{BT}. We fix $V,\rho _{V}$ a finite
dimensional linear representation of $\mathbb{G}$ which is non trivial on
each factor of $\mathbb{G}$ as in \S \ref{reducadj} above. We let $x_{0}$ be
the base point of $\mathcal{BT}(SL_{V},k)$ corresponding to the stabilizer
of the norm $||\cdot ||_{\rho _{V},k}$ defined in \S \ref{reducadj}. The
maximal compact subgroup $K_{0}$ of $\mathbb{G}(k)$ defined in \S \ref%
{reducadj} coincides with the the stabilizer of $||\cdot ||_{\rho _{V},k}$
inside $\mathbb{G}(k).$

Let $\ell $ be a finite extension of $k.$ On $\mathcal{BT}(\mathbb{G},\ell )$
we define the distance $d$ to be the standard left invariant distance on $%
\mathcal{BT}(\mathbb{G},\ell )$ with the following normalization: if $a\in
A, $ then $d(a\cdot x_{0},x_{0})=\sqrt{\sum_{i=1}^{d}(\log |a_{i}|_{k})^{2}}%
, $ where $\log $ is the logarithm in base $|\pi _{\ell }^{-1}|_{k},$ with $%
\pi _{\ell }$ a uniformizer for $\mathcal{O}_{\ell }$ when $k$ is non
Archimedean, and the standard logarithm if $k$ is Archimedean. In this
normalization, the distance between adjacent vertices on $\mathcal{BT}(%
\mathbb{G},\ell )$ is of order $1$ and independent of $\ell $ (when $k$ is
non Archimedean).

Proposition \ref{caprace} below, which was communicated to us by P.E.
Caprace \cite{Cap}, shows that the symmetric space or building $\mathcal{BT}(%
\mathbb{G},k)\simeq \mathbb{G}(k)/K_{0}$ embeds isometrically in $\mathcal{BT%
}(SL_{V},k)$ as a closed and convex subspace via the orbit map $\mathbb{G}%
(k)/K_{0}\rightarrow \mathcal{BT}(SL_{V},k),$ $gK_{0}\mapsto g$. The short
proof given below makes use of the general theory of $CAT(0)$ spaces
(examples of which are the symmetric spaces and buildings $\mathcal{BT}%
(SL_{V},k)$ considered here). We refer the reader to the book by Bridson and
Haefliger \cite{BH} for background on $CAT(0)$ spaces. In particular, the
notion of a semisimple isometry of a $CAT(0)$ space is defined in \cite[II.6.%
]{BH}.

\begin{proposition}
\label{caprace}As above let $k$ be a local field and $\mathbb{G}$ a
semisimple $k$-split linear algebraic group, with Cartan decomposition $%
\mathbb{G}(k)=K_{0}T(k)K_{0}.$ Assume that $\mathbb{G}(k)$ acts properly by
isometries on a complete $CAT(0)$ space $X$ in such a way that semisimple
elements of $\mathbb{G}(k)$ act by semisimple isometries. Assume that $K_{0}$
fixes a point $p$ in $X$ which belongs to a flat $P$ stabilized by $T(k)$.
Then the map $gK_{0}\mapsto g\cdot p$ induces (up to renormalizing the
metric on $X$) a $\mathbb{G}(k)$-equivariant isometric embedding $f$ from $%
\mathcal{BT}(\mathbb{G},k)$ to $X$.
\end{proposition}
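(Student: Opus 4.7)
\textit{Proof plan.} The plan is to define the candidate isometric embedding as the orbit map $f(gK_0) := g \cdot p$, then reduce via the Cartan decomposition $\mathbb{G}(k)=K_0T(k)K_0$ to the statement that the restriction of $f$ to a single apartment is an affine isometry onto the flat $P$ (up to the allowed rescaling of the metric on $X$). The map $f$ is well-defined because $K_0$ fixes $p$, and it is $\mathbb{G}(k)$-equivariant by construction. Since both spaces are homogeneous under $\mathbb{G}(k)$ by isometries, equivariance reduces the isometry statement to showing $d_X(p, g\cdot p) = c \cdot d_{\mathcal{BT}}(K_0, gK_0)$ for some constant $c > 0$ and all $g \in \mathbb{G}(k)$.

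Writing $g = k_1 t k_2$ and using that $K_0$ fixes both $p$ and the base vertex $K_0 \in \mathcal{BT}(\mathbb{G},k)$, the two distances become invariant under the $K_0$-bi-action, so it suffices to establish the relation for $g = t \in T(k)$. The key remaining step is to show that $T(k)$ acts on $P$ by translations. Since $T(k)$ is abelian and acts on $X$ by semisimple isometries, and $P$ is a jointly invariant flat, the induced homomorphism $T(k) \to \mathrm{Isom}(P)$ should factor through the translation subgroup: the kernel contains $T(k) \cap K_0$, because these elements fix $p$ and a compact group of Euclidean isometries fixing a point lies in the rotation subgroup, but they also commute with translations by elements of $T(k)$ in directions spanning $P$, forcing the rotation part to be trivial. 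This yields a homomorphism $\mu: T(k) \to P_0$, where $P_0$ is the translation group of $P$, with kernel $T(k) \cap K_0$.

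By Bruhat--Tits theory, the apartment $A \subset \mathcal{BT}(\mathbb{G}, k)$ associated to $T$ is identified with a Euclidean space on which $T(k)$ acts by translations via a canonical homomorphism $\nu: T(k) \to A_0$ with the same kernel. The normalizer $N_{\mathbb{G}(k)}(T(k))$ acts on both $A$ and $P$, extending the $T(k)$-actions via the Weyl group $W$ acting by reflections; this uses the \emph{good} property $N_{K_0}(T(k)) T(k) = N_{\mathbb{G}(k)}(T(k))$ of $K_0$ precisely to guarantee that the stabilizer of $p$ inside $N_{\mathbb{G}(k)}(T(k))$ corresponds to that of the base vertex in $A$. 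Hence $\mu$ and $\nu$ are $W$-equivariant linear maps from the cocharacter space of $T$ into Euclidean spaces carrying the reflection representation of $W$; its irreducibility on each simple factor forces $\mu$ to be a positive scalar multiple of $\nu$, and the scalar is absorbed by renormalizing the metric on $X$. Combined with the reduction above this shows that $f$ is an isometric embedding; its image is closed and convex as it is the union of the $\mathbb{G}(k)$-translates of $P$, glued along intersections via the apartment system.

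The main obstacle is proving that $T(k)$ acts on $P$ by translations, which relies crucially on the hypothesis that semisimple elements of $\mathbb{G}(k)$ act by semisimple isometries: without it, an element of $T(k)$ could act on $P$ with a nontrivial unipotent (``parabolic'') part, and the identification of $P$ with the apartment would break down. The argument requires combining the CAT(0) theory of commuting semisimple isometries (axes, common min-sets, flat orbits) with the algebraic structure of $T(k)$ as the product of a compact part $T(k) \cap K_0$ and a free abelian (respectively real vector) quotient, to conclude that the rotational part vanishes on all of $T(k)$ and that $\mu$ has the right rank to match $\nu$.
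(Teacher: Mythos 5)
Your overall skeleton (orbit map, well-definedness and equivariance, Cartan decomposition reducing everything to the torus, and a Weyl-group rigidity argument identifying the torus action on $P$ with the action on the apartment up to one scalar) matches the paper's proof, and your final step — Schur-type proportionality of two $W$-equivariant maps using irreducibility of the reflection representation — is a perfectly good substitute for the paper's appeal to rigidity of cocompact Euclidean Coxeter group actions. But there is a genuine gap at the step you yourself flag as the main obstacle: you never justify that $T(k)$ acts on $P$ by translations, nor that $N_{\mathbb{G}(k)}(T(k))$ stabilizes $P$ (which your $W$-equivariance of $\mu$ requires), nor that the translation parts of $T(k)$ span $P$ (which your rotation-killing argument presupposes — making that argument circular). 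In fact, for an \emph{arbitrary} $T(k)$-invariant flat $P$ through $p$ these claims can fail: take $X=\mathcal{BT}(\mathbb{G},k)\times\mathbb{E}^{2}$ with $\mathbb{G}(k)$ acting standardly on the first factor and through a finite quotient by rotations fixing the origin on the second; then $P=P_{0}\times\mathbb{E}^{2}$ is a $T(k)$-invariant flat containing the $K_{0}$-fixed point, yet elements of $T(k)$ act with nontrivial rotational part and $\dim P>\mathrm{rk}\,\mathbb{G}$. Similarly, for a non-canonical $P$, an element $n\in N_{\mathbb{G}(k)}(T(k))\cap K_{0}$ only sends $P$ to \emph{some} $T(k)$-invariant flat through $p$, not necessarily to $P$ itself, so the extension of the $T(k)$-action on $P$ to a $W$-action is unjustified.

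The missing idea — and the paper's opening move — is to invoke the Flat Torus Theorem (Bridson--Haefliger II.7) for the abelian group $T(k)$ acting properly by semisimple isometries: there is a \emph{unique minimal} $T(k)$-invariant flat containing $p$, it has dimension exactly $r=\dim T$, and $T(k)$ acts on it by translations. Replacing $P$ by this minimal flat (legitimate, since the conclusion only involves the orbit of $p$) repairs all three points at once: uniqueness plus the decomposition $N_{\mathbb{G}(k)}(T(k))=T(k)\cdot\bigl(N_{\mathbb{G}(k)}(T(k))\cap K_{0}\bigr)$ (the \emph{good} property of $K_{0}$) shows that the full normalizer stabilizes $P$, the translation action is given by the theorem rather than by your circular commutation argument, and the dimension count gives the rank/spanning statement you need so that $\mu$ is injective (with zero ruled out by properness) and your Schur argument applies. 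With that insertion your proof goes through and is essentially a variant of the paper's; without it, the central claim is not merely unproved but false as stated.
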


\proof%
Let $G=\mathbb{G}(k)$, $T=T(k)$ and $P_{0}$ the $T$-invariant flat in $%
\mathcal{BT}(\mathbb{G},k)$ containing the base point $p_{0}$ associated to $%
K_{0}.$ According to the Flat Torus Theorem (see \cite[II.7.]{BH}), there is
a unique minimal $T$-invariant flat containing $p$ and its dimension is $%
\dim T=r=rk(\mathbb{G})$. We may thus assume that $P$ is this minimal flat.
However, the normalizer $N_{G}(T)$ permutes the $T$-invariant flats and $%
N_{G}(T)$ is generated by $T$ and by $N_{G}(T)\cap K_{0}$. It follows that $%
N_{G}(T)$ stabilizes $P$. Hence $g\cdot p_{0}\mapsto g\cdot p$ induces an $%
N_{G}(T)$-equivariant map $f$ between $P_{0}$ and $P$.

Note first that it is enough to show that $f$ is a homothety from $P_{0}$ to
$P$. Indeed up to renormalizing the metric in $X,$ we may then assume that $%
f $ is an isometry from $P_{0}$ to $P,$ i.e. $d(a\cdot p,p)=d(a\cdot
p_{0},p_{0}).$ But then for any $g,h\in G$, $d(f(g\cdot p_{0}),f(h\cdot
p_{0}))=d(h^{-1}g\cdot p,p)=d(a\cdot p,p)=d(g\cdot p_{0},h\cdot p_{0})$ if $%
h^{-1}g=k_{1}ak_{2}$ is a Cartan decomposition of $h^{-1}g.$

The fact that $f:P_{0}\rightarrow P$ is a homothety follows from the
rigidity of Euclidean Coxeter group actions. Indeed $N_{G}(T)$ contains the
affine Weyl group as a co-compact subgroup which acts co-compactly by
isometries on both $P_{0}$ and $P.$ But any such action is isometric to the
standard Coxeter representation (cf. \cite{Bourb}).
\endproof%

\begin{remark}
This proposition is a special case of a theorem of Landvogt about
functoriality properties of Bruhat-Tits buildings (see \cite{Land}) in the
non-archimedean case and a theorem of Karpelevich and Mostow (see \cite{Mos}%
) in the form given by Eberlein in \cite[2.6.]{Eber} in the Archimedean case.
\end{remark}

The relation between the operator norm on $SL(V_{k})$ and the displacement
on $\mathcal{BT}(SL_{V},k)$ in given by the following well-known:

\begin{lemma}
\label{displacement}For any $f,g\in SL(V_{k})$ and $x=g^{-1}\cdot x_{0}\in
\mathcal{BT}(SL_{V},k),$ letting $\log $ be the logarithm in base $|\pi
_{k}^{-1}|_{k}$, we have
\begin{equation*}
\log \left\Vert gfg^{-1}\right\Vert _{\rho _{V},k}\leq d(f\cdot x,x)\leq
\sqrt{\dim V}\cdot \log \left\Vert gfg^{-1}\right\Vert _{\rho _{V},k}
\end{equation*}
\end{lemma}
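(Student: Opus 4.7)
The plan is to reduce to the case of a single diagonal matrix via the Cartan decomposition, and then carry out the estimate in coordinates.

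First, I would use the $SL(V_k)$-invariance of the metric on $\mathcal{BT}(SL_V, k)$: since $g^{-1}$ acts by isometries,
\[
d(f\cdot x, x) \;=\; d(fg^{-1}\cdot x_0,\, g^{-1}\cdot x_0) \;=\; d(gfg^{-1}\cdot x_0,\, x_0).
\]
Setting $h := gfg^{-1}$, the inequality to prove depends only on $h$ and on the base point $x_0$, so I may replace the left-hand pair $(f, g)$ by $(h, 1)$.

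Next, I would apply the Cartan decomposition $h = k_1 a k_2$ with $k_1, k_2 \in K_0$ and $a \in T(k)$ diagonal in the weight basis. By construction $K_0$ is the stabilizer of the norm $\|\cdot\|_{\rho_V, k}$, hence also the stabilizer of the base point $x_0$ of $\mathcal{BT}(SL_V, k)$. It follows that $\|h\|_{\rho_V, k} = \|a\|_{\rho_V, k}$ and that $d(h \cdot x_0, x_0) = d(a \cdot x_0, x_0)$. This reduces both inequalities to the case of a single diagonal matrix $a$.

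For such $a = \mathrm{diag}(a_1, \dots, a_D)$ with $D = \dim V$ and $\prod_i a_i = 1$, the norm is $\|a\|_{\rho_V, k} = \max_i |a_i|_k$, while the normalization of the metric on the apartment through $x_0$ gives $d(a \cdot x_0, x_0) = \sqrt{\sum_{i=1}^D (\log |a_i|_k)^2}$ with $\log$ in base $|\pi_k^{-1}|_k$. Write $\lambda_i := \log |a_i|_k$ and $M := \max_i \lambda_i = \log \|a\|_{\rho_V, k}$; the relation $\sum_i \lambda_i = 0$ forces $M \geq 0$. The lower bound $M \leq \sqrt{\sum_i \lambda_i^2}$ is the trivial $\ell^\infty \leq \ell^2$ estimate (applied to the nonnegative quantity $M$). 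The upper bound $\sqrt{\sum_i \lambda_i^2} \leq \sqrt{D} \cdot M$ reduces to showing $|\lambda_i| \leq M$ for every $i$, which holds (up to an absorbed $\dim V$-dependent constant if necessary) by the choice of fundamental Weyl chamber together with the trace-zero relation $\sum_i \lambda_i = 0$, which bounds $-\min_i \lambda_i$ in terms of $M$.

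The main thing to be careful about is the compatibility of the Cartan decomposition with both the norm $\|\cdot\|_{\rho_V, k}$ and the base point $x_0$ -- namely, that the good maximal compact subgroup $K_0$ defined in \S\ref{reducadj} stabilizes both -- and that the normalization of the distance on the apartment is the one yielding the formula $\sqrt{\sum_i (\log |a_i|_k)^2}$ in the weight coordinates. Once these are in place, the final step is a purely elementary comparison between the $\ell^\infty$ and $\ell^2$ norms on the Cartan projection, which are uniformly equivalent on the trace-zero hyperplane of $\mathbb{R}^D$ with constants depending only on $\dim V$.
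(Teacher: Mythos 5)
Your proposal follows the paper's own proof exactly: left-invariance of the metric reduces to $g=1$, the Cartan decomposition $h=k_{1}ak_{2}$ together with the fact that $K_{0}$ fixes both the norm $\Vert\cdot\Vert_{\rho_{V},k}$ and the base point $x_{0}$ reduces everything to a diagonal element $a$, and the estimate becomes an $\ell^{\infty}$ versus $\ell^{2}$ comparison for the vector $\lambda=(\lambda_{i})_{i}$ of logarithms of the diagonal entries on the trace-zero hyperplane; the paper's proof is just a compressed version of this ("the estimate is obvious from the normalization"). The lower bound is fine as you argue, since $\sum_{i}\lambda_{i}=0$ forces $M:=\max_{i}\lambda_{i}=\log\Vert a\Vert_{\rho_{V},k}\geq 0$ and $M\leq\sqrt{\sum_{i}\lambda_{i}^{2}}$.

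The one step you state incorrectly is the claim $|\lambda_{i}|\leq M$ for every $i$: this is false in general, and the trace-zero relation only gives $-\min_{i}\lambda_{i}\leq (D-1)M$ where $D=\dim V$ (take $a=\mathrm{diag}(t,\dots,t,t^{-(D-1)})$ with $|t|_{k}>1$). The elementary comparison then yields $d(a\cdot x_{0},x_{0})\leq\sqrt{D(D-1)}\,M$ rather than $\sqrt{D}\,M$. Your hedge about absorbing a $\dim V$-dependent constant is in fact unavoidable: for $a=\mathrm{diag}(t,t,t^{-2})$ in $SL_{3}$ one has $d(a\cdot x_{0},x_{0})=\sqrt{6}\,\log|t|_{k}>\sqrt{3}\,\log|t|_{k}$, so the constant $\sqrt{\dim V}$ appearing in the statement of Lemma \ref{displacement} should really be read as $\sqrt{D(D-1)}$ (or simply $\dim V$). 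This discrepancy is harmless for the way the lemma is used later (in Proposition \ref{smallRE2} and in Lemma \ref{Mostow} only the existence of such a $\dim V$-dependent constant matters), but in your write-up you should drop the assertion $|\lambda_{i}|\leq M$ and state the final inequality with the correct constant coming from $-\min_{i}\lambda_{i}\leq(D-1)M$.
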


\proof%
Since $d(\cdot ,\cdot )$ is left invariant, we may assume that $g=1$. Then
we may write $f=k_{1}ak_{2}$ the Cartan decomposition for $f$. Since the
norm is fixed by $K_{0}$ we can assume that $f=a$. Then the estimate is
obvious from the normalization we chose for $d(\cdot ,\cdot )$ above.%
\endproof%

A consequence of this lemma is that the logarithm of the minimal norm of a
finite set $F$ is comparable to the minimal displacement of $F$ on $\mathcal{%
BT}(SL_{V},k).$ As in \cite{uti}, 5.4.1., we will use a projection argument
and the fact that $\mathcal{BT}(SL_{V},k)$ is a $CAT(0)$ space in order to
show that the minimal displacement of $F$ is attained on $\mathcal{BT}(%
\mathbb{G},k).$ More precisely:

\begin{lemma}
\label{Mostow}For every finite set $F\in \mathbb{G(}k),$ we have
\begin{equation*}
E_{k}(\rho _{V}(F))\leq \inf_{g\in \mathbb{G}(\overline{k})}\left\Vert \rho
_{V}(gFg^{-1})\right\Vert _{\rho _{V},k}\leq E_{k}(\rho _{V}(F))^{\sqrt{\dim
V}}
\end{equation*}
\end{lemma}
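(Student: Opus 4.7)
The first inequality $E_k(\rho_V(F)) \leq \inf_{g \in \mathbb{G}(\overline{k})} \|\rho_V(gFg^{-1})\|_{\rho_V,k}$ is immediate, since $\rho_V(\mathbb{G}(\overline{k})) \subseteq GL(V_{\overline{k}})$, so the infimum over the smaller conjugating set is no smaller than the infimum over all of $GL(V_{\overline{k}})$, which by definition is $E_k(\rho_V(F))$.

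For the second inequality, the plan is to convert norm-minimisation into displacement-minimisation on buildings via Lemma \ref{displacement}, use Proposition \ref{caprace} to push an approximate minimiser from $\mathcal{BT}(SL_V,\ell)$ down onto the convex subspace $\mathcal{BT}(\mathbb{G},\ell)$ by CAT(0) nearest-point projection, and then convert back to a norm bound on an element of $\rho_V(\mathbb{G}(\overline{k}))$. Concretely, fix $\varepsilon>0$ and pick a finite extension $\ell/k$ together with $h \in GL(V_\ell)$ such that
\[
\|h\rho_V(F)h^{-1}\|_\ell \leq (1+\varepsilon)\,E_k(\rho_V(F)).
\]
Set $y := h^{-1}\cdot x_0 \in \mathcal{BT}(SL_V,\ell)$. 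By Lemma \ref{displacement},
\[
\max_{f\in F} d(\rho_V(f)\cdot y,\,y)\;\leq\;\sqrt{\dim V}\cdot\log\|h\rho_V(F)h^{-1}\|_\ell.
\]

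By Proposition \ref{caprace}, $\mathcal{BT}(\mathbb{G},\ell) \hookrightarrow \mathcal{BT}(SL_V,\ell)$ is a closed convex subspace stable under $\rho_V(\mathbb{G}(\ell))$. Let $\pi$ denote the CAT(0) nearest-point projection onto it: it is $1$-Lipschitz and, being canonical, commutes with every isometry preserving the subspace, hence in particular with $\rho_V(f)$ for all $f\in F$. Therefore
\[
d(\rho_V(f)\cdot\pi(y),\,\pi(y)) \;=\; d(\pi(\rho_V(f)\cdot y),\,\pi(y)) \;\leq\; d(\rho_V(f)\cdot y,\,y).
\]
Since $\mathcal{BT}(\mathbb{G},\ell)\simeq \mathbb{G}(\ell)/K_0$, write $\pi(y) = \rho_V(g)^{-1}\cdot x_0$ for some $g\in \mathbb{G}(\ell)\subseteq \mathbb{G}(\overline{k})$, and apply Lemma \ref{displacement} in the reverse direction to recover the norm bound
\[
\log\|\rho_V(gfg^{-1})\|_\ell \;\leq\; d(\rho_V(f)\cdot\pi(y),\,\pi(y)).
\]
Chaining the three inequalities and sending $\varepsilon\to 0$ yields $\|\rho_V(gFg^{-1})\|_\ell \leq E_k(\rho_V(F))^{\sqrt{\dim V}}$, which is the desired conclusion (the extended absolute value on $\overline{k}$ agrees with the canonical one on $\ell$, so the left-hand side is the same whether viewed in $\ell$ or in $\overline{k}$).

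The only truly geometric input in this argument is Proposition \ref{caprace}, which provides the isometric convex embedding of $\mathcal{BT}(\mathbb{G},\ell)$ into $\mathcal{BT}(SL_V,\ell)$; the rest is a straightforward packaging of Lemma \ref{displacement} together with the classical CAT(0) facts that nearest-point projection onto a closed convex subspace is $1$-Lipschitz and intertwines with isometries preserving that subspace (see \cite{BH}). The main point requiring care is therefore the bookkeeping with the finite extension $\ell$: one must be sure that a near-minimiser $h$ for $E_k(\rho_V(F))$ may be taken in $GL(V_\ell)$ for some $\ell$ and that the buildings $\mathcal{BT}(\mathbb{G},\ell)\subseteq \mathcal{BT}(SL_V,\ell)$ and their metrics are set up compatibly with the extensions of $\|\cdot\|_k$ to $\ell$.
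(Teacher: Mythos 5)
Your argument follows the paper's strategy (Lemma \ref{displacement}, Proposition \ref{caprace}, and the $1$-Lipschitz CAT(0) projection), but it has a genuine gap at the step ``Since $\mathcal{BT}(\mathbb{G},\ell)\simeq \mathbb{G}(\ell)/K_{0}$, write $\pi(y)=\rho_{V}(g)^{-1}\cdot x_{0}$ for some $g\in\mathbb{G}(\ell)$.'' This identification is only valid in the archimedean case, where $\mathbb{G}(k)$ acts transitively on its symmetric space. When $k$ is non-archimedean, the orbit $\mathbb{G}(\ell)\cdot x_{0}$ is a discrete set of (special) vertices of the building $\mathcal{BT}(\mathbb{G},\ell)$, whereas the projected point $\pi(y)$ is in general an interior point of a facet, or a vertex of another type; it need not be of the form $g^{-1}\cdot x_{0}$ with $g\in\mathbb{G}(\ell)$, so you cannot apply Lemma \ref{displacement} ``in the reverse direction'' to it. Consequently your chain of inequalities does not produce a conjugating element of $\mathbb{G}(\overline{k})$.

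The paper handles exactly this point: it replaces $\pi(y)$ by the nearest point of the orbit $\mathbb{G}(\ell)\cdot x_{0}$, at the cost of an additive constant $c$ in the displacement, where $c$ is the maximal distance from a point of $\mathcal{BT}(\mathbb{G},\ell)$ to that orbit. Two further observations are then essential and are absent from your write-up: first, with the chosen normalization of the metric (logarithms in base $|\pi_{\ell}^{-1}|_{k}$, so that adjacent vertices are at distance of order $1$) the constant $c$ is independent of $\ell$; second, the additive error $c$ in displacement becomes the multiplicative factor $(|\pi_{\ell}^{-1}|_{k})^{c}$ on the norm side, and this factor tends to $1$ when one lets $\ell$ run over more and more ramified finite extensions of $k$ (which changes neither side of the desired inequality). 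Only after this limiting argument, together with choosing $\ell$ large enough that conjugators in $SL(V_{\ell})$ nearly realize $E_{k}(\rho_{V}(F))$, does one obtain the clean bound $\inf_{g\in\mathbb{G}(\overline{k})}\|\rho_{V}(gFg^{-1})\|_{\rho_{V},k}\leq E_{k}(\rho_{V}(F))^{\sqrt{\dim V}}$ with no extraneous constant. For a fixed $\ell$, your method (once corrected by projecting to the orbit) would only give the inequality up to the factor $(|\pi_{\ell}^{-1}|_{k})^{c}$. The archimedean case of your argument is fine as written.
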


\proof%
The left side of the inequalities is obvious from the definition of $%
E_{k}(\rho _{V}(F))$. For any $\varepsilon >0,$ one can find a finite
extension $\ell $ of $k$ such that $\inf_{g\in \mathbb{G}(\overline{\mathbb{Q%
}}_{v})}\left\Vert \rho _{V}(gFg^{-1})\right\Vert _{\rho _{V},k}\leq
\inf_{g\in \mathbb{G}(\ell )}\left\Vert \rho _{V}(gFg^{-1})\right\Vert
_{\rho _{V},k}+\varepsilon .$ By Lemma \ref{displacement}
\begin{equation}
\inf_{g\in \mathbb{G}(\ell )}\log \left\Vert \rho _{V}(gFg^{-1})\right\Vert
_{\rho _{V},k}\leq \inf_{g\in \mathbb{G}(\ell )}\max_{f\in
F}d(fgx_{0},gx_{0})\leq \inf_{x\in \mathcal{BT}(\mathbb{G},\ell )}\max_{f\in
F}d(fx,x)+c  \label{f1}
\end{equation}%
where the $\log $ is in base $|\pi _{\ell }^{-1}|_{k}$ and $c$ is the
maximal distance from any point in $\mathcal{BT}(\mathbb{G},\ell )$ to the
nearest point in the orbit $\mathbb{G}(\ell )\cdot x_{0}.$ Note that this
constant $c$ is independent of the choice of $\ell $. Since $\mathcal{BT}%
(SL_{V},\ell )$ is a $CAT(0)$ metric space and $\mathcal{BT}(\mathbb{G},\ell
)$ a closed convex subset, for every $x\in \mathcal{BT}(SL_{V},\ell ),$ one
can define the projection $p(x)$ of $x$ on $\mathcal{BT}(\mathbb{G},\ell )$
to be the (unique) point that realizes the distance from $x$ to $\mathcal{BT}%
(\mathbb{G},\ell ).$ The projection map is $1$-Lipschitz, hence $d(fx,x)\geq
d(fp(x),p(x))$ for any $x\in \mathcal{BT}(SL_{V},\ell ).$ Therefore
\begin{equation}
\inf_{x\in \mathcal{BT}(\mathbb{G},\ell )}\max_{f\in F}d(fx,x)=\inf_{x\in
\mathcal{BT}(SL_{V},\ell )}\max_{f\in F}d(fx,x)  \label{f2}
\end{equation}%
Combining (\ref{f1}) with (\ref{f2}) and Lemma \ref{displacement} we get
\begin{equation*}
\inf_{g\in \mathbb{G}(\overline{k})}\left\Vert \rho
_{V}(gFg^{-1})\right\Vert _{\rho _{V},k}\leq (|\pi _{\ell
}^{-1}|_{k})^{c}\inf_{g\in SL_{V}(\ell )}\left\Vert g\rho
_{V}(F)g^{-1}\right\Vert _{\rho _{V},k}^{\sqrt{\dim V}}+\varepsilon
\end{equation*}%
But $\ell $ can be taken arbitrarily large, so that $|\pi _{\ell }^{-1}|_{k}$
can be taken arbitrarily close to $1$, and since $c$ was independent of $%
\ell $ and $\varepsilon $ was arbitrary, we finally get the right hand side
of the desired inequality.%
\endproof%

\section{Local estimates on Chevalley groups\label{local}}

In this section, we work locally in a fixed local field, and prove several
crucial estimates relating the minimal norm $E_{k}(F)$ and the matrix
coefficients of the elements of $F$ in the adjoint representation. In the
next section, we will gather this local information at each place and put it
together to obtain global bounds.

\subsection{Notation\label{notat}}

Recall our notation. The group $\mathbb{G}$ is an absolutely simple
algebraic group of adjoint type defined over $\overline{\mathbb{Q}}$, viewed
via the adjoint representation as an algebraic subgroup of $GL(\mathfrak{g}%
), $ where $\mathfrak{g}$ is the Lie algebra of $\mathbb{G}$. We let $L$ be
a number field over which $\mathbb{G}$ splits. The set $F=\{Id,a,b\}$
consists of the identity and two semisimple regular elements of $\mathbb{G}(%
\overline{\mathbb{Q}})$ which generate a Zariski-dense subgroup of $\mathbb{G%
}$.

Let $T$ be the unique maximal torus of $\mathbb{G}$ containing $a.$ Let $%
\Phi =\Phi (\mathbb{G},T)$ be the set of roots of $\mathbb{G}$ with respect
to $T.$ Let $r$ be the absolute rank of $\mathbb{G}$. Let us also choose a
Borel subgroup $B $ of $\mathbb{G}$ containing $T,$ thus defining the set of
positive roots $\Phi ^{+}$ and a base $\Pi $ for $\Phi $. For $\alpha \in
\Phi $, let $\mathfrak{g}_{\alpha }$ be the root subspace corresponding to $%
\alpha $ and $\mathfrak{t}=\mathfrak{g}_{0}$ be the Lie algebra of $T,$ so
that we have the direct sum decomposition
\begin{equation}
\mathfrak{g}=\mathfrak{t}\oplus \bigoplus_{\alpha \in \Phi }\mathfrak{g}%
_{\alpha }  \label{rootdec}
\end{equation}

Let $(\alpha _{1},...,\alpha _{r})$ be an enumeration of the base associated
to the choice of $B$. The chosen enumeration of the elements of the base
induces a total order on the set of roots, namely two roots $\alpha =\sum
n_{i}\alpha _{i}$ and $\beta =\sum m_{i}\alpha _{i}$ satisfy $\alpha \geq
\beta $ iff $(n_{1},...,n_{r})\geq (m_{1},...,m_{r})$ for the canonical
lexicographical order on $r$-tuples. We may label the roots in decreasing
order, so that $\alpha _{1}>...>\alpha _{|\Phi ^{+}|}>0>\alpha _{|\Phi
^{+}|+r+1}>...>\alpha _{|\Phi |+r}$ is the full list of all roots. Note that
$d=\dim \mathfrak{g}=|\Phi |+r$ and that $\alpha _{|\Phi ^{+}|+r+i}=-\alpha
_{|\Phi ^{+}|+1-i}$ for $1\leq i\leq |\Phi ^{+}|.$ Also set $\alpha _{0}=0$
and $\alpha _{i}=0$ if $i\in I_{r}=[|\Phi ^{+}|+1,|\Phi ^{+}|+r].$ Finally,
for any root $\alpha $, let $i_{\alpha }$ be the index such that $\alpha
_{i_{\alpha }}=\alpha $.

For every $\alpha \in \Phi ^{+}\cup \{0\},$ let $\mathfrak{u}_{\alpha }$ be
the subspace of $\mathfrak{g}$ generated by the $\mathfrak{g}_{\beta }$'s
for all roots $\beta >\alpha .$

\begin{lemma}
For each $\alpha \in \Phi ^{+},$ $\mathfrak{u}_{\alpha }$ is an ideal in $%
\mathfrak{b}=\mathfrak{t}\oplus \bigoplus_{\alpha \in \Phi ^{+}}\mathfrak{g}%
_{\alpha }$. Moreover the sequence of $\mathfrak{u}_{\alpha }$'s for $\alpha
\in \Phi ^{+}$ is a decreasing (with $\alpha $) sequence of non-trivial
ideals in $\mathfrak{b}$ starting with $\mathfrak{u}_{0}=\bigoplus_{\alpha
\in \Phi ^{+}}\mathfrak{g}_{\alpha }$, each one being of codimension $1$
inside the previous one.
\end{lemma}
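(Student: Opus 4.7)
The plan is to deduce both the ideal property and the codimension-$1$ chain property directly from the root space decomposition (\ref{rootdec}) and the single structural fact that the lexicographic order on the roots is compatible with adding positive roots: namely $\gamma>0$ and $\beta>\alpha$ together imply $\beta+\gamma>\alpha$. This follows at once from the definition of the lex order on $\mathbb{Z}^r$, since the coordinates of $\beta+\gamma$ relative to the simple roots dominate those of $\beta$ in the lex sense.

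For the ideal claim, I would argue by bilinearity. Since $\mathfrak{u}_\alpha = \bigoplus_{\beta>\alpha}\mathfrak{g}_\beta$ is a sum of root spaces and $\mathfrak{b} = \mathfrak{t} \oplus \bigoplus_{\gamma\in\Phi^+}\mathfrak{g}_\gamma$, it suffices to check $[\mathfrak{b},\mathfrak{g}_\beta]\subseteq \mathfrak{u}_\alpha$ for every $\beta>\alpha$. The bracket $[\mathfrak{t},\mathfrak{g}_\beta]$ is contained in $\mathfrak{g}_\beta \subseteq \mathfrak{u}_\alpha$ because $\mathfrak{g}_\beta$ is a $T$-weight space. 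The bracket $[\mathfrak{g}_\gamma,\mathfrak{g}_\beta]$ for $\gamma\in\Phi^+$ is contained in $\mathfrak{g}_{\gamma+\beta}$, which is either zero (if $\gamma+\beta$ is not a root) or lies in the root space of the positive root $\gamma+\beta$; the case $\gamma+\beta=0$ is excluded since it would force $\gamma=-\beta$, impossible with both $\gamma,\beta$ positive. By the compatibility of the lex order with positive-root addition, $\gamma+\beta>\beta>\alpha$, so the bracket lies in $\mathfrak{u}_\alpha$.

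For the chain property, I would enumerate the positive roots in decreasing order as $\alpha_1>\alpha_2>\cdots>\alpha_{|\Phi^+|}$, so that $\mathfrak{u}_{\alpha_i}=\bigoplus_{j<i}\mathfrak{g}_{\alpha_j}$. Since each root space $\mathfrak{g}_{\alpha_j}$ is one-dimensional (a standard fact for semisimple Lie algebras in characteristic zero, which is the setting here), passing from $\mathfrak{u}_{\alpha_{i-1}}$ to $\mathfrak{u}_{\alpha_i}$ amounts to removing the single line $\mathfrak{g}_{\alpha_i}$, so the inclusion is strict of codimension exactly $1$. The chain starts at $\mathfrak{u}_0=\bigoplus_{\alpha\in\Phi^+}\mathfrak{g}_\alpha$ as stated, and strict decrease together with non-triviality along the way is automatic from the enumeration.

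There is no real obstacle: the only mildly delicate points are verifying that the lex order is stable under adding a positive root and excluding that a bracket $[\mathfrak{g}_\gamma,\mathfrak{g}_\beta]$ with $\gamma,\beta\in\Phi^+$ can hit $\mathfrak{t}$, both of which are immediate. Everything else is a direct unwinding of the root-space decomposition.
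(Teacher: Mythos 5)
Your proof is correct and follows essentially the same route as the paper: the bracket relation $[\mathfrak{g}_{\gamma},\mathfrak{g}_{\beta}]\subseteq\mathfrak{g}_{\gamma+\beta}$ combined with the compatibility of the lexicographic order with adding an element of $\Phi^{+}\cup\{0\}$ gives the ideal property (the paper treats $\mathfrak{t}=\mathfrak{g}_{0}$ so the torus bracket is the case $\gamma=0$), and the codimension-$1$ chain follows from the one-dimensionality of the root spaces. Your write-up only spells out in more detail the steps the paper states tersely, so there is nothing to add.
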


\proof%
We have $\mathfrak{u}_{\alpha }=\bigoplus_{\beta >\alpha }\mathfrak{g}%
_{\beta }.$ Moreover $[\mathfrak{g}_{\gamma },\mathfrak{g}_{\beta }]\leq
\mathfrak{g}_{\gamma +\beta }$ and $\gamma +\beta >\alpha $ for any $\gamma
\in \Phi ^{+}\cup \{0\},$ and so clearly $[\mathfrak{b},\mathfrak{u}_{\alpha
}]\leq \mathfrak{u}_{\alpha }.$ The second assertion follows from the fact
that each $\mathfrak{g}_{\alpha }$, $\alpha \in \Phi $, has dimension $1$.
\endproof%

We also denote by $U_{\alpha }$ the unipotent algebraic subgroup of $\mathbb{%
G}$ whose Lie algebra is $\mathfrak{u}_{\alpha },$ and by $U_{0}$ the
maximal unipotent subgroup, whose Lie algebra is $\mathfrak{u}_{0}$.
Furthermore, for each $\alpha \in \Phi ,$ we denote by $e_{\alpha }:\mathbb{G%
}_{a}\rightarrow \mathbb{G}$ the morphism of algebraic groups corresponding
to $X_{\alpha }\in \mathfrak{g}_{\alpha },$ i.e. $e_{\alpha }(t)=\exp
(tX_{\alpha }).$ Recall that $U_{\alpha }=\prod_{\beta >\alpha }e_{\beta }(%
\mathbb{G}_{a}),$ so any element in $U_{\alpha }$ can be written as a
product of $e_{\beta }(t_{\beta })$'s for $\beta >\alpha .$

Recall that since $\mathfrak{g}$ is a simple Lie algebra, it has a Chevalley
basis (canonical up to automorphisms of $\mathfrak{g}$) $\{H_{\alpha
},\alpha \in \Pi \}\cup \{X_{\alpha },\alpha \in \Phi \}$ with $H_{\alpha
}\in \mathfrak{t}$ and $X_{\alpha }\in \mathfrak{g}_{\alpha }$. Let $(\omega
_{\alpha })_{\alpha \in \Pi }$ be the basis of $\mathfrak{t}$ which is dual
to $\Pi .$ Equivalently $\beta (\omega _{\alpha })=\delta _{\alpha \beta }.$
Then $\{\omega _{\alpha },\alpha \in \Pi \}\cup \{X_{\alpha },\alpha \in
\Phi \}$ is also a basis of $\mathfrak{g}$ and defines a $\mathbb{Z}$%
-structure $\mathfrak{g}_{\mathbb{Z}}$ on $\mathfrak{g}$ with $[\mathfrak{g}%
_{\mathbb{Z}},\mathfrak{g}_{\mathbb{Z}}]\subset \mathfrak{g}_{\mathbb{Z}}$
(see \cite{Stein}). Hence for any field $k,$ we can define $\mathfrak{g}_{k}=%
\mathfrak{g}_{\mathbb{Z}}\otimes _{\mathbb{Z}}k.$ If $K$ is a number field
and $v$ a place of $K$ with corresponding embedding $\sigma
_{v}:K\rightarrow K_{v}$ where $K_{v}$ is the associated completion of $K,$
then we will use the notation $\mathfrak{g}_{v}$ to mean $\mathfrak{g}%
_{K_{v}}.$

Since the definition of $e(F)$ does not depend on the choice of the basis of
$\mathfrak{g}$ used to define the standard norm appearing in the quantities $%
E_{v}(F),$ we may as well fix the basis of $\mathfrak{g}$ to be the basis $%
\{\omega _{\alpha },\alpha \in \Pi \}\cup \{X_{\alpha },\alpha \in \Phi \}$,
which we denote $(Y_{1},...,Y_{d})$ with $Y_{i}=X_{\alpha _{i}}\in \mathfrak{%
g}_{\alpha _{i}}$ if $i\notin I_{r}=[|\Phi ^{+}|+1,|\Phi ^{+}|+r]$ and $%
Y_{i}\in \{\omega _{\alpha },\alpha \in \Pi \}$ if $i\in I_{r}.$

Let $B(X,Y)$ be the Killing form on $\mathfrak{g.}$ We have $%
B(Y_{i},Y_{j})\in \mathbb{Z}$ for all $i,j.$ The Chevalley involution is the
linear map $\tau :\mathfrak{g}\rightarrow \mathfrak{g}$ by $Y_{i}^{\tau
}=-Y_{i}$ for $i\in I_{r}$ and and $X_{\alpha }^{\tau }=-X_{-\alpha }$ for
each $\alpha \in \Phi $. Then $\tau $ is an automorphism of $\mathfrak{g}$
which perserves $\mathfrak{g}_{\mathbb{Z}}.$ We set $\phi (X,Y)=-B(X^{\tau
},Y).$

We now describe how to choose the norm $\left\| \cdot \right\| _{v}$ on $%
\mathfrak{g}_{v}.$ First consider the case when $v$ is Archimedean, i.e. $%
\overline{\mathbb{Q}_{v}}=\mathbb{C}$. We set $\left\langle X,Y\right\rangle
_{v}=\phi (X,\overline{Y}),$ and thus get a positive definite scalar product
on $\mathfrak{g}_{v}$ and a norm $\left\| \cdot \right\| _{v}$ on $\mathfrak{%
g}_{v}$. Let $\mathbf{K}_{v}=\{g\in \mathbb{G}(\mathbb{C}),g^{\tau }=g\}$,
where we denoted again by $\tau $ the automorphism of $\mathbb{G}(\mathbb{C}%
) $ induced by the Chevalley involution $\tau .$ Then $\mathbf{K}_{v}$ is a
maximal compact subgroup of $\mathbb{G}(\mathbb{C})$ and this group
coincides with the stabilizer of $\left\langle \cdot ,\cdot \right\rangle
_{v}$ in $\mathbb{G}(\mathbb{C}),$ which in turn coincides with $\{g\in
\mathbb{G}(\mathbb{C}),\left\| Ad(g)\right\| _{v}=1\}$ where the norm is the
operator norm associated to $\left\langle \cdot ,\cdot \right\rangle _{v}$.
Note that $(Y_{1},...,Y_{d})$ however is not orthogonal with respect to $%
\left\langle \cdot ,\cdot \right\rangle _{v}$ but the decomposition (\ref%
{rootdec}) is orthogonal$.$ Finally observe that according to the Iwasawa
decomposition we may write $\mathbb{G}(\mathbb{C})=\mathbf{K}_{v}U_{0}(%
\mathbb{C})T(\mathbb{C}).$

Suppose now that $v$ is non Archimedean. We let $\left\Vert \cdot
\right\Vert _{v}$ be the norm induced on $\mathfrak{g}_{v}$ by the basis $%
(Y_{1},...,Y_{d}),$ i.e. $\left\Vert \sum y_{i}Y_{i}\right\Vert
_{v}=\max_{1\leq i\leq d}|y_{i}|_{v}.$ Then we set $\mathbf{K}_{v}$ to be
the stabilizer in $\mathbb{G}(\overline{\mathbb{Q}}_{v})$ of $\mathfrak{g}_{%
\mathcal{O}_{v}}=\mathfrak{g}_{\mathbb{Z}}\otimes _{\mathbb{Z}}\mathcal{O}%
_{v},$ where $\mathcal{O}_{v}$ is the ring of integers in $\overline{\mathbb{%
Q}}_{v}.$ In this situation, the Iwasawa decomposition (see \cite{IwaMat})\
reads $\mathbb{G}(\overline{\mathbb{Q}}_{v})=\mathbf{K}_{v}U_{0}(\overline{%
\mathbb{Q}}_{v})T(\overline{\mathbb{Q}}_{v}).$ Recall (see \cite[\S 1, Lemma
6]{Stein}) that for any $n\in \mathbb{N}$ and any $\alpha \in \Phi ,$ $\frac{%
ad(X_{\alpha })^{n}}{n!}$ fixes $\mathfrak{g}_{\mathbb{Z}}$. Hence $%
\left\Vert \frac{ad(X_{\alpha })^{n}}{n!}\right\Vert _{v}\leq 1.$

Let $c_{v}=\sup_{\alpha \in \Phi }\frac{\left\| ad(X_{\alpha })\right\| _{v}%
}{\left\| X_{\alpha }\right\| _{v}}$ if $v$ is Archimedean and set $c_{v}=0$
if $v$ is non Archimedean. Then, for any place $v$ and $x\in \overline{%
\mathbb{Q}}_{v},$ the following holds
\begin{eqnarray}
\left\| Ad(e_{\alpha }(x))\right\| _{v} &=&\left\| 1+ad(xX_{\alpha })+\frac{%
ad(xX_{\alpha })^{2}}{2!}+...+\frac{ad(xX_{\alpha })^{d}}{d!}\right\| _{v}
\label{expbound} \\
&\leq &e^{c_{v}}\cdot \max \{1,\left\| xX_{\alpha }\right\| _{v}\}^{d}
\end{eqnarray}
for every $\alpha \in \Phi $, where $d=\dim \mathfrak{g}$.

Finally we observe that we have:

\begin{lemma}
\label{norm}Suppose $v$ is non Archimedean. Then, for each root $\alpha \in
\Phi ,$ the norm $|\alpha |_{v}:=\sup_{Y\in \mathfrak{t}_{v}\backslash \{0\}}%
\frac{|\alpha (Y)|_{v}}{\left\Vert Y\right\Vert _{v}}$ satisfies $|\alpha
|_{v}=1$.
\end{lemma}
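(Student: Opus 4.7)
The plan is to unpack the definitions and reduce to a standard primitivity fact about roots. Since the chosen basis of $\mathfrak{g}$ is $\{\omega_\alpha,\alpha\in\Pi\}\cup\{X_\alpha,\alpha\in\Phi\}$, any $Y\in\mathfrak{t}_v$ can be written $Y=\sum_{\beta\in\Pi}y_\beta\,\omega_\beta$ with $\|Y\|_v=\max_{\beta\in\Pi}|y_\beta|_v$. By the defining duality $\beta(\omega_\gamma)=\delta_{\beta\gamma}$, writing a root as $\alpha=\sum_{\beta\in\Pi}n_\beta\beta$ with $n_\beta\in\mathbb{Z}$ gives $\alpha(\omega_\gamma)=n_\gamma$, so that $\alpha(Y)=\sum_{\beta\in\Pi}n_\beta y_\beta$.

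Next I would establish the upper bound $|\alpha|_v\leq 1$: the ultrametric inequality, together with $|n_\beta|_v\leq 1$ (since $n_\beta\in\mathbb{Z}\subset\mathcal{O}_v$ and $v$ is non-archimedean), gives
\[
|\alpha(Y)|_v\;\leq\;\max_{\beta\in\Pi}|n_\beta|_v\,|y_\beta|_v\;\leq\;\max_{\beta\in\Pi}|y_\beta|_v\;=\;\|Y\|_v
\]
for every $Y\neq 0$. For the matching lower bound, I would exhibit a witness: take $Y=\omega_{\beta_0}$ for some simple root $\beta_0$ such that $n_{\beta_0}$ is a $v$-unit; then $\alpha(Y)=n_{\beta_0}$ and $\|Y\|_v=1$, whence $|\alpha(Y)|_v/\|Y\|_v=1$.

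The one substantive step is the existence of such a $\beta_0$, which amounts to asserting that the integers $(n_\beta)_{\beta\in\Pi}$ are not all divisible by the residue characteristic of $v$. I would prove the stronger, well-known fact that every root is primitive in the root lattice, i.e.\ $\gcd(n_\beta:\beta\in\Pi)=1$. This is where the structure of root systems enters: the Weyl group $W$ acts on the root lattice by invertible integer matrices, hence preserves the gcd of the simple-root coordinates; and every $W$-orbit in $\Phi$ meets $\Pi$ (at most two orbits, one per root length in the non-simply laced case), while simple roots have coordinate vector $(0,\ldots,1,\ldots,0)$ of gcd $1$. I do not anticipate a real obstacle: the argument is a direct ultrametric computation, and the primitivity input is classical structural theory of root systems.
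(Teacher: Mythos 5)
Your proof is correct, and its skeleton is the same as the paper's: the upper bound $|\alpha|_v\leq 1$ comes from writing $\alpha$ as an integer combination of simple roots and using the ultrametric inequality, and the lower bound reduces to the primitivity of $\alpha$ in the root lattice, i.e. $\gcd(\alpha(\omega_\beta),\ \beta\in\Pi)=1$, which yields a fundamental coweight $\omega_{\beta_0}$ with $|\alpha(\omega_{\beta_0})|_v=1$. The only place you diverge is in how primitivity is established. The paper argues by contradiction: if $\alpha=p\alpha_0$ with $\alpha_0$ in the root lattice, then since $\Phi$ is reduced $\alpha$ itself belongs to some base of $\Phi$ (Bourbaki VI.1.5), and expressing $\alpha_0$ in that base forces a non-integral coordinate, which is absurd. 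You instead note that the Weyl group acts on the root lattice through matrices in $GL_r(\mathbb{Z})$ and therefore preserves the gcd of simple-root coordinates, and that every root is $W$-conjugate to a simple root, which is visibly primitive. These two structural inputs live in the same paragraph of Bourbaki and are essentially interchangeable, so neither route buys much over the other: yours avoids the explicit divisibility contradiction at the cost of invoking the Weyl group action and the orbit statement, while the paper's is self-contained modulo the ``every root lies in some base'' fact. Both correctly use that $v$ non-archimedean makes integers have absolute value at most $1$, with equality exactly when the integer is prime to the residue characteristic.
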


\proof%
First, note that it obviously holds when $\alpha \in \Pi ,$ because $\alpha
(\omega _{\beta })=\delta _{\alpha \beta }.$ As every $\alpha \in \Phi $ is
a linear combination with integer coefficients of elements from $\Pi ,$ we
must have $|\alpha |_{v}\leq 1$. To show the opposite inequality, observe
that $\gcd (\alpha (\omega _{\beta }),\beta \in \Pi )=1$. Indeed, suppose
there were a prime number $p$ such that $p$ divides $\gcd (\alpha (\omega
_{\beta }),\beta \in \Pi ).$ Then $\alpha =p\alpha _{0}$ with $\alpha
_{0}=\sum_{i=1}^{r}n_{i}\alpha _{i}$ for some $n_{i}\in \mathbb{Z}$ and $\Pi
=\{\alpha _{1},...,\alpha _{r}\}.$ But since $\Phi $ is reduced, $\alpha $
belongs to some base of the root system say $\alpha =\alpha _{1}^{\prime
},...,\alpha _{r}^{\prime }$ (\cite{Bourb} VI.1.5)$.$ Since each $\alpha
_{i} $ is a linear combination with integer coefficients of some $\alpha
_{i}^{\prime }$ 's, we get that $\alpha _{0}\in \mathbb{Z}\alpha ,$ a
contradiction.%
\endproof%

Note that when $v$ is Archimedean, then $|\alpha |_{v}$ is independent of $v$
(it is the norm of $\alpha $ with respect to the canonical scalar product
induced on the real vector space spanned by the root system). We denote it
by $|\alpha |_{\infty }.$

\subsection{Some local estimates}

We work locally, fixing the place $v$. The aim of this subsection is to
record two estimates, namely Propositions (\ref{upper1}) and (\ref{upper2})
below.

Let now $(e_{i})_{1\leq i\leq d}$ be an orthonormal basis for $\mathfrak{g}_{%
\mathbb{C}}$ such that for each $1\leq i\leq d$, $e_{i}\in \mathfrak{g}%
_{\alpha _{i}}.$ Note that if $b\in Ad(B(\mathbb{C})\mathbb{)}$, then the
matrix of $b$ is upper-triangular in the basis $(e_{i})_{i}.$

\begin{lemma}
\label{triangular}Let $V$ be a complex vector space of dimension $n$ endowed
with a hermitian scalar product $\left\langle \cdot ,\cdot \right\rangle .$
Let $(e_{i})_{1\leq i\leq n}$ be an orthonormal basis of $V$ and assume that
$b\in SL(V)$ has an upper triangular matrix in this basis. Then
\begin{equation*}
\sum_{i<j}\left| \left\langle be_{i},e_{j}\right\rangle \right| ^{2}\leq
n\cdot \left( \left\| b\right\| ^{2}-1\right)
\end{equation*}
\end{lemma}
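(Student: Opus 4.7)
The plan is to express both sides in terms of readily controllable quantities: the Hilbert--Schmidt (Frobenius) norm of $b$ on the one hand, and the diagonal entries of $b$ (which are its eigenvalues, by upper-triangularity) on the other. The argument is then a one-line combination of two very standard inequalities: the comparison between the Hilbert--Schmidt norm and the operator norm on an $n$-dimensional space, and the AM--GM inequality applied to the squares of the moduli of the diagonal entries. The assumption $b\in SL(V)$ enters only through the fact that the product of the diagonal entries equals~$1$.

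More precisely, set $d_i := \langle b e_i, e_i\rangle$ for $1 \le i \le n$, and observe that since the matrix of $b$ in $(e_i)$ is upper triangular, the only possibly nonzero off-diagonal entries are the $\langle be_i, e_j\rangle$ with $i<j$. Hence
$$
\|b\|_{HS}^2 \;:=\; \sum_{i,j} \bigl|\langle be_i,e_j\rangle\bigr|^2 \;=\; \sum_{i=1}^n |d_i|^2 \;+\; \sum_{i<j}\bigl|\langle be_i,e_j\rangle\bigr|^2.
$$
The first ingredient is that $\|b\|_{HS}^2 \le n\cdot \|b\|^2$, which follows because $\|b\|_{HS}^2$ is the sum of the squares of the singular values of $b$, each of which is bounded by the largest one, namely $\|b\|$. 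The second ingredient is that, since $b$ is upper triangular, its eigenvalues are exactly the $d_i$, so $\prod_{i=1}^n d_i = \det b = 1$; consequently $\prod |d_i|^2 = 1$ and AM--GM yields $\sum_{i=1}^n |d_i|^2 \ge n$. Subtracting this lower bound from the upper bound on $\|b\|_{HS}^2$ gives exactly $\sum_{i<j}|\langle be_i,e_j\rangle|^2 \le n(\|b\|^2 - 1)$.

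I do not foresee any real obstacle: both inequalities used are classical and sharp in the correct direction. The only small point worth double-checking is the identification of the diagonal entries as eigenvalues (immediate from the triangular form) and the correct normalization of $\det b$ (which is where $b\in SL(V)$, as opposed to $GL(V)$, is used).
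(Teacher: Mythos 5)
Your proof is correct and is essentially the paper's own argument: both bound the Hilbert--Schmidt norm $\operatorname{tr}(b^{*}b)$ above by $n\|b\|^{2}$ via the singular values, and bound the diagonal contribution below by $n$ using AM--GM together with the fact that the diagonal entries of a triangular matrix are its eigenvalues and $\det b=1$. (Your parenthetical identification of which off-diagonal entries vanish is convention-dependent, but it is not needed: dropping the remaining off-diagonal terms gives the required inequality anyway.)
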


\proof%
Let $\lambda _{1}\geq ...\geq \lambda _{n}\geq 0$ be the eigenvalues of $%
b^{*}b.$ According to Cartan's $KAK$ decomposition, we have $\left\|
b\right\| ^{2}=\lambda _{1}.$ We have
\begin{equation*}
tr(b^{*}b)=\sum \lambda _{i}\leq n\cdot \lambda _{1}=n\cdot \left\|
b\right\| ^{2}
\end{equation*}
On the other hand,
\begin{equation*}
tr(b^{*}b)=\sum_{i,j}\left| \left\langle be_{i},e_{j}\right\rangle \right|
^{2}=\sum_{i<j}\left| \left\langle be_{i},e_{j}\right\rangle \right|
^{2}+\sum_{1\leq i\leq n}|\mu _{i}|^{2}
\end{equation*}
where $\mu _{1},...,\mu _{n}$ are the eigenvalues of $b.$ But $\frac{1}{n}%
\sum_{1\leq i\leq n}|\mu _{i}|^{2}\geq (\prod |\mu _{i}|)^{\frac{2}{n}}=1$
since $\det (b)=1.$ Hence
\begin{equation*}
n\cdot \left\| b\right\| ^{2}\geq tr(b^{*}b)\geq \sum_{i<j}\left|
\left\langle be_{i},e_{j}\right\rangle \right| ^{2}+n
\end{equation*}
\endproof%

\begin{lemma}
\label{onestep}Let $v$ be any place. Let $\alpha \in \Phi ^{+}$, $a\in T(%
\overline{\mathbb{Q}}_{v})$ regular$,$ $v_{\alpha }\in U_{\alpha }(\overline{%
\mathbb{Q}}_{v})$ and $n_{\alpha }=e_{\alpha }(x)$ for some $x\in \overline{%
\mathbb{Q}}_{v}$ and let $b=Ad(n_{\alpha }av_{\alpha }n_{\alpha }^{-1})$.
Then if $v$ is Archimedean
\begin{equation}
\left\Vert xX_{\alpha }\right\Vert _{v}\leq \frac{\sqrt{d\cdot (\left\Vert
b\right\Vert _{v}^{2}-1)}}{|1-\alpha (a)|_{v}|\alpha |_{v}}  \label{talpha}
\end{equation}%
while if $v$ is non Archimedean
\begin{equation}
\left\Vert xX_{\alpha }\right\Vert _{v}\leq \frac{\left\Vert b\right\Vert
_{v}}{|1-\alpha (a)|_{v}|\alpha |_{v}}  \label{talpha2}
\end{equation}%
where $|\alpha |_{v}$ is the norm of $\alpha $ viewed as linear form on $%
\mathfrak{t}_{v}$ as in Lemma \ref{norm}.
\end{lemma}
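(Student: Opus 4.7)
The plan is to bring $n_\alpha a v_\alpha n_\alpha^{-1}$ into a form that exhibits its action on the Cartan subalgebra as an explicit upper-triangular matrix in the chosen basis, and then to read off $\|xX_\alpha\|_v$ from a single off-diagonal entry of $b$.

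First I would use the commutation identity $ae_\alpha(t)a^{-1}=e_\alpha(\alpha(a)t)$, together with the fact that $U_\alpha$ is normal in the subgroup $U_{\geq\alpha}$ generated by the root groups of roots $\geq\alpha$, and that $T$ normalizes $U_\alpha$, to rewrite
\[
n_\alpha a v_\alpha n_\alpha^{-1}= e_\alpha\bigl((1-\alpha(a))x\bigr)\cdot v_\alpha''\cdot a
\]
for some $v_\alpha''\in U_\alpha(\overline{\mathbb{Q}}_v)$. Setting $y=(1-\alpha(a))x$ and applying $Ad$, one evaluates $b(H)$ for $H\in\mathfrak{t}_v$: since $Ad(a)$ fixes $H$, since $Ad(v_\alpha'')(H)-H\in\mathfrak{u}_\alpha$, and since $Ad(e_\alpha(y))H=H-y\alpha(H)X_\alpha$ exactly (the exponential series terminates because $[X_\alpha,X_\alpha]=0$) while $Ad(e_\alpha(y))$ preserves $\mathfrak{u}_\alpha$, one obtains
\[
b(H)=H-(1-\alpha(a))\alpha(H)\,x\,X_\alpha + U,\qquad U\in\mathfrak{u}_\alpha.
\]
Moreover, each of $Ad(a)$, $Ad(e_\alpha(y))$, $Ad(v_\alpha'')$ is upper-triangular in the decreasing-root ordering of the basis, hence so is $b$, and the coefficient displayed above is an entry of $b$ lying strictly above the diagonal (in row $i_\alpha$ and column $j\in I_r$, since $i_\alpha\leq |\Phi^+|<j$).

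In the archimedean case, the basis $(e_j)_{j\in I_r}$ of $\mathfrak{t}_v$ is hermitian-orthonormal and $e_{i_\alpha}=X_\alpha/\|X_\alpha\|_v$, so that $\langle b(e_j),e_{i_\alpha}\rangle=-(1-\alpha(a))\alpha(e_j)\,x\,\|X_\alpha\|_v$. Summing $|\langle b(e_j),e_{i_\alpha}\rangle|^2$ over $j\in I_r$ and invoking the Riesz-type identity $\sum_{j\in I_r}|\alpha(e_j)|^2=|\alpha|_v^2$ produces $|1-\alpha(a)|_v^2|\alpha|_v^2\|xX_\alpha\|_v^2$, which Lemma~\ref{triangular} bounds above by $d(\|b\|_v^2-1)$; this yields (\ref{talpha}) after taking square roots. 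In the non-archimedean case, $\|X_\alpha\|_v=1$, and by Lemma~\ref{norm} one may choose $Y_j=\omega_\beta$ with $\beta\in\Pi$ such that $|\alpha(\omega_\beta)|_v=|\alpha|_v=1$. The $X_\alpha$-coefficient of $b(Y_j)$ is $-(1-\alpha(a))\alpha(\omega_\beta)x$, of $v$-absolute value $|1-\alpha(a)|_v|\alpha|_v|x|_v$; the ultrametric sup-norm forces this to be at most $\|b(Y_j)\|_v\leq\|b\|_v$, giving (\ref{talpha2}).

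The principal bookkeeping difficulty is verifying that the commutators produced along the way all remain in $U_\alpha$ (respectively $\mathfrak{u}_\alpha$), so that the only $X_\alpha$-component appearing in $b(H)$ is the explicit one coming from $Ad(e_\alpha(y))H$. Once that point is settled, the remainder is a one-line root-space computation and the two required bounds follow uniformly in the archimedean/non-archimedean split.
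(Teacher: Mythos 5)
Your proof is correct and follows essentially the same route as the paper: the same regrouping of $n_{\alpha}av_{\alpha}n_{\alpha}^{-1}$ using that $T$ and $e_{\alpha}(\mathbb{G}_a)$ normalize $U_{\alpha}$, the same computation of the $X_{\alpha}$-component of $b$ on $\mathfrak{t}_v$, upper-triangularity plus Lemma \ref{triangular} at archimedean places, and the sup-norm coordinate bound at non-archimedean places. The only (equivalent) cosmetic difference is that you extract the bound via the identity $\sum_{j\in I_r}|\alpha(e_j)|^2=|\alpha|_v^2$, respectively a coordinate vector $\omega_{\beta}$ with $|\alpha(\omega_{\beta})|_v=1$, where the paper applies Cauchy--Schwarz and takes the supremum over $Y\in\mathfrak{t}_v$.
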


\proof%
First observe that if $m\in U_{\alpha }$ and $Y\in \mathfrak{t}_{v}$, then $%
Ad(m)Y\in Y+\mathfrak{u}_{\alpha },$ while if $m=e_{\alpha }(x)$ for some $%
x, $ then $Ad(m)Y=Y+x[X_{\alpha },Y]=Y-\alpha (Y)xX_{\alpha }.$ Let $Y\in
\mathfrak{t}_{v}$ be arbitrary. We have $n_{\alpha }av_{\alpha }n_{\alpha
}^{-1}=aa^{-1}n_{\alpha }an_{\alpha }^{-1}n_{\alpha }v_{\alpha }n_{\alpha
}^{-1}=a\cdot e_{\alpha }((\alpha (a^{-1})-1)x)\cdot n^{\prime \prime }$
where $n^{\prime \prime }\in U_{\alpha }.$ We then compute:
\begin{equation}
bY\in Y+x(1-\alpha (a))\alpha (Y)X_{\alpha }+\mathfrak{u}_{\alpha }
\label{sim}
\end{equation}%
Suppose first that $v$ is Archimedean
\begin{equation*}
\left\langle bY,X_{\alpha }\right\rangle _{v}=x(1-\alpha(a))\alpha
(Y)\left\Vert X_{\alpha }\right\Vert _{v}^{2}
\end{equation*}%
On the other hand $Y=\sum y_{i}e_{i}$ for some $y_{i}\in \overline{\mathbb{Q}%
}_{v}$ all zero except if $i\in I_{r}=[|\Phi ^{+}|+1,|\Phi ^{+}|+r]$ (recall
that we defined the vectors $e_{i}$'s in Lemma \ref{triangular} to be any
orthonormal basis for $\mathfrak{g}_{\mathbb{C}}$ such that for each $1\leq
i\leq d$, $e_{i}\in \mathfrak{g}_{\alpha _{i}}$)$.$ Using Cauchy-Schwarz, we
get:
\begin{equation*}
\left\vert \left\langle bY,X_{\alpha }\right\rangle _{v}\right\vert \leq
\left\Vert X_{\alpha }\right\Vert _{v}\left\Vert Y\right\Vert _{v}\sqrt{%
\sum_{i\in I_{r}}\left\vert \left\langle be_{i},e_{i_{\alpha }}\right\rangle
_{v}\right\vert ^{2}}
\end{equation*}%
But $b$ is upper-triangular in the basis $(e_{i})_{i}$ because $n_{\alpha
}av_{\alpha }n_{\alpha }^{-1}$ belongs to the Borel subgroup $B(\overline{%
\mathbb{Q}}_{v}).$ We are in a position to apply Lemma \ref{triangular},
which yields:
\begin{equation*}
\left\vert (1-\alpha (a))\alpha (Y)\right\vert _{v}\cdot \left\Vert
xX_{\alpha }\right\Vert _{v}\cdot \left\Vert X_{\alpha }\right\Vert _{v}\leq
\left\Vert X_{\alpha }\right\Vert _{v}\cdot \left\Vert Y\right\Vert
_{v}\cdot \sqrt{d\cdot (\left\Vert b\right\Vert _{v}^{2}-1)}
\end{equation*}%
As this is true for all $Y\in \mathfrak{t},$ we indeed obtain $(\ref{talpha}%
) $.

Now assume $v$ is non Archimedean, then (\ref{sim}) shows that
\begin{equation*}
\left\Vert x(1-\alpha (a))\alpha (Y)X_{\alpha }\right\Vert _{v}\leq
\left\Vert b\right\Vert _{v}||Y||_{v}
\end{equation*}%
which is what we wanted.
\endproof%

\begin{proposition}
\label{upper1}There are explicitely computable positive constants $%
(C_{i})_{1\leq i\leq 3}$ depending only on $d=\dim \mathfrak{g}$ and $%
p=|\Phi ^{+}|$ such that for any $a\in T(\overline{\mathbb{Q}}_{v})$ regular
and $u\in U_{0}(\overline{\mathbb{Q}}_{v}),$ we have
\begin{equation}
\left\Vert Ad(u)\right\Vert _{v}\leq C_{3}\cdot \left\Vert
Ad(uau^{-1})\right\Vert _{v}^{C_{1}}\cdot \left( \prod_{i=1}^{p}\max
\{1,L_{i}\}\right) ^{C_{2}}  \label{for}
\end{equation}%
where $L_{i}=\left( |1-\alpha _{i}(a)|_{v}\cdot |\alpha _{i}|_{v}\right)
^{-1}$. Moreover, if $v$ is non Archimedean, then (\ref{for}) holds with $%
C_{3}=1$.
\end{proposition}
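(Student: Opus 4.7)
My plan is to peel off root-group factors from $u$ one at a time and apply Lemma \ref{onestep} at each step. I first use the filtration $U_0 \supset U_{\alpha_p} \supset \cdots \supset U_{\alpha_1} = \{1\}$ (whose ideal structure was established just above) to factor uniquely
\begin{equation*}
u = n_p n_{p-1} \cdots n_1, \qquad n_i := e_{\alpha_i}(x_i) \in \mathbb{G}(\overline{\mathbb{Q}}_v),
\end{equation*}
and set $u_k := n_{p-k+1} \cdots n_1 \in U_{\alpha_{p-k+2}}$ (with the convention $U_{\alpha_{p+1}} = U_0$), so that $u_1 = u$, $u_{p+1} = 1$, and $u_k = n_{p-k+1} u_{k+1}$ with $u_{k+1} \in U_{\alpha_{p-k+1}}$. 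The quantities I want to estimate inductively are $M_k := \|Ad(n_{p-k+1})\|_v$ and $B_k := \|Ad(u_k a u_k^{-1})\|_v$.

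At step $k$, since $T$ normalises $U_{\alpha_{p-k+1}}$, a short manipulation rewrites
\begin{equation*}
u_k a u_k^{-1} = n_{p-k+1} \cdot a v_k' \cdot n_{p-k+1}^{-1},
\end{equation*}
with $v_k' := (a^{-1} u_{k+1} a)\, u_{k+1}^{-1} \in U_{\alpha_{p-k+1}}$. This is exactly the setup of Lemma \ref{onestep} with $\alpha = \alpha_{p-k+1}$ and $b = Ad(u_k a u_k^{-1})$, so the lemma yields
\begin{equation*}
\|x_{p-k+1} X_{\alpha_{p-k+1}}\|_v \leq c_d \cdot B_k \cdot L_{p-k+1},
\end{equation*}
where $c_d = \sqrt{d}$ in the Archimedean case (bounding $\sqrt{\|b\|_v^2-1}$ by $\|b\|_v$) and $c_d = 1$ otherwise. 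Combined with (\ref{expbound}), this controls
\begin{equation*}
M_k \leq e^{c_v} \max\{1,\, c_d\, B_k\, L_{p-k+1}\}^d.
\end{equation*}

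To close the induction, the identity $u_{k+1} = n_{p-k+1}^{-1} u_k$ and the fact that $\|Ad(n_{p-k+1}^{-1})\|_v$ satisfies the same bound as $M_k$ give $B_{k+1} \leq M_k^2 \cdot B_k$. Starting from $B_1 = \|Ad(uau^{-1})\|_v \geq 1$ (an operator in $SL(\mathfrak{g})$) and unrolling this recursion for $k = 1, \dots, p$, each $M_k$ is dominated by $B_1$ raised to a power of order $d(2d+1)^{k-1}$, times $\prod_j \max\{1, L_j\}$ raised to an analogous power, with a multiplicative constant depending only on $c_v$, $d$ and $p$. Sub-multiplicativity then gives $\|Ad(u)\|_v \leq \prod_{k=1}^p M_k$, which is exactly (\ref{for}) for explicit constants $C_1, C_2, C_3 = C_3(d,p)$. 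In the non-Archimedean case $c_v = 0$ and $c_d = 1$, so the prefactor reduces to $1$ and we may take $C_3 = 1$. I expect the only real obstacle to be the purely computational bookkeeping involved in unrolling the recursion to extract clean universal exponents $C_1, C_2$; no further conceptual input beyond Lemma \ref{onestep} and the filtration structure is required.
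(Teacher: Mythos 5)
Your argument is correct and is essentially the paper's own proof: the same factorization $u=e_{\alpha_p}(x_p)\cdots e_{\alpha_1}(x_1)$, the same recursive application of Lemma \ref{onestep} peeling off the leftmost (smallest positive root) factor at each step, the same use of (\ref{expbound}) to obtain the recursion bounding the conjugated norm by a power $2d+1$ of the previous one times $\max\{1,L_i\}^{2d}$, and the same submultiplicative assembly $\left\Vert Ad(u)\right\Vert_v\leq\prod_k\left\Vert Ad(n_k)\right\Vert_v$ at the end; your indexing of the intermediate quantities $B_k$ simply runs in the opposite direction to the paper's $\left\Vert b_i\right\Vert_v$. The non-Archimedean claim $C_3=1$ is obtained exactly as in the paper, since there $c_v=0$ and the factor $\sqrt{d}$ disappears.
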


\proof%
Recall that we may write $u=e_{\alpha _{p}}(x_{p})\cdot ...\cdot e_{\alpha
_{1}}(x_{1})$, where $p=|\Phi ^{+}|$ and $x_{i}\in \overline{\mathbb{Q}}_{v}$
for each $i.$ We want to apply Lemma \ref{onestep} recursively starting with
$\alpha =\alpha _{p}$ and going up to $\alpha _{1}.$ For each $\alpha \in
\Phi ^{+}$ let $u_{\alpha }=e_{\alpha _{i_{\alpha }-1}}(x_{i_{\alpha
}-1})\cdot ...\cdot e_{\alpha _{1}}(x_{1})$ and $n_{\alpha }=e_{\alpha
}(x_{\alpha }).$ For each $i\in \lbrack 1,p]$ we have $u_{\alpha
_{i+1}}au_{\alpha _{i+1}}^{-1}=n_{\alpha }u_{\alpha }au_{\alpha
}^{-1}n_{\alpha }^{-1}=n_{\alpha }av_{\alpha }n_{\alpha }^{-1},$ where $%
\alpha =\alpha _{i}$ $v_{\alpha }=a^{-1}u_{\alpha }au_{\alpha }^{-1}\in
U_{\alpha }.$

We set $b_{p+1}=Ad(uau^{-1})$ and $b_{i}=Ad(u_{\alpha _{i}}au_{\alpha
_{i}}^{-1})$. Lemma \ref{onestep} gives for each $i\in \lbrack 1,p]$,
\begin{equation*}
\left\Vert x_{\alpha _{i}}X_{\alpha _{i}}\right\Vert _{v}\leq f_{v}\cdot
L_{i}\cdot \left\Vert b_{i+1}\right\Vert _{v}
\end{equation*}%
where $f_{v}=\sqrt{d}$ if $v$ is Archimedean, and $f_{v}=1$ otherwise. Since
$b_{i+1}=Ad(n_{\alpha _{i}})b_{i}Ad(n_{\alpha _{i}}^{-1})$, we have
\begin{equation*}
\left\Vert b_{i}\right\Vert _{v}\leq \left\Vert b_{i+1}\right\Vert _{v}\cdot
e^{2c_{v}}\cdot \max \{1,\left\Vert x_{\alpha _{i}}X_{\alpha
_{i}}\right\Vert _{v}\}^{2d}
\end{equation*}%
where we have used (\ref{expbound}). Hence combining the last two lines:
\begin{equation}
\left\Vert b_{i}\right\Vert _{v}\leq \mu _{i}\cdot \left\Vert
b_{i+1}\right\Vert _{v}^{2d+1}  \label{recurse}
\end{equation}%
where $\mu _{i}=e^{2c_{v}}f_{v}^{2d}\max \{1,L_{i}\}^{2d}$.

On the other hand $\left\| Ad(u)\right\| _{v}\leq \prod_{\alpha \in \Phi
^{+}}\left\| Ad(e_{\alpha }(x_{\alpha }))\right\| _{v}$ and using (\ref%
{expbound}) again we obtain
\begin{eqnarray*}
\left\| Ad(u)\right\| _{v} &\leq &e^{pc_{v}}\cdot \left( \prod_{\alpha \in
\Phi ^{+}}\max \{1,\left\| x_{\alpha }X_{\alpha }\right\| _{v}\}\right) ^{d}
\\
&\leq &e^{pc_{v}}\cdot f_{v}^{dp}\cdot \left( \prod_{i=1}^{p}\max
\{1,L_{i}\}\right) ^{d}\cdot \left( \prod_{i=2}^{p+1}\left\| b_{i}\right\|
_{v}\right) ^{d}
\end{eqnarray*}
It remains to estimate the last term in the right hand side. Recursively
from (\ref{recurse}), we get
\begin{equation*}
\prod_{i=2}^{p+1}\left\| b_{i}\right\| _{v}\leq \left\| b\right\|
_{v}^{\sum_{k=0}^{p-1}(2d+1)^{k}}\cdot \prod_{i=2}^{p}\prod_{k=i}^{p}\mu
_{k}^{(2d+1)^{k-i}}
\end{equation*}
Hence we do indeed obtain a bound of the desired form.
\endproof%

The above proposition is useful to bound $\left\| Ad(u)\right\| _{v}$ when $%
\left\| Ad(uau^{-1})\right\| _{v}$ may be large. We now need an estimate
(only when $v$ is Archimedean) when this norm is small. Let $L_{i}$ be
defined as in the previous statement.

\begin{proposition}
\label{upper2}Suppose $v$ is Archimedean. Then there are positive constants $%
(D_{i})_{1\leq i\leq 3}$ depending only on $d=\dim \mathfrak{g}$ and $%
p=|\Phi ^{+}|$, such that for any $u\in U_{0}(\overline{\mathbb{Q}}_{v})$
and $a\in T(\overline{\mathbb{Q}}_{v})$ regular with $\log \left\Vert
Ad(uau^{-1})\right\Vert _{v}\leq 1,$ we have
\begin{equation*}
\log \left\Vert Ad(u)\right\Vert _{v}\leq D_{3}\cdot L_{v}^{D_{2}}\cdot
\left( \log \left\Vert Ad(uau^{-1})\right\Vert _{v}\right) ^{D_{1}}
\end{equation*}%
where $L_{v}=\prod_{i=1}^{p}\max \{1,L_{i}(a)_{v}\}.$
\end{proposition}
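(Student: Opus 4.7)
The plan is to run the same recursive argument as in the proof of Proposition~\ref{upper1}, but using sharper local estimates adapted to the regime where $\|Ad(uau^{-1})\|_v$ is close to~$1$. Writing $u = e_{\alpha_p}(x_p)\cdots e_{\alpha_1}(x_1)$ and setting $b_i = Ad(u_{\alpha_i}au_{\alpha_i}^{-1})$ and $b_{p+1} = Ad(uau^{-1})$ as in that proof, the starting point is Lemma~\ref{onestep}, which at the Archimedean place $v$ gives $\|x_{\alpha_i}X_{\alpha_i}\|_v \le L_i\sqrt{d(\|b_{i+1}\|_v^2-1)}$. Setting $\ell_i := \log\|b_i\|_v$ and using the elementary inequality $e^{2t}-1 \le (e^2-1)t$ (valid for $t \in [0,1]$), this refines to
\[
\|x_{\alpha_i}X_{\alpha_i}\|_v \le C(d)\, L_i\, \sqrt{\ell_{i+1}}
\]
whenever $\ell_{i+1} \le 1$. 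This is the crucial sharpening over the bound used in Proposition~\ref{upper1}: the exponent of $\ell_{i+1}$ is $1/2$ rather than effectively $1$.

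The second ingredient I would use is the sharper bound $\log\|Ad(e_{\alpha_i}(x_{\alpha_i}))\|_v \le c_v\|x_{\alpha_i}X_{\alpha_i}\|_v$, obtained from writing $Ad(e_{\alpha_i}(x_{\alpha_i})) = \exp(ad(x_{\alpha_i}X_{\alpha_i}))$ and applying $\|\exp(Y)\|_v \le e^{\|Y\|_v}$. Combining this with the identity $b_i = Ad(n_{\alpha_i})b_{i+1}Ad(n_{\alpha_i})^{-1}$ produces the log-scale recursion
\[
\ell_i \le \ell_{i+1} + 2c_v\|x_{\alpha_i}X_{\alpha_i}\|_v \le \ell_{i+1} + C'L_i\sqrt{\ell_{i+1}} \le (1+C'L_v)\sqrt{\ell_{i+1}},
\]
valid as long as $\ell_{i+1} \le 1$. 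Iterating this $p$ times produces $\ell_i \le C''L_v^{A_i}\,\ell_{p+1}^{2^{-(p-i+1)}}$ for explicit exponents $A_i = A_i(d,p)$; plugging the resulting bounds on the $\ell_{i+1}$'s back into $\log\|Ad(u)\|_v \le c_v\sum_i\|x_{\alpha_i}X_{\alpha_i}\|_v \le c_vC\sum_i L_i\sqrt{\ell_{i+1}}$ yields an estimate of the desired form with $D_1 = 2^{-p}$ and suitable $D_2, D_3$ depending only on $d$ and $p$.

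The main obstacle, and the only substantive point where care is needed, is that the iteration depends on the induction hypothesis $\ell_{i+1}\le 1$ at every stage, and a single square-root step $\ell_i \le (1+C'L_v)\sqrt{\ell_{i+1}}$ can push $\ell_i$ above~$1$ when $L_v$ is large. I would handle this by a clean dichotomy on the size of $\ell_{p+1} = \log\|Ad(uau^{-1})\|_v$. When $\ell_{p+1} \le (1+C'L_v)^{-2^{p+1}}$, a straightforward reverse induction keeps every $\ell_i$ below~$1$ and the argument above goes through directly. Otherwise $\ell_{p+1}^{D_1}$ is bounded below by an explicit negative power of $L_v$; then one can quote Proposition~\ref{upper1} (whose bound is polynomial in $\|Ad(uau^{-1})\|_v$ and in $L_v$) and absorb all terms into the desired form $D_3 L_v^{D_2}\ell_{p+1}^{D_1}$ after enlarging $D_2$ and $D_3$. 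Combining the two regimes completes the proof.
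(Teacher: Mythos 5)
Your proposal is correct and follows essentially the same route as the paper's proof: the square-root refinement of Lemma \ref{onestep} when $\log\|b_{i+1}\|_v\le 1$, the log-scale recursion $\ell_i\lesssim L_v\sqrt{\ell_{i+1}}$ iterated through the positive roots, and a dichotomy in which the small-$\ell$ regime is handled by the recursion and the complementary regime is absorbed via the crude bound of Proposition \ref{upper1} after enlarging $D_2,D_3$. The only differences are cosmetic (using $\log\|Ad(e_\alpha(x))\|_v\le c_v\|xX_\alpha\|_v$ instead of the paper's $1+2c_v\|xX_\alpha\|_v$ bound, and a slightly different threshold and value of $D_1$), which do not affect the argument.
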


\proof%
In this proof, by a constant we mean a positive number that depends only on $%
d$ and $p$. Observe that there exists $\varepsilon _{1}>0$ such that $\sqrt{%
x^{2}-1}\leq 2\sqrt{\log x}$ as soon as $x\geq 1$ and $\log x\leq
\varepsilon _{1}.$ We keep the notations of the proof of the previous
proposition. Applying Lemma \ref{onestep}, we thus obtain that as soon as $%
\ell _{i+1}\leq \varepsilon _{1}$%
\begin{equation*}
\left\Vert x_{\alpha _{i}}X_{\alpha _{i}}\right\Vert _{v}\leq 2\sqrt{d}\cdot
L_{i}\cdot \sqrt{\ell _{i+1}}
\end{equation*}%
where we set $\ell _{i}=\log \left\Vert b_{i}\right\Vert _{v}$ for each $%
i\in \lbrack 1,p],$ and $\ell =\ell _{p+1}=\log \left\Vert
Ad(uau^{-1})\right\Vert _{v}.$

We may choose a smaller $\varepsilon _{1}$ so that
\begin{equation*}
\left\Vert Ad(e_{\alpha }(x))\right\Vert _{v}\leq 1+2c_{v}\left\Vert
xX_{\alpha }\right\Vert _{v}
\end{equation*}%
for each $\alpha \in \Phi ^{+}$ as soon as $|x|_{v}\leq \varepsilon _{1}$ as
we see from (\ref{expbound}). Hence if $\sqrt{\ell _{i+1}}\leq \frac{%
\varepsilon _{1}}{2\sqrt{d}\cdot L\cdot },$ then
\begin{equation*}
\left\Vert b_{i}\right\Vert _{v}\leq \left\Vert b_{i+1}\right\Vert _{v}\cdot
\left( 1+4\sqrt{d}\cdot L_{i}\cdot \sqrt{\ell _{i+1}}\right) ^{2}
\end{equation*}%
or
\begin{eqnarray*}
\ell _{i} &\leq &\ell _{i+1}+8\sqrt{d}\cdot L_{i}\cdot \sqrt{\ell _{i+1}} \\
&\leq &C\cdot L\cdot \sqrt{\ell _{i+1}}
\end{eqnarray*}%
for some constant $C$. Applying this recursively, we see that, as soon as $L$
is bigger than some constant, if $\ell \leq \frac{\varepsilon _{1}^{2^{p+1}}%
}{L^{3^{p+1}}}$ then, for each $i\in \lbrack 1,p],$ $\ell _{i}\leq \frac{%
\varepsilon _{1}^{2^{p+1}}}{L^{3^{p+1}}}$ and
\begin{equation*}
\ell _{i}\leq C^{\prime }\cdot L^{2}\cdot \ell ^{\frac{1}{2^{p+1-i}}}
\end{equation*}%
for each $i\in \lbrack 1,p]$ and some constant $C^{\prime }.$ On the other
hand $\left\Vert Ad(u)\right\Vert _{v}\leq \prod_{\alpha \in \Phi
^{+}}\left\Vert Ad(e_{\alpha }(x_{\alpha }))\right\Vert _{v}\leq
\prod_{\alpha \in \Phi ^{+}}e^{c_{v}\left\Vert x_{\alpha }X_{\alpha
}\right\Vert _{v}}$ and
\begin{eqnarray*}
\log \left\Vert Ad(u)\right\Vert _{v} &\leq &c_{v}\cdot
\sum_{i=1}^{p}\left\Vert x_{\alpha _{i}}X_{\alpha _{i}}\right\Vert _{v}\leq
C^{\prime \prime }\cdot L\cdot \sqrt{\sum_{2\leq i\leq p+1}\ell _{i}} \\
&\leq &C^{\prime \prime \prime }\cdot L^{2}\cdot \ell ^{\frac{1}{2^{p+1}}}
\end{eqnarray*}%
On the other hand the cruder bound obtained in Proposition \ref{upper1}
shows that without a condition on $\ell ,$
\begin{equation*}
\log \left\Vert Ad(u)\right\Vert _{v}\leq \log C_{3}+C_{1}\cdot \ell
+C_{2}\cdot \log L
\end{equation*}%
hence
\begin{equation*}
\log \left\Vert Ad(u)\right\Vert _{v}\leq C_{0}\cdot L
\end{equation*}%
for some constant $C_{0}$ if $\ell \leq 1$ and $L$ larger than some
constant. Take $D_{1}=\frac{1}{2^{p+1}},$ $D_{2}\geq 1+\left( \frac{3}{2}%
\right) ^{p+1}$ and $D_{3}\geq \max \{\frac{C_{0}}{\varepsilon _{1}}%
,C^{\prime \prime \prime }\}.$ Then if $\ell \geq \frac{\varepsilon
_{1}^{2^{p+1}}}{L^{3^{p+1}}},$ we have $D_{3}\cdot L^{D_{2}}\cdot \ell ^{%
\frac{1}{2^{p+1}}}\geq D_{3}\cdot L\cdot \varepsilon _{1}\geq C_{0}\cdot L.$
Therefore as soon as $\ell \leq 1$ and $L$ larger than some constant say $%
C_{4},$ we have
\begin{equation*}
\log \left\Vert Ad(u)\right\Vert _{v}\leq D_{3}\cdot L^{D_{2}}\cdot \ell
^{D_{1}}
\end{equation*}%
Hence up to changing $D_{3}$ into $D_{3}C_{4}^{D_{2}}$ if necessary, we
obtain the desired result.
\endproof%

\section{Global bounds on arithmetic heights\label{global}}

In this section we gather together all the local data obtained in the
previous section and sum it up to obtain a global bound (see Proposition \ref%
{heightbound} below) on the height of the matrix coefficients of the finite
set $F.$

Recall our set of notations from the last section (see \S \ref{notat}). $%
\mathbb{G}$ is a Chevalley group of adjoint type and $T$ a maximal torus. We
had fixed a total order on the set of all roots induced by an ordering of
the simple roots, that is $\Phi =\{\alpha _{1},...,\alpha _{|\Phi
^{+}|},\alpha _{|\Phi ^{+}|+r+1},...,\alpha _{d}\}$, where $r$ is the rank
of $\mathfrak{g}=Lie(\mathbb{G})$ and $I_{r}=[|\Phi ^{+}|,|\Phi ^{+}|+r]$.
The Lie algebra $\mathfrak{g}$ has a basis $(Y_{1},...,Y_{d})$ obtained from
a Chevalley basis of $\mathfrak{g}$, with $Y_{i}=X_{\alpha _{i}}$ if $%
i\notin I_{r}$ and $Y_{i}\in \{\omega _{\alpha },\alpha \in \Pi \}$ if $i\in
I_{r}.$ Also $\mathfrak{g}_{\mathbb{Z}}$ denotes the integer lattice
generated by the basis $(Y_{1},...,Y_{d}).$ Recall further that for $X,Y\in
\mathfrak{g}$ we had set $\phi (X,Y)=-B(X^{\tau },Y)$ where $B$ is the
Killing form and $\tau $ the Chevalley involution. Note that (\ref{rootdec})
is an orthogonal decomposition for the symmetric bilinear form $\phi $.

We will consider the elements $A=Ad(a)$ and $B=Ad(b)$ from $%
F=\{Id,a,b\}\subset \mathbb{G}(\overline{\mathbb{Q}})$ with $a\in T$ as
matrices in the basis $(Y_{1},...,Y_{d})$. Then $A$ is diagonal and $%
B=(b_{ij})_{ij}\in SL_{d}(\overline{\mathbb{Q}})$. Consider the regular
function on $\mathbb{G}$ given by $f(g)=g_{dd}$ in this basis. The root $%
\alpha \left( d\right) $ is the smallest root in the above ordering. It
coincides with the opposite of the highest root of $\Phi $ in the sense of
\cite[VI.1.8.]{Bourb}. Observe the following:

$-$ for every $t\in T$, we have $f(tgt^{-1})=f(g).$

$-$ for every $t\in T$, $f(t)=\alpha _{d}(t)$, hence $f$ is not constant.

$-$ $\phi (Ad(g)Y_{d},Y_{d})=f(g)\phi (Y_{d},Y_{d}),$

$-$ for every place $v$ we have $|f(g)|_{v}\leq ||Ad(g)||_{v}$.

Recall that we consider $\mathbb{G}$ as a subgroup of $SL(\mathfrak{g})$ and
thus define the heights $e$ and $\widehat{h}$ of finite subsets of $\mathbb{%
G(}\overline{\mathbb{Q}})$ with respect to the adjoint representation. The
goal of this section is to prove:

\begin{proposition}
\label{heightbound}For every $n\in \mathbb{N}$ and any $\alpha >0$ there is $%
\eta >0$ and $A_{1}>0$ such that if $F=\{Id,a,b\}$ is a subset of $\mathbb{G}%
(\overline{\mathbb{Q}})$ with $a\in T(\overline{\mathbb{Q}})$ such that $%
e(F)<\eta $ and $\deg (\alpha _{i}(a))>A_{1}$ for each positive root $\alpha
_{i},$ then we have for every $i\in \mathbb{N}$, $1\leq i\leq n$%
\begin{equation*}
h(f(b^{i}))<\alpha
\end{equation*}%
where $f$ is the function defined above.
\end{proposition}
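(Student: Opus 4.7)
The plan is to leverage the $T$-conjugation invariance of $f$ together with the local estimates of Section \ref{local} and Bilu's equidistribution theorem, place by place, and then sum globally.

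First, for each place $v$ of $K$ and each $\varepsilon > 0$, Lemma \ref{Mostow} produces $g_v \in \mathbb{G}(\overline{K_v})$ with $\left\Vert Ad(g_v F g_v^{-1})\right\Vert_v \leq E_v(Ad(F))^{\sqrt{d}} + \varepsilon$. Writing $g_v = k_v u_v t_v$ via the Iwasawa decomposition and using that $\mathbf{K}_v$ preserves the norm on $\mathfrak{g}_v$, we get that $\left\Vert Ad(u_v t_v F t_v^{-1} u_v^{-1})\right\Vert_v$ is close to $E_v(Ad(F))^{\sqrt{d}}$. Since $a \in T$ commutes with $t_v$, the element $u_v t_v a t_v^{-1} u_v^{-1}$ simplifies to $u_v a u_v^{-1}$, and setting $c_v := u_v t_v b t_v^{-1} u_v^{-1}$ we have $t_v b^i t_v^{-1} = u_v^{-1} c_v^i u_v$. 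By the $T$-invariance $f(tgt^{-1}) = f(g)$, and the inequality $|f(g)|_v \leq \left\Vert Ad(g) \right\Vert_v$, we deduce the local bound
\begin{equation*}
|f(b^i)|_v = |f(t_v b^i t_v^{-1})|_v \leq \left\Vert Ad(u_v)\right\Vert_v^{2} \cdot \left\Vert Ad(c_v)\right\Vert_v^{i}.
\end{equation*}

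Second, summing $\log^+$ over all places will separate into two terms. The $\left\Vert Ad(c_v)\right\Vert_v^{i}$ part gives at most $i\sqrt{d}\,[K:\mathbb{Q}]\,e(Ad(F)) \leq i \cdot 4d\sqrt{d}\,[K:\mathbb{Q}]\,e(F)$ by Proposition \ref{Ad comp Cor}, which is acceptable since $i \leq n$ is fixed. The delicate task is to bound $\sum_v n_v \log^+ \left\Vert Ad(u_v)\right\Vert_v$. For this I would apply Proposition \ref{upper1} at all places (which at non-Archimedean $v$ already gives $C_3 = 1$ and $|\alpha_i|_v = 1$ by Lemma \ref{norm}), and at Archimedean $v$ with $\ell_v := \log\left\Vert Ad(u_v a u_v^{-1})\right\Vert_v \leq 1$ the much sharper Proposition \ref{upper2}. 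Both bounds feature the quantities $L_i(v) = (|1-\alpha_i(a)|_v \cdot |\alpha_i|_v)^{-1}$, and everything reduces to controlling weighted sums of powers of the $L_i(v)$.

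Third, the control of these sums is the main obstacle and proceeds in two steps. At finite places the sums $\sum_{v \in V_f} n_v \log^+ L_i(v)$ are bounded by $[K:\mathbb{Q}](h(1-\alpha_i(a)) + O(1)) \leq [K:\mathbb{Q}](h(\alpha_i(a)) + O(1))$, and since $\alpha_i(a)$ is an eigenvalue of $Ad(a)$, Proposition \ref{propbis}(c) gives $h(\alpha_i(a)) \leq \widehat{h}(Ad(F)) \leq e(Ad(F)) = O(e(F))$. This handles the finite-place contribution. At Archimedean places with large $\ell_v$ we use Proposition \ref{upper1} directly and again obtain a contribution of the form $O(e(F)) + O(h(\alpha_i(a)))$ plus a constant which, by Markov's inequality, concerns few places because $\sum_v n_v \ell_v = O([K:\mathbb{Q}] e(F))$ is small. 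The remaining Archimedean contribution, where $\ell_v \leq 1$, is bounded via Proposition \ref{upper2} by $D_3 \sum_v n_v L_v^{D_2} \ell_v^{D_1}$. Applying Hölder's inequality with exponents $1/(1-D_1)$ and $1/D_1$ reduces the question to the boundedness of $\frac{1}{[K:\mathbb{Q}]} \sum_{v \in V_\infty} n_v L_i(v)^{s}$ for a suitable small $s$. This is precisely where Bilu's theorem is invoked: since $h(\alpha_i(a)) = O(e(F)) \to 0$ and $\deg \alpha_i(a) > A_1 \to \infty$, the Galois conjugates of $\alpha_i(a)$ equidistribute on the unit circle, so those averages converge to $\int_0^{2\pi} |1 - e^{i\theta}|^{-s} |\alpha_i|_\infty^{-s} \frac{d\theta}{2\pi}$ which is finite for $s < 1$. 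The main technical obstacle is the joint behaviour of different roots inside $L_v = \prod_i \max\{1, L_i(v)\}$: equidistribution of each individual $\alpha_i(a)$ does not immediately yield the required estimate on the product, so one must either iterate the argument one root at a time (using that only the simple roots appear in Proposition \ref{upper2} and handling higher roots separately) or apply a joint equidistribution result.

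Combining these pieces gives $\frac{1}{[K:\mathbb{Q}]}\sum_v n_v \log^+ \left\Vert Ad(u_v)\right\Vert_v = o_{e(F), A_1 \to \infty}(1)$, and hence $h(f(b^i)) \leq 2 \cdot o(1) + i \cdot O(e(F))$ can be made smaller than any prescribed $\alpha > 0$ uniformly in $i \in [1,n]$, by choosing $\eta$ small enough and $A_1$ large enough.
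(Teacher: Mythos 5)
Your overall skeleton (Lemma \ref{Mostow} plus the Iwasawa decomposition, the local estimates of Propositions \ref{upper1} and \ref{upper2}, then a global summation combined with Bilu) is the same as the paper's, but two steps of the global summation do not work as written. First, the finite-place term. You bound $\sum_{v\in V_f} n_v\log^+ L_i(v)=[K:\mathbb{Q}]\,h_f\bigl((1-\alpha_i(a))^{-1}\bigr)$ by $[K:\mathbb{Q}]\bigl(h(1-\alpha_i(a))+O(1)\bigr)\le [K:\mathbb{Q}]\bigl(h(\alpha_i(a))+O(1)\bigr)$ and declare the finite places handled. But the target is $h(f(b^i))<\alpha$ for an \emph{arbitrary} prescribed $\alpha$, so every term must be made arbitrarily small, and the additive constant here is not an artifact: in exactly the regime where Bilu applies (small height, large degree), $h(1-\alpha_i(a))$ converges to $\int_0^1\log^+|1-e^{2\pi i\theta}|\,d\theta>0$, so your bound does not tend to $0$. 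This is precisely why the paper proves Lemma \ref{lillem2}: by the product formula $h_f(\delta^{-1})=h_\infty(\delta)-h_\infty(\delta^{-1})+h_f(\delta)$, the archimedean difference is the average of $\log|1-x|$ (plain $\log$, whose circle integral vanishes) over the Galois conjugates, which Bilu together with the singularity control of Lemma \ref{lillem0} makes small, while $h_f(\delta)\le h(\alpha_i(a))\le e(F)$. So Bilu is needed exactly for the finite-place term (and for the large-$L_v$ archimedean term $h_\infty^{A}(\delta_i^{-1})$), not only in the place where you invoke it.

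Second, the archimedean places with $\ell_v\le 1$. Your H\"older step requires $\frac{1}{[K:\mathbb{Q}]}\sum_{v\in V_\infty}n_v L_i(v)^{s}$ to be bounded with $s=D_2/(1-D_1)$; since $D_2>1$ (the paper takes $D_2\ge 1+(3/2)^{p+1}$), the limiting integral $\int_0^{2\pi}|1-e^{i\theta}|^{-s}\,d\theta$ already diverges, and more fundamentally Bilu's theorem is weak-$*$ convergence against bounded continuous test functions: it gives no control of averages of $|1-z|^{-s}$. Indeed a single conjugate of $\alpha_i(a)$ at distance $e^{-c\deg}$ from $1$ is compatible with small height and makes that average exponentially large, so the claimed boundedness is not available. (The paper only ever needs averages of $\log|1-z|$, whose singular part is tamed by the height identity used in Lemma \ref{lillem0}.) The paper avoids H\"older altogether by the dichotomy $L_v\gtrless A/\kappa$: where $L_v<A/\kappa$ it uses the crude bound $L_v^{D_2}\le (A/\kappa)^{D_2}$ against the factor $\varepsilon^{D_1}$, choosing $\varepsilon$ after $A$; where $L_v\ge A/\kappa$ it reverts to Proposition \ref{upper1}, which produces the $\log^+$-type quantity $h_\infty^{A}(\delta_i^{-1})$, again handled by Lemma \ref{lillem0}. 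Two smaller points: in your displayed local bound the exponent on $\left\Vert Ad(u_v)\right\Vert_v$ should be $d$ rather than $2$, since $\left\Vert Ad(u_v)^{-1}\right\Vert_v\neq\left\Vert Ad(u_v)\right\Vert_v$ in general; and once one takes logarithms the product over the positive roots in $L_v$ becomes a sum, so the ``joint equidistribution'' difficulty you flag disappears in the paper's scheme --- it is only the power-type bound in your H\"older step that makes the product genuinely problematic.
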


This proposition is central to the proof of the main theorem of this paper,
that is Theorem \ref{main}. How to proceed from it to a complete proof of
Theorem \ref{main} will be explained in the next section. For the moment,
let us just say that given the assumptions of Theorem \ref{main}, if $%
\widehat{h}(F)$ is small, then by escape from subvarieties (see Proposition %
\ref{twoelements}) one may find many pairs $\{a,b\}$ in a bounded power of $%
F $ that satisfy the requirements of the above proposition, indeed one may
find such $\{a,b\}$ outside every given subvariety of $\mathbb{G\times G}$.
Applying Zhang's theorem (see \cite{Zhang} Theorem \ref{Zh} below), we will
see however that the height bounds imposed upon the $f(b^{i})$'s by the
above proposition will force $\{a,b\}$ to belong to a proper algebraic
variety of $\mathbb{G}$, thus contradicting the choice of $\{a,b\}.$

We now begin the proof of the above proposition. It will make use of the
local estimates obtained in the previous section as well as Bilu's
equidistribution theorem (see below). The proof will occupy the next two
subsections. First, we collect the local estimates and see what bounds they
give us. Then we use Bilu's theorem to show that the remainder terms give
only a small contribution to the height.

\subsection{Preliminary upper bounds\label{prelimupper}}

Recall that the $(C_{i})_{1\leq i\leq 3}$'s and $(D_{i})_{1\leq i\leq 3}$'s
are the constants obtained in Propositions \ref{upper1} and \ref{upper2}.
For $A\geq 1$ and $x\in \overline{\mathbb{Q}}$ we set
\begin{equation}
h_{\infty }^{A}(x):=\frac{1}{[K:\mathbb{Q}]}\sum_{v\in V_{\infty
},|x|_{v}\geq A}n_{v}\cdot \log ^{+}|x|_{v}  \label{hinfty}
\end{equation}
where the sum is limited to those $v\in V_{\infty }$ such that $|x|_{v}\geq
A $. In this paragraph, we prove the following.

\begin{proposition}
\label{heightbound0}There are positive constants $A_{0}$, $C_{4}$, and $%
D_{4} $ such that if $\varepsilon >0$ and $A\geq A_{0}$ are arbitrary, then
for any $j\in \mathbb{N}$ and any $a,b\in \mathbb{G}(K)$ two regular
semisimple $K$-rational elements for some number field $K$ with $a\in T(K)$,
\begin{equation}
\frac{h(f(b^{j}))}{j}\leq 4\frac{\log A}{\varepsilon }e(\{a,b%
\})+D_{4}A^{D_{2}}\varepsilon ^{D_{1}}+C_{4}\sum_{1\leq i\leq p}\left(
h_{f}(\delta _{i}^{-1})+h_{\infty }^{A}(\delta _{i}^{-1})\right)  \label{y}
\end{equation}%
where $\delta _{i}=1-\alpha _{i}(a)$ for each positive root $\alpha _{i}\in
\Phi ^{+}$ and $p=|\Phi ^{+}|$.
\end{proposition}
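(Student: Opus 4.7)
The plan is to estimate $\log^+|f(b^j)|_v$ at each place $v\in V_K$ separately and then sum the weighted contributions. The key observation is that $f(g)=g_{dd}$ is constant on $T$-conjugacy classes, so for each $v$ we are free to replace $b$ by $b':=t_vbt_v^{-1}$ for any $t_v\in T(\overline{K_v})$. Combining this with the Iwasawa decomposition $\mathbb{G}(\overline{K_v})=\mathbf{K}_vU_0(\overline{K_v})T(\overline{K_v})$ and Lemma~\ref{Mostow}, I choose $u_v\in U_0(\overline{K_v})$ and $t_v\in T(\overline{K_v})$ so that the conjugates $\overline{a}:=u_vau_v^{-1}$ (note $t_vat_v^{-1}=a$ since $a\in T$) and $\overline{b}:=u_vb'u_v^{-1}$ both satisfy $\log\|Ad(\cdot)\|_v\leq \sqrt{d}\,e_v(\{a,b\})+\varepsilon_0$ for an arbitrary preassigned $\varepsilon_0>0$. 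Since $f(b^j)=f(b'^j)$ by $T$-invariance and $b'=u_v^{-1}\overline{b}u_v$, the bound $|f(g)|_v\leq c_v\|Ad(g)\|_v$ (with $c_v=1$ at finite places and bounded at infinite places) combined with $\|Ad(u_v^{-1})\|_v\leq \|Ad(u_v)\|_v^{d-1}$ (Proposition~\ref{propbis}\,(d)) yields a pointwise estimate of the form
\[
\log^+|f(b^j)|_v\ \leq\ d\log\|Ad(u_v)\|_v\ +\ j\log^+\|Ad(\overline{b})\|_v\ +\ \log c_v.
\]

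I then classify places as follows: $v$ is \emph{small} if $v$ is archimedean, $\log\|Ad(\overline{a})\|_v\leq\varepsilon$, and $|\delta_i^{-1}|_v\leq A$ for every positive root $\alpha_i$; otherwise $v$ is \emph{large}. For small $v$, Proposition~\ref{upper2} gives $\log\|Ad(u_v)\|_v\leq D_3L_v^{D_2}\varepsilon^{D_1}$, and since $L_v\leq\bigl(\prod_i|\alpha_i|_\infty^{-1}\bigr)A^p$ at such a place, the aggregate contribution over all small places is at most $D_4A^{D_2}\varepsilon^{D_1}$ (after absorbing $p$ and the $|\alpha_i|_\infty^{-1}$'s into $D_4$ and, if needed, enlarging the exponent). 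For large $v$, Proposition~\ref{upper1} gives
\[
\log\|Ad(u_v)\|_v\ \leq\ C_1\log\|Ad(\overline{a})\|_v+C_2\sum_i\log^+L_i(a)_v+\log C_3;
\]
the first summand, summed globally with weights $n_v/[K:\mathbb{Q}]$, contributes $O(e(\{a,b\}))$. For the $\log^+L_i(a)_v$ terms at large $v$ I split further: (i) $v$ finite (so $|\alpha_i|_v=1$ by Lemma~\ref{norm}), (ii) $v$ archimedean with $|\delta_i^{-1}|_v>A$, and (iii) $v$ archimedean with $|\delta_i^{-1}|_v\leq A$. Cases (i) and (ii) together contribute $\leq C_4\sum_i\bigl(h_f(\delta_i^{-1})+h_\infty^A(\delta_i^{-1})\bigr)$ directly from the definitions of $h_f$ and $h_\infty^A$. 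In case (iii), being large forces $\log\|Ad(\overline{a})\|_v>\varepsilon$, so
\[
\log^+L_i(a)_v\ \leq\ \log A+\log|\alpha_i|_\infty^{-1}\ \leq\ \frac{\log A+O(1)}{\varepsilon}\log\|Ad(\overline{a})\|_v,
\]
which upon summation yields at most $\tfrac{4\log A}{\varepsilon}e(\{a,b\})$ for $A\geq A_0$ sufficiently large. This is precisely the origin of the first term on the right-hand side of the desired inequality.

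Putting the three families of contributions together, dividing by $j$, and choosing $A_0$ large enough so that the coefficient $\tfrac{4\log A}{\varepsilon}$ absorbs the remaining $O(e(\{a,b\}))$ terms (those coming from the $j\log^+\|Ad(\overline{b})\|_v$ summation and from the $C_1\log\|Ad(\overline{a})\|_v$ term in Proposition~\ref{upper1}) yields the stated inequality with suitable constants $C_4,D_4$. The main technical obstacle will be the careful bookkeeping of the three-way split at large archimedean places together with the simultaneous near-minimization of both $\|Ad(\overline{a})\|_v$ and $\|Ad(\overline{b})\|_v$ via a single Iwasawa choice $(u_v,t_v)$; beyond this, the argument is a direct application of the local estimates of Section~\ref{local} and elementary properties of the Weil height.
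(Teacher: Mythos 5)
Your route is the one the paper itself takes: conjugate $F=\{a,b\}$ at each place by $u_vt_v$ obtained from Lemma~\ref{Mostow} and the Iwasawa decomposition (with $t_v$ commuting with $a$, so that Propositions~\ref{upper1} and~\ref{upper2} apply to $u_v$ and $\overline{a}=u_vau_v^{-1}$), use the $T$-invariance of $f$ and $|f(g)|_v\leq\|Ad(g)\|_v$ to reduce everything to bounding $\|Ad(u_v)\|_v$, convert the finite-place $L_i(a)_v$ contributions into $h_f(\delta_i^{-1})$ via Lemma~\ref{norm}, and absorb additive constants by taking $A_0$ large. The one step that does not hold as written is your case (iii). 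With your definition, a place $v$ is \emph{large} as soon as \emph{some} root $\alpha_j$ satisfies $|\delta_j^{-1}|_v>A$, even if $\log\|Ad(\overline{a})\|_v\leq\varepsilon$. At such a mixed place, a root $\alpha_i$ with $|\delta_i^{-1}|_v\leq A$ falls into your case (iii), but the asserted implication ``being large forces $\log\|Ad(\overline{a})\|_v>\varepsilon$'' is false there, and the displayed inequality $\log^+L_i(a)_v\leq\frac{\log A+O(1)}{\varepsilon}\log\|Ad(\overline{a})\|_v$ can fail badly: $\log^+L_i(a)_v$ can be of size $\log A$ while $\log\|Ad(\overline{a})\|_v$ is arbitrarily small. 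This contribution is then charged nowhere: not to $\frac{4\log A}{\varepsilon}e(\{a,b\})$ (the place contributes essentially nothing to $e$), not to $h_\infty^A(\delta_i^{-1})$ (which by definition~(\ref{hinfty}) only sees places with $|\delta_i^{-1}|_v\geq A$), and not to the $D_4A^{D_2}\varepsilon^{D_1}$ term (the place is not small, so Proposition~\ref{upper2} was not invoked there).

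The gap is local and easily repaired: at such a place there is an offending root $\alpha_j$ with $\log^+|\delta_j^{-1}|_v>\log A\geq\log A_0$, so $\log^+L_i(a)_v\leq\log A+|\log\kappa|\leq 2\log^+|\delta_j^{-1}|_v$ once $A_0$ exceeds a constant depending on $\kappa=\min_i|\alpha_i|_\infty$; charging these terms (at most $p$ of them per offending root, and likewise the additive $\log C_3$ from Proposition~\ref{upper1} at these places) to $h_\infty^A(\delta_j^{-1})$ only enlarges $C_4$ by a bounded factor, and your bookkeeping then closes. The remaining deviations from the paper are harmless: your per-root splitting replaces the paper's split on the aggregate quantity $L_v\geq A/\kappa$, so the middle term comes out as $A^{pD_2}\varepsilon^{D_1}$ rather than $A^{D_2}\varepsilon^{D_1}$, and your simultaneous absorption of the leftover $O_d(1)\cdot e(\{a,b\})$ terms into a coefficient already declared to be $\frac{4\log A}{\varepsilon}$ is loose about constants; both issues are immaterial for the sequel (Proposition~\ref{heightbound} only needs some fixed exponent and some $d$-dependent coefficient), and the literal constant $4$ can be recovered by applying the proved inequality with $\varepsilon$ rescaled by a factor depending only on $d$, at the cost of enlarging $D_4$.
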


We set as before $F=\{a,b\}.$ For each place $v$ let $s_{v}>\log
(E_{v}^{Ad}(F))$ be some real number. According to Lemma \ref{Mostow}, there
exists $g_{v}\in \mathbb{G}(\overline{\mathbb{Q}}_{v})$ such that $%
\left\Vert Ad(g_{v}Fg_{v}^{-1})\right\Vert _{v}\leq e^{s_{v}}$. Since by the
Iwasawa decomposition we have $\mathbb{G}(\overline{\mathbb{Q}}_{v})=\mathbf{%
K}_{v}U_{0}(\overline{\mathbb{Q}}_{v})T(\overline{\mathbb{Q}}_{v}),$ and $%
\mathbf{K}_{v}$ stabilizes the norm, we may assume that $g_{v}\in U_{0}(%
\overline{\mathbb{Q}}_{v})T(\overline{\mathbb{Q}}_{v}),$ i.e. $%
g_{v}=u_{v}\cdot t_{v}.$ Since $t$ commutes with $a$ we get
\begin{eqnarray*}
\left\Vert Ad(u_{v}au_{v}^{-1})\right\Vert _{v} &\leq &e^{s_{v}} \\
\left\Vert Ad(u_{v}b^{t_{v}}u_{v}^{-1})\right\Vert _{v} &\leq &e^{s_{v}}
\end{eqnarray*}%
where $b^{t_{v}}=t_{v}bt_{v}^{-1}.$ Recall that $d=\dim \mathbb{G}$,

According to Proposition \ref{upper1} we have
\begin{eqnarray}
\left\Vert Ad(b^{t_{v}})\right\Vert _{v} &\leq &e^{s_{v}}\cdot \left\Vert
Ad(u_{v})\right\Vert _{v}^{d}  \label{Bbound} \\
&\leq &e^{s_{v}}\cdot C_{3}^{d}\cdot \left\Vert
Ad(u_{v}au_{v}^{-1})\right\Vert _{v}^{dC_{1}}\cdot \left(
\prod_{i=1}^{p}\max \{1,L_{i}(a)_{v}\}\right) ^{dC_{2}}  \notag \\
&\leq &C_{3}^{d}\cdot \left( \prod_{i=1}^{p}\max \{1,L_{i}(a)_{v}\}\right)
^{dC_{2}}\cdot e^{s_{v}(1+dC_{1})}  \notag
\end{eqnarray}%
with $C_{3}=1$ if $v$ is non Archimedean. Let $L_{v}=\prod_{i=1}^{p}\max
\{1,L_{i}(a)_{v}\}.$ We get
\begin{equation}
\log \left\Vert Ad(b^{t_{v}})\right\Vert _{v}\leq d\log C_{3}+dC_{2}\cdot
\log L_{v}+(1+dC_{1})\cdot s_{v}  \label{1st}
\end{equation}%
Now assume $v$ is Archimedean. According to Proposition \ref{upper2} we have
constants $D_{i}>0$ such that if $s_{v}\leq 1$ then
\begin{eqnarray}
\log \left\Vert Ad(b^{t_{v}})\right\Vert _{v} &\leq &s_{v}+d\log \left\Vert
Ad(u_{v})\right\Vert _{v}\leq s_{v}+dD_{3}L_{v}^{D_{2}}\cdot s_{v}^{D_{1}}
\label{2nd} \\
&\leq &D_{4}^{\prime }L_{v}^{D_{2}}\cdot s_{v}^{D_{1}}  \notag
\end{eqnarray}%
where $D_{4}^{\prime }=dD_{3}+1$ and where we have chosen $D_{1}\leq 1$ as
we may so that $s_{v}\leq s_{v}^{D_{1}}$. Since $|f(b^{j})|_{v}\leq
\left\Vert Ad(b^{t_{v}})\right\Vert _{v}^{j}$ for each $j\in \lbrack 1,n]$
and $v,$ we have obtained:
\begin{equation*}
\frac{h(f(b^{j}))}{j}\leq \frac{1}{[K:\mathbb{Q}]}\sum_{v\in
V_{K}}n_{v}\cdot \log \left\Vert Ad(b^{t_{v}})\right\Vert _{v}
\end{equation*}%
In order to prove Proposition \ref{heightbound0}, we will decompose this sum
into four parts. Let $\kappa =\min_{i}|\alpha _{i}|_{\infty }$ (see Lemma %
\ref{norm} and the remark following it for the definition of $|\alpha
_{i}|_{\infty }$). We split the set of places $v$ into four parts: $v\in
V_{\infty }$, $s_{v}\leq \varepsilon $ and $L_{v}\geq A/\kappa $ (this gives
$H_{\leq }^{+}$), $v\in V_{\infty }$, $s_{v}\leq \varepsilon $ and $%
L_{v}<A/\kappa $ (this gives $H_{\leq }^{-}$), $v\in V_{\infty }$ and $%
s_{v}>\varepsilon $ (this gives $H_{\geq }$) and finally $v\in V_{f}$ (this
gives $H_{f}$). So
\begin{equation*}
\frac{h(f(b^{j}))}{j}\leq H_{\leq }^{-}+H_{\leq }^{+}+H_{\geq }+H_{f}
\end{equation*}%
Making use of the bound (\ref{1st}) for $H_{\leq }^{+},$ $H_{\geq }$ and $%
H_{f}$ and the bound (\ref{2nd}) for $H_{\leq }^{-}$ respectively, we obtain
the following estimates as soon as $A$ is large enough ($\log A>\log
A_{0}:=1+dC_{1}+d\log C_{3}+\log |\kappa |,$ we also set $C_{4}=4dC_{2}$) :

\begin{equation*}
H_{f}\leq (1+dC_{1})\frac{1}{[K:\mathbb{Q}]}\sum_{v\in V_{f}}n_{v}\cdot
s_{v}+(dC_{2})\frac{1}{[K:\mathbb{Q}]}\sum_{v\in V_{f}}n_{v}\cdot \log L_{v}
\end{equation*}
\begin{eqnarray*}
H_{\geq } &\leq &\left( \frac{d\log C_{3}}{\varepsilon }+(1+dC_{1})\right)
\cdot \frac{1}{[K:\mathbb{Q}]}\sum_{v\in V_{\infty },s_{v}\geq \varepsilon
}n_{v}\cdot s_{v}+\frac{C_{4}}{4}\frac{1}{[K:\mathbb{Q}]}\sum_{v\in
V_{\infty },s_{v}\geq \varepsilon }n_{v}\cdot \log L_{v} \\
&\leq &\frac{4\log A}{\varepsilon }\cdot \frac{1}{[K:\mathbb{Q}]}\sum_{v\in
V_{\infty },s_{v}>\varepsilon }n_{v}\cdot s_{v}+\frac{C_{4}}{4}\frac{1}{[K:%
\mathbb{Q}]}\sum_{v\in V_{\infty },s_{v}>\varepsilon ,L_{v}\geq A/\kappa
}n_{v}\cdot \log L_{v}
\end{eqnarray*}
\begin{eqnarray*}
H_{\leq }^{+} &\leq &(2dC_{2})\cdot \frac{1}{[K:\mathbb{Q}]}\sum_{v\in
V_{\infty },s_{v}\leq \varepsilon ,L_{v}\geq A/\kappa }n_{v}\cdot \log L_{v}
\\
H_{\leq }^{-} &\leq &\frac{1}{[K:\mathbb{Q}]}\sum_{v\in V_{\infty
},s_{v}\leq \varepsilon ,L_{v}<A/\kappa }n_{v}\cdot D_{4}^{\prime
}L_{v}^{D_{2}}\cdot s_{v}^{D_{1}}\leq 2\frac{D_{4}^{\prime }}{\kappa ^{D_{2}}%
}A^{D_{2}}\varepsilon ^{D_{1}}\leq D_{4}A^{D_{2}}\varepsilon ^{D_{1}}
\end{eqnarray*}
for $D_{4}=2\frac{D_{4}^{\prime }}{\kappa ^{D_{2}}},$ since $n_{v}\leq 2$
for $v\in V_{\infty }.$

Note that $e(F)=\frac{1}{[K:\mathbb{Q}]}\sum_{v\in V_{K}}n_{v}\cdot s_{v},$
so the above bounds give :
\begin{equation}
\frac{h(f(b^{j}))}{j}\leq 4\frac{\log A}{\varepsilon }e(F)+\frac{C_{4}}{2}%
\frac{1}{[K:\mathbb{Q}]}\left( \sum_{v\in V_{\infty },L_{v}\geq A/\kappa
}n_{v}\cdot \log L_{v}+\sum_{v\in V_{f}}n_{v}\cdot \log L_{v}\right)
+D_{4}A^{D_{2}}\varepsilon ^{D_{1}}  \label{u}
\end{equation}
On the other hand $\log L_{v}\leq \sum_{1\leq i\leq p}\log ^{+}L_{i}(a)_{v}$
where $L_{i}(a)_{v}=|\delta _{i}^{-1}|_{v}/|\alpha _{i}|_{v}$ and $\delta
_{i}=1-\alpha _{i}(a).$

Clearly if $L_{i}(a)_{v}\geq A/\kappa \geq \kappa ^{-2}$ then $|\delta
_{i}^{-1}|_{v}\geq A$ and $L_{i}(a)_{v}\leq |\delta _{i}^{-1}|_{v}^{2}$ $.$
We get:
\begin{eqnarray}
\sum_{v\in V_{\infty },L_{v}\geq A/\kappa }n_{v}\cdot \log L_{v} &\leq
&\sum_{1\leq i\leq p}\sum_{v\in V_{\infty },L_{i}(a)\geq A/\kappa
}n_{v}\cdot \log ^{+}L_{i}(a)_{v}  \label{u'} \\
&\leq &2\cdot \sum_{1\leq i\leq p}\sum_{v\in V_{\infty },|\delta
_{i}|_{v}\leq A^{-1}}n_{v}\cdot \log ^{+}|\delta _{i}^{-1}|_{v}  \notag
\end{eqnarray}%
Now note that for $v\in V_{f}$ we have $|\alpha _{i}|_{v}=1$ according to
Lemma \ref{norm}. It follows that
\begin{equation}
\sum_{v\in V_{f}}n_{v}\cdot \log ^{+}L_{i}(a)_{v}=\sum_{v\in
V_{f}}n_{v}\cdot \log ^{+}|\delta _{i}^{-1}|_{v}=[K:\mathbb{Q}]\cdot
h_{f}(\delta _{i}^{-1})  \label{v}
\end{equation}%
Hence combining $(\ref{u})$ with $(\ref{u'}),$ $(\ref{v})$ we obtain $(\ref%
{y})$ and this ends the proof of Proposition \ref{heightbound0}.

\subsection{Bilu's equidistribution theorem}

We are now going to apply Bilu's equidistribution theorem to show that the
last term in estimate $(\ref{y})$ becomes very small when both $A$ is large
and $e(F)$ is small.

\begin{theorem}
(Bilu's Equidistribution of Small Points \cite{Bilu})\label{EQ} Suppose $%
(\lambda _{n})_{n\geq 1}$ is a sequence of algebraic numbers (i.e. in $%
\overline{\mathbb{Q}}$) such that $h(\lambda _{n})\rightarrow 0$ and $\deg
(\lambda _{n})\rightarrow +\infty $ as $n\rightarrow +\infty .$ Let $%
\mathcal{O}(\lambda _{n})$ be the Galois orbit of $\lambda _{n}$. Then we
have the following weak-$*$ convergence of probability measures on $\mathbb{C%
}$,
\begin{equation}
\frac{1}{\#\mathcal{O}(\lambda _{n})}\sum_{x\in \mathcal{O}(\lambda
_{n})}\delta _{x}\underset{n\rightarrow +\infty }{\rightarrow }d\theta
\label{equi}
\end{equation}
where $d\theta $ is the normalized Lebesgue measure on the unit circle $%
\{z\in \mathbb{C}$, $|z|=1\}.$
\end{theorem}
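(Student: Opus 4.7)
My plan is to prove the theorem via potential theory, following Bilu's original strategy. The proof proceeds in four main steps, starting with tightness, then restricting the support, and finally pinning down the limit via an energy-minimization argument.

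First, I would establish tightness of the family of probability measures
$$\mu_{n}=\frac{1}{\#\mathcal{O}(\lambda_{n})}\sum_{x\in\mathcal{O}(\lambda_{n})}\delta_{x}$$
on $\mathbb{C}$. For any $R>1$, using that the archimedean contribution is bounded by the full height,
$$\mu_{n}(\{|z|>R\})\leq\frac{1}{\log R}\cdot\frac{1}{\#\mathcal{O}(\lambda_{n})}\sum_{x\in\mathcal{O}(\lambda_{n})}\log^{+}|x|\leq\frac{h(\lambda_{n})}{\log R}\longrightarrow 0.$$
Since $\lambda_{n}^{-1}$ has the same degree as $\lambda_{n}$ and $h(\lambda_{n}^{-1})=h(\lambda_{n})\to 0$ (this uses the product formula), the same estimate shows that mass also escapes from small disks around $0$. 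Hence $\mu_{n}$ is tight and concentrated in annuli $\{R^{-1}\leq|z|\leq R\}$.

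Second, I would show that any weak-$\ast$ limit $\mu$ of a subsequence $(\mu_{n_{k}})$ is supported on the unit circle $\{|z|=1\}$. Letting $R\downarrow 1$ in the tightness estimates above gives $\mu(\{|z|>1\})=\mu(\{|z|<1\})=0$, so $\mathrm{supp}(\mu)\subset\{|z|=1\}$.

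The third and hardest step is to identify $\mu$ as the normalized Lebesgue measure $d\theta$ on the unit circle using the logarithmic energy
$$I(\mu)=-\iint\log|x-y|\,d\mu(x)\,d\mu(y).$$
The unit disk has logarithmic capacity $1$, so $I\geq 0$ on probability measures supported there, and the minimum $I=0$ is attained \emph{uniquely} by $d\theta$. Since $I$ is lower semicontinuous under weak-$\ast$ convergence (after a standard truncation of the logarithm near the diagonal), it suffices to prove $\limsup_{k}I(\mu_{n_{k}})\leq 0$. The key tool here is a discriminant-type identity: for any number field $K\ni\lambda_{n}$ with $d_{n}=[K:\mathbb{Q}]$, the product formula applied to $\prod_{i\neq j}(x_{i}-x_{j})$ (where the $x_{i}$ are the $K_{v}$-conjugates of $\lambda_{n}$) gives
$$\sum_{v\in V_{K}}\frac{n_{v}}{d_{n}}\sum_{i\neq j}\log|x_{i}-x_{j}|_{v}=0.$$
Bounding the non-archimedean sum by the height of $\lambda_{n}$ (with an $O(h(\lambda_{n})\cdot d_{n})$ bound that disappears after dividing by $d_{n}^{2}$, using $\log|x_{i}-x_{j}|_{v}\leq\log\max(|x_{i}|_{v},|x_{j}|_{v})$) yields
$$\frac{1}{d_{n}^{2}}\sum_{i\neq j}\log|x_{i}-x_{j}|_{\infty}\geq -o(1),$$
which is exactly $-I(\mu_{n})\geq -o(1)$ after truncation. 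Passing to the limit, $I(\mu)\leq 0$, hence $I(\mu)=0$ and $\mu=d\theta$.

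Finally, since every subsequential limit equals $d\theta$, the full sequence converges: $\mu_{n}\to d\theta$ weak-$\ast$. The main obstacle is the third step, specifically the lower-semicontinuity passage through the singularity of $\log|x-y|$ on the diagonal and the careful balancing of archimedean and non-archimedean contributions in the product formula; this is the heart of Bilu's theorem and the reason the hypothesis $\deg(\lambda_{n})\to\infty$ is needed (to make the diagonal contribution negligible).
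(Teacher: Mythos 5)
The paper does not prove this statement at all: it is quoted verbatim as Bilu's theorem with a citation to \cite{Bilu}, so there is no internal argument to compare against. Your outline is a faithful reconstruction of Bilu's original potential-theoretic proof and is essentially correct: tightness and escape from $0$ via $h(\lambda_n^{-1})=h(\lambda_n)\to 0$, support of any limit on the unit circle, the product-formula (discriminant) bound showing the normalized off-diagonal archimedean energy is $\geq -O(h(\lambda_n))$, truncation of the logarithmic kernel plus $\deg(\lambda_n)\to\infty$ to kill the diagonal, lower semicontinuity, and uniqueness of the equilibrium measure of the circle (capacity $1$, energy minimizer $d\theta$). Two small points you would need to tidy in a full write-up: the product-formula identity should be applied to $\Delta_n=\prod_{i\neq j}(x_i-x_j)$ viewed in a Galois closure (or equivalently to the integer discriminant of the minimal polynomial, in which case the leading coefficient term $(2d_n-2)\log|a_n|\leq 2d_n^2h(\lambda_n)$ must be absorbed), with the weights $n_v/[K:\mathbb{Q}]$ handled consistently when $K$ strictly contains $\mathbb{Q}(\lambda_n)$; and the passage from the discrete off-diagonal energy to $I(\mu)\leq 0$ should be written with the truncated kernel $\min\{-\log|x-y|,T\}$, whose diagonal contribution $T/d_n\to 0$, followed by monotone convergence in $T$. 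Both are routine, and nonvanishing of $\Delta_n$ is automatic since minimal polynomials are separable in characteristic zero.
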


Let us first draw two consequences of this equidistribution statement:

\begin{lemma}
\label{lillem0}For every $\alpha >0$ there is $A_{1}>0$, $\eta _{1}>0$ and $%
\varepsilon _{1}>0$ with the following property. If $\lambda \in \overline{%
\mathbb{Q}}$ is such that $h(\lambda )\leq \eta _{1}$ and $\deg (\lambda
)>A_{1} $ then
\begin{equation}
h_{\infty }^{\varepsilon _{1}^{-1}}(\frac{1}{1-\lambda })\leq \alpha
\label{toprove}
\end{equation}
where $h_{\infty }^{\varepsilon _{1}^{-1}}$ was defined in $(\ref{hinfty})$.
\end{lemma}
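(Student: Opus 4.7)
Writing $K=\mathbb{Q}(\lambda)$ and $D=\deg(\lambda)=[K:\mathbb{Q}]$, the archimedean places of $K$ are indexed by the $D$ Galois embeddings $\sigma:K\hookrightarrow\mathbb{C}$, so that
\[
h_{\infty}^{\varepsilon_1^{-1}}\!\bigl(\tfrac{1}{1-\lambda}\bigr)
\;=\;\frac{1}{D}\!\sum_{\sigma:\,|1-\sigma(\lambda)|\leq\varepsilon_1}\!\log\frac{1}{|1-\sigma(\lambda)|}
\;=\;\int_{|1-z|\leq\varepsilon_1}\log\tfrac{1}{|1-z|}\,d\mu_\lambda(z),
\]
where $\mu_\lambda=\frac{1}{D}\sum_\sigma \delta_{\sigma(\lambda)}$ is the Galois-orbit empirical measure. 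Bilu's theorem~\ref{EQ} asserts that $\mu_\lambda\to d\theta$ weak-$*$ as $h(\lambda)\to 0$ and $\deg(\lambda)\to\infty$. The candidate limit, computed directly on the unit circle via $|1-e^{it}|=2|\sin(t/2)|$, is $I(\varepsilon_1):=\int_{|z|=1,\,|1-z|\leq\varepsilon_1}\log(1/|1-z|)\,d\theta=O(\varepsilon_1\log(1/\varepsilon_1))$. I fix $\varepsilon_1$ small enough that $I(2\varepsilon_1)<\alpha/3$, and the lemma reduces to showing $\int_{|1-z|\leq\varepsilon_1}\log(1/|1-z|)\,d\mu_\lambda\leq\alpha$ for $h(\lambda)$ sufficiently small and $\deg(\lambda)$ sufficiently large.

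The integrand $\log(1/|1-z|)\mathbf{1}_{|1-z|\leq\varepsilon_1}$ is discontinuous at $|1-z|=\varepsilon_1$ and unbounded at $z=1$, so Bilu does not apply directly. I plan a truncation at a level $M$, splitting the $\mu_\lambda$-integral into a bounded piece over $\{e^{-M}\leq|1-z|\leq\varepsilon_1\}$ and a singular tail over $\{|1-z|\leq e^{-M}\}$. On the bounded piece the integrand is nonnegative and $\leq M$; I sandwich the discontinuous indicator between continuous bumps supported in $\{e^{-M}/2\leq|1-z|\leq 2\varepsilon_1\}$ and apply Bilu to each continuous bounded product with $\log(1/|1-z|)$, obtaining
\[
\limsup_\lambda \int_{e^{-M}\leq|1-z|\leq\varepsilon_1}\log\tfrac{1}{|1-z|}\,d\mu_\lambda \;\leq\; I(2\varepsilon_1)\;<\;\alpha/3.
\]
For the singular tail, the arithmetic input is the Mahler-measure identity
\[
\int\log|1-z|\,d\mu_\lambda\;=\;\frac{1}{D}\log|P(1)|\;-\;\frac{1}{D}\log|a|,
\]
where $P(X)=a\prod_i(X-\sigma_i(\lambda))\in\mathbb{Z}[X]$ is the primitive integer minimal polynomial of $\lambda$. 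Since $\lambda\neq 1$, $P(1)\in\mathbb{Z}\setminus\{0\}$, hence $\log|P(1)|\geq 0$, while $\log|a|/D\leq h(\lambda)\leq\eta_1$. This forces $\int\log|1-z|\,d\mu_\lambda\geq -\eta_1$, and combining this uniform lower bound with the Bilu-driven convergence of $\int_{|1-z|\geq e^{-M}}\log|1-z|\,d\mu_\lambda$ (obtained by the same bump-approximation on the bounded complement, together with $\int\log^+|z|\,d\mu_\lambda=h_\infty(\lambda)\leq\eta_1$ to truncate at infinity) shows that the singular tail converges as $h(\lambda)\to 0$ to its Lebesgue counterpart $\int_{|z|=1,\,|1-z|\leq e^{-M}}\log(1/|1-z|)\,d\theta$, which is $<\alpha/3$ once $M$ is large. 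Summing the three contributions yields the desired bound by $\alpha$.

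The hardest step is the singular-tail control: one must rule out a pathological clustering of Galois conjugates arbitrarily close to $1$ that would outweigh what Lebesgue allows on the shrinking set $\{|1-z|\leq e^{-M}\}$. The crucial ingredient is exactly the arithmetic non-triviality $|P(1)|\geq 1$ (a nonzero rational integer), which uniformly prevents $\sum_i\log|1-\sigma_i(\lambda)|$ from being very negative; together with Bilu on the bounded region, this pins down the tail. All three parameters $(\varepsilon_1,M,\eta_1,A_1)$ are then chosen in the correct order: first $\varepsilon_1$ for $I(2\varepsilon_1)<\alpha/3$, then $M$ for the Lebesgue tail $<\alpha/3$, then $\eta_1$ small and $A_1$ large enough for the Bilu approximations to hold within $\alpha/3$.
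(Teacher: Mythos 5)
Your plan is correct and follows essentially the same route as the paper: your key arithmetic input, $\int\log|1-z|\,d\mu_\lambda\ge -h(\lambda)$ obtained from $|P(1)|\ge 1$ and $\frac{1}{D}\log|a|\le h(\lambda)$, is exactly the paper's product-formula estimate $\frac{1}{\deg\lambda}\sum_{x\in\mathcal{O}(\lambda)}\log\frac{1}{|1-x|}\le h(\lambda)$, and the remainder is Bilu's theorem applied to $\log|1-z|$ cut off near $z=1$ together with $\int_0^1\log|1-e^{2\pi i\theta}|\,d\theta=0$. Your additional truncation at level $M$ and the bump-function sandwich simply make explicit the technical points (discontinuity of the cutoff, behaviour at infinity) that the paper compresses into the observation that $\mathbf{1}_{|z-1|>\varepsilon_1}\log|1-z|$ is locally bounded.
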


\proof%
We have
\begin{equation*}
h_{\infty }(\frac{1}{1-\lambda })\leq h(\frac{1}{1-\lambda })=h(1-\lambda
)\leq h_{f}(\lambda )+h_{\infty }(1-\lambda )\leq h(\lambda )+h_{\infty
}(1-\lambda )
\end{equation*}
Hence
\begin{equation*}
\frac{1}{\deg (\lambda )}\sum_{x\in \mathcal{O}(\lambda )}\log \frac{1}{|1-x|%
}=h_{\infty }(\frac{1}{1-\lambda })-h_{\infty }(1-\lambda )\leq h(\lambda )
\end{equation*}
and
\begin{equation}
h_{\infty }^{\varepsilon _{1}^{-1}}(\frac{1}{1-\lambda })=\frac{1}{\deg
(\lambda )}\sum_{|1-x|\leq \varepsilon _{1}}\log \frac{1}{|1-x|}\leq
h(\lambda )+\frac{1}{\deg (\lambda )}\sum_{|1-x|>\varepsilon _{1}}\log |1-x|
\label{in}
\end{equation}
Consider the function $f_{\varepsilon _{1}}(z)=\mathbf{1}_{|z-1|>\varepsilon
_{1}}\log |1-z|$. It is locally bounded on $\mathbb{C}$. By Theorem \ref{EQ}%
, for every $\varepsilon _{1}>0$, there must exists $\eta _{1}>0$ and $%
A_{1}>0$ such that, if $h(\lambda )\leq \eta _{1},$ and $d=\deg (\lambda
)>A_{1}$, then
\begin{equation*}
\left| \frac{1}{\deg (\lambda )}\sum_{x}f_{\varepsilon
_{1}}(x)-\int_{0}^{1}f_{\varepsilon _{1}}(e^{2\pi i\theta })d\theta \right|
\leq \frac{\alpha }{3}
\end{equation*}
On the other hand we verify that $\int_{0}^{1}\log |1-e^{2\pi i\theta
}|d\theta =0.$ Hence we can choose $\varepsilon _{1}>0$ small enough so that
$\left| \int_{0}^{1}f_{\varepsilon _{1}}(e^{2\pi i\theta })d\theta \right|
\leq \frac{\alpha }{3}.$ Combining these inequalities with $(\ref{in})$ and
choosing $\eta _{1}\leq \frac{\alpha }{3},$ we get $(\ref{toprove}).$

\endproof%

\begin{lemma}
\label{lillem}For every $\alpha >0$ there exists $\eta >0$ and $A_{1}>0$
such that for any $\lambda \in \overline{\mathbb{Q}},$ if $h(\lambda )\leq
\eta $ and $d=\deg (\lambda )>A_{1}$, then
\begin{equation*}
\left| \frac{1}{\deg (\lambda )}\sum_{v\in V_{\infty }}n_{v}\cdot \log
|1-\lambda |_{v}\right| \leq \alpha
\end{equation*}
\end{lemma}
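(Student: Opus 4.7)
The plan is to write the quantity as an average over the Galois orbit of $\lambda$, then apply Bilu's theorem to an appropriately truncated integrand, with Lemma \ref{lillem0} handling the logarithmic singularity at $z=1$.

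First, take $K=\mathbb{Q}(\lambda)$ so that $[K:\mathbb{Q}]=\deg(\lambda)$. The infinite places of $K$ are indexed by equivalence classes of embeddings $\sigma:K\hookrightarrow \mathbb{C}$ (real places contributing $n_v=1$ and pairs of conjugate complex embeddings contributing $n_v=2$, with $|1-\sigma(\lambda)|=|1-\bar\sigma(\lambda)|$). Since these embeddings are in bijection with the Galois orbit $\mathcal{O}(\lambda)$, we have the identity
\begin{equation*}
\frac{1}{\deg(\lambda)}\sum_{v\in V_\infty} n_v\cdot \log|1-\lambda|_v \;=\; \frac{1}{|\mathcal{O}(\lambda)|}\sum_{x\in \mathcal{O}(\lambda)}\log|1-x|.
\end{equation*}
This recasts the problem as one about the empirical measure $\mu_\lambda := \frac{1}{|\mathcal{O}(\lambda)|}\sum_{x\in\mathcal{O}(\lambda)}\delta_x$.

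Next, fix an auxiliary parameter $\varepsilon_1\in (0,1)$ (to be chosen below) and split the sum at the threshold $|1-x|=\varepsilon_1$. For the far part, introduce the bounded function $f_{\varepsilon_1}(z):=\mathbf{1}_{|1-z|>\varepsilon_1}\log|1-z|$ on $\mathbb{C}$; it is continuous off the circle $\{|1-z|=\varepsilon_1\}$, which meets the unit circle in finitely many points and thus has zero measure under the normalized Lebesgue measure $d\theta$. Since $\int_0^1 \log|1-e^{2\pi i\theta}|\,d\theta=0$, one can choose $\varepsilon_1$ small so that $|\int_0^1 f_{\varepsilon_1}(e^{2\pi i\theta})\,d\theta|\leq \alpha/3$. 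Then, by Bilu's theorem \ref{EQ}, one obtains $\eta>0$ and $A_1>0$ (depending on $\varepsilon_1$, hence on $\alpha$) such that whenever $h(\lambda)\leq \eta$ and $\deg(\lambda)>A_1$,
\begin{equation*}
\Bigl|\frac{1}{|\mathcal{O}(\lambda)|}\sum_{|1-x|>\varepsilon_1}\log|1-x|\Bigr|
\;\leq\; \Bigl|\!\int f_{\varepsilon_1}\,d\theta\Bigr| + \frac{\alpha}{3} \;\leq\; \frac{2\alpha}{3}.
\end{equation*}

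The near part $\sum_{|1-x|\leq \varepsilon_1}\log|1-x|$ is non-positive (as $\varepsilon_1<1$), and its absolute value equals $\sum_{|1-x|\leq \varepsilon_1}\log\frac{1}{|1-x|}=\deg(\lambda)\cdot h_\infty^{\varepsilon_1^{-1}}\bigl(\tfrac{1}{1-\lambda}\bigr)$. By Lemma \ref{lillem0} applied with $\alpha/3$ in place of $\alpha$, after possibly shrinking $\eta$ and enlarging $A_1$ (and arranging that $\varepsilon_1$ is the one supplied by that lemma), this last quantity is bounded by $\alpha/3$ under the hypotheses $h(\lambda)\leq \eta$, $\deg(\lambda)>A_1$. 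Choosing the final $\eta,A_1$ to be the minimum (resp.\ maximum) of those demanded by Bilu and by Lemma \ref{lillem0}, the triangle inequality then gives $|\frac{1}{\deg\lambda}\sum_{v\in V_\infty}n_v\log|1-\lambda|_v|\leq \frac{2\alpha}{3}+\frac{\alpha}{3}=\alpha$, as desired. The only delicate point is the consistent choice of the parameter $\varepsilon_1$: it must be small enough that the Haar integral of $f_{\varepsilon_1}$ is small, and at the same time it must be the scale at which Lemma \ref{lillem0} produces its bound; matching these two requirements is straightforward because Lemma \ref{lillem0} itself allows one to choose $\varepsilon_1$ as small as desired.
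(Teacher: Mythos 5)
Your argument is correct and is essentially the paper's own proof: the paper states Lemma \ref{lillem} as an immediate consequence of Lemma \ref{lillem0} (the convergence in Bilu's theorem extends to the function $\log|1-z|$, whose circle average vanishes), and your write-up simply makes explicit the same decomposition — Galois-orbit average, truncation at $|1-x|=\varepsilon_1$, Bilu for the truncated function $f_{\varepsilon_1}$, and Lemma \ref{lillem0} for the singular part — that underlies that one-line deduction. The only cosmetic caveat is that $f_{\varepsilon_1}$ is locally rather than globally bounded, but this is exactly the latitude the paper itself takes, and the small-height hypothesis controls the contribution of large conjugates.
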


\proof%
The previous lemma shows that the convergence $(\ref{equi})$ not only holds
for compactly supported functions on $\mathbb{C}$, but also for functions
with logarithmic singularities at $1.$ In particular it holds for the
function $f(z)=\log |1-z|,$ which is exactly what we need, since we check
easily that $\int_{0}^{1}f(e^{2\pi i\theta })d\theta =0.$
\endproof%

As a consequence we obtain:

\begin{lemma}
\label{lillem2}For every $\alpha >0$ there exists $\eta _{0}>0$ and $A_{1}>0$
such that for any $\lambda \in \overline{\mathbb{Q}},$ if $h(\lambda )\leq
\eta _{0}$ and $d=\deg (\lambda )>A_{1}$, then
\begin{equation*}
h_{f}(\frac{1}{1-\lambda })\leq 2\alpha
\end{equation*}
\end{lemma}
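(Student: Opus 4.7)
The plan is to reduce the estimate on $h_f(1/(1-\lambda))$ to an estimate on the archimedean average of $\log|1-\lambda|_v$, which is exactly what Lemma \ref{lillem} controls, and to handle the finite places by the ultrametric inequality together with the smallness of $h(\lambda)$.

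First I would apply the product formula to the nonzero algebraic number $1-\lambda$: letting $K=\mathbb{Q}(\lambda)$ and using the identity $\log|x|_v = \log^+|x|_v - \log^+|x^{-1}|_v$ valid for each place, one gets
\begin{equation*}
\sum_{v\in V_f} n_v \log^+\tfrac{1}{|1-\lambda|_v}
= \sum_{v\in V_f} n_v \log^+|1-\lambda|_v \;+\; \sum_{v\in V_\infty} n_v \log|1-\lambda|_v.
\end{equation*}
The left-hand side is exactly $[K:\mathbb{Q}]\cdot h_f\!\left(\tfrac{1}{1-\lambda}\right)$, since for a non-archimedean place the absolute value of $1-\lambda$ cannot vanish unless $\lambda=1$ (a case we may exclude, as then the claim is trivial).

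Next I would bound the first sum on the right. For every $v\in V_f$ the ultrametric inequality gives $|1-\lambda|_v \leq \max(1,|\lambda|_v)$, whence $\log^+|1-\lambda|_v \leq \log^+|\lambda|_v$. Summing over $v\in V_f$ yields
\begin{equation*}
\sum_{v\in V_f} n_v \log^+|1-\lambda|_v \;\leq\; [K:\mathbb{Q}]\cdot h_f(\lambda) \;\leq\; [K:\mathbb{Q}]\cdot h(\lambda).
\end{equation*}
Combining the two displays and dividing by $[K:\mathbb{Q}]=\deg(\lambda)$ I obtain the key inequality
\begin{equation*}
h_f\!\left(\tfrac{1}{1-\lambda}\right) \;\leq\; h(\lambda) \;+\; \frac{1}{\deg(\lambda)}\sum_{v\in V_\infty} n_v \log|1-\lambda|_v.
\end{equation*}

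Finally I would invoke Lemma \ref{lillem}: given $\alpha>0$, choose the $\eta,A_1$ provided by that lemma for the parameter $\alpha$, and set $\eta_0=\min(\eta,\alpha)$. Then under the hypotheses $h(\lambda)\leq \eta_0$ and $\deg(\lambda)>A_1$, the archimedean average is at most $\alpha$ in absolute value, so the right-hand side above is at most $\eta_0+\alpha\leq 2\alpha$, which is the desired conclusion. No step presents a real obstacle: this is a short consequence of Lemma \ref{lillem} via the product formula, with the ultrametric bound being the only additional ingredient.
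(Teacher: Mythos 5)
Your proposal is correct and follows essentially the same route as the paper: the product formula applied to $1-\lambda$ (the paper phrases it as $h(\delta)=h(\delta^{-1})$, which after splitting into finite and infinite parts is exactly your identity), the ultrametric bound $h_f(1-\lambda)\leq h_f(\lambda)$ at the finite places, and Lemma \ref{lillem} to control the archimedean average $\frac{1}{\deg(\lambda)}\sum_{v\in V_\infty}n_v\log|1-\lambda|_v$. The only difference is cosmetic bookkeeping, so there is nothing to add.
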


\proof%
We apply the product formula to $\delta =1-\lambda $, which takes the form $%
h(\delta )=h(\delta ^{-1}),$ hence
\begin{equation*}
h_{f}(\delta ^{-1})=h_{\infty }(\delta )-h_{\infty }(\delta
^{-1})+h_{f}(\delta )
\end{equation*}
But $h_{f}(\delta )=h_{f}(1-\lambda )\leq h_{f}(\lambda )\leq \eta _{0}$ and
$h_{\infty }(\delta )-h_{\infty }(\delta ^{-1})=\frac{1}{[K:\mathbb{Q}]}%
\sum_{v\in V_{\infty }}n_{v}\cdot \log |\delta |_{v}$, which is bounded by $%
\alpha $ according to Lemma \ref{lillem}. We are done.
\endproof%

The outcome of all this is that each of the terms $h_{f}(\delta
_{i}^{-1})+h_{\infty }^{A}(\delta _{i}^{-1})$ in $(\ref{y})$ becomes small
as soon as $e(F)$ (hence $h(\alpha _{i}(a))$) becomes small and $A$ becomes
large.

\subsection{Proof of Proposition \protect\ref{heightbound}}

Let $n\in \mathbb{N}$ and $\alpha >0$ be arbitrary. Let $j\in \lbrack 1,n]$
an integer and $F=\{a,b\}\subset \mathbb{G}(\overline{\mathbb{Q}})$ with $%
a\in T(\overline{\mathbb{Q}}).$ Then\ for any $\varepsilon >0$ and $A>0$
large enough we obtained the upper bound $(\ref{y})$ above$.$ On the other
hand we had $h(\alpha _{i}(a))\leq e(F)$ for each positive root $\alpha _{i}$
and $\delta _{i}=1-\alpha _{i}(a).$ Let $\varepsilon _{1},$ $A_{1}$ and $%
\eta _{0}$ be the quantities obtained in the previous paragraph in Lemmas %
\ref{lillem0} and \ref{lillem2}. Choose $A$ so that $A^{-1}<\varepsilon _{1}$
and $A\geq A_{0}$ and consider $(\ref{y}).$ Assume that for each $i\in
\{1,...,p\}$ $\deg (\alpha _{i}(a))>A_{1}$. Then Lemmas \ref{lillem0} and %
\ref{lillem2} will hold with $\lambda =\alpha _{i}(a)$ as soon as $e(F)<\eta
_{0}$. Hence for each $i=1,...,p$
\begin{equation*}
\left\vert h_{f}(\delta _{i}^{-1})+h_{\infty }^{A}(\delta
_{i}^{-1})\right\vert \leq 2\alpha
\end{equation*}%
and
\begin{equation*}
\frac{h(f(b^{j}))}{j}\leq \frac{4\log A}{\varepsilon }e(\{a,b%
\})+D_{4}A^{D_{2}}\varepsilon ^{D_{1}}+2p(4dC_{2})\alpha
\end{equation*}%
Now choose $\varepsilon >0$ so that $2D_{4}A^{D_{2}}\varepsilon
^{D_{1}}<\alpha .$ Then choose $\eta >0$ so that $4\frac{\log A}{\varepsilon
}\eta <\alpha $ and $\eta <\eta _{0}.$ From $(\ref{y})$, we then obtain that
if $e(F)<\eta $ and $j\in \mathbb{N}$%
\begin{equation*}
\frac{1}{j}h(f(b^{j}))\leq (2+p(4dC_{2}))\alpha
\end{equation*}%
Since $\alpha $ was arbitrary we obtain the desired bound.

\section{Proof of the statements of Section 3}

\subsection{Proof of Theorem \protect\ref{main}}

Before beginning the proof of Theorem \ref{main}, we recall Zhang's theorem
on small points of algebraic tori. Let $\mathbb{G}_{m}$ be the
multiplicative group and $n\in \mathbb{N}$. On the $\overline{\mathbb{Q}}$%
-points of the torus $\mathbb{G}_{m}^{n}$ we define a notion of height in
the following natural way. If $\mathbf{x}=(x_{1},...,x_{n})\in \mathbb{G}%
_{m}^{n}$ then $h(\mathbf{x}):=h(x_{1})+...+h(x_{n})$ where $h(x_{i})$ is
the standard logarithmic Weil height we have been using so far.

\begin{theorem}
\label{Zh}(Zhang \cite{Zhang}) Let $V$ be a proper closed algebraic
subvariety of $\mathbb{G}_{m}^{n}$ defined over $\overline{\mathbb{Q}}.$
Then there is $\varepsilon >0$ such that the Zariski closure $V_{\varepsilon
}$ of the set $\{\mathbf{x}\in V$, $h(\mathbf{x})<\varepsilon \}$ consists
of a finite union of torsion coset tori, i.e. subsets of the forms $\mathbf{%
\zeta }H$, where $\mathbf{\zeta }=(\zeta _{1},...,\zeta _{n})$ is a torsion
point and $H$ is a subtorus of $\mathbb{G}_{m}^{n}.$
\end{theorem}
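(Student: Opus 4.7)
The plan is to prove Theorem \ref{Zh} via the following inductive reformulation: \emph{if $V$ is an irreducible proper subvariety of $\mathbb{G}_m^n$ which is not a torsion coset, then the essential minimum $\mu(V):=\sup\{\varepsilon>0:\{\mathbf{x}\in V:h(\mathbf{x})<\varepsilon\}\text{ is not Zariski-dense in }V\}$ is strictly positive.} Granting this, the original statement follows by Noetherian induction on $V$: for $\varepsilon<\mu(V)$ the closure $V_\varepsilon$ lies in a proper subvariety of $V$, and one iterates on its irreducible components, each of which is either a torsion coset (stop) or again has positive essential minimum (continue). Since $\mathbb{G}_m^n$ is Noetherian the process terminates, producing a finite union of torsion cosets.

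The base case $\dim V = 0$ is essentially Kronecker's theorem: a tuple of algebraic numbers of height zero consists of roots of unity, and Northcott together with the scalar Bilu theorem (Theorem \ref{EQ}) rules out accumulation of small-height non-torsion points in the unbounded-degree regime. For the inductive step, suppose for contradiction that $\mu(V)=0$ for some irreducible $V$ which is not a torsion coset. Extract a Zariski-dense sequence $(\mathbf{x}_k)$ in $V$ with $h(\mathbf{x}_k)\to 0$; by Northcott the degrees $[\mathbb{Q}(\mathbf{x}_k):\mathbb{Q}]$ tend to infinity.

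The key step is the multidimensional equidistribution of Szpiro--Ullmo--Zhang, generalizing Bilu's Theorem \ref{EQ}: for a Zariski-generic sequence of small points in $\mathbb{G}_m^n$, the probability measures $\frac{1}{\#\mathcal{O}(\mathbf{x}_k)}\sum_{\mathbf{y}\in\mathcal{O}(\mathbf{x}_k)}\delta_{\mathbf{y}}$ converge weakly on $\mathbb{G}_m^n(\mathbb{C})$ to the normalized Haar measure on the compact polytorus $(S^1)^n$. Because every Galois orbit lies in the closed set $V(\mathbb{C})$ and $(S^1)^n$ is Zariski-dense in $\mathbb{G}_m^n$, this forces $V=\mathbb{G}_m^n$, contradicting properness. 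The only escape from this dichotomy is that the sequence $(\mathbf{x}_k)$ is trapped inside a proper torsion coset $\mathbf{\zeta}H\subsetneq\mathbb{G}_m^n$; then Zariski density forces $V\subseteq\mathbf{\zeta}H$, and after translating by $\mathbf{\zeta}^{-1}$ we apply the inductive hypothesis to $\mathbf{\zeta}^{-1}V\subseteq H\simeq\mathbb{G}_m^{\dim H}$ in strictly smaller ambient dimension.

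The main obstacle is, of course, the equidistribution theorem itself, the deep Arakelov-geometric input resting on the semipositivity of the canonical metrized line bundle on $\mathbb{G}_m^n$ and Zhang's arithmetic Hilbert--Samuel/Bezout inequality, which together force the limit measure to be the canonical (Haar) measure on a torsion coset. Once equidistribution is granted, the rest is essentially bookkeeping: Zariski-generic small points cannot accumulate inside a proper $V$, hence must degenerate into lower-dimensional torsion cosets, and Noetherianity plus the induction on ambient dimension assemble these into the finite union claimed in $V_\varepsilon$.
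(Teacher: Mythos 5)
The paper itself contains no proof of this statement: Theorem \ref{Zh} is imported wholesale from Zhang's paper, whose original argument runs through adelic metrics, arithmetic ampleness and successive minima and predates Bilu's equidistribution theorem. What you propose — deducing the toric Bogomolov statement from equidistribution of small points on $\mathbb{G}_m^n$ together with a Noetherian/ambient-dimension induction — is a genuinely different and by now standard alternative route, and it is not circular, since the equidistribution input for tori is proved by elementary Fourier-analytic means independent of Zhang's theorem. Two caveats on the input: the theorem you need is the full $n$-dimensional equidistribution statement for \emph{strict} sequences, which is due to Bilu himself (Szpiro--Ullmo--Zhang is the abelian-variety analogue, proved by Arakelov-theoretic methods), and it is strictly stronger than the one-variable statement quoted as Theorem \ref{EQ} in the paper; so your proof buys independence from Zhang's intersection-theoretic machinery at the cost of a stronger equidistribution black box.

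Two steps in your sketch need tightening before the dichotomy you assert is actually available. First, you must extract a \emph{generic} sequence $(\mathbf{x}_k)$ — one that eventually leaves every proper closed subvariety of $V$, which is possible because $\overline{\mathbb{Q}}$ is countable — not merely a Zariski-dense one. Then the failure of strictness means some fixed proper algebraic subgroup $H$ contains infinitely many $\mathbf{x}_k$, genericity forces $V\subseteq H$, and since every irreducible component of $H$ is a torsion coset of the subtorus $H^{0}$ (divisibility of $H^{0}$), irreducibility of $V$ places it in a single torsion coset $\zeta H^{0}$; that is the precise content of ``the only escape''. With a merely dense sequence the offending subsequence need not be dense in $V$ and no conclusion follows. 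Second, $V$ is only defined over $\overline{\mathbb{Q}}$, so the $\mathrm{Gal}(\overline{\mathbb{Q}}/\mathbb{Q})$-orbits you equidistribute need not lie in $V(\mathbb{C})$; they lie in the finite union of conjugates $\bigcup_{\sigma}V^{\sigma}(\mathbb{C})$, which is still a proper closed set, and the support argument should be applied to that union (or one invokes equidistribution relative to a field of definition of $V$). Finally, when you translate by $\zeta^{-1}$ and identify $H^{0}\simeq\mathbb{G}_m^{\dim H^{0}}$, heights are only preserved up to bounded multiplicative constants under the monomial isomorphism, which suffices for positivity of the essential minimum. With these repairs your induction and the concluding bookkeeping are correct.
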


We will need the following lemma, where $\mathbb{G}$ is a semisimple
algebraic group over an algebraically closed field, $T$ a maximal torus
together with a choice of simple roots $\Pi $, and $f$ is the regular
function defined at the beginning of the last section:

\begin{lemma}
\label{multindep}For every $k\in \mathbb{N}$, the regular functions $%
f_{1},...,f_{k}$ defined on $\mathbb{G}$ by $f_{i}(g)=f(g^{i})$ are
multiplicatively independent. Namely, if for each $i,$ $n_{i}$ and $m_{i}$
are non-negative integers and the $f_{i}$'s satisfy an equation of the form $%
f_{1}^{n_{1}}\cdot ...\cdot f_{k}^{n_{k}}=f_{1}^{m_{1}}\cdot ...\cdot
f_{k}^{m_{k}}$ then $n_{i}=m_{i}$ for each $i$.
\end{lemma}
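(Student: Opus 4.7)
The plan is to prove the multiplicative independence of $f_{1},\ldots,f_{k}$ by restricting these regular functions to a one-parameter curve inside a conjugate of the maximal torus $T$, on which each $f_{i}$ becomes an explicit Laurent polynomial in a single variable, and then invoking unique factorization in the PID $\overline{\mathbb{Q}}[z,z^{-1}]$.

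First I would carry out a direct matrix-coefficient computation. For $h\in\mathbb{G}$ and $t\in T$, decomposing $Ad(h^{-1})X_{\alpha_{d}}$ in the root decomposition $\mathfrak{g}=\mathfrak{t}\oplus\bigoplus_{\alpha\in\Phi}\mathfrak{g}_\alpha$ and using that $Ad(t)$ acts on $\mathfrak{g}_\alpha$ by the scalar $\alpha(t)$, one obtains
\[
f(hth^{-1})=\sum_{\alpha\in\Phi\cup\{0\}}A_\alpha(h)\,\alpha(t),
\]
for certain regular functions $A_\alpha:\mathbb{G}\to\overline{\mathbb{Q}}$. Since $(hth^{-1})^i=ht^ih^{-1}$, this yields $f_i(hth^{-1})=\sum_\alpha A_\alpha(h)\,\alpha(t)^i$.

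Next, I would pick a cocharacter $\lambda:\mathbb{G}_m\to T$ generic enough that the integers $n_\alpha:=\langle\alpha,\lambda\rangle$ are pairwise distinct and non-zero for $\alpha\in\Phi\cup\{0\}$, and set $Q^{(h)}(w):=\sum_\alpha A_\alpha(h)\,w^{n_\alpha}\in\overline{\mathbb{Q}}[w,w^{-1}]$. Along the curve $z\mapsto h\lambda(z)h^{-1}$, the restriction of $f_i$ becomes the Laurent polynomial $R_i^{(h)}(z)=Q^{(h)}(z^i)$, whose roots in $\overline{\mathbb{Q}}^{\times}$ are precisely the values $\zeta_j(h)^{1/i}\omega$ for $\omega^i=1$, where $\zeta_1(h),\ldots,\zeta_N(h)$ are the roots of $Q^{(h)}$. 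Hence $R_i^{(h)}$ factors in the PID $\overline{\mathbb{Q}}[z,z^{-1}]$ into the linear factors $z-\zeta_j(h)^{1/i}\omega$. The crucial point is then to choose $h$ so that no relation $\zeta_j(h)^{i'}=\zeta_{j'}(h)^{i}$ holds for any $(i,j)\neq(i',j')$ with $i,i'\in\{1,\ldots,k\}$; under this genericity the root sets of $R_1^{(h)},\ldots,R_k^{(h)}$ are pairwise disjoint, so these Laurent polynomials are pairwise coprime. Any relation $\prod_i f_i^{n_i}=\prod_i f_i^{m_i}$ on $\mathbb{G}$ then pulls back along $z\mapsto h\lambda(z)h^{-1}$ to $\prod_i R_i^{(h)}(z)^{n_i-m_i}=1$ in $\overline{\mathbb{Q}}(z)^{\times}$, and unique factorization forces $n_i=m_i$ for every $i$.

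The main obstacle will be verifying the genericity hypothesis in the third step: the finitely many bad conditions $\zeta_j^{i'}=\zeta_{j'}^{i}$ cut out a proper Zariski-closed subset of the coefficient space of $Q$, and one must check that the morphism $h\mapsto(A_\alpha(h))_\alpha$ does not have its image contained in this bad locus. I expect this can be handled either by a dimension count (the $\mathbb{G}$-orbit $Ad(\mathbb{G})X_{\alpha_d}\subset\mathfrak{g}$ spans $\mathfrak{g}$ because $\mathbb{G}$ is simple, which forces enough of the $A_\alpha$ to be non-trivial regular functions) or by exhibiting an explicit $h$, for instance a suitable $\exp(X)$ with $X\in\mathfrak{g}$ in general position, at which the roots of $Q^{(h)}$ can be computed by hand and verified to satisfy the required non-degeneracy.
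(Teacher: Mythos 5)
Your overall strategy---restricting the $f_i$ to a conjugated one-parameter subgroup $z\mapsto h\lambda(z)h^{-1}$ so that each $f_i$ becomes a Laurent polynomial $R_i^{(h)}(z)=Q^{(h)}(z^i)$, and then arguing with roots in $\overline{\mathbb{Q}}[z,z^{-1}]$---is sound and in fact close in spirit to the paper's argument. But your write-up has a genuine gap exactly where the content of the lemma lies: you never establish the genericity you rely on, namely the existence of $h$ (and $\lambda$) for which $Q^{(h)}$ is not a monomial and its roots $\zeta_j(h)$ satisfy no relation $\zeta_j^{i'}=\zeta_{j'}^{i}$ with $i\neq i'$, $1\le i,i'\le k$. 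You explicitly defer this point, and the dimension-count sketch you offer does not do the job: knowing that $Ad(\mathbb{G})X_{\alpha_d}$ spans $\mathfrak{g}$ only shows that various coefficient functions $A_\alpha$ are not identically zero, whereas the bad locus you must avoid is cut out by multiplicative relations among the \emph{roots} of $Q^{(h)}$, and nonvanishing of coefficients says nothing about those. (Two smaller points: for the unique-factorization step you also need each $R_i^{(h)}$ to be a non-unit of $\overline{\mathbb{Q}}[z,z^{-1}]$, i.e.\ $Q^{(h)}$ must have at least two nonzero terms, which is again part of the unverified genericity; and since $\langle 0,\lambda\rangle=0$, the exponents $n_\alpha$ cannot all be nonzero as stated, though this is only a wording slip.)

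The paper closes precisely this gap by an explicit computation, and you could finish along the same lines. First, it suffices (after cancelling common powers in the integral domain $\overline{\mathbb{Q}}[\mathbb{G}]$, $\mathbb{G}$ being connected) to find, for each $i$, one point at which $f_i$ vanishes while all $f_j$, $j\neq i$, do not; this is weaker than pairwise coprimality of all the $R_i^{(h)}$. The paper then restricts $f$ to the copy $H\simeq PGL_2$ attached to the highest root, takes the explicit conjugated torus $g_\lambda=P\,\mathrm{diag}(\lambda,\lambda^{-1})\,P^{-1}$ with a fixed integral matrix $P$, and computes that $f_i(g_\lambda)=0$ if and only if $2\lambda^{2i}=1$; these conditions are mutually exclusive for distinct $i$, since two of them would force $\lambda$ to be a root of unity while $|\lambda^{2i}|=1/2$. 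In your language this amounts to exhibiting an explicit $h$ and $\lambda$ for which the roots of $Q^{(h)}$ are $\pm 2^{-1/2}$, so the required non-degeneracy is immediate. Without such an explicit choice, or some genuine argument that the image of $h\mapsto(A_\alpha(h))_\alpha$ escapes the bad locus, your proof is incomplete.
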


\proof%
To prove this lemma it is enough to show that for each $i$ one can find a
group element $g\in \mathbb{G}$ such that $f_{i}(g)=0$ while all other $%
f_{j}(g)$'s are non zero. Let $H$ be the copy of $PGL_{2}$ corresponding to
the roots $\alpha =\alpha _{d}$ and $-\alpha =\alpha _{1}$ with Lie algebra $%
\mathfrak{h}$ generated by $X_{\alpha },$ $X_{-\alpha }$ and $H_{\alpha }.$
Clearly it is enough to prove the lemma for the restriction of $f$ to $H$.
Therefore without loss of generality we may assume that $\mathbb{G}=PGL_{2}$%
, hence $f(g)=a^{2}$ if $g=\left(
\begin{array}{ll}
a & b \\
c & d%
\end{array}%
\right) \in PGL_{2}.$ Let for instance $D_{\lambda }=\left(
\begin{array}{ll}
\lambda & 0 \\
0 & \lambda ^{-1}%
\end{array}%
\right) \in PGL_{2}$ and $P=\left(
\begin{array}{ll}
1 & 1 \\
1 & 2%
\end{array}%
\right) .$ Set $g_{\lambda }=PD_{\lambda }P^{-1}.$ Then compute $%
f(g_{\lambda })=2\lambda -\lambda ^{-1}$ and $f_{i}(g_{\lambda
})=f(g_{\lambda ^{i}}).$ Hence $f_{i}(g_{\lambda })=0$ if and only if $%
2\lambda ^{2i}=1.$ These conditions are mutually exclusive for distinct
values of $i$. So we are done.
\endproof%

We now conclude this subsection with the proof of Theorem \ref{main}.
According to the reductions made in Section \ref{reduc} we may assume that $%
F\subset \mathbb{G}(\overline{\mathbb{Q}})$ where $\mathbb{G}$ is a
connected absolutely almost simple algebraic group $\mathbb{G}$ of adjoint
type (viewed as embedded in $GL(\mathfrak{g})$ via the adjoint
representation), and that the group $\left\langle F\right\rangle $ is
Zariski dense in $\mathbb{G}$. Let $T$ be a maximal torus in $\mathbb{G}$
and $\Phi $ be the corresponding set of roots with set of simple roots $\Pi $
and let $\alpha _{1}=-\alpha _{d}$ be the highest root. The function $f\in
\overline{\mathbb{Q}}[\mathbb{G}]$ was defined at the beginning of Section %
\ref{global} by $f(g)=g_{dd}$ where $\{g_{ij}\}_{1\leq i,j\leq d}$ is the
matrix of $Ad(g)$ in the Chevalley basis $(Y_{1},...,Y_{d})$. Let $%
f_{i}(g)=f(g^{i})$ and let $\Omega $ be the Zariski open subset of $\mathbb{G}$ defined by $\{g,~f_{i}(g)\neq 0$ for each $i\leq d+1\}$. Let $\mathbf{f}$
be the regular map $\mathbf{f}(g):=(f_{1}(g),...,f_{d+1}(g))$ $:$ $\Omega
\rightarrow \mathbb{G}_{m}^{d+1}$. Since $d=\dim \mathbb{G}$, $Im(\mathbf{f})$ is not Zariski dense in $\mathbb{G}_{m}^{d+1}.$ Let $V$ be its
Zariski closure. According to the above theorem of Zhang, there is $\mu >0$
such that the Zariski closure $V_{\mu }$ of $\{\mathbf{x}
=(x_{1},...,x_{d+1})\in V \textnormal{ such that } h(\mathbf{x})<\mu \}$ is a finite
union of torsion coset tori. On the other hand, Lemma \ref{multindep} and
the Zariski connectedness of $\mathbb{G}$ shows that $V$ cannot be equal to
a finite union of torsion coset tori. Hence $V_{\mu }$ is a proper Zariski
closed subset of $V$. Let $Z_{\mu }=\Omega ^{c}\cup \mathbf{f}^{-1}\{V_{\mu
}\}.$ Then $Z_{\mu }$ is a proper Zariski-closed subset of $\mathbb{G}$.
Note that since $f$ is invariant under conjugation by $T,$ $Z_{\mu }$ also
is invariant under conjugation by $T.$ Let $\widehat{Z}_{\mu }$ the Zariski
closure of the set $\{(gag^{-1},gbg^{-1})\in \mathbb{G}^{2}$ with $g\in
\mathbb{G}$, $a\in T$ and $b\in Z_{\mu },$ or $a\in Z_{\mu }$ and $b\in T\}$
in $\mathbb{G\times G}.$ It is a proper Zariski closed subset, since $\dim
\widehat{Z}\leq 2\dim \mathbb{G}-1.$ Take $n=d+1$ and $\alpha =\mu /n$ in
Proposition \ref{heightbound}, which gives us an $A_{1}>0$ and an $\eta >0.$
According to Proposition \ref{twoelements} there is a number $c=c(\mathbb{G}%
,Z_{\mu },A_{1})>0$ such that $F^{c}$ contains two elements $a$ and $b$
which are $A_{1}$-regular semisimple elements, generate a Zariski-dense
subgroup of $\mathbb{G}$ and satisfy $(a,b)\notin \widehat{Z}_{\mu }.$ Now
let $\varepsilon =\eta /c$ and assume that $e(F)<\varepsilon .$ Then $%
e(\{a,b\})<\eta $. For some $g\in \mathbb{G}(\overline{\mathbb{Q}})$, $%
gag^{-1}\in T$, and since $e(\cdot )$ is invariant under conjugation by
elements from $\mathbb{G}(\overline{\mathbb{Q}}),$ we have $%
e(\{gag^{-1},gbg^{-1}\})<\eta .$ We can now apply Proposition \ref%
{heightbound} to see that we must have $h(\mathbf{f}(gbg^{-1}))<\mu $,
therefore $gbg^{-1}\in Z_{\mu }$ and hence $(gag^{-1},gbg^{-1})\in \widehat{Z%
}_{\mu }$. which gives the desired contradiction. Hence $e(F)>\varepsilon $
and we are done.

\subsection{Proof of Proposition \protect\ref{minheight}.\newline
}

\bigskip

\bigskip

\textit{Reduction to the adjoint representation.}

We first reduce to proving the statement of Proposition \ref{minheight} for
the adjoint representation and the \textquotedblleft Killing
height\textquotedblright\ $h_{Kill}$. Changing $\mathbb{G}$ into its image
in $SL(V)$ via $\rho ,$ we may assume that $\rho $ is non trivial on each
simple factor of $\mathbb{G}$. Let $Ad,\mathfrak{g}$ be the adjoint
representation of $\mathbb{G}$ and let $h_{Kill}$ be the \textquotedblleft
Killing height\textquotedblright\ introduced in Paragraph \ref{reducadj}.
According to Proposition \ref{welldefheight} and its proof there exists a
constant $C_{\rho }\geq 1$ and a basis of $V$ giving rise to an associated
height function $h$ on $End(V),$ such that $\frac{1}{C_{\rho }}\cdot
h_{Kill}(F)-C_{\rho }^{\prime }\leq h(\rho (F))\leq C_{\rho }\cdot
h_{Kill}(F)+C_{\rho }^{\prime }$ \ for all $F$ (as mentioned in Paragraph %
\ref{reducadj} $h_{Kill}$ and the height associated to a Chevalley basis of $%
\mathfrak{g}$ only differ by an additive constant). Granting the conclusion
of Proposition \ref{minheight} for the adjoint representation, we obtain $%
g\in \mathbb{G}(\overline{\mathbb{Q}})$ such that $h(\rho (gFg^{-1}))\leq
CC_{\rho }^{2}\cdot \widehat{h}(\rho (F))+C_{\rho }^{\prime }.$ But by the
main Theorem \ref{main}, since $F$ generates a non virtually solvable group,
we have $C_{\rho }^{\prime }\leq C^{K}\cdot \widehat{h}(\rho (F))$ and $%
CC_{\rho }^{2}\leq C^{K}$ for some $K=K(d)\in \mathbb{N}$ independent of $F.$
Hence $h(\rho (gFg^{-1}))\leq 2C^{K}\cdot \widehat{h}(\rho (F)).$ The
remaining inequalities are clear or follow from the basic properties of
heights explained in Section 2.

\bigskip

\textit{Proof of Proposition \ref{minheight} for the adjoint representation.}

We thus assume that $\rho =Ad$ and $h=h_{Kill},$ while $\mathbb{G}$ is
semisimple of adjoint type and $\left\langle F\right\rangle $ is Zariski
dense in $\mathbb{G}$. Again let $T$ be a maximal torus in $\mathbb{G}$ and
pick a corresponding basis of $\mathfrak{g}_{\mathbb{Z}}$ made of weight
vectors say $(Y_{1},...,Y_{d})$ as in Section \ref{local}. Since $\mathbb{G}$
is of adjoint type, it is a direct product of its simple factors. Looking at
the projection of $F$ to each simple factors, it is straightforward to
verify that, when proving Proposition \ref{minheight}, we can reduce to the
case when $\mathbb{G}$ is absolutely simple. So we assume $\mathbb{G}$
absolutely simple.

Clearly, if we prove the statement for a bounded power of $F$ instead, then
this will prove the statement for $F$. Hence making use of escape (i.e.
applying Proposition \ref{twoelements}), and after possibly conjugating $F$
by an element of $\mathbb{G}(\overline{\mathbb{Q}}),$ we may assume that $F$
contains two elements $a,b$ which generate a subgroup acting irreducibly on $%
\mathfrak{g}$ and such that $a$ is a regular semisimple element in $T$ and $%
b $ is generic with respect to $T,$ i.e. such that the matrix coefficient $%
B_{ij}$ of $Ad(b)$ in the basis $(Y_{1},...,Y_{d})$ is non zero for any
indices $i,j$. We thus write $F=\{a,b,b_{1},...,b_{M}\}.$

Let $S\subset \lbrack 1,d]$ be the set of indices corresponding to the
simple roots. So $|S|=rk(\mathbb{G}).$ Let $I_{r}\subset \lbrack 1,d]$ be
the set of indices corresponding to the $Y_{i}$'s that belong to $\mathfrak{t%
}=Lie(T).$ For each $j\in S$, let us choose some $i_{j}\in I_{r}$. We have $%
B_{i_{j}j}B_{ji_{j}}\neq 0.$ Then one can choose a unique point $t\in T(%
\overline{\mathbb{Q}}\mathbb{)}$ such that $\alpha _{j}(t)^{2}=\frac{%
B_{i_{j}j}}{B_{ji_{j}}}$ for each $j\in S$. As we may, we change $F$ into $%
tFt^{-1}.$ Then $B_{i_{j}j}=B_{ji_{j}}$ for every $j\in S$. Moreover we know
from $(\ref{Bbound})$ that for any place $v$ and any real number $%
s_{v}>E_{v}^{Ad}(F),$ there exists $t_{v}\in T(\overline{\mathbb{Q}_{v}})$
such that
\begin{equation*}
\left\Vert Ad(b^{t_{v}})\right\Vert _{v}\leq C_{v}^{d}\cdot \left(
\prod_{k=1}^{p}\max \{1,L_{k}(a)_{v}\}\right) ^{dC_{2}}\cdot
e^{s_{v}(1+dC_{1})}
\end{equation*}%
where $C_{1},C_{2},C_{\infty }$ are positive constants independent of $v$
and $C_{v}=1$ if $v$ is non archimedean, while $C_{v}=C_{\infty }$ if $v$ is
archimedean. Since every matrix coefficient of $Ad(b^{t_{v}})$ is bounded by
$\left\Vert Ad(b^{t_{v}})\right\Vert _{v}$ if $v$ is non archimedean and by
a constant multiple of this norm if $v$ is archimedean, up to enlarging $%
C_{\infty }$ if necessary we get that the same bound holds for all matrix
coefficients of $Ad(b^{t_{v}})$, i.e.
\begin{equation}
\log ^{+}|\alpha _{i}\alpha _{j}{}^{-1}(t_{v})B_{ij}|_{v}\leq d\log
C_{v}+dC_{2}\sum_{k=1}^{p}\log ^{+}L_{k}(a)_{v}+(1+dC_{1})s_{v}=:r_{v}(a)
\label{rel}
\end{equation}%
Specializing this for $B_{ij}=B_{ji}$ when $j\in S$ and $i=i_{j}$ and
adding, we obtain
\begin{equation*}
2\log ^{+}|B_{ij}|_{v}=\log ^{+}|B_{ij}B_{ji}|_{v}\leq 2r_{v}(a)
\end{equation*}%
On the other hand
\begin{eqnarray}
\frac{1}{[K:\mathbb{Q}]}\sum_{v\in V_{K}}n_{v}\cdot r_{v}(a) &\leq &d\log
C_{\infty }+dC_{2}\sum_{k=1}^{p}(h(\delta _{k}^{-1})+\log ^{+}\frac{1}{%
\kappa })+(1+dC_{1})e(F)  \notag \\
&\leq &C_{\infty }^{\prime }+(1+dC_{1}+dpC_{2})e(F)  \label{anotherline}
\end{eqnarray}%
where $C_{\infty }^{\prime }$ is another positive constant, $\delta
_{k}=1-\alpha _{k}(a)$ for $k\in S$, $\kappa =\min_{k\in S}|\alpha
_{k}|_{\infty }$ as in \S \ref{prelimupper} above, and where we have used $%
h(\delta _{k}^{-1})=h(\delta _{k})\leq h(\alpha _{k}(a))+\log 2\leq
e(F)+\log 2$. Hence for $j\in S$ and $i=i_{j},$%
\begin{equation}
h(B_{ij})\leq C_{\infty }^{\prime }+(1+dC_{1}+dpC_{2})e(F)
\label{anotherline2}
\end{equation}%
On the other hand, since $i\in I_{r}$ $\alpha _{i}=1$ and $(\ref{rel})$
gives
\begin{equation*}
\log ^{+}|\alpha _{j}{}^{\pm 1}(t_{v})B_{ij}|_{v}\leq r_{v}(a)
\end{equation*}%
\begin{equation*}
\log ^{+}|\alpha _{j}{}^{\pm 1}(t_{v})|_{v}\leq r_{v}(a)+\log ^{+}\left\vert
\frac{1}{B_{ij}}\right\vert _{v}
\end{equation*}%
Taking the weighted sum over all places, we get
\begin{equation*}
h(\alpha _{j}(t_{v})_{v}),h(\alpha _{j}{}^{-1}(t_{v})_{v})\leq h(\frac{1}{%
B_{ij}})+\frac{1}{[K:\mathbb{Q}]}\sum_{v\in V_{K}}n_{v}\cdot r_{v}(a)
\end{equation*}%
which, as $h(B_{ij}^{-1})=h(B_{ij})$ gives from $(\ref{anotherline})$ and $(%
\ref{anotherline2})$%
\begin{equation}
h(\alpha _{j}(t_{v})_{v}),h(\alpha _{j}{}^{-1}(t_{v})_{v})\leq 2C_{\infty
}^{\prime }+2(1+dC_{1}+dpC_{2})e(F)  \label{rootbound}
\end{equation}%
Now let $\alpha $ be an arbitrary root, i.e. $\alpha =\prod_{j\in S}\alpha
_{j}{}^{n_{j}}$ for some integers $n_{j}\in \mathbb{Z}$. Since there are
only finitely many possibilities for the $n_{j}$'s given $\mathbb{G}$, there
is a bound, say $N,$ for the possible sums $\sum |n_{j}|.$ Hence $(\ref%
{rootbound})$ gives
\begin{equation*}
h(\alpha (t_{v})_{v})\leq 2NC_{\infty }^{\prime }+2N(1+dC_{1}+dpC_{2})e(F)
\end{equation*}%
for every root $\alpha .$ Finally, if $i$ and $j$ are arbitrary indices this
time, we get from $(\ref{rel})$ and $(\ref{anotherline})$
\begin{eqnarray*}
h(B_{ij}) &\leq &\frac{1}{[K:\mathbb{Q}]}\sum_{v\in V_{K}}n_{v}\cdot
r_{v}(a)+h(\alpha _{i}{}^{-1}(t_{v})_{v})+h(\alpha _{j}(t_{v})_{v}) \\
&\leq &(4N+1)C_{\infty }^{\prime }+(4N+1)(1+dC_{1}+dpC_{2})e(F)
\end{eqnarray*}%
Since $A_{ij}=0$ for $i\neq j$ while $h(A_{ii})\leq e(F)$ by Proposition $(%
\ref{propbis})$ (c), we finally get $h_{Kill}(A)+h_{Kill}(B)\leq
O_{d}(1)\cdot (\sum_{ij}h(A_{ij})+h(B_{ij}))\leq C+C\cdot e(F)$.

Now recall that $a$ and $b$ were chosen so that they generate a subgroup
which acts irreducibly on $\mathfrak{g}(\overline{\mathbb{Q}}).$ By
Burnside's theorem, this means that $Ad(a)$ and $Ad(b)$ generate $End(%
\mathfrak{g})$ as an associative $\overline{\mathbb{Q}}$-algebra. In
particular, one can find $d^{2}$ elements, say $u_{1},...,u_{d^{2}},$ in $%
\{Id,Ad(a),Ad(b)\}^{d^{2}}$ which form a basis of $End(\mathfrak{g})$ over $%
\overline{\mathbb{Q}}.$ Clearly $h_{Kill}(u_{i})\leq d^{2}(C+Ce(F))$ for
each $i=1,...,d^{2}.$ Let $E_{ij}$ be the elementary matrices associated to
our basis $(Y_{1},...,Y_{d})$ of $\mathfrak{g}$. We may write $u_{i}$ as a
linear combination $\sum U_{kl}^{(i)}E_{kl}$ with $U_{kl}^{(i)}\in \overline{%
\mathbb{Q}}.$ By definition of the height $h=h_{Kill}$ on $End(\mathfrak{g})$%
, it differs from the height associated to the basis $(Y_{1},...,Y_{d})$
only by an additive constant $C_{\infty }$ due to the fact that the $Y_{i}$%
's are not necessarily orthogonal at infinite places. Thus each height $%
h(U_{kl}^{(i)})$ is at most $h(u_{i})+C_{\infty }$. In particular the height
of the determinant of $(u_{1},...,u_{d^{2}})$ in the basis of the $E_{ij}$
is bounded in terms of the $h(u_{i})$ hence in terms of $e(F)$ only. As a
result, if we write each $E_{ij}$ as a linear combination $\sum
x_{k}^{(ij)}u_{k}$ with $x_{k}^{(ij)}\in \overline{\mathbb{Q}},$ then the
height $h(x_{k}^{(ij)})$ is bounded in terms of $e(F)$ (and $d$) only, i.e. $%
\leq C_{\infty }^{\prime \prime }+O_{d}(1)\cdot e(F)$ for some other
constant $C_{\infty }^{\prime \prime }>0$ depending on $d$ only.

Let $c$ be any element of $F=\{a,b,b_{1},...,b_{M}\}$. Then we may write $%
C=Ad(c)=\sum C_{ij}E_{ij}$ and $C_{ij}=(E_{ji}C)_{jj}=\sum
x_{k}^{(ij)}(u_{k}C)_{jj}.$ Now observe that we may apply $(\ref{Bbound})$
to the two matrices $\{Ad(a),u_{k}C\}$ and get as in $(\ref{rel})$ for each
place $v$ and all $j=1,...,d$%
\begin{equation*}
\log ^{+}|(u_{k}C)_{jj}|_{v}\leq d\log C_{v}+dC_{2}\sum_{k=1}^{p}\log
^{+}L_{k}(a)_{v}+(1+dC_{1})s_{v}=r_{v}(a).
\end{equation*}%
We may now estimate $\log ^{+}||F||_{v}.$ First if $v$ is non archimedean
one gets $\log ^{+}||F||_{v}\leq \log
^{+}\max_{k,j,c}|(u_{k}C)_{jj}|_{v}+\log ^{+}\max_{k,i,j}|x_{k}^{(ij)}|_{v}$
and%
\begin{equation*}
\log ^{+}||F||_{v}\leq r_{v}(a)+\sum_{k,i,j}\log ^{+}|x_{k}^{(ij)}|_{v}
\end{equation*}%
while if $v$ is archimedean we get the same estimate plus an additive error.
Summing over the places as in $(\ref{anotherline2})$ we have%
\begin{equation*}
h_{Kill}(F)\leq C_{\infty }^{\prime \prime }+\sum_{k,i,j}h(x_{k}^{(ij)})+%
\frac{1}{[K:\mathbb{Q}]}\sum_{v\in V_{K}}n_{v}r_{v}(a)
\end{equation*}%
And thus $h_{Kill}(F)\leq O_{d}(1)(1+e(F)).$ Using Theorem \ref{main}, this
upper bound can be replaced by $h_{Kill}(F)\leq O_{d}(1)\cdot e(F),$ and
Proposition \ref{minheight} is proved.%
\endproof%

\begin{remark}
\label{poscharadjoint}In positive characteristic $p$ with $p$ not $2$ nor $3$
and $\mathbb{G}$ not of type $A_{n},$ the adjoint representation is
irreducible and the above proof continues to hold verbatim without having to
appeal to Theorem \ref{main} at the end because no additive constant appears
in the upper bound (since all places are non archimedean). In the cases
where the adjoint representation is not irreducible, one can modify the
above proof to make it work for every irreducible rational representation
instead of $Ad$. One has to take a set of linearly independent weights $\chi
_{j}$ in place of the simple roots in order to define the conjugating
element $t\in T$, and then modify $(\ref{Bbound})$ accordingly. Details are
left to the reader.
\end{remark}

\textit{Proof of Proposition \ref{minh} from the introduction. }

Let $\mathbb{G}$ be the Zariski closure of $F$ in $GL_{d}.$ Since we are in
characteristic $0$, $\mathbb{G}$ is completely reducible when acting on $%
\overline{\mathbb{Q}}^{d}.$ Since there are only finitely many isomorphism
classes of semisimple algebraic subgroups of $GL_{d}$ and finitely many
isomorphism classes of irreducible representations of $\mathbb{G}$ of
dimension at most $d,$ we may consider the maximum of all constants $C\geq 1$
appearing in Proposition \ref{minheight} for the various semisimple groups $%
\mathbb{G}$ and representations that can arise. Thus Proposition \ref%
{minheight} gives a basis of $V$ with height $h_{0}$ and $g_{0}\in \mathbb{G}%
(\overline{\mathbb{Q}})$ such that $h_{0}(g_{0}Fg_{0}^{-1})\leq C\widehat{%
h_{0}}(F).$ But there is $g\in GL_{d}(\overline{\mathbb{Q}})$ such that $%
h(\cdot )=h_{0}(g\cdot g^{-1})$ and $\widehat{h}=\widehat{h_{0}},$ so we are
done.%
\endproof%

\bigskip

\subsection{Proof of Corollaries \protect\ref{eig} and \protect\ref{eig1}}

First we assume that $F$ generates a non virtually solvable group. From
Lemma \ref{CompLem}, we have for any set $F$ containing $1$, $\sum_{a\in
F^{d^{2}}}e(\{a\})\geq e(F)-|\log c|$. In particular
\begin{equation}
\max \{e(\{a\}),a\in F^{nd^{2}}\}\geq \frac{1}{|F|^{nd^{2}}}(n\widehat{h}%
(F)-|\log c|)  \label{anotherineq}
\end{equation}%
and for every $n\in \mathbb{N}$. Now by Theorem \ref{main}, we have $%
\widehat{h}(F)>\varepsilon =\varepsilon (d)>0$. Hence for some $%
n_{0}=n_{0}(d)\in \mathbb{N}$,
\begin{equation*}
\max \{e(\{a\}),a\in F^{n_{0}}\}\geq \frac{d}{|F|^{n_{0}}}\cdot \widehat{h}%
(F).
\end{equation*}%
On the other hand, we clearly have $e(\{a\})\leq \sum h(\lambda )$ where the
sum is over the $d$ eigenvalues of $a.$ Hence the assertion of Corollary \ref%
{eig}.

Now assume that $F$ generates a virtually solvable subgroup. It is well
known (see \cite{Werf} 3.6 and 10.10) that there is an integer $%
n_{0}=n_{0}(d)\in \mathbb{N}$ such that any virtually solvable subgroup of $%
GL_{d}(\mathbb{C})$ contains a subgroup of index at most $n_{0}$ which can
be conjugated inside the upper-triangular matrices. Applying Lemma \ref%
{finiteindex} (and its proof), we may find $F_{1}\subset F^{2n_{0}-1}$ such
that $F^{n}\cap B\subset (F_{1}\cup F_{1}^{-1})^{2n}$ for all $n,$ where $%
B=T_{d}(\mathbb{C})$ is the subgroup of upper-triangular matrices. But $%
F^{n}=\cup (F^{n}\cap f_{i}^{-1}B)$ for at most $n_{0}$ elements $f_{i}$ in $%
F^{n_{0}}.$ Hence $F^{n}\subset \cup f_{i}^{-1}(F^{n+n_{0}}\cap B)$ and $%
R_{v}(F)\leq \lim \inf ||F^{n}\cap B||^{1/n}\leq R_{v}(F_{1}\cup
F_{1}^{-1})^{2}.$ However, since $F_{1}\subset B,$ it is straightforward to
observe that $R_{v}(F_{1}\cup F_{1}^{-1})=\Lambda _{v}(F_{1}\cup
F_{1}^{-1}). $ Summing over all places, we obtain $\widehat{h}(F)\leq
2\sum_{a\in F_{1}}e(\{a\})+e(\{a^{-1}\})\leq 2|F|^{2n_{0}}\max \{\sum
h(\lambda )+h(\lambda ^{-1}),\lambda $ eigenvalue of $a\in F_{1}\}.$ Since $%
h(\lambda )=h(\lambda ^{-1}),$ we get the desired result.

Now we turn to Corollary \ref{eig1}. By the remark above on the bound $n_{0}$
of the index of a triangular subgroup in any virtually solvable subgroup, it
is easy to see that the set of pairs $(A,B)$ in $GL_{d}\times GL_{d}$ that
generate a virtually solvable subgroup is a closed subvariety. Since every
connected simple algebraic group can be topologically generated (for the
Zariski topology) by two elements (see Proposition\ \ref{2elem}), we can
apply the escape from subvarieties lemma (Lemma \ref{Bezout}) and conclude
that there is a pair $\{A,B\}$ in $F^{c(d)}$ which generates a non virtually
solvable subgroup of $\left\langle F\right\rangle .$ Then apply Corollary %
\ref{eig} to $\{Id,A,B\}.$

\subsection{Proof of Corollaries \protect\ref{torsion}, \protect\ref{corobv}
and \protect\ref{geomint}}

\proof[Proof of Corollary \ref{torsion}]%
Let $k$ be the algebraic closure of $K$ and $\Gamma $ the subgroup generated
by $F$. First assume that $\Gamma \leq GL(W)$ acts absolutely irreducibly on
$W=k^{d}$. According to Burnside's theorem the $k$-subalgebra generated by
the elements of $\Gamma $ is the full algebra $End_{k}(W).$ Since $D=\dim
End_{k}(W)=(\dim W)^{2}\leq d^{2}$, there exists a linear basis, say $%
w_{1},...,w_{D}$ of $End_{k}(W)$ in $F^{d^{2}}$ (start with $w_{1}=1,$ then
multiply by the elements of $F$ one after the other). Since $\{x\mapsto
tr(zx)\}_{z\in End_{k}(W)}$ account for all linear forms on $End_{k}(W),$
the linear forms $x\mapsto tr(w_{i}x)$ must be linearly independent, and the
matrix $\{tr(w_{i}w_{j})\}_{1\leq i,j\leq D}$ is invertible. Let $L$ be the
field generated by the eigenvalues of all elements of $F^{2d^{2}+1}.$ Note
that $L$ contains $tr(w_{i}w_{j})$ and $tr(fw_{i}w_{j})$ for $f\in F$ and
all $i,j.$ We claim that $\Gamma \leq \bigoplus\limits_{1\leq i\leq
D}Lw_{i}\leq End_{k}(W).$ Indeed for each $i,$ and each $f\in F,$ write $%
fw_{i}=\sum a_{ij}w_{j}$ for some $a_{ij}\in k$. Then as $%
\{tr(w_{i}w_{j})\}_{1\leq i,j\leq D}$ is invertible, the $a_{ij}$ must
belong to $L.$ Since $w_{1}=1,$ we see that positive words in $F$ lie all in
$\bigoplus\limits_{1\leq i\leq D}Lw_{i}.$ On the other hand, the
Cayley-Hamilton theorem implies that $f^{-1}\in L[f].$ Finally $\Gamma \leq
\bigoplus\limits_{1\leq i\leq D}Lw_{i}$ as claimed. The left regular
representation of $\Gamma $ on $\bigoplus\limits_{1\leq i\leq D}Lw_{i}$
gives us a faithful representation of $\Gamma $ in $GL_{D}(L).$ If $%
F^{2d^{2}+1}$ consists only of torsion elements, the field $L,$ is generated
over its prime field by finitely many roots of unity. If $char(K)>0$ this
already implies that $L$ is finite and thus that $\Gamma $ is finite, a
contradiction. If $char(K)=0,$ then $L$ belongs to $\overline{\mathbb{Q}}$
and we are thus reduced to the case when $\Gamma $ lies in $GL_{D}(\overline{%
\mathbb{Q}})$. Then, by the combination of Corollary \ref{eig} with Theorem %
\ref{main} we are done unless $\Gamma $ is virtually solvable.

If $\Gamma $ does not act irreducibly of $k^{d},$ let $\{0\}\leq V_{1}\leq
...\leq V_{k}=k^{d}$ be a composition series for $\Gamma $ and let $%
W=V_{i_{0}}/V_{i_{0}+1}$ be an (irreducible) composition factor. If $%
char(K)>0,$ by the above, the image of $\Gamma $ is $GL(W)$ is finite. It
follows that $\Gamma $ is virtually unipotent and hence finite, because
finitely generated unipotent subgroups in positive characteristic are finite.

If $char(K)=0,$ then the image of $\Gamma $ on each composition factor is
virtually solvable, and hence $\Gamma $ itself is virtually solvable. Recall
that there is an integer $n_{0}=n_{0}(d)\in \mathbb{N}$ such that any
virtually solvable subgroup of $GL_{d}(\mathbb{C})$ contains a subgroup of
index at most $n_{0}$ which can be conjugated inside the upper-triangular
matrices (see \cite{Werf} 3.6 and 10.10). Applying Lemma \ref{finiteindex},
we may assume without loss of generality that $F$ is made of
upper-triangular matrices. Then for every $a,b\in F$, the commutator $[a,b]$
is a unipotent matrix in $SL_{d}(\mathbb{C})$, hence is either trivial or of
infinite order. If one of them has infinite order, we are done. Otherwise
this means that the matrices in $F$ commute. But a finitely generated
abelian group generated by torsion elements is finite. We are done.

The argument above works verbatim without the need to take inverses until
the point in the last paragraph when $F$ is assumed to consist of
upper-triangular matrices. Note that if the elements of $F$ are torsion,
then their eigenvalues are roots of unity, hence the group generated by $F$
is virtually nilpotent. This completes the proof of the corollary.
\endproof%

\proof[Proof of Corollary \ref{corobv} from the Introduction]%
If $\gamma $ has a transcendental eigenvalue for some $\gamma \in
F^{2d^{2}+1},$ then the second alternative obviously holds. If no $\gamma
\in F^{2d^{2}+1}$ has a transcendental eigenvalue, then the argument given
in the proof of Corollary \ref{torsion} shows that $\Gamma $ has a faithful
representation in $GL_{d^{2}}(\overline{\mathbb{Q}})$. So we are reduced to
this situation and the claim is clear by Corollary \ref{corobv}.%
\endproof%

\proof[Proof of Corollary \ref{geomint} from the Introduction]%
If $F$ fixes a point in the Bruhat-Tits building $X_{k}$ of $SL_{d}$ over a $%
p$-adic field $k$, then $F$ fixes a vertex of $X_{k}$ (it fixes the vertices
of the smallest simplex containing the fixed point). But vertices of $X_{k}$
are permuted transitively by the action of $GL_{d}(k)$. If follows from
Lemma \ref{displacement} that $E_{k}(F)=1.$ Hence if $F$ fixes a point on
each $X_{k}$ for $k$ non archimedean, then $e_{f}(F)=0.$ Hence by Theorem %
\ref{main} we must have $e_{\infty }(F)>\varepsilon .$ Thus there exists an
embedding $\sigma $ of $K$ in $\mathbb{C}$ such that $\log E_{\mathbb{C}%
}(\sigma (F))>\varepsilon .$ Then by Lemma \ref{displacement}, every point
of $X_{\mathbb{C}}$ must be moved by at least $\varepsilon $ by some element
of $F$.
\endproof%

\begin{Ack}
This paper grew out of my attempts to improve the results of [21] and [14]. I wish to thank T. Gelander for our past collaboration and joint works which naturally led me to study the questions addressed in this paper.

I thank the referee for his careful reading of the paper and many comments that helped improve the exposition. I am grateful to P. Sarnak and A. Yafaev from whom I learned about Bilu's theorem. I thank E. Bombieri, J-B. Bost and A. Chambert-Loir for their insights about diophantine geometry and P. E. Caprace for his invaluable help with buildings. I also thank G. Chenevier, E. Lindenstrauss, G. Prasad, A. Salehi-Golsefidy and V. Talamanca for stimulating conversations. Finally I acknowledge the generous support of the European Research Council through Grant GADA-208091.
\end{Ack}

\end{document}